\documentclass[3p]{elsarticle}




\usepackage{amssymb}
\usepackage{amsmath}
\usepackage{amsthm}

\usepackage{natbib}

\usepackage{mathtools}
\usepackage{float}
\usepackage{tikz}
\usepackage{dutchcal}
\usepackage[font=footnotesize]{subcaption}
\usepackage{accents}
\usepackage[inline,shortlabels]{enumitem}
\setlist{itemsep=0\baselineskip} 
\usepackage{ifthen}
\usepackage[utf8]{inputenc} 
\usepackage{xcolor}
\usepackage{mathrsfs}
\usepackage[bookmarks=true, colorlinks=true, linkcolor=blue!50!black,
citecolor=orange!50!black, urlcolor=orange!50!black, pdfencoding=unicode]{hyperref}
\usepackage[capitalize]{cleveref}

\theoremstyle{definition}
\newtheorem{definition}{Definition}[section]
\newtheorem{example}[definition]{Example}
\newtheorem{remark}[definition]{Remark}
\newtheorem{notation}[definition]{Notation}

\theoremstyle{plain}

\newtheorem{theorem}[definition]{Theorem}
\newtheorem{proposition}[definition]{Proposition}
\newtheorem{corollary}[definition]{Corollary}
\newtheorem{lemma}[definition]{Lemma}

\newtheorem{assumption}{Assumption}
\newtheorem*{theorem*}{Theorem}
\newtheorem*{proposition*}{Proposition}
\newtheorem*{corollary*}{Corollary}
\newtheorem*{lemma*}{Lemma}
\newtheorem*{warning*}{Warning}

\crefname{assumption}{Assumption}{Assumptions}

\crefformat{enumi}{\##2#1#3}
\crefalias{chapter}{section}

\captionsetup[subfigure]{width=0.9\textwidth,justification=centering}


\usetikzlibrary{ 
  cd,
  math,
  decorations.markings,
  decorations.pathreplacing,
  positioning,
  arrows,
  arrows.meta,
  shapes,
  shadows,
  shadings,
  calc,
  fit,
  quotes,
  intersections,
  circuits,
  circuits.ee.IEC,
  calligraphy
}

\tikzset{
  biml/.tip={Glyph[glyph math command=triangleleft, glyph length=1.1ex]},
  bimr/.tip={Glyph[glyph math command=triangleright, glyph length=1.1ex]},
}

\tikzset{
  tick/.style={postaction={
      decorate,
      decoration={markings, mark=at position 0.5 with
    	{\draw[-] (0,.4ex) -- (0,-.4ex);}}}
  }
} 
\tikzset{
  slash/.style={postaction={
      decorate,
      decoration={markings, mark=at position 0.5 with
    	{\draw[-] (.3ex,.3ex) -- (-.3ex,-.3ex);}}}
  }
}
\tikzset{
  cotor/.style={postaction={
      decorate,
      decoration={markings, mark=at position 0.5 with
    	{\draw[->,dashed,line width=0.05mm] (1ex,.6ex) -- (-1.8ex,.6ex);}}}
  }
}
\tikzset{
  cotol/.style={postaction={
      decorate,
      decoration={markings, mark=at position 0.5 with
    	{\draw[->,dashed,line width=0.05mm] (1ex,-.6ex) -- (-1.8ex,-.6ex);}}}
  }
}

\definecolor{rcol}{RGB}{200,0,0}
\definecolor{gcol}{RGB}{0,120,0}
\definecolor{bcol}{RGB}{0,0,210}
\definecolor{ycol}{RGB}{210,130,0}
\definecolor{ocol}{RGB}{210,100,0}

\tikzset{corollas/.style={scale=.9,baseline}}
\tikzset{smallcorollas/.style={scale=.5,baseline}}

\tikzset{vertex/.style={circle,draw,inner sep=1pt,minimum size=6,text=black}}
\tikzset{edge/.style={-stealth}}
\tikzset{label/.style={rectangle,rounded corners,fill=white,inner
    sep=1pt,text=black}}

\tikzset{annot/.style={font=\scriptsize}}
\tikzset{dasht/.style={dashed,draw opacity=0.5}}
\tikzset{transparent/.style={draw opacity=0.3,fill opacity=0.3}}
\tikzset{small/.style={minimum size=3.5}}
\tikzset{large/.style={minimum size=10}}
\tikzset{larger/.style={minimum size=12}}
\tikzset{huge/.style={minimum size=17}}
\tikzset{huger/.style={minimum size=27}}

\tikzset{stringd/.style={baseline}}
\tikzset{container/.style={line width=0.25mm, rounded corners}}
\tikzset{hub/.style={container}}
\tikzset{wire/.style={}}
\tikzset{obj/.style={font=\scriptsize}}




\newcommand{\bito}[1][]{\mathbin{\begin{tikzcd}[ampersand replacement=\&, cramped]\ar[r, biml-bimr, "{#1}"]\&{}\end{tikzcd}}}

\DeclareSymbolFont{symbolsC}{U}{pxsyc}{m}{n}
\DeclareMathSymbol{\multimapdotbothA}{\mathrel}{symbolsC}{23}

\newcommand{\adj}[5][30pt]{
  \begin{tikzcd}[ampersand replacement=\&, column sep=#1]
    #2\ar[r, shift left=7pt, "#3"]
    \ar[r, phantom, "\scriptstyle\Rightarrow"]\&
    #5\ar[l, shift left=7pt, "#4"]
  \end{tikzcd}
}

\newcommand{\xtickar}[1]{\begin{tikzcd}[baseline=-0.5ex,cramped,sep=small,ampersand 
    replacement=\&]{}\ar[r,tick, "{#1}"]\&{}\end{tikzcd}}


\DeclareSymbolFont{stmry}{U}{stmry}{m}{n}
\DeclareMathSymbol\fatsemi\mathop{stmry}{"23}

\DeclareFontFamily{U}{mathx}{\hyphenchar\font45}
\DeclareFontShape{U}{mathx}{m}{n}{
  <5> <6> <7> <8> <9> <10>
  <10.95> <12> <14.4> <17.28> <20.74> <24.88>
  mathx10
}{}
\DeclareSymbolFont{mathx}{U}{mathx}{m}{n}
\DeclareFontSubstitution{U}{mathx}{m}{n}
\DeclareMathAccent{\widecheck}{0}{mathx}{"71}


\renewcommand{\ss}{\subseteq}


\DeclareMathAlphabet{\mathbf}{OT1}{cmr}{b}{n}
\DeclareFontSeriesDefault[rm]{bf}{b}


\DeclarePairedDelimiter{\present}{\langle}{\rangle}

\DeclarePairedDelimiter{\corners}{\ulcorner}{\urcorner}

\newcommand{\res}[2]{\left.{#1}\right|_{#2}}

\DeclareMathOperator{\dom}{dom}
\DeclareMathOperator{\cod}{cod}

\DeclareMathOperator*{\colim}{colim}

\DeclareMathOperator{\ob}{Ob}

\DeclareMathOperator{\el}{El}

\newcommand{\ord}[1]{\underaccent{\bar}{#1}}
\newcommand{\cat}[1]{#1}
\newcommand{\Cat}[1]{\mathbf{#1}}

\newcommand{\call}[1]{\mathcal{#1}}

\newcommand{\id}{\mathrm{id}}
\newcommand{\then}{\mathbin{\fatsemi}}

\newcommand{\too}{\longrightarrow}
\newcommand{\tto}{\rightrightarrows}
\newcommand{\To}[2][]{\xrightarrow[#1]{#2}}

\newcommand{\Too}[1]{\xrightarrow{\;\;#1\;\;}}
\newcommand{\from}{\leftarrow}

\newcommand{\From}[1]{\xleftarrow{#1}}
\newcommand{\Fromm}[1]{\xleftarrow{\;\;#1\;\;}}

\newcommand{\imp}{\Rightarrow}
\renewcommand{\iff}{\Leftrightarrow}

\newcommand{\tickar}{\xtickar{}}

\newcommand{\profunctor}{\tickar}

\newcommand{\inv}{^{-1}}
\newcommand{\op}{^\tn{op}}
\newcommand{\co}{^\tn{co}}

\newcommand{\tn}[1]{\textnormal{#1}}

\newcommand{\codisc}[1]{c_{#1}}

\newcommand{\dual}[1]{{#1}^\vee}
\newcommand{\ddual}[1]{{#1}^{\vee\vee}}

\newcommand{\yoncool}[2]{{}_{#1} I_{#2}}
\newcommand{\pnt}[1]{\dot{#1}}

\newcommand{\catfun}[2]{{#2}^{#1}}

\newcommand{\nn}{\mathbb{N}}

\newcommand{\zz}{\mathbb{Z}}
\newcommand{\rr}{\mathbb{R}}

\newcommand{\finset}{\Cat{Fin}}
\newcommand{\smset}{\Cat{Set}}
\newcommand{\smcat}{\Cat{Cat}}
\newcommand{\catsharp}{\Cat{Cat}^{\sharp}}

\newcommand{\ppolyfun}{\mathbb{P}\Cat{olyFun}}
\newcommand{\ccat}{\mathbb{C}\Cat{at}}
\newcommand{\ccatsharp}{\mathbb{C}\Cat{at}^{\sharp}}
\newcommand{\ccatsharpdisc}{\ccatsharp_{\tn{disc}}}
\newcommand{\ccatsharplin}{\ccatsharp_{\tn{lin}}}
\newcommand{\ccatsharpcon}{\ccatsharp_{\tn{con}}}
\newcommand{\ccatsharpdisccon}{\ccatsharp_{\tn{disc,con}}}
\newcommand{\sspan}{\mathbb{S}\Cat{pan}}
\newcommand{\spancat}{\sspan}
\newcommand{\catsharpspan}{\spancat_{\mathrm{retro}}(\smcat)}
\newcommand{\polyfunb}{\ccatsharp}
\newcommand{\mon}{\Cat{Mon}}
\newcommand{\comon}{\Cat{Comon}}
\newcommand{\ccomod}{\mathbb{C}\Cat{omod}}
\newcommand{\comod}{\ccomod}
\newcommand{\mmod}{\mathbb{M}\Cat{od}}
\newcommand{\coco}{\Cat{Fam}}

\newcommand{\prov}{\Cat{E}_\tn{ver}}
\newcommand{\proh}{\mathbb{E}_\tn{hor}}
\newcommand{\vcomp}{K_v}
\newcommand{\hcomp}{K_h}

\newcommand{\Praf}{\Cat{PRA}}

\newcommand{\set}{\tn{-}\Cat{Set}}

\newcommand{\yon}{\mathcal{y}}
\newcommand{\poly}{\Cat{Poly}}

\newcommand{\tri}{\mathbin{\triangleleft}}

\newcommand{\agg}{\circledast}
\newcommand{\rdag}{^{\rotatebox{0}{$\dagger$}}}
\newcommand{\ldag}{^{\rotatebox{180}{$\dagger$}}}
\newcommand{\ot}[2]{\,\mbox{${}_{#1}\otimes_{#2}$}\,}
\newcommand{\ih}[4]{{}_{#1}[#2,#3]_{#4}}
\newcommand{\thg}{{\mathord{\text{--}}}}

\newcommand{\alttensor}{\odot}
\newcommand{\altbito}{\tickar}
\newcommand{\lpush}{_*}

\newcommand{\biglens}[2]{
  \begin{bmatrix}{\vphantom{f}#2} \\ {\vphantom{f}#1} \end{bmatrix}
}
\newcommand{\littlelens}[2]{
  [\begin{smallmatrix}{\vphantom{f}#2} \\ {\vphantom{f}#1} \end{smallmatrix}]
}
\newcommand{\lens}[2]{
  \mathchoice{\biglens{#1}{#2}}{\littlelens{#1}{#2}}{\littlelens{#1}{#2}}{\littlelens{#1}{#2}}
}

\newcommand{\biglenskan}[4]{
  \,\mbox{$\prescript{}{#3}{\begin{bmatrix}{\vphantom{f_f^f}#2} \\ {\vphantom{f_f^f}#1} \end{bmatrix}_{#4}}$}\,
}
\newcommand{\littlelenskan}[4]{
  \,\mbox{$\prescript{}{#3}{\begin{bsmallmatrix}{\vphantom{f}#2} \\ {\vphantom{f}#1} \end{bsmallmatrix}_{#4}}$}\,
}
\newcommand{\lenskan}[4]{
  \relax\if@display
    \biglenskan{#1}{#2}{#3}{#4}
  \else
    \littlelenskan{#1}{#2}{#3}{#4}
  \fi
}

\newcommand{\qqand}{\qquad\text{and}\qquad}

\newcommand{\coto}{\overset{\raisebox{-2pt}{\smash{$\scriptstyle\!\!\dashleftarrow$}}}{\longrightarrow}}


\journal{Journal of Pure and Applied Algebra}

\begin{document}

\begin{frontmatter}



  \title{Functorial Aggregation}


  \author[add1]{David I. Spivak\corref{cor1}} 
  \cortext[cor1]{Corresponding author.}
  \address[add1]{Topos Institute, Berkeley, CA 94704, United States}
  \ead{david@topos.institute}

  \author[add2]{Richard Garner} 
  \address[add2]{School of Mathematical and Physical
    Sciences, Macquarie University, NSW 2109, Australia}
  \ead{richard.garner@mq.edu.au}

  \author[add3]{Aaron David Fairbanks} 
  \address[add3]{Department of Mathematics and Statistics, Dalhousie
    University, Halifax, NS B3H 4R2, Canada}
  \ead{adavidfairbanks@gmail.com}

  \begin{abstract}
    We study polynomial comonads and polynomial
    bicomodules. Polynomial comonads amount to categories. Polynomial
    bicomodules between categories amount to parametric right adjoint
    functors between corresponding copresheaf categories. These may
    themselves be understood as generalized polynomial functors. They
    are also called data migration functors because of applications in
    categorical database theory. We investigate several universal
    constructions in the framed bicategory of categories,
    retrofunctors, and parametric right adjoints. We then use the
    theory we develop to model database aggregation alongside
    querying, all within this rich ecosystem.
  \end{abstract}



  \begin{keyword}
    aggregation \sep categories \sep polynomial functors \sep
    retrofunctors \sep parametric right adjoints \sep comonads \sep bicomodules


  \end{keyword}

\end{frontmatter}






\section{Introduction}

In this paper, we investigate multi-variable polynomials, but instead
of sets of variables, our polynomials have categories of
variables. This needs explanation. Let us start with what
\emph{polynomial} means. Usually, a polynomial is a function
\[x \quad \mapsto \quad \sum_{i \in I} x^{n_i}.\]
We can plug numbers in and get numbers out. A first
generalization is to replace the numbers with sets
\[X \quad \mapsto \quad \!\:\sum_{i \in I} X^{S_i}\mathrlap{\,\: \coloneqq\: \sum_{i \in I}
    \smset(S_i, X)}\] so that we can plug sets in and get sets
out. Instead of a function, this is a \emph{functor}
$\smset \to \smset$. Hence we see polynomials as the objects of a
category $\poly$: a full subcategory of $\catfun{\smset}{\smset}$. As
we would expect of polynomials, we find they are closed under
composition (\cref{prop.comp_monoidal}), i.e., $\poly$ inherits
monoidal structure from $\catfun{\smset}{\smset}$. There is much more
to say about $\poly$, and we will return to it soon.

A second generalization is to use not just one input variable, but several
of them; and while we are about it, we may as well also allow
multiple output variables. Now a polynomial is of the form
\begin{equation}
  \label{eqn.multivarpoly}
  (X_a)_{a \in A} \quad \mapsto \quad (Y_b)_{b \in B} \qquad \text{where} \qquad
  Y_b \coloneqq \sum_{i \in I_b} \prod_{j \in J_i} X_{s(j)}
\end{equation}
so we plug $A$-many sets in, and get $B$-many sets out. Such a
multi-variable polynomial is equivalently expressed by a diagram of functions:
\begin{equation}
  \label{eqn.bridgediag}
  \begin{tikzcd}
    &J\ar[dl, "s"']\ar[r, "\pi"]&I\ar[dr, "t"]\\[-10pt]
    A&&&B
  \end{tikzcd}
\end{equation}
where the fibers of $t$ are the sets $I_b$, and the fibers of $\pi$
are the sets $J_i$.
This \emph{bridge diagram}
%
acts as a blueprint of the corresponding functor
$\smset^A \to \smset^B$. Indeed, calculating the value of a
polynomial functor like~\eqref{eqn.multivarpoly} can be done in three steps:
\begin{itemize}
\item Step $\Delta$: copy the $A$-indexed family of inputs into a
  $J$-indexed family of variable instances;
\item Step $\Pi$: multiply variable instances to obtain an $I$-indexed family of monomials;
\item Step $\Sigma$: sum the monomials appropriately to obtain a $B$-indexed
  family of outputs.
\end{itemize}
These three steps are themselves functors, controlled by the maps $s$,
$\pi$ and $t$ respectively. Indeed:
\begin{itemize}
\item $\Delta_s\colon \smset^A \to \smset^J$ is reindexing along $s$;
\item $\Pi_\pi\colon \smset^J \to \smset^I$ is the right adjoint to
  reindexing along $\pi$; and
\item $\Sigma_t\colon \smset^I \to \smset^B$ is the left adjoint to
reindexing along $t$,
\end{itemize}
and the whole multi-variable polynomial functor corresponding
to~\eqref{eqn.bridgediag} is the composite
$\Sigma_t\circ\Pi_\pi\circ\Delta_s$. Multivariable polynomials
were first described in this way
in~\cite{abbott2003categories,gambino2003wellfounded}; for a
comprehensive account, see~\cite{kock2012polynomial}. 

In fact, polynomial functors between powers of $\smset$ are closed
under composition, and so we obtain a $2$-category whose objects are
sets $A,B,C,\dots$ and where the hom-category from $A$ to $B$ is the
category of polynomial functors $\smset^A \rightarrow \smset^B$. As
shown by Gambino and Kock in \cite{kock2012polynomial}, this
$2$-category can be enhanced to a framed bicategory
$\ppolyfun_\smset$, whose structure can be described either in terms
of the polynomial functors themselves, or else in terms of their
representing bridge diagrams. A useful equivalent characterization of
such polynomial functors is that they are \emph{parametric right
  adjoint} functors (\cref{def.prafunctor})---or \emph{pra-functors}
for short.

We now come to our third generalization of polynomials which, as
promised, is to replace the sets of variables $A,B \in \smset$ with
categories $\cat{c}, \cat{d}\in \smcat$.
As opposed to a mere $A$-indexed family of sets, a \emph{$\cat{c}$-set} (a
functor $\cat{c} \to \smset)$ is able to model richer data, in that it
records relationships between elements (for each arrow
$f\colon a_1 \to a_2$ in $\cat{c}$, each element of type $a_1$ points via $f$ to
a particular element of type $a_2$).
It is the basic form of data that would be stored in a database (as in
\cite{spivak2012functorial}).

Indeed, it is straightforward to model the organizational form (the
\emph{schema}) of a database as a category $\cat{c}$ and the data in
the database as a $\cat{c}$-set. (A $c$-set is simply an extremely
general form of organized data, not tied to any particular
implementation of databases.) We may then use the left and right Kan
extensions
\[
  \begin{tikzcd}[column sep=50pt]
    \cat{c}\set\ar[r, shift left=7pt, "\Sigma_F"]\ar[r,shift right=7pt, "\Pi_F"']&
    \cat{d}\set\ar[l, "\Delta_F" description]
  \end{tikzcd}
\]
induced by a functor $F\colon\cat{c}\to\cat{d}$ to \emph{migrate data}
between databases. (We will look more concretely at what these
functors do later.)

The reader might expect that the appropriate generalization of
polynomial in this context would be a functor of the form
\begin{equation}\label{eqn.prafunctor}
  (\Sigma_T\circ\Pi_\pi\circ\Delta_S)\colon\cat{c}\set\too\cat{d}\set
\end{equation}
induced by a ``bridge diagram'' of categories:
\begin{equation}
  \label{eqn.catbridge}
    \begin{tikzcd}
    &\cat{e}\ar[dl, "S"']\ar[r, "\pi"]&\cat{b}\ar[dr, "T"]\\[-10pt]
    \cat{c}&&&\cat{d}\rlap{ .}
  \end{tikzcd}
\end{equation}

However, it turns out this is slightly \emph{too} general, and not
quite what we want in practice. Indeed, for databases, general
functors of the form $\Sigma_T$ are not as useful as one might think,
because they can identify data. For example, identifying two users in
a database would collaterally identify the contents of what they point
to as well---it could easily equate different strings in such a way
that, \emph{across the entire database}, we have an equality of strings
$\text{``a''} = \text{``b''}$! As one can imagine, freely
interchanging arbitrary instbnces of ``a'' with ``b'' cbuses proalems.

Still, there is a special case where $\Sigma_T$ does describe a
useful operation for migrating data, namely summing: forming the union
of data from various sources without making identifications (as seen
in polynomials). Formally, we capture this by demanding $T$
is a discrete opfibration (what we call \emph{\'etale}), in which case $\Sigma_T$ can be
computed using coproducts rather than general colimits. The
mathematics works out more nicely too: given a bridge
diagram~\eqref{eqn.catbridge} in which $T\colon\cat{b}\to\cat{d}$ is
\'etale, the composite functor~\eqref{eqn.prafunctor} is a prafunctor,
and in fact, every prafunctor from $c$-sets to $d$-sets can be
obtained in this way (\cref{prop.bridge_diagram}).

Just as with polynomial functors between powers of $\smset$,
prafunctors between categories of the form
$c\set \coloneqq \smset^{\cat{c}}$ are closed under composition. So we
obtain a $2$-category of prafunctors which, as we will see shortly,
can be enhanced to a framed bicategory which we term
$\ccatsharp$. This generalises Gambino and Kock's framed bicategory
$\ppolyfun_\smset$ in the sense that, when we restrict from objects of
the form $c\set$ for $c$ a category to ones of the form $\smset^C$ for
$C$ a set, we obtain exactly $\ppolyfun_\smset$ (which in this article
we denote by $\ccatsharpdisc$).

In database terminology, prafunctors model \emph{disjoint unions of
  conjunctive queries}. (In the language of categories, a conjunctive
query is a limit operation: it returns the set of tuples satisfying
certain relationships, i.e., linked by certain arrows. So a disjoint
union of conjunctive queries refers to a coproduct of limits.) For
example, in the US, every city is in a state and every county is also
in a state, i.e., we have a cospan of sets
\begin{equation}\label{eqn.cities}
  \text{city}\to\text{state}\from\text{county}
\end{equation}
and one can perform a disjoint union of conjunctive queries on such
data, e.g., asking for the set
\begin{equation}\label{eqn.duc_query_example}
  (\text{city}\times_{\text{state}}\text{city})+
  (\text{city}\times_{\text{state}}\text{county})+
  (\text{county}\times_{\text{state}}\text{county}),
\end{equation}
and this is modelled by a prafunctor $\cat{c}\set \rightarrow
\smset$ where $c$ is the category $\bullet \rightarrow \bullet
\leftarrow \bullet$ indexing diagrams of the form~\eqref{eqn.cities};
see~\cref{ex.variable_is_duc} below.

We sometimes refer to prafunctors as \emph{data migration functors},
because a prafunctor as in \eqref{eqn.prafunctor} migrates data from
$\cat{c}$ to $\cat{d}$ by assigning to each object of $\cat{d}$ a
disjoint union of conjunctive queries on $\cat{c}\set$.

\subsection{From $\poly$ to $\ccatsharp$}
Having learned a bit about fancy polynomials, let us return to
consider the monoidal category of single-variable polynomials
$\poly \subset \catfun{\smset}{\smset}$.
What are the monoids and comonoids for the composition
monoidal structure? Naturally, they are polynomial monads and comonads
on $\smset$. The monoids are interesting, but comonoids will be more
relevant to us, as, in fact:

\begin{center}
  \emph{Polynomial comonads are categories.}
\end{center}

This was noticed by Ahman and Uustalu in 2016
\cite{ahman2016directed}, and is already intriguing. Who would have
thought that the concept of category is inherent in the notion of
single-variable polynomial? In fact, the situation is even more
interesting than this. We will show that:

\begin{center}
  \emph{Polynomial bicomodules between categories $c$ and $d$ are
    parametric right adjoint functors $c\set \to d\set$.}
\end{center}

Thus, the entire generalization from single-variable polynomials, to
set-valued multi-variable polynomials, to category-valued
multi-variable polynomials, can be found sitting right there in the
category of single-variable polynomials! What is more, we can obtain
not just categories and prafunctors, but also the whole apparatus
which surrounds them, namely, the framed bicategory $\ccatsharp$
mentioned above. Indeed, from any monoidal category $\mathcal{V}$
whose tensor product satisfies mild limit-preservation properties we
can build a framed bicategory of comonoids, comonoid homomorphisms and
comonoid bicomodules~\cite[Example~11.6]{shulman2008framed}. This
construction (suitably dualized) is precisely the one that produces
rings, ring homomorphisms, and bimodules, as arising from abelian groups, as well as
categories, functors, and profunctors, as arising from spans.

The first main result of this paper says that, when applied to the
monoidal category $\poly$, this construction produces exactly
$\ccatsharp$. Indeed:
\begin{itemize}
\item The objects, i.e.\ the comonoids in $\poly$, are categories;
\item The loose maps, the bicomodules between comonoids, are
 the prafunctors $\cat{c}\set \rightarrow \cat{d}\set$;
\item The tight maps, i.e.\ the comonoid homomorphisms, are
  not as one might expect functors between categories, but rather
  \emph{retrofunctors} as in \cref{def.retrofunctor} below.
\end{itemize}
We may choose to see this as the deity of category theory telling us
what the notion of multi-variable polynomial should be when the
variable types form a category rather than a mere set: it is a data
migration~functor.

Thus we turn to $\poly$ (and $\ccatsharp$ begotten by it)
to understand data.


\subsection{Aggregation}
Suppose given a function $\pi\colon E\to D$, say $E$ is the set of
employees in a company, $D$ is the set of departments, and every
employee $e\in E$ works in a department $\pi(e)\in D$.%
\footnote{We should also assume that the fibers are \emph{compact} in
  the sense that for every $d\in D$, the preimage $\pi\inv(d)\ss E$ is
  a finite set.} 
Now if there is a function $s\colon E\to \rr$, assigning a salary
$s(e)$ to each employee, 
we can \emph{aggregate} the salaries of the employees in each
department to get a total department salary. That is, there should be
an induced function $(\mathtt{sum}\ s)_\pi\colon D\to\rr$ as in the
diagram 
\begin{equation}\label{eqn.aggregate}
  \begin{tikzcd}
    E\ar[r, "s"]\ar[d, "\pi"']&\rr\\
    D\ar[ur, dashed, "(\mathtt{sum}\ s)_\pi"']
  \end{tikzcd}
\end{equation}
where \eqref{eqn.aggregate} is not intended to commute. The aggregate
function is given by 
\begin{equation}\label{eqn.sum_example}
  (\mathtt{sum}\ s)_\pi(d)\coloneqq\sum_{\pi(e)=d}s(e).
\end{equation}
In general, we can replace $\rr$ with any commutative monoid, and call
the induced map aggregation.  

Since the free
commutative monoid on a set $E$ is the monoid $M(E)$ of multisets in
$E$, perhaps the most important aggregation function is the
``group-by'' operation
\[
  \begin{tikzcd}
    E\ar[r]\ar[d, "\pi"']&M(E)\\
    D\ar[ur, dashed, "\mathtt{groupBy}_\pi"']
  \end{tikzcd}
\]
sending each $d\in D$ to the set
$\mathtt{groupBy}_\pi(d)\coloneqq\{e\in E\mid\pi(e)=d\}$. With
$\mathtt{groupBy}_\pi\colon D\to M(E)$ in hand, one can use maps out
of $M(E)$ to do things that range from very simple, like counting the rows in a
table, to more fancy operations like plotting graphs of data. As such, aggregation is arguably the most valuable practical operation one performs on databases.

Whereas data migration (querying) is functorial, aggregation is not. A
map of instances as $c$-sets is a natural transformation; for example
inserting a new row into a table is natural. If we query a database
before and after someone inserts a new employee $e$ into $E$, we'll
get a map between the results. But if we aggregate before and after an
insertion, there will be no particular relation between the results:
the new sum of real numbers may be different from---greater or less than---the original. How might
we rectify this? And since aggregation is not functorial in the same
way that querying is, in what way could the querying and aggregation
stories possibly interoperate?

Indeed, modeling aggregation in the same framework as data migration
is a challenge. Someone might say ``Why don't you just do it? If you
want to add all the salaries up, just use the fact that $M$ is a
commutative monoid and add them up!'' In some sense this is right. If
there are no rules of the game, then nothing prevents us from
proceeding ad hoc. But $\ccatsharp$ is a context in which data
migration naturally lives. The universal constructions in $\ccatsharp$
are certain basic moves we can perform: e.g., bicomodule
composition (\cref{def.bicomodule}), taking Dirichlet products
(\cref{chap.dirichlet}), taking weak duals
(\cref{thm.linear_conjunctive_dual}). These are all constructions that
evidently naturally occur in this context, amounting to what makes
sense in the world of data migration. In \cref{chap.aggregation} we
describe aggregation in this context, playing by the rules of the
polynomial ecosystem.

\subsection{Plan of the paper}

We begin in \cref{chap.singlevariable} by introducing the category
$\poly$ of polynomial functors in one variable. In
\cref{chap.catsharp} we generalize to multi-variable polynomials,
which comprise a framed bicategory $\ccatsharp$; this is what we refer
to as the \emph{polynomial ecosystem}.

By Ahman-Uustalu's result, the objects in $\ccatsharp$ (categories)
can be identified with comonoids in $\poly$, and by Garner's result,
the horizontal maps in $\ccatsharp$ from $c$ to $d$ (parametric right
adjoint functors between $c\set$ and $d\set$) can be identified with
bicomodules in $\poly$. We first show this correspondence using an
abstract argument in \cref{chap.abstract}, and then we unpack the
details of the correspondence concretely in \cref{chap.concrete}. The
reader could pick either one of these sections to read first,
depending on taste.

We then in \cref{chap.structures} discuss two important subcategories
of $\ccatsharp$. The first is the full sub framed bicategory spanned
by the discrete categories. We observe this is equivalent to
Gambino-Kock's framed bicategory $\ppolyfun_\smset$ from
\cite{kock2012polynomial} (the set-variable polynomials described
above). Inside of this we find $\sspan\ss\ccatsharp$, the usual framed
bicategory of spans in $\smset$. Since categories are monoids in
$\sspan$, we see the ordinary framed bicategory of categories,
functors, and profunctors as living in the polynomial ecosystem.

In \cref{chap.dirichlet}, we construct a local monoidal closed
structure on $\ccatsharp$. We show how this structure lets us perform
the operation of transposing a span as a composite of two other, more
primitive operations.

In \cref{chap.aggregation}, we put the pieces together to explain how
aggregation fits into $\ccatsharp$. The explanation will use
everything discussed up to this point.

\subsection{Notation}

\begin{notation}\label{notation.main}
  We use upper-case letters to denote sets, e.g., $A,B\in\smset$ and
  occasionally to denote functors between categories, e.g., $F,G$. We
  use lower-case letters for polynomials, e.g., $p,q\in\poly$, for
  elements of sets, e.g., $i,j\in I$, and even for small categories,
  e.g., $c,d \in \smcat$. We denote large categories using bold, e.g.,
  $\smset$, $\poly$, and $\smcat$. We denote framed bicategories using
  blackboard bold font on the first letter, e.g., $\sspan$, $\ccat$
  and $\ccatsharp$.

  Note that framed bicategories may be viewed as either bicategories
  or double categories; our convention is that the blackboard bold
  font always denotes the bicategory of horizontal/loose 1-cells,
  while the non-blackboard bold font denotes the (strict 2-)category
  of vertical/tight 1-cells. For example, $\ccat$ denotes the the
  (framed) bicategory of categories and profunctors (vertical 1-cells
  are functors) while $\smcat$ denotes the category, or strict
  2-category, of categories and functors.

  Given a category $c$, we refer to functors $c\to\smset$
  as \emph{$c$-sets} (a.k.a.\ $c$-copresheaves) and denote the category
  $\catfun{c}{\smset}$ by $c\set$. Given a $c$-set
  $P\colon c\to\smset$, we denote its category of elements by
  $\el_c(P)\in\smcat$. We refer to objects
  $(i,x)\in\ob(\el_c(P))$ as \emph{elements} of $P$, i.e., an
  element of $P$ consists of an object $i\in\ob(c)$ and an
  element $x\in P(i)$.

  Given $n\in\nn$ we abuse notation and also write $n\in\smset$ to
  denote the set of $n$ elements $n \coloneqq \{\mathsf{1},\ldots, \mathsf{n}\}$.
  
  We sometimes write $=$ to denote the presence of a canonical
  isomorphism, so we can write $3^0=1$ and $3^1=3$, but we should
  \emph{not} write $3^2=9$.

  We often use the name of an object to denote the identity morphism
  on it, $c=\id_c$. We use $\then$ to denote diagrammatic composition
  and $\circ$ to denote Leibniz-ordered composition, e.g., given maps
  $c\To{f}d\To{g}e$, we have $f\then g=g\circ f$ and may write
  either.

  It is standard mathematical practice to denote a group $(G,e,*)$
  simply by $G$ (its \emph{carrier}). We call this \emph{carrier
    notation} and follow that convention throughout the article. Thus
  when a polynomial $p$ is endowed with extra structure, we will
  generally denote it simply by $p$.

  We denote an adjunction $F\dashv G$ (that is, $F$ is left adjoint to
  $G$) by
  \[\adj{c}{F}{G}{d}.\] 
  The 2-arrow always points along the left adjoint, because it denotes
  the direction of both the unit and the counit of the adjunction
  \[
    \eta\colon c\to F\then G\qqand 
    \epsilon\colon G\then F\to d.\]
  We will in later sections of the paper denote $F$'s left adjoint (if
  it has one) by $F\ldag$
  and denote $F$'s right adjoint (if it has one) by $F\rdag$.%
  \footnote{One remembers this notation in two ways: $\ldag$ looks
    more like an L and $\rdag$ looks more like an r, and in
    prafunctors, $\ldag$ will tend to push things down from the
    exponent to the base, whereas $\rdag$ will tend to push things up
    from the base to the exponent. But we'll also usually remind the
    reader which is which.}

  We use $\alttensor$ to denote the monoidal product in a generic
  monoidal category, reserving $\otimes$ for the Dirichlet product on
  polynomials introduced in \cref{chap.dirichlet}.
  
  We denote a profunctor $c\op \times d \to \smset$ between categories
  $c$ an $d$ by $c \profunctor d$; we also use this notation more
  generally for a bi(co)module in a generic monoidal category.
  
  For convenience of reference, \cref{tab.catnot} and \cref{tab.not}
  summarize several names of categories and symbols used in this
  article.
  \begin{table}[htbp]
    \begin{center}
      \begin{tabular}{c c p{10cm} }
        Notation & Introduced & Meaning\\[.1cm]
        $\smset$ & & Category of sets and functions \\[.1cm]
        $\finset$ & & Category of finite sets and functions \\[.1cm]
        $\smcat$ & & Category of categories and functors \\[.1cm]
        $\ccat$ & & Framed bicategory of categories, profunctors, and functors \\[.1cm]
        $\sspan$ & & Framed bicategory of sets, spans, and functions \\[.1cm]
        $\mon(\mathscr{V})$ & & Category of monoids in monoidal category $\mathscr{V}$\\[.1cm]
        $\comon(\mathscr{V})$ & & Category of comonoids in monoidal category $\mathscr{V}$ \\[.1cm] 
        $\mmod(\mathscr{V})$ & & Framed bicategory of monoids and bimodules in monoidal category (or bicategory) $\mathscr{V}$\\[.1cm]
        $\ccomod(\mathscr{V})$ & & Framed bicategory of comonoids and bicomodules in monoidal category (or bicategory) $\mathscr{V}$ \\[.1cm] 
        $\coco(c)$ & & Free coproduct completion of category $c$ \\[.1cm] 
        $c\set$ & \cref{notation.main} & Category of $c$-sets, i.e., $\catfun{c}{\smset}$ \\[.1cm]
        $\poly$ & \cref{def.polynomial_functor} & Category of (single-variable) polynomial functors \\[.1cm]
        $\ccatsharp$ & \cref{chap.catsharp} & Framed bicategory of categories, category-variable polynomials, and retrofunctors\\[.1cm] 
        $\smset[c]$ & \cref{def.duc_query} & Category of $c$-variable polynomial functors \\[.1cm]
        $d\set[c]$ & \cref{cor.TFAE_indexed_duc} & Category of $c$-variable $d$-valued polynomial functors \\[.1cm]
        $\catsharp$ & \cref{def.retrofunctor} & Category of categories and retrofunctors \\[.1cm]
        $\ccatsharpdisc$ & \cref{sec.catsharpdisc} & Full sub framed bicategory of $\ccatsharp$ spanned by discrete categories \\[.1cm]
        $\ccatsharplin$ & \cref{sec.sspan} & Locally full sub framed bicategory of $\ccatsharpdisc$ spanned by 1-cells with linear carrier \\[.1cm]
        $\ccatsharpcon$ & \cref{prop.profunctor} & Locally full sub bicategory of $\ccatsharp$ spanned by conjunctive 1-cells \\[.1cm]
        $\ccatsharpdisccon$ & \cref{cor.span_left_adj} & Intersection of $\ccatsharpdisc$ and $\ccatsharpcon$ (as sub bicategories of $\ccatsharp$)\\[.1cm]
        $\Cat{Dir}$ & \cref{def.dir} & Category of Dirichlet polynomial functors\\[.1cm]
        $d\text-\Cat{Dir}[c]$ & \cref{def.dir_map} & Category of $c$-variable $d$-valued polynomials and Dirichlet maps
      \end{tabular}\caption{Table of notation for large categories.}\label{tab.catnot}
    \end{center}
  \end{table}
  \begin{table}
    \begin{center}
      \begin{tabular}{c c p{10cm} }
        $g \circ f$ & \cref{notation.main} & Composition ($g$ after $f$) \\[.1cm]
        $f \then g$ & \cref{notation.main} & Composition ($f$ then $g$) \\[.1cm]
        $\el_c(P)$ & \cref{notation.main} & Category of elements of $c$-set $P$\\[.1cm]
        $\adj{c}{F}{G}{d}$ & \cref{notation.main} & Adjunction with $F$ left adjoint and $G$ right adjoint \\[.1cm]
        $G\ldag$ & \cref{notation.main} & Left adjoint of $G$\\[.1cm]
        $F\rdag$ & \cref{notation.main} & Right adjoint of $F$\\[.1cm]
        $a \alttensor b$ & \cref{notation.main} & Generic monoidal product \\[.1cm]
        $c \profunctor d$ & \cref{notation.main} & Profunctor $c\op \times d \to \smset$, or generic bi(co)module \\[.1cm]
        $\yon^S$ & \cref{not.yon} & Representable functor $\smset(S,\thg)$ \\[.1cm]
        $p(1)$ & \cref{def.polynomial_functor} & Positions of polynomial $p$ \\[.1cm]
        $p[i]$ & \cref{def.polynomial_functor} & Directions of polynomial $p$ at position $i$\\[.1cm]
        $\pnt{p}$ & \cref{rem.poly_bundle} & Derivative of polynomial $p$ \\[.1cm]
        $\varphi_1$ & \cref{prop.comb_description} & Map $p(1) \to q(1)$ induced by map of polynomials $\varphi \colon p \to q$\\[.1cm]
        $\varphi^\sharp_i$ & \cref{prop.comb_description} & Map $q[\varphi_1(i)] \to p[i]$ induced by map of polynomials $\varphi \colon p \to q$\\[.1cm]
        $q \tri p$ & \cref{prop.comp_monoidal} & Polynomial functor composition ($q$ after $p$) \\[.1cm]
        $\lens{p}{q}$ & \cref{prop.JoshMeyers} & Coclosure for $\tri$ (left Kan extension of $p$ along $q$) \\[.1cm]
        $c \coto d$ & \cref{def.retrofunctor} & Retrofunctor between categories $c$ and $d$\\[.1cm]
        $\overline F$ & \cref{ex.\'etale_retrofunctor} & Retrofunctor $c\coto d$ from \'etale functor $F \colon c \to d$\\[.1cm]
        $\tilde F$ & \cref{ex.boo_retrofunctor} & Retrofunctor $d \coto c$ from bijective on objects functor $F\colon c \to d$\\[.1cm]
        $c \bito[m] d$ & \cref{sec.bicomodules} & Polynomial bicomodule $m$ from $c$ to $d$  \\[.1cm]
        $m_a$ & \cref{not.indexed_component} & $a$-indexed component of polynomial bicomodule $m$ for $a\in m(1)$ \\[.1cm]
        $\lenskan{p}{q}{d}{e}$ & \cref{prop.generalcoclosure} & Left Kan extension in $\ccatsharp$ of $c\bito[p] e$ along $c \bito[q] d$ \\[.1cm]
        $p \otimes q$ & \cref{prop.mon_closed} & Dirichlet product of polynomials $p$ and $q$ \\[.1cm]
        $[p, q]$ & \cref{prop.mon_closed} & Internal hom (usually with respect to $\otimes$)\\[.1cm]
        $p\ot{c}{d}q$ & \cref{sec.multivar_tensor} & Dirichlet product of bicomodules $c \bito[p, q] d$\\[.1cm]
        $\yoncool{c}{d}$ & \cref{sec.multivar_tensor} & Monoidal unit for Dirichlet product of bicomodules $c \bito d$\\[.1cm]
        $\ih{c}{p}{q}{d}$ & \cref{prop.local_closure_general} & Internal hom with respect to $\ot{c}{d}$\\[.1cm]
        $\dual{p}$ & \cref{ex.duality_in_poly} & Weak dual of $p$ \\[.1cm]
        $\agg$ & \cref{chap.aggregation} & Aggregation functor
      \end{tabular}\caption{Table of miscellaneous notation.}\label{tab.not}
      \end{center}
    \end{table}
  \end{notation}

  \begin{notation}[String diagrams]
    \emph{String diagrams} (\cref{fig.string_diagrams}) as in
    \cite{joyal1991geometry} offer a convenient way to visualize
    morphisms in a monoidal category $(\mathscr{V}, \alttensor, I)$. A
    morphism
    $\varphi\colon a_1 \alttensor \ldots \alttensor a_n \to b_1
    \alttensor \ldots \alttensor b_m$ is depicted as a box with $n$ input
    wires and $m$ output wires. Composition of morphisms is depicted as
    plugging inputs into outputs, and tensor of morphisms is depicted as
    placing boxes in parallel. Identity morphisms appear as plain empty
    wires, as they have no effect when composed with anything.

    \begin{figure}[h]
      \centering
      \begin{subfigure}[c]{0.33\textwidth}
        \centering
        \begin{tikzpicture}[stringd]
          \draw [wire] (-.5, 1) node[obj,above] {$a_1$} -- (-.5, .3);
          \draw [wire] (0, 1) node[obj,above] {$a_2$} -- (0, .3);
          \draw [wire] (.5, 1) node[obj,above] {$a_3$} -- (.5, .3);
          \draw [wire] (-.35, -1) node[obj,below] {$b_1$} -- (-.35, -.3);
          \draw [wire] (.35, -1) node[obj,below] {$b_2$} -- (.35, -.3);
          \draw[hub] (-.8,-.3) rectangle (.8, .3) node[pos=.5] {$\varphi$};
        \end{tikzpicture}
        \caption{A morphism $a_1 \alttensor a_2 \alttensor a_3 \to b_1 \alttensor b_2$.}
      \end{subfigure}%
      \begin{subfigure}[c]{0.33\textwidth}
        \centering
        \begin{tikzpicture}[stringd]
          \draw [wire] (-.35, -.35) -- node[obj,left] {$b_1$} (-.35, .35);
          \draw [wire] (.35, -.35) -- node[obj,right] {$b_2$} (.35, .35);
          \begin{scope}[shift={(0,.65)}]
            \draw [wire] (-.5, 1) node[obj,above] {$a_1$} -- (-.5, .3);
            \draw [wire] (0, 1) node[obj,above] {$a_2$} -- (0, .3);
            \draw [wire] (.5, 1) node[obj,above] {$a_3$} -- (.5, .3);
            \draw[hub] (-.8,-.3) rectangle (.8, .3) node[pos=.5] {$\varphi$};
          \end{scope}
          \begin{scope}[shift={(0,-.65)}]
            \draw [wire] (-.35, -1) node[obj,below] {$c_1$} -- (-.35, -.3);
            \draw [wire] (.35, -1) node[obj,below] {$c_2$} -- (.35, -.3);
            \draw[hub] (-.6,-.3) rectangle (.6, .3) node[pos=.5] {$\psi$};
          \end{scope}
        \end{tikzpicture}
        \caption{A composite of morphisms $\varphi \then \psi$.}
      \end{subfigure}%
      \begin{subfigure}[c]{0.33\textwidth}
        \centering
        \begin{tikzpicture}[stringd]
          \begin{scope}[shift={(-.8, 0)}]
            \draw [wire] (-.5, 1) node[obj,above] {$a_1$} -- (-.5, .3);
            \draw [wire] (0, 1) node[obj,above] {$a_2$} -- (0, .3);
            \draw [wire] (.5, 1) node[obj,above] {$a_3$} -- (.5, .3);
            \draw [wire] (-.35, -1) node[obj,below] {$b_1$} -- (-.35, -.3);
            \draw [wire] (.35, -1) node[obj,below] {$b_2$} -- (.35, -.3);
            \draw[hub] (-.8,-.3) rectangle (.8, .3) node[pos=.5] {$\varphi$};
          \end{scope}
          \begin{scope}[shift={(1, 0)}]
            \draw [wire] (-.35, 1) node[obj,above] {$a_4$} -- (-.35, .3);
            \draw [wire] (.35, 1) node[obj,above] {$a_5$} -- (.35, .3);
            \draw [wire] (-.35, -1) node[obj,below] {$b_3$} -- (-.35, -.3);
            \draw [wire] (.35, -1) node[obj,below] {$b_4$} -- (.35, -.3);
            \draw[hub] (-.6,-.3) rectangle (.6, .3) node[pos=.5] {$\psi$};
          \end{scope}
        \end{tikzpicture}
        \caption{A tensor of morphisms $\varphi \alttensor \psi$.}
      \end{subfigure}
      \caption{String diagrams.}\label{fig.string_diagrams}
    \end{figure}

    For readers who are not comfortable with string diagrams, we include commutative diagrams as well. 
  \end{notation}

\subsection{Acknowledgements}
David would like to thank Nick Smith for insisting on something he
already knew but had lost courage to pursue: that aggregation is
really worth understanding.

We also thank Simon Henry, Josh Meyers, David Jaz Myers,
and Nelson Niu for contributions that directly helped this research
effort.
Thanks to
Kevin Carlson for catching an error. Conversations with Nathanael Arkor,
Steve Awodey, Spencer Breiner, Brendan Fong, Joachim Kock, Sophie
Libkind, Joe Moeller, Juan Orendain, Evan Patterson, Todd Trimble, and
Ryan Wisnesky were also beneficial. 

We also very much appreciate the careful review by Spencer Breiner, who
refereed this article for JPAA.

This material is based upon work supported by the Air Force Office of
Scientific Research under award number FA9550-20-1-0348.
\section{Single-variable polynomial functors}
\label{chap.singlevariable}
In this section, we introduce the category $\poly$ of polynomial
endofunctors of $\smset$, describe various perspectives on its
objects and morphisms, and explain the composition product of these
single-variable polynomial functors.

\subsection{The category of polynomial functors}
\begin{notation}[Representable functor]\label{not.yon}
  Given a set $S$, we denote the corresponding representable functor
  $\smset\to\smset$ by
  \[\yon^S\coloneqq\smset(S,\thg).\]
\end{notation}

In particular, $\yon=\yon^1$ is the
identity, and $1 = \yon^0$ is the constant singleton functor.

\begin{definition}[Polynomial functor]\label{def.polynomial_functor}

  A \emph{polynomial functor} (of sets) is a functor $p\colon\smset\to\smset$
  that is isomorphic to a sum of representables. This means there exists
  a set $I$, a set $S_i$ for each $i\in I$, and an isomorphism
  \[
    p\cong\sum_{i\in I}\yon^{S_i}.
  \]

  To simplify notation, we identify $I$ with the set $p(1)$, and we
  refer to each set $S_i$ as $p[i]$. Hence we generally write
  \begin{equation}\label{eqn.poly_notation}
    p=\sum_{i\in p(1)}\yon^{p[i]}.
  \end{equation}
  
  A \emph{morphism} $p\to q$ of polynomial functors is simply a natural transformation between them. We denote the category of polynomial functors by $\poly$.
\end{definition}

\begin{definition}[Positions and directions]
  When $p$ is a polynomial functor, we call elements of $p(1)$ the \emph{positions} of $p$, and for each $i\in p(1)$ we call elements of $p[i]$ the \emph{directions at $i$}.

  The terminology suggests a visualization strategy: we draw positions as
  nodes and their directions as emanating arrows (\cref{fig.forest}).
  We refer to such a node with emanating arrows
  (symbolizing a representable summand of a polynomial functor) as a
  \emph{corolla}. The usefulness of this visualization will be
  more apparent after we introduce the composition product in
  \cref{sec.subst}.
\end{definition}

\begin{figure}[h]
  \centering
  \begin{tikzpicture}[smallcorollas]
    \draw[rounded corners] (-5.9,-1.8) rectangle (5.4,.8);
    \begin{scope}[shift={(-4.5,0)}]
      \node [rcol,vertex] (s1) at (0, -1.2) {};
      \coordinate (t11) at (-.8,.3) {};
      \coordinate (t12) at (0,.3) {};
      \coordinate (t13) at (.8,.3) {};
      \draw [rcol,edge] (s1) -- (t11);
      \draw [rcol,edge] (s1) -- (t12);
      \draw [rcol,edge] (s1) -- (t13);
    \end{scope}
    \begin{scope}[shift={(-1.5,0)}]
      \node [rcol,vertex] (s2) at (0, -1.2) {};
      \coordinate (t21) at (-.6,.3) {};
      \coordinate (t22) at (.6,.3) {};
      \draw [rcol,edge] (s2) -- (t21);
      \draw [rcol,edge] (s2) -- (t22);
    \end{scope}
    \begin{scope}[shift={(1.5,0)}]
      \node [rcol,vertex] (s3) at (0, -1.2) {};
      \coordinate (t31) at (-.6,.3) {};
      \coordinate (t32) at (.6,.3) {};
      \draw [rcol,edge] (s3) -- (t31);
      \draw [rcol,edge] (s3) -- (t32);
    \end{scope}
    \begin{scope}[shift={(4.5,0)}]
      \node [rcol,vertex] (s1) at (0, -1.2) {};
    \end{scope}
  \end{tikzpicture}

  \caption{The polynomial $ p \coloneqq \yon^3 +2\yon^2 + 1$ as
    a corolla forest.}\label{fig.forest}
\end{figure}

\begin{remark}[Polynomials as bundles]\label{rem.poly_bundle}
  Specifying a polynomial amounts to specifying a \emph{bundle} (i.e.,
  function between sets): the map sending the total set of directions
  to positions.
  \begin{equation}\label{eqn.poly_asbundle}
    \begin{tikzcd}
      \displaystyle\sum_{i\in p(1)}p[i]\ar[d, "\pi_p"]\\
      p(1)
    \end{tikzcd}
  \end{equation}

  If $p$ is a polynomial, let $\pnt{p}$ denote its derivative in the
  usual calculus sense. Then the positions $\pnt{p}(1)$ of the
  derivative are the directions of the original $p$. Thus we may write
  the bundle corresponding to $p$ as
  \begin{equation}\label{eqn.poly_bundleform}
    \begin{tikzcd}
      \pnt{p}(1)\ar[d, "\pi_p"]\\
      p(1)
    \end{tikzcd}
  \end{equation}
  That said, we will often simply denote such a bundle as $E\To{\pi} B$.
\end{remark}
The following standard characterization of polynomial functors will
allow us to quickly prove several results in the coming sections. It
will be generalized in \cref{cor.TFAE_indexed_duc}.

\begin{proposition}\label{prop.connected_limits}
  A functor $p\colon\smset\to\smset$ is a polynomial iff it preserves connected limits.
\end{proposition}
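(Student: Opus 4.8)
`\textbf{Proof proposal.} The plan is to prove both implications by a careful analysis of how polynomial functors interact with limits, using the sum-of-representables description.

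First I would prove the easy direction: every polynomial preserves connected limits. Since $\yon^S = \smset(S,-)$ is representable, it preserves \emph{all} limits. The subtlety is that a general polynomial is a \emph{sum} $p = \sum_{i\in I}\yon^{p[i]}$, and coproducts do not commute with arbitrary limits in $\smset$---but they do commute with connected limits. Concretely, given a connected diagram $D\colon \cat{J}\to\smset$, I would argue that a cone over $\sum_i \yon^{p[i]}\circ D$ with vertex a one-point set picks out, coherently across all of connected $\cat{J}$, a single summand index $i$; hence $\bigl(\sum_i \yon^{p[i]}\bigr)(\lim D) \cong \sum_i \yon^{p[i]}(\lim D) \cong \sum_i \lim(\yon^{p[i]}\circ D) \cong \lim\bigl(\sum_i \yon^{p[i]}\circ D\bigr)$. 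The middle isomorphism is representability; the outer two are the statement that $\smset$-coproducts commute with connected limits, which can be cited or checked directly via the universal property (testing against a connected diagram valued in sets and using that connectedness forces compatible choices of summand).

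For the converse, suppose $p\colon\smset\to\smset$ preserves connected limits. The strategy is to reconstruct $p$ from its values. Set $I \coloneqq p(1)$, and for each $i\in I$ set $p[i] \coloneqq$ the fiber of $p(\text{the unique map } 2\to 1)$... more carefully: I would first show $p$ preserves all pullbacks and cofiltered limits (these are connected), hence in particular $p$ preserves monos and is determined on the slice-ish data. The cleanest route is: for a set $X$, decompose $X = \sum_{x\in X} 1$, and use that a connected limit computation expresses any element of $p(X)$ as living over a well-defined element of $p(1)$. Specifically, the map $X\to 1$ gives $p(X)\to p(1) = I$; I claim the fiber over $i$ is naturally $X^{p[i]}$ for an appropriate set $p[i]$. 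To identify $p[i]$, apply $p$ to a suitable connected diagram (e.g. the cofiltered diagram of finite subsets, or a pullback square) to pin down the directions, then build the comparison natural transformation $\sum_{i\in I}\yon^{p[i]} \to p$ and check it is an isomorphism componentwise.

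The main obstacle I anticipate is the converse direction, and within it the precise bookkeeping that recovers the direction-sets $p[i]$ and establishes naturality of the reconstruction isomorphism. The slogan ``connected limits detect that $p$ is a disjoint union of representables'' is standard (it goes back to Diers and is closely related to the characterization of familially representable functors), but making it airtight requires choosing the right connected limits to probe with---pullbacks to show each "component" of $p$ is a representable, and cofiltered limits to control infinite direction-sets---and then verifying the glued-together comparison map is natural in $X$. I would likely factor this through the observation that $p$ restricted to each connected component of its category of elements is representable, then reassemble. The forward direction, by contrast, is a short verification once one has the lemma that coproducts commute with connected limits in $\smset$.`
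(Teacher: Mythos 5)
Your forward direction matches the paper's: representables preserve all limits, and coproducts in $\smset$ commute with connected limits because connectedness forces a single compatible choice of summand across the whole diagram. The gap is in the converse, and it is exactly the step you flag as your ``main obstacle'': you never actually establish \emph{why} each fiber of $p(X)\to p(1)$ is representable. The plan of probing with pullbacks and cofiltered limits to pin down the direction-sets can be pushed through (it is essentially Diers' analysis of familially representable functors), but it involves precisely the bookkeeping you admit you have not done, so as written the converse is a strategy statement rather than a proof.

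The missing idea that collapses all of that bookkeeping is the following. Factor $p$ as $\smset\cong\smset/1\To{F'}\smset/p(1)\to\smset$, where $F'$ is induced by the canonical map $p(X)\to p(1)$ and the second functor is the forgetful one (a coproduct of $p(1)$-many component functors). The key observation is that a limit in the slice $\smset/p(1)$ of a diagram indexed by $\cat{J}$ is computed in $\smset$ as the limit of that diagram extended by the cone to $p(1)$, and this extended diagram is \emph{connected} (every object maps to the adjoined vertex); likewise $\lim_{\smset} D\cong\lim_{\smset}(\text{$D$ extended by }1)$. Hence connected-limit-preservation of $p$ upgrades to preservation of \emph{all} limits by $F'$, and therefore by each component $F'_i\colon\smset\to\smset$. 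A limit-preserving endofunctor of $\smset$ has a left adjoint $L_i$ (special adjoint functor theorem), and is then representable with representing object $L_i(1)$, so $F'_i\cong\yon^{p[i]}$ with $p[i]\coloneqq L_i(1)$; summing over $i\in p(1)$ gives $p\cong\sum_i\yon^{p[i]}$, with naturality free because the isomorphism comes from an adjunction. Your decomposition over $p(1)$ is the right first move; this slice observation is what lets you finish it without choosing any probing diagrams at all.
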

\begin{proof}
  Representable functors certainly preserve connected limits, as do
  their (pointwise) coproducts since coproducts commute with
  connected limits in $\smset$.  In the other direction, suppose $F$
  preserves connected limits. Let
  $F'\colon\smset\cong\smset/1\to\smset/F(1)$ be the induced map to
  the slice over $F(1)$, so that $F$ is the composite of $F'$ and the
  forgetful functor $\smset/F(1)\to\smset$, the latter of which is
  given by a coproduct of $F(1)$-many representable functors. Note
  that $F'$ preserves limits, since it preserves both the terminal
  object (by construction) and connected limits (as connected
  limits in a slice category are computed in the underlying
  category). Hence to show that $F$ is a coproduct of representables,
  it suffices to observe that any limit-preserving functor
  $\smset\to\smset$ is representable. Indeed, such a functor has a
  left adjoint $L$, and the representing set is $L(1)$.
\end{proof}

We say that a polynomial $p$ is \emph{constant} if it is of the form
$p\cong A=\sum_{a\in A}\yon^0$ and \emph{linear} if it is of the form $p\cong
A\yon=\sum_{a\in A}\yon^1$ for some set $A\in\smset$. These correspond to bundles of the
form $0\to A$ and $A\to A$, respectively.

\begin{remark}\label{rem.set_copies}
  $\poly$ includes two embedded copies of $\smset$: the full
  subcategory spanned by constants $A$, and the full subcategory
  spanned by linears $A\yon$. $\poly$ also includes an embedded copy
  of $\smset\op$: the full subcategory spanned by representables
  $\yon^A$.
\end{remark}

The previous facts become evident in light of the following
combinatorial description of polynomial morphisms. This will be
crucial; many of the proofs rely on the reader's ability to verify
calculations on morphisms of polynomials, which are made much easier
by understanding morphisms combinatorially.

\begin{proposition}[Combinatorial description of morphisms in
  $\poly$]\label{prop.comb_description}
  We have the formula
  \begin{equation}\label{eqn.poly_map_combinatorics}
    \poly(p,q)=\prod_{i\in p(1)}\sum_{j\in q(1)}\smset(q[j],p[i]).
  \end{equation}
\end{proposition}
In other words, to specify a morphism $\varphi\colon p\to q$ is to
specify a way to take a $p$-position $i\in p(1)$ and provide both a
$q$-position $j\in q(1)$ and a function $q[j]\to p[i]$ from
$q$-directions at $j$ backwards to $p$-directions at $i$. Note that, using the distributive law of products over sums, we may also
write this formula as
\begin{equation}\label{eqn.mapsharp_formal}
  \poly(p,q)=\sum_{\varphi_1\! \colon p(1) \rightarrow q(1)}\;\prod_{i\in p(1)}\smset\big(q[\varphi_1(i)],p[i]\big)\,.
\end{equation}

\begin{figure}[h]
  \centering
  \begin{subfigure}[t]{0.5\textwidth}
    \centering
    \begin{tikzpicture}[corollas]
      \path (-2.1,-1.7) -- (2.1,1.7);
      \begin{scope}[shift={(-1.3,0)}]
        \node [rcol,vertex,larger] (as) at (0, -.75) {$i$};
        \coordinate (at1) at (-.75,.75) {};
        \coordinate (at2) at (0,.75) {};
        \coordinate (at3) at (.75,.75) {};
        \draw [rcol,edge] (as) -- (at1);
        \draw [rcol,edge] (as) -- (at2);
        \draw [rcol,edge] (as) -- (at3);
      \end{scope}

      \begin{scope}[shift={(1.3,0)}]
        \node [bcol,vertex,larger] (bs) at (0, -.75) {$j$};
        \coordinate (bt1) at (-.75,.75) {};
        \coordinate (bt2) at (-.25,.75) {};
        \coordinate (bt3) at (.25,.75) {};
        \coordinate (bt4) at (.75,.75) {};
        \draw [bcol,edge] (bs) -- (bt1);
        \draw [bcol,edge] (bs) -- (bt2);
        \draw [bcol,edge] (bs) -- (bt3);
        \draw [bcol,edge] (bs) -- (bt4);
      \end{scope}
      \draw[|->,shorten <=2,shorten >=2,bend right=30] (as) to (bs);
      \node at (0,-1.5) {$\varphi_1$};
      \draw[->,dotted,shorten <=2,shorten >=2,bend right=30] (bt1) to (at2);
      \draw[->,dotted,shorten <=2,shorten >=2,bend right=30] (bt2) to (at2);
      \draw[->,dotted,shorten <=2,shorten >=2,bend right=30] (bt3) to (at3);
      \draw[->,dotted,shorten <=2,shorten >=2,bend right=30] (bt4) to (at3);
      \node at (0, 1.5) {$\varphi^\sharp$};
    \end{tikzpicture}
    \caption{For each $i \in p(1)$, we pick a $j \in q(1)$ and map its
      directions backwards to directions at $i$.}
  \end{subfigure}%
  \begin{subfigure}[t]{0.5\textwidth}
    \centering
    \begin{tikzpicture}[corollas]
      \path (-2.1,-1.7) -- (2.1,1.7);
      \begin{scope}[shift={(-1.3,0)}]
        \node [rcol,vertex,larger] (s) at (0, -.75) {$i$};
        \coordinate (t1) at (-.75,.75) {};
        \coordinate (t2) at (0,.75) {};
        \coordinate (t3) at (.75,.75) {};
        \draw [rcol,edge] (s) -- (t1);
        \draw [rcol,edge] (s) -- (t2);
        \draw [rcol,edge] (s) -- (t3);
      \end{scope}

      \node at (0, 0) {$\underset{\varphi}{\mapsto}$};
      \begin{scope}[shift={(1.3,0)}]
        \node [bcol,vertex,larger] (s) at (0, -.75) {$j$};
        \coordinate (t1) at (-.75,.75) {};
        \coordinate (t2) at (0,.75) {};
        \coordinate (t3) at (.75,.75) {};
        \draw [dasht] (s) -- (t1);
        \draw [bcol,edge,bend left=10] (s) to (t2);
        \draw [bcol,edge,bend right=10] (s) to (t2);
        \draw [bcol,edge,bend left=10] (s) to (t3);
        \draw [bcol,edge,bend right=10] (s) to (t3);
      \end{scope}
    \end{tikzpicture}
    \caption{We illustrate
      this in ``one step'' replacing each $p$-corolla
      with a $q$-corolla, $q$-directions assuming places of
      $p$-directions.}
  \end{subfigure}
  \caption{Morphism of polynomials.}\label{fig.corollamap1}
\end{figure}
\begin{proof}
  The formula in \eqref{eqn.poly_map_combinatorics} comes from the
  definition of coproduct, the Yoneda lemma, and the fact that $\poly$
  is a full subcategory of all functors $\smset\to\smset$:
  \begin{equation*}
    \begin{aligned}
      \poly(p,q)&=
                  \poly\Big(\sum_{i\in p(1)}\yon^{p[i]},\sum_{j\in q(1)}\yon^{q[j]}\Big)\\&=
      \prod_{i\in p(1)}\poly\Big(\yon^{p[i]},\sum_{j\in q(1)}\yon^{q[j]}\Big)\\&=
      \prod_{i\in p(1)}\Big(\sum_{j\in q(1)}\yon^{q[j]}\Big)(p[i])\\&=
      \prod_{i\in p(1)}\sum_{j\in q(1)}\smset(q[j],p[i]).\qedhere
    \end{aligned}
  \end{equation*}
\end{proof}
As indicated in~\eqref{eqn.mapsharp_formal}, we typically write the
map on positions as $\varphi_1$, as below-left; for indeed, it is just
the component of the natural transformation $\varphi$ at the set $1$.
We then denote the map backwards on directions using a
$\sharp$-symbol,%
\footnote{For readers familiar with basic algebraic geometry, the
  $(\varphi_1,\varphi^\sharp)$ notation is meant to evoke the standard
  notation for maps of ringed spaces $(X,\cat{O}_X)\to(Y,\cat{O}_Y)$:
  a map $f\colon X\to Y$ and a map of sheaves
  $f^\sharp\colon f^*\cat{O}_Y\to\cat{O}_X$ \cite[Section
  II.2]{hartshorne1977algebraic}. This is quite analogous; indeed one
  can view $p\in\poly$ as a sheaf of sets on the discrete space
  $p(1)$, assigning to each $I\ss p(1)$ the set of sections
  $\prod_{i\in I}p[i]$. Then a map of polynomials is just as for
  ringed spaces: forward on points, backwards on sections.} as
below-right:
\begin{equation}\label{eqn.mapsharp}
  \varphi_1\colon p(1)\to q(1)
  \qqand
  \varphi^\sharp(i,\thg)\colon q[j]\to p[i]
\end{equation}
where $\varphi_1(i)=j$. Sometimes we will prefer to write
$\varphi^\sharp(i, \thg)$ as $\varphi^\sharp_i(\thg)$. 

\begin{example}\label{ex.lenses}
  In functional programming, a \emph{lens} is a sort of morphism between pairs of sets, often denoted $\lens{S}{T}\to\lens{A}{B}$. It consists of two functions:
  \begin{equation}\label{eqn.lensmaps}
    S\to A
    \qqand
    S\times B\to T.
  \end{equation}
  Rather than saying how they compose, we can give the whole story at
  once: The category of lenses is isomorphic to the full subcategory
  of $\poly$ spanned by the monomials. Indeed, one can read off from
  \eqref{eqn.poly_map_combinatorics} that a map $S\yon^T\to A\yon^B$
  consists of the two functions displayed in \eqref{eqn.lensmaps}.
  We will generalize this in \cref{ex.lens_coclosure}, seeing that these objects arise as a special case of a coclosure operation in $\poly$.
\end{example}

Here is yet another way to understand maps between polynomials.

\begin{proposition}\label{prop.map_bundle_view}
  Let $p,p'$ be polynomials and $E\To{\pi} B$ and $E'\To{\pi'} B'$ the
  corresponding bundles. A morphism $\varphi\colon p\to p'$
  corresponds to a diagram of the form
  \begin{equation}\label{eqn.map_bundle_view}
    \begin{tikzcd}
      E\ar[d, "\pi"']&\bullet\ar[d]\ar[l, "\varphi^\sharp"']\ar[r]\ar[dr, phantom, very near start, "\lrcorner"]&E'\ar[d, "\pi'"]\\
      B\ar[r, equal]&B\ar[r, "\varphi_1"']&B'
    \end{tikzcd}
  \end{equation}
\end{proposition}
\begin{proof}
  After verifying that the pullback object (denoted $\bullet\coloneqq
  B\times_{B'}E'$) can be identified with $\sum_{i\in
    B}p'[\varphi_1(i)]$, the result is just a rephrasing of
  \cref{eqn.mapsharp}. 
\end{proof}

We conclude this section with some observations on limits and colimits
in $\poly$.
\begin{proposition}\label{cor.limits}
  Small limits and coproducts in $\poly$ exist and are computed in $\catfun{\smset}{\smset}$.
\end{proposition}
\begin{proof}
  In light of \cref{prop.connected_limits}, it suffices to observe
  small limits and coproducts of connected limit preserving functors
  $\smset \to \smset$ preserve connected limits. Indeed, we compute
  them ``pointwise'', since both limits and coproducts commute with
  connected limits in $\smset$.
\end{proof}

The following is quite useful; we invite the reader to
check it explicitly using \cref{eqn.poly_map_combinatorics}.
\begin{corollary}\label{cor.sum_product}
  The categorical sum and product of polynomials $p+q$ and $pq=p\times q$
  correspond to the usual algebraic sum and product of polynomials.
\end{corollary}
\begin{remark}
  In fact, $\poly$ has all small colimits; however, coequalizers are not
  generally preserved by the forgetful functor
  $\poly\to\catfun{\smset}{\smset}$, and we will not need them in this
  paper.
\end{remark}

%
%

\subsection{Composition of single-variable polynomials}
\label{sec.subst}
In this section, we describe the monoidal structure on the category
$\poly$ induced by functor composition---which corresponds to
\emph{substitution} of polynomials in the usual sense.

\begin{proposition}[Composition of polynomial functors]\label{prop.comp_monoidal}
  The composite $p \circ q$ of two polynomial functors $p,q \colon
  \smset \rightarrow \smset$ is again polynomial; we denote it $p\tri
  q \colon \smset \rightarrow \smset$.%
  \footnote{We are thinking of polynomials as objects rather than morphisms in a category, namely $\poly$. We use $\tri$ for the composition (also known as \emph{substitution}) monoidal structure on $\poly$, to reserve the symbol $\circ$ for composition of morphisms.} 
  It can be described by the formula
  \[
    p\tri q\,\cong\sum_{i\in p(1)}\,\sum_{j\colon\! p[i]\to q(1)}\;\yon^{\;\sum\limits_{d\in p[i]}q[j(d)]}.
  \]
\end{proposition}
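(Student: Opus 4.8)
The plan is to compute $p \circ q$ directly by unwinding both polynomials into sums of representables and repeatedly invoking the distributive and exponential laws in $\smset$. The only structural input needed is that products distribute over coproducts in $\smset$: for a set $S$ and a $J$-indexed family $(Y_j)_{j\in J}$ there is an isomorphism
\[
\Big(\sum_{j\in J}Y_j\Big)^S\;\cong\;\sum_{f\colon S\to J}\;\prod_{s\in S}Y_{f(s)},
\]
natural in the $Y_j$, since an element of the left-hand side is a function $S\to\sum_jY_j$, equivalently a choice of $f(s)\in J$ for each $s\in S$ together with an element of $Y_{f(s)}$. Together with the exponential law $\prod_{s\in S}X^{T_s}\cong X^{\sum_{s\in S}T_s}$, natural in $X$, this is everything one needs at the level of sets.

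Then I would run the computation. Writing $p=\sum_{i\in p(1)}\yon^{p[i]}$ and $q=\sum_{j\in q(1)}\yon^{q[j]}$ and evaluating at an arbitrary $X\in\smset$:
\begin{align*}
p(q(X))&=\sum_{i\in p(1)}q(X)^{p[i]}=\sum_{i\in p(1)}\Big(\sum_{j\in q(1)}X^{q[j]}\Big)^{p[i]}\\
&\cong\sum_{i\in p(1)}\;\sum_{j\colon p[i]\to q(1)}\;\prod_{d\in p[i]}X^{q[j(d)]}\;\cong\;\sum_{i\in p(1)}\;\sum_{j\colon p[i]\to q(1)}X^{\sum_{d\in p[i]}q[j(d)]}.
\end{align*}
The first equality is the definition of the polynomial $p$ applied to the set $q(X)$, the second is the definition of $q$, the third is the distributive law above (applied with $S=p[i]$, $J=q(1)$), and the fourth is the exponential law. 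Every step is natural in $X$, so this is an isomorphism of functors $\smset\to\smset$; reading off the right-hand side exhibits $p\circ q$ as a sum of representables, hence as a polynomial, and identifies it with the stated formula, whose positions are pairs $(i,j)$ with $i\in p(1)$ and $j\colon p[i]\to q(1)$, and whose set of directions at $(i,j)$ is $\sum_{d\in p[i]}q[j(d)]$.

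There is essentially no obstacle here; the argument is bookkeeping in the arithmetic of $\smset$. If one wants a structural check that the composite is polynomial at all, independent of the formula, one can instead invoke \cref{prop.connected_limits}: connected limits are preserved by each of $p$ and $q$, hence by $p\circ q$, so $p\circ q$ is polynomial. But since the explicit positions-and-directions description is what will be used later, the direct computation above is the one to carry out. The only point that warrants a word of care is the naturality in $X$ of the distributive-law isomorphism, which is immediate once one observes that the bijection is described without reference to the elements of $X$.
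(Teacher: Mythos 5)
Your proof is correct and essentially matches the paper's: the paper likewise derives the formula by the distributive law (written symbolically with $\yon$ rather than evaluated at a test set $X$), and uses the connected-limit-preservation argument you mention as an aside to establish that the composite is polynomial. The only cosmetic difference is that your pointwise computation establishes both claims at once.
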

In other words, the corollas of $p \tri q$ are two-layer trees: a position consists of a $p$-position $i$ of $p$ and $q$-positions for each direction at $i$, and its directions are those of these $q$-positions.

\begin{figure}[h]
  \centering
  \begin{tikzpicture}[smallcorollas]
    \begin{scope}[shift={(-4,0)}]
      \node [vertex] (s) at (0, -1.2) {};
      \coordinate (t11) at (-2.25,1.55) {};
      \coordinate (t12) at (-1.35,1.55) {};
      \coordinate (t13) at (-.45,1.55) {};
      \coordinate (t21) at (.45,1.55) {};
      \coordinate (t31) at (1.35,1.55) {};
      \coordinate (t32) at (2.25,1.55) {};
      \draw [edge] (s) -- (t11);
      \draw [edge] (s) -- (t12);
      \draw [edge] (s) -- (t13);
      \draw [edge] (s) -- (t21);
      \draw [edge] (s) -- (t31);
      \draw [edge] (s) -- (t32);

      \node [annot] at (.6, -1.2) {\rlap{$\in (p \tri q)(1)$}};
    \end{scope}
    \node at (0,0) {$\coloneqq$};
    \begin{scope}[shift={(4,0)}]
      \node [rcol,vertex] (s) at (0, -1.2) {};
      \node [bcol,vertex] (m1) at (-1.35,.3) {};
      \node [bcol,vertex] (m2) at (0,.3) {};
      \node [bcol,vertex] (m3) at (1.35,.3) {};
      \coordinate (t11) at (-2.225,1.55) {};
      \coordinate (t12) at (-1.35,1.55) {};
      \coordinate (t13) at (-.475,1.55) {};
      \coordinate (t21) at (0,1.55) {};
      \coordinate (t31) at (.725,1.55) {};
      \coordinate (t32) at (1.975,1.55) {};
      \draw [rcol,edge] (s) -- (m1);
      \draw [rcol,edge] (s) -- (m2);
      \draw [rcol,edge] (s) -- (m3);
      \draw [bcol,edge] (m1) -- (t11);
      \draw [bcol,edge] (m1) -- (t12);
      \draw [bcol,edge] (m1) -- (t13);
      \draw [bcol,edge] (m2) -- (t21);
      \draw [bcol,edge] (m3) -- (t31);
      \draw [bcol,edge] (m3) -- (t32);

      \node [annot] at (.45, -1.2) {\rlap{$\in p(1)$}};
      \node [annot] at (1.8,  .3) {\rlap{$\in q(1)$}};
    \end{scope}
  \end{tikzpicture}
  \caption{A corolla of $p \tri q$.}
\end{figure}
\begin{proof}
  The composite of two connected-limit-preserving functors is also
  connected-limit-preserving, so the result follows by
  \cref{prop.connected_limits}. The formula is simply a use of the
  distributivity law:
  \begin{equation*}
    p\tri q
    =
    \sum_{i\in p(1)}\left(\sum_{j\in q(1)}\yon^{q[j]}\right)^{p[i]}
    \cong
    \sum_{i\in p(1)}\;\sum_{j\colon p[i]\to q(1)}\;\prod_{d\in p[i]}\yon^{q[j(d)]}
    \cong
    \sum_{i\in p(1)}\;\sum_{j\colon p[i]\to q(1)}\yon^{\;\sum\limits_{d\in p[i]}q[j(d)]}\text{ .} \qedhere
  \end{equation*}
\end{proof}

In particular, the evaluation of a polynomial $p$ at a set $X$ can be
denoted $p\tri X$, e.g., $p(1)=p\tri 1$.

\begin{definition}\label{def.monoidal_category_poly}
  The monoidal category $(\poly, \yon, \tri)$ is the full monoidal
  subcategory of $(\catfun{\smset}{\smset}, \id_{\smset}, \circ)$
  spanned by the polynomial functors.
\end{definition}

\begin{remark}[Composition product of polynomial morphisms]\label{rem.comp_comb}
  The composition product of polynomial morphisms is their horizontal
  composition as natural transformations, but we can also describe
  this in terms of positions and directions.
  Let
  $\varphi\colon p \to p'$ and $\psi\colon q \to q'$ be morphisms of
  polynomials. Then:
  \begin{itemize}
  \item $(\varphi \tri \psi)_1$ sends the position $(i, j)
  \in (p \tri q)(1)$, where $i \in p(1)$ and $j \colon p[i] \to q(1)$, to the
  position $(i', j') \in (p' \tri q')(1)$ where $i' \coloneqq
  \varphi_1(i) \in p'$ and $j' \coloneqq \psi_1 \circ j \circ
  \varphi^\sharp_i\colon p'[i'] \to q'(1)$. 
\item The map $\smash{(\varphi
  \tri \psi)^\sharp_{(i, j)}}$ sends a direction $(d', e') \in (p' \tri
  q')[(i', j')]$ where $d' \in p'[i']$ and $e' \in q'[j'(d')]$ to the
  direction $(d, e) \in (p \tri q)[(i, j)]$ where $d \coloneqq
  \varphi^\sharp_i(d') \in p[i]$ and $e \coloneqq \varphi^\sharp_{j
    (d)}(e') \in q[j(d)]$.
  \end{itemize}
  In brief, $(\varphi\tri\psi)^\sharp$ sends a direction written $(d',
  e')$ to a direction written $(\varphi^\sharp(d'),
  \psi^\sharp(e'))$.
\end{remark}

\begin{figure}[h]
  \centering
  \begin{subfigure}[c]{0.5\textwidth}
    \centering
  \begin{tikzpicture}[smallcorollas]
    \begin{scope}[shift={(-3.5,0)}]
      \node [rcol,vertex] (s) at (0, -1.2) {};
      \node [bcol,vertex] (m1) at (-1.35,.3) {};
      \node [bcol,vertex] (m2) at (0,.3) {};
      \node [bcol,vertex] (m3) at (1.35,.3) {};
      \coordinate (t11) at (-2.225,1.55) {};
      \coordinate (t12) at (-1.35,1.55) {};
      \coordinate (t13) at (-.475,1.55) {};
      \coordinate (t21) at (0,1.55) {};
      \coordinate (t31) at (.725,1.55) {};
      \coordinate (t32) at (1.975,1.55) {};
      \draw [rcol,edge] (s) -- (m1);
      \draw [rcol,edge] (s) -- (m2);
      \draw [rcol,edge] (s) -- (m3);
      \draw [bcol,edge] (m1) -- (t11);
      \draw [bcol,edge] (m1) -- (t12);
      \draw [bcol,edge] (m1) -- (t13);
      \draw [bcol,edge] (m2) -- (t21);
      \draw [bcol,edge] (m3) -- (t31);
      \draw [bcol,edge] (m3) -- (t32);

      \node [annot] at (.45, -1.2) {\rlap{$\in p(1)$}};
      \node [annot] at (1.8,  .3) {\rlap{$\in q(1)$}};
    \end{scope}
    \node at (.5,-1) {$\underset{\varphi}{\mapsto}$};
    \node at (.5,.6) {$\underset{\psi}{\mapsto}$};
    \begin{scope}[shift={(3.5,0)}]
      \node [ocol,vertex] (s) at (0, -1.2) {};
      \node [gcol,vertex, transparent] (m1) at (-1.35,.3) {};
      \node [gcol,vertex] (m2) at (0,.3) {};
      \node [gcol,vertex] (m3) at (1.35,.3) {};
      \coordinate (t11) at (-2.225,1.55) {};
      \coordinate (t12) at (-1.35,1.55) {};
      \coordinate (t13) at (-.475,1.55) {};
      \coordinate (t21) at (0,1.55) {};
      \coordinate (t31) at (.725,1.55) {};
      \coordinate (t32) at (1.975,1.55) {};
      \draw [dasht] (s) -- (m1);
      \draw [ocol,edge,bend left=17] (s) to (m2);
      \draw [ocol,edge,bend right=17] (s) to (m2);
      \draw [ocol,edge,bend left=17] (s) to (m3);
      \draw [ocol,edge,bend right=17] (s) to (m3);
      \draw [gcol,edge,transparent] (m1) -- (t11);
      \draw [gcol,edge,transparent,bend left=17] (m1) to (t12);
      \draw [gcol,edge,transparent,bend right=17] (m1) to (t12);
      \draw [gcol,edge,transparent] (m1) -- (t13);
      \draw [gcol,edge] (m2) -- (t21);
      \draw [dasht] (m3) -- (t31);
      \draw [gcol,edge,bend left=17] (m3) to (t32);
      \draw [gcol,edge,bend right=17] (m3) to (t32);

      \node [annot] at (.45, -1.2) {\rlap{$\in p'(1)$}};
      \node [annot] at (1.8,  .3) {\rlap{$\in q'(1)$}};
    \end{scope}
    \end{tikzpicture}
    \end{subfigure}%
    \begin{subfigure}[c]{0.5\textwidth}
      \centering
      \begin{tikzpicture}[smallcorollas]
        \begin{scope}[shift={(-3.25,0)}]
          \node [rcol,vertex] (s) at (0, -1.2) {};
          \node [bcol,vertex] (m1) at (-1.35,.3) {};
          \node [bcol,vertex] (m2) at (0,.3) {};
          \node [bcol,vertex] (m3) at (1.35,.3) {};
          \coordinate (t11) at (-2.225,1.55) {};
          \coordinate (t12) at (-1.35,1.55) {};
          \coordinate (t13) at (-.475,1.55) {};
          \coordinate (t21) at (0,1.55) {};
          \coordinate (t31) at (.725,1.55) {};
          \coordinate (t32) at (1.975,1.55) {};
          \draw [rcol,edge] (s) -- (m1);
          \draw [rcol,edge] (s) -- (m2);
          \draw [rcol,edge] (s) -- (m3);
          \draw [bcol,edge] (m1) -- (t11);
          \draw [bcol,edge] (m1) -- (t12);
          \draw [bcol,edge] (m1) -- (t13);
          \draw [bcol,edge] (m2) -- (t21);
          \draw [bcol,edge] (m3) -- (t31);
          \draw [bcol,edge] (m3) -- (t32);

          
        \end{scope}
        \node at (0,0) {$\underset{\varphi \tri \psi}{\mapsto}$};
    \begin{scope}[shift={(3.25,0)}]
      \node [ocol,vertex] (s) at (0, -1.2) {};
      \coordinate (t11) at (-2.225,1.55) {};
      \coordinate (t12) at (-1.35,1.55) {};
      \coordinate (t13) at (-.475,1.55) {};
      \coordinate (t21) at (0,1.55) {};
      \coordinate (t31) at (.725,1.55) {};
      \coordinate (t32) at (1.975,1.55) {};
      \draw [dasht] (s) -- (t11);
      \draw [dasht] (s) -- (t12);
      \draw [dasht] (s) -- (t13);
      \draw [dasht] (s) -- (t31);
      \node [gcol,vertex,fill=white] (m2) at (-.26,.3) {};
      \node [gcol,vertex,fill=white] (m2_) at (.26,.3) {};
      \node [gcol,vertex,fill=white] (m3) at (1.05,.3) {};
      \node [gcol,vertex,fill=white] (m3_) at (1.65,.3) {};
      \draw [ocol,edge,bend left=5] (s) to (m2);
      \draw [ocol,edge,bend right=5] (s) to (m2_);
      \draw [ocol,edge,bend left=5] (s) to (m3);
      \draw [ocol,edge,bend right=5] (s) to (m3_);
      \draw [gcol,edge, bend left=5] (m2) to (t21);
      \draw [gcol,edge,bend right=5] (m2_) to (t21);
      \draw [gcol,edge,bend left=14] (m3) to (t32);
      \draw [gcol,edge,bend right=14] (m3) to (t32);
      \draw [gcol,edge,bend left=14] (m3_) to (t32);
      \draw [gcol,edge,bend right=14] (m3_) to (t32);


    \end{scope}
  \end{tikzpicture}
  \end{subfigure}
  \caption{The horizontal composite of polynomial morphisms
    $\varphi \tri \psi$ replaces each $(p \tri q)$-corolla with a
    $(p' \tri q')$-corolla: simply replace
    the constituent $p$-corolla with a $p'$-corolla via $\varphi$ and
    $q$-corollas with $q'$-corollas via $\psi$.}
\end{figure}

We learned the following from Josh Meyers (personal communication); it will be generalized in \cref{prop.generalcoclosure}.

\begin{proposition}[Coclosure for $\tri$]\label{prop.JoshMeyers}
  The composition operation $\tri$ has a (right) co-closure. That is, for every
  $p,q\in\poly$ we can define a polynomial $\lens{p}{q}\in\poly$ and
  bijections, natural in $p'$, of the form
  \begin{equation}\label{eqn.adjunction_coclosure}
    \poly\left(\lens{p}{q},p'\right) \cong \poly(p,p'\tri q)\rlap{ .}
  \end{equation}
  Explicitly, the polynomial $\lens p q$ is given by
  \begin{equation}\label{eqn.formula_coclosure}
    \lens{p}{q}\coloneqq\sum_{i\in p(1)}\yon^{q(p[i])}.
  \end{equation}
\end{proposition}
\begin{proof}
  The result follows from the combinatorial description
  \eqref{eqn.poly_map_combinatorics} in the sense that on both sides
  of the adjunction isomorphism \eqref{eqn.adjunction_coclosure}, the
  required set-theoretic maps are exactly the same: start with $i\in
  p(1)$, assign some $i'\in p'(1)$, and then, for any $d'\in p'[i']$
  assign some $j\in q(1)$, and then, for any $e\in q[j]$ assign some
  $d\in p[i]$.
\end{proof}

\begin{remark}[Coclosure is left Kan extension]
  When $p, q\colon \smset \to \smset$ are both
  polynomial, the left Kan extension of $p$ along $q$ is the
  polynomial $\lens{p}{q}$.
  In general, note that \emph{coclosure (resp. closure)} and
  \emph{left (resp. right) Kan extension} are merely different
  terms describing the same concept in different contexts. Left Kan
  extension along $q$ refers to a left adjoint of $(\thg) \circ q$;
  coclosure at $q$ refers to a left adjoint of $(\thg) \tri q$.
\end{remark}

\begin{example}[Lens notation as coclosure]\label{ex.lens_coclosure}
  Suppose $P,Q$ are sets. Then one reads off from \eqref{eqn.formula_coclosure} that 
  \[
    \lens{P}{Q}=P\yon^Q,
  \]
  recovering our notation from the example on lenses, \cref{ex.lenses}.
\end{example}

Finally in this section, we note that the composition monoidal
structure interacts well with limits in $\poly$; this will allow us to
define composition of bicomodules in \cref{sec.bicomodules}.
\begin{lemma}[Composition preserves connected limits]\label{lemma.preservation_of_equalizers}
  Each functor $(\thg) \tri p$ and $p \tri (\thg) \colon \poly
  \rightarrow \poly$ preserves connected limits.
\end{lemma}
\begin{proof}
  Coclosure (\cref{prop.JoshMeyers}) implies $(\thg) \tri p$ preserves
  all limits. As for $p \tri (\thg)$, note that $p$ preserves
  connected limits by \cref{prop.connected_limits}, and that
  limits in $\poly$ are pointwise by \cref{cor.limits}.
\end{proof}

\section{The polynomial ecosystem}
\label{chap.catsharp}
In this section, we study the generalization of $\poly$ to
multi-variable polynomials. We could, as in the introduction, consider
first of all set-indexed collections of variables, but instead we jump
straight to category-indexed collections of variables. Rather than a
monoidal category, we will obtain a $2$-category, whose objects are
categories $c,d,e,\dots$, and whose hom-category from $c$ to $d$ is
the category of polynomials with inputs indexed by $c$ and outputs
indexed by $d$---which is, equivalently, the category of prafunctors
$c\set \rightarrow d\set$.

In fact, we get more than this: we will exhibit a \emph{framed
  bicategory} $\ccatsharp$, which incorporates not just categories and
prafunctors, but also \emph{retrofunctors} between categories
(\cref{def.retrofunctor}). For the purposes of this paper, we refer to this framed
bicategory $\ccatsharp$ as the \emph{polynomial ecosystem}.%
\footnote{In other contexts, it may be useful to think of a larger framed bicategory as the \emph{full} polynomial ecosystem, e.g.\ $\Cat{EM}\text{-}\ccatsharp$, the Eilenberg-Moore completion of $\ccatsharp$; see \cite[Section 5]{shapiro2024polynomial}.}

\subsection{Category-indexed multi-variable polynomials}
We start by considering polynomials whose inputs
indexed by a category, but with only a single output.

\begin{definition}[$c$-variable polynomial]\label{def.duc_query}
  Let $c$ be a category. A \emph{$c$-variable polynomial} is a
  functor $p\colon c\set\to\smset$ that is isomorphic to a sum of
  representables. This means there exists a set $I$, a $c$-set
  $P_i\colon c\to\smset$ for each $i \in I$, and an isomorphism
  \begin{equation}\label{eqn.duc_query}
    p \cong \sum_{i\in I}c\set(P_i,\thg).
  \end{equation}
  Like before, we simplify notation by identifying $I$ with the set $p(1)$, and 
  referring to each $\cat{c}$-set $P_i$ as $p[i]$. Hence we may write
  \begin{equation*}
    p=\sum_{i\in p(1)}c\set(p[i], \thg).
  \end{equation*}
  A \emph{morphism of $c$-variable polynomials} is a natural
  transformation between them. We denote the category of $c$-variable
  polynomials by $\smset[c]$.
\end{definition}

The notation $\smset[c]$ is supposed to invoke (a categorification of)
the polynomial ring $\zz[y_1,\ldots,y_C]$, where we have replaced the
coefficient ring $(\zz,0,+,1,*)$ by the rig category
$(\smset,0,+,1,\times)$.

\begin{example}[$c$-variable polynomials for discrete
  $c$]\label{ex.discrete_multivariable}
  \begin{enumerate}[(i)]
  \item When $c$ is the empty category $0$, there is a canonical
  isomorphism $0\set=1$. (There is one functor from the empty category
  to $\smset$.) A $c$-variable polynomial is a functor
  $0\set\to\smset$, which is just a set, and there are no further
  restrictions. So polynomials in no variables are sets:
  $\smset[0]\cong\smset$.
\item 
  When $c$ is the one-object one-morphism category, which we'll later refer to by its polynomial carrier $\yon$, a
  $c$-variable polynomial is a functor $\smset\to\smset$ that can be
  written as a coproduct of representables. In other words
  $\smset[\yon]\cong\poly$.
\item   Suppose $c$ is the discrete category on a two-element set
  $\{a, b\}$. Then $X\in c\set$ can be identified with a pair of sets
  $(X(a),X(b))$, so a polynomial in $X$ can be rewritten
  \[\sum_{i\in I}c\set(P_i,X)\cong\sum_{i\in I}X(a)^{P_i(a)}\times X(b)^{P_i(b)}\]
  for arbitrary sets $P_i(a),P_i(b)$, and thus $\smset[\{a,b\}]$ is just the category of polynomials in two variables. Of course, the situation is
  analogous on replacing $c$ with an arbitrary discrete category.
  \end{enumerate}
\end{example}

\begin{example}[$c$-variable polynomials as duc-queries]\label{ex.variable_is_duc}
  Consider the cospan category:
  \[c\coloneqq\fbox{$\text{city}\to\text{state}\from\text{county}$}\]
  A $c$-set $X$ then describes sets of cities, states, and counties,
  where each city and each county belongs to a certain state. Suppose
  we are interested in extracting pairs of cities in the same state,
  $\text{city}\times_{\text{state}}\text{city}$. That is, we are
  looking for appearances of the shape:
  \[
    \begin{tikzpicture}
      \node (x) at (-.5,1) {$x$};
      \node (y) at (-.5,-1) {$y$};
      \node (z) at (.5,0) {$z$};
      \node at (-.25,1) {\rlap{\footnotesize$\in$ \text{city}}};
      \node at (-.25,-1) {\rlap{\footnotesize$\in$ \text{city}}};
      \node at (.75,0) {\rlap{\footnotesize$\in$ \text{state}}};
      \draw[->] (x) -- (z);
      \draw[->] (y) -- (z);
    \end{tikzpicture}
  \]
  This diagram itself describes a $c$-set $P$, and what we are
  looking for are natural transformations from $P$ to $X$. So the set
  we desire is $c\set(P, X)$.

  In general, a query for tuples of elements $(x_i)$ satisfying
  various equations the form $f(x_i) = x_j$ will correspond to a
  certain $c$-set $P$ (the corresponding colimit of representable
  $c$-sets), where the functor $c\set(P, \thg)$ extracts the set of such
  tuples as a functor $c\set\to\smset$. In database terminology such a query is called
  a \emph{conjunctive query}.

  Hence a $c$-variable polynomial specifies a \emph{disjoint union of
    conjunctive queries}, or \emph{duc-query} for short. For example,
  taking the same category $c$ from above, there is a $c$-variable
  polynomial
  \[
    (\text{city}\times_{\text{state}}\text{city})+(\text{city}\times_{\text{state}}\text{county})+\text{state}
  \]
  that extracts the disjoint union of pairs of cities in the same
  state, pairs of a city and county in the same state, and
  states.
  %
\end{example}

Note that the morphisms of $c$-variable polynomials admit a
combinatorial description, generalizing~\cref{prop.comb_description} above, with exactly the same
proof.
\begin{proposition}
We have the formula\label{prop.d-poly_map_combinatorics}
  \begin{equation}\label{eqn.d-poly_map_combinatorics}
    \smset[c](p,q)=\prod_{i\in p(1)}\sum_{j\in q(1)}c\set(q[j],p[i]) =
    \sum_{\varphi_1\! \colon p(1) \rightarrow q(1)}\;\prod_{i\in p(1)}c\set\big(q[\varphi_1(i)],p[i]\big)\,.
  \end{equation}
\end{proposition}

Our next result provides a number of alternative formulations of the
notion of $c$-variable polynomial. Before stating it, we recall the
definition of parametric right adjoint functor.

\begin{definition}[Slice factorization, prafunctor]
  \label{def.prafunctor}
  Let $\Cat{C}$ and $\Cat{D}$ be categories and suppose $\Cat{C}$ has
  a terminal object. The \emph{slice factorization} of a functor $F
  \colon \Cat{C} \rightarrow \Cat{D}$ is given by:
  \begin{equation}
    \label{eqn.slicefactorization}
    \Cat{C} \xrightarrow{F_{/1}} \Cat{D}/F1 \xrightarrow{U} \Cat{D}
  \end{equation}
  where $U$ is the forgetful functor from the slice, and $F_{/1}$ is
  the functor sending $X \in \Cat{C}$ to
  $F(! \colon X \rightarrow 1)$. A functor $F$ as above is called a
  \emph{parametric right adjoint}, or \emph{prafunctor}, if $F_{/1}$
  is a right adjoint.
\end{definition}

\begin{proposition}\label{prop.TFAE_duc}
  For any category $c$, the following categories are equivalent:
  \begin{enumerate}[(1)]
  \item the category $\smset[c]$ of $c$-variable polynomials, as in \cref{def.duc_query};
  \item the free coproduct completion $\coco((c\set)\op)$ of $(c\set)\op$;
  \item the category $\Cat{CLP}(c\set,\smset)$ of connected-limit-preserving functors $c\set\to\smset$; and
  \item the category $\Praf(c\set,\smset)$ of parametric right
    adjoint functors $c\set\to\smset$.
  \end{enumerate}
\end{proposition}
Later \cref{thm.garner} will add a fifth element, \emph{the category
  $\comod(\poly)(c, \yon)$ of right $c$-comodules}, to the list.

\begin{proof}
  We have (1) $\iff$ (2) since the free coproduct completion of a category
  $\Cat{C}$ can be found as the full subcategory of functors
  $\Cat{C}^\mathrm{op} \rightarrow \smset$ which are coproducts of
  representables. Now, since each of $\smset[c]$,
  $\Cat{CLP}(c\set,\smset)$, and $\Praf(c\set,\smset)$ is a full
  subcategory of the category of all functors $c\set\to\smset$, it
  suffices to show that all three have the same essential image.

  First of all, if $F \colon c\set\to\smset$ is a parametric
  right adjoint, then it admits a
  factorization~\eqref{eqn.slicefactorization} whose first part is a
  right adjoint. Since right adjoints and projections from slice
  categories preserve connected limits, we have $(4)\Rightarrow (3)$.
  Conversely, if $F\colon c\set\to\smset$ preserves connected limits,
  then $F_{/1}$ preserves the terminal object by construction, and
  preserves connected limits since $F$ does and $U$ reflects them. So
  $F_{/1} \colon \cat{c}\set \rightarrow \smset$ preserves limits, and
  any such functor is a right adjoint; which gives $(3) \Rightarrow (4)$.

  We also have (1) $\Rightarrow$ (3): representable functors preserve all
  limits, and so $c$-variable polynomials, which are coproducts of
  representables, preserve connected limits, since coproducts commute
  with connected limits in $\smset$. Thus, to complete the proof, it
  suffices to show (4) $\Rightarrow$ (2).
  So suppose that $F \colon \cat{c}\set \rightarrow \smset$ is a
  prafunctor. The factorization in~\eqref{eqn.slicefactorization} has
  as interposing object the slice category $\smset / F1$, which is
  equivalent to the power $\smset^{F1}$, and so can be rewritten as
  \begin{equation*}
    \cat{c}\set \xrightarrow{\ F_{/1}\ } \smset^{F1} \xrightarrow{\ \ \Sigma\ \ } \smset\rlap{ .}
  \end{equation*}
  Here, the second factor is the coproduct functor, and it thus
  suffices to show that the composite of $F_{/1}$ with any product
  projection $\pi_i \colon \smset^{F1} \rightarrow \smset$ is
  representable. But $F_{/1}$ and $\pi_i$ are both right adjoints; so
  $\pi_i \circ F_{/1} \colon \cat{c}\set \rightarrow \smset$ is as
  well, and so represented by the value of the corresponding left
  adjoint at $1$.
\end{proof}

We now generalize from polynomials with single outputs, to polynomials
with multiple outputs.
\begin{definition}[$c$-variable $d$-valued polynomial]\label{def.cvalued_duc_query}
  Let $c,d$ be categories. A \emph{$c$-variable $d$-valued polynomial}
  is a functor $p\colon c\set\to d\set$ that is pointwise isomorphic
  to a sum of representables. This means that, for each object $a$ of
  $d$, there exists a set $p_a(1)$ and $c$-sets
  $p_a[i] \colon c\to\smset$ for each $i \in p_a(1)$, and isomorphisms
  \begin{equation}
    \label{eqn.cvalued_duc_query}
    p(\thg)(a) \cong \sum_{i\in p_a(1)}c\set(p_a[i], \thg).
  \end{equation}
  A \emph{morphism of $c$-variable $d$-valued polynomials} is a natural
  transformation between them.
\end{definition}

The situation here is slightly different to before, since knowledge of
the sets $p_a(1)$ and $c$-sets $p_a[i]$
in~\eqref{eqn.cvalued_duc_query} does not completely determine $p$.
The missing aspect is the action by morphisms
$f \colon a \rightarrow b$ in $d$; these induce
transformations between $c$-valued polynomials
$p(\thg)(a) \Rightarrow p(\thg)(b)$, functorially in $f$.
Equivalently, by~\eqref{eqn.d-poly_map_combinatorics}, we have
functions $p_f(1) \colon p_a(1) \rightarrow p_b(1)$, together with
$c$-set maps $p_f^\sharp(i, \thg) \colon p_b[j] \rightarrow p_a[i]$
where $p_f(1)(i) = j$. Bearing in mind the functoriality in $f$, we
thus obtain:
\begin{proposition}
  \label{prop.cvalued_characterization}To give a $c$-variable
  $d$-valued polynomial is equivalently to give a $d$-set $p(1)$
  (with values $p_a(1)$ for $a \in d$) together with a functor
  $p[\thg]\colon(\el_d p(1))\op \to c\set$ (with values $p_a[i]$ for
  $(a,i) \in \el_d p(1)$).
\end{proposition}

Now \cref{prop.TFAE_duc} easily generalizes to the following.

\begin{proposition}\label{cor.TFAE_indexed_duc}
  For categories $c$ and $d$, the following categories are equivalent:
  \begin{enumerate}[(1)]
  \item the category of $c$-variable, $d$-valued polynomials;
  \item the category of functors $d\to\smset[c]$;
  \item the category of functors $d\to\coco((c\set)\op)$;
  \item the category $\Cat{CLP}(c\set,d\set)$ of
    connected-limit-preserving functors $c\set\to d\set$; and
  \item the category $\Praf(c\set,d\set)$ of parametric right adjoint
    functors $c\set\to d\set$.
  \end{enumerate}
\end{proposition}
Later \cref{thm.garner} will add a sixth element, \emph{the category
  $\comod(\poly)(c, d)$ of $(c, d)$-bicomodules}, to the list.
\begin{proof}
  (2) is simply a restatement of (1) under exponential transpose, so
  that (1) $\iff$ (2). Next, since limits in $\cat{d}\set$ are
  computed pointwise, a functor $\cat{c}\set \rightarrow \cat{d}\set$
  will preserve connected limits just when its transpose to a functor
  $d \rightarrow \catfun{\cat{c}\set}{\smset}$ factors through
  $\Cat{CLP}(c\set,\smset)$. So we may identify the category of (4)
  with the category of functors
  $d \rightarrow \Cat{CLP}(c\set,\smset)$, and so the equivalence of
  (2), (3) and (4) follows by applying~\cref{prop.TFAE_duc}
  componentwise. Finally, (4) $\iff$ (5) follows by exactly the same
  argument as in~\cref{prop.TFAE_duc}.
\end{proof}

We will use the janus notation $d\set[c]$ to denote the category of
functors $d\to\smset[c]$; it generalizes both $d\set[0]\cong d\set$
and $1\set[c]\cong\smset[c]$ from \cref{def.duc_query}.
Generalizing \cref{cor.limits}, we have:
\begin{proposition}\label{prop.multivar_limits}
  Small limits and coproducts exist in each category $d\set[c]$, and
  are computed as in $\catfun{c\set}{d\set}$.
\end{proposition}
\begin{proof}
  Just as in \cref{cor.limits}, this follows from the fact that limits
  and coproducts commute with connected limits in $\smset$, so that pointwise
  limits and coproducts of connected-limit preserving functors in
  $\catfun{c\set}{d\set}$ are again connected-limit preserving.
\end{proof}

To conclude this section, we
generalize~\cref{prop.d-poly_map_combinatorics} to give a
combinatorial description of morphisms between multi-variable
polynomials.

\begin{proposition}[Combinatorial description of natural
  transformations]\label{rem.comb_bicomodule_maps}
  Let $p,q \in d\set[c]$ be specified by $d$-sets $p(1), q(1)$ and
  functors $p[\thg]\colon(\el_d p(1))\op \to c\set$ and  $q[\thg]\colon(\el_d q(1))\op \to c\set$.
  We have the formula
  \begin{align}
    d\set[c](p,q)
    &=\sum_{\varphi_1\! \colon p(1) \rightarrow q(1)}\catfun{(\el_d p(1))\op}{c\set}(q[\varphi_1(-)], p[\thg]) \nonumber\\
    &=\sum_{\varphi_1\! \colon p(1) \rightarrow q(1)}\;\int\limits_{(b,i) \in \el_d p(1)}c\set\big(q_b[\varphi_1(i)],p_b[i]\big)\label{eqn.bicomod_map_combinatorics}
  \end{align}
  Here $\int$ denotes an end, taken over the category of elements of
  $p(1)$.
\end{proposition}
One can see that this formula reduces to~\eqref{eqn.mapsharp_formal}
in the case that $c=d=\yon$, and
to~\eqref{eqn.d-poly_map_combinatorics} when $d=\yon$.

\subsection{Composition of multi-variable polynomials}

In this section, we describe the composition of multi-variable
polynomials, and the $2$-category of multi-variable polynomials this
induces. In preparation for this, we give a lemma which gives a
convenient description of morphisms \emph{into} one of the values of a
polynomial functor.

\begin{lemma}
  \label{lem.mapsintopoly}
  Let $p \colon c\set \rightarrow d\set$ be a multi-variable
  polynomial functor. For any $X \in c\set$ and $Y \in d\set$ we have
  natural isomorphisms
  \begin{equation}
    \label{eqn.mapsintopoly1}
    d\set(Y, p(X)) \cong \sum_{\alpha \colon Y \rightarrow p(1)} c\set(\tilde \alpha, X)
  \end{equation}
  where $\tilde \alpha$ is the colimit of the functor
  \begin{equation}
    \label{eqn.mapsintopoly2}
    (\el_d Y)^\mathrm{op} \xrightarrow{(\el_d \alpha)^\mathrm{op}} (\el_d p(1))^\mathrm{op} \xrightarrow{p[\thg]} c\set\rlap{ .}
  \end{equation}
\end{lemma}
\begin{proof}
  Note that $d\set(Y, p(\thg)) \colon c\set \rightarrow \smset$ is the
  composite of the connected-limit preserving $p$ and the
  limit-preserving representable $d\set(Y, \thg)$. So
  by~\cref{prop.TFAE_duc}, it is a $c$-variable polynomial functor,
  and so can be written in the form~\eqref{eqn.mapsintopoly1} for
  \emph{some} family of $c$-sets $\tilde\alpha$. To see which ones,
  let us fix $\alpha$ and note $\tilde \alpha$ is a representing
  object for the functor $c\set \rightarrow \smset$ given by:
  \begin{equation}
    \label{eqn.mapsintopoly3}
    X \qquad \mapsto \qquad \{ f \colon Y \rightarrow p(X) \mid Y \xrightarrow{f} p(X) \xrightarrow{p(!)} p(1) = \alpha \}\rlap{ .}
  \end{equation}
  The side-condition on the right-hand side specifies just those $f$ whose
  components have the form:
  \begin{align*}
    Y(a) & \rightarrow p(X)(a) = \sum_{i \in p_a(1)} c\set(p_a[i], X) \\
    x & \mapsto (\alpha_d(x),\, \theta \colon p_a[\alpha_d(x)] \rightarrow X)\rlap{ ;}
  \end{align*}
  but the data of these components are precisely the data of a cocone
  under the diagram~\eqref{eqn.mapsintopoly2} with vertex $X$. It
  follows that the representing object $\tilde \alpha$
  of~\eqref{eqn.mapsintopoly3} must be the colimit of this diagram, as
  desired.
\end{proof}

\begin{proposition}[Composition of multi-variable polynomial functors]\label{prop.comp_multi_monoidal}
  If $q \colon
  c\set \rightarrow d\set$ and $p \colon d\set \rightarrow e\set$ are
  multi-variable polynomial functors, then so is their composite $p \circ
  q \colon c\set \rightarrow e\set$. If $p$ and $q$ are specified as
  in~\cref{prop.cvalued_characterization} by:
  \begin{equation*}
    p(1) \in e\set, \qquad p[\thg]\colon(\el_e p(1))\op \to d\set \qquad \text{and} \qquad 
    q(1) \in d\set, \qquad q[\thg]\colon(\el_d q(1))\op \to c\set
  \end{equation*}
  then their composite is specified by
  $r(1) \in e\set$ and $r[\thg]\colon(\el_e p(1))\op \to c\set$
  where:
  \begin{enumerate}[(1)]
  \item The value of $r(1)$ at $a \in e$ is $r_a(1) = \{\,i \in
    p_a(1),\, \alpha \colon p_a[i] \rightarrow q(1)
    \text{ in }d\set\,\}$;
  \item The value of $r(1)$ at $f \colon a \rightarrow b$ in $e$ is
    the map $r_f(1) \colon r_a(1) \rightarrow r_b(1)$ sending $(i, \alpha)$
    to $(j, \beta)$ where $j = p_f(1)(i)$ and
    \begin{equation*}
      \beta \coloneqq p_b[j] \xrightarrow{p^\sharp_f(i, \thg)} p[a,i] \xrightarrow{\ \ \alpha\ \ } q(1)\rlap{ .}
    \end{equation*}
  \item For any $(i, \alpha) \in r_a(1)$, we have $r_a[i, \alpha]
    \in c\set$
    given by the colimit of the functor
    \begin{equation*}
      (\el_d p_a[i])^\mathrm{op} \xrightarrow{\el_d \alpha} (\el_d q(1))^\mathrm{op} \xrightarrow{q[\thg]} c\set\rlap{ .}
    \end{equation*}
  \item If $(i, \alpha) \in r_a(1)$ has image $(j, \beta)$ under
    $r_f(1) \colon r_a(1) \rightarrow r_{b}(1)$, then the map
    $r_f^\sharp(i, \alpha, \thg) \colon r_b[j,
    \beta] \rightarrow r_a[i, \alpha]$ is the unique map on colimits
    induced by the commuting triangle:
    \begin{equation*}
      \begin{tikzcd}[column sep={5em,between origins},row sep={1em}]
        {(\el_d p_b[j])\op} \ar{rrrr}{(\el_d p_f^\sharp(i,\thg)\op} \ar[dr, pos=.4, "\el_d \beta"'] & & & & 
        {(\el_d p_a[i])\op} \ar[dl, pos=.4, "\el_d \alpha"]  \\
        & {(\el_d q(1))\op} \ar[dr, "{q[\thg]}"'] & &
        {(\el_d q(1))\op} \ar[dl, "{q[\thg]}"] \\ & &
        c\set\rlap{ .}
      \end{tikzcd}
    \end{equation*}
  \end{enumerate}
\end{proposition}
The reader may feel concerned that the use of colimits in the third
dot point above runs counter to the idea that, in studying databases,
the only colimits we should allow ourselves are disjoint unions.
However, on further reflection, we see that these colimits are being
taken over the \emph{shapes} which index our conjunctive queries---so
that when we hom it into the data itself, this colimit becomes a
limit. In other words, these colimits serve only to build a new
conjunctive query from a family of existing ones.
\begin{proof}
  Like before, the composite of two connected-limit-preserving
  functors is also connected-limit-preserving, so the first claim
  follows by \cref{cor.TFAE_indexed_duc}. As for the formula, we have
  by~\cref{lem.mapsintopoly} that:
  \begin{equation*}
    p(q(X))(a)
    =
    \sum_{i\in p_a(1)}d\set(p_a[i], q(X)) = \sum_{i \in p_a(1)} \sum_{\alpha \colon p_a[i] \rightarrow q(1)} c\set(\tilde \alpha, X)
  \end{equation*}
  where $\tilde \alpha$ is precisely the colimit presheaf in part (3)
  of the formula. This establishes clauses (1) and (3) of the formula,
  and a straightforward analysis of functoriality now yields (2) and (4).
\end{proof}

\begin{definition}[The $2$-category of multi-variable polynomials]\label{def.polyfun}
  The $2$-category $\polyfunb$ is the $2$-category whose objects are
  small categories $c,d,e$, whose hom-category from $c$ to $d$ is
  $d\set[c]$, and whose composition is composition of multi-variable
  polynomials.
\end{definition}

Generalising \cref{lemma.preservation_of_equalizers}, we have the
following result; its proof is mutatis mutandis the same.
\begin{lemma}[Composition preserves connected
  limits]\label{lemma.multivar_preservation_of_equalizers}
  Each hom-category $\polyfunb(c,d)$ has connected limits, and these
  are preserved by composition in each variable.
\end{lemma}

In the introduction, we explained how multi-variable polynomial
functors between discrete categories may be represented by ``bridge
diagrams'' of the form~\eqref{eqn.bridgediag}. Let us now show that
same thing is possible for prafunctors between categories of
copresheaves. We begin by describing the prafunctors which will interpret
the legs of these bridges.

\begin{notation}[Outfacing maps]\label{notation.outfacing}
  Given a small category $c$ and object $a \in c$, we write $c[a]$ for
  the set of all morphisms in $c$ with domain $a$, and call them
  \emph{$a$-outfacing maps} of $c$. (The reason for
  this notation will become clear in~\cref{thm.cats_comonads}.)
\end{notation}
\begin{definition}[\'Etale maps, a.k.a.\ discrete opfibrations]
  A functor $\varphi\colon c\to d$ is called an \emph{\'etale map}, or a
  \emph{discrete opfibration}, if the induced function
  $ c[a]\to d[\varphi(a)]$, sending $a$-outfacing maps of $c$ to
  $\varphi(a)$-outfacing map of $d$, is a bijection for each
  $a \in \ob(c)$. We prefer the terminology \emph{\'etale}, because the
  word is shorter, prettier, and evokes the correct mental picture:
  there is a ``local homeomorphism'' between the outfacing maps from
  object $a$ and those from its image object $\varphi(a)$.
\end{definition}

Given $F\colon\cat{c}\to\cat{d}$, we denote the precomposition functor
$\cat{d}\set\to\cat{c}\set$ by $\Delta_F$, its right adjoint by
$\Pi_F$, and its left adjoint by $\Sigma_F$, as is standard.

\begin{proposition}[Functors give adjoint prafunctors]\label{prop.functor_adj_bico}
  Let $c$ and $d$ be categories. For any functor
  $F\colon\cat{c}\to\cat{d}$, both functors in the adjunction
  \[
    \begin{tikzcd}[column sep=large]
      d\set\ar[r, shift left=5pt, "\Delta_F"]
      \ar[r, phantom, "\scriptstyle\Rightarrow"]&
      c\set\ar[l, shift left=5pt, "\Pi_F"]
    \end{tikzcd}
  \]
  are parametric right adjoint functors. If $F$ is
  \'etale, then both functors in the adjunction
  \[
    \begin{tikzcd}[column sep=large]
      d\set\ar[r, shift right=5pt, "\Delta_F"']
      \ar[r, phantom, "\scriptstyle\Rightarrow"]&
      c\set\ar[l, shift right=5pt, "\Sigma_F"']
    \end{tikzcd}
  \]
  are also parametric right adjoint functors.
\end{proposition}
\begin{proof}
  Both $\Delta_F$ and $\Pi_F$ are right adjoints, hence parametric right adjoints
  $d\set\To{\Delta_F}c\set$ and $c\set\To{\Pi_F}d\set$, and the first result
  follows from \cref{prop.adjoint_prafunctors}.

  In general $\Sigma_F$ assigns the colimit of a certain comma
  category to each $a\in\ob\cat{c}$. When $F$ is \'etale this diagram
  has a discrete final subcategory, and hence can be computed as a
  coproduct. As such $\Sigma_F$ is a parametric right adjoint and we
  again apply \cref{prop.adjoint_prafunctors}.
\end{proof}
\begin{remark}[$\Delta_F, \Pi_F$ and $\Sigma_F$
  explicitly]\label{rmk.explicitdeltapisigma}
  If $F \colon c \rightarrow d$ is a functor, then the induced functors $\Delta_F$
  and $\Pi_F$ have the explicit formulae
  \begin{equation*}
    \Delta_F(X)(a) = X(Fa) = d\set(\yon^{Fa}, X) \qquad \text{and} \qquad
    \Pi_F(Y)(b) = c\set(\yon^b F, Y)\rlap{ ,}
  \end{equation*}
  where here $\yon^b F$ is the composite functor $c \xrightarrow{F} d
  \xrightarrow{\yon^b} \smset$. If $F$ is \'etale, then $\Sigma_F$ has
  the formula
  \begin{equation*}
    \Sigma_F(Y)(b) = \sum_{a \in F^{-1}(b)} F(a)\rlap{ .}
  \end{equation*}
\end{remark}
\begin{proposition}[\cite{weber2007theory}]\label{prop.bridge_diagram}
  Every prafunctor $p \colon c\set \rightarrow d\set$ can be written as
  $\Sigma_T \circ \Pi_\pi \circ \Delta_S$ for a diagram of categories
  as below wherein $T$ is \'etale.
  \begin{equation*}
  \begin{tikzcd}
    &\cat{e}\ar[dl, "S"']\ar[r, "\pi"]&\cat{b}\ar[dr, "T"]\\[-10pt]
    \cat{c}&&&\cat{d}
  \end{tikzcd}
  \end{equation*}
\end{proposition}
\begin{proof}
  Let $b \coloneqq \el_d p(1)$ and let $T \colon b \rightarrow d$ be
  the usual (\'etale) projection from the category of elements. Now let
  $e$ be the category with:
  \begin{itemize}
  \item \textbf{objects}: tuples $(w,x,a,y)$ where $(w,x) \in \el_d p(1)$, $a \in c$
    and $y \in p_w[x](a)$;
  \item \textbf{morphisms} $(w,x,a,y) \rightarrow (w',x',a',y')$ given by
    a pair of maps $f \colon (w,x) \rightarrow (w',x')$ in $\el_d p(1)$ and $g
    \colon a \rightarrow a'$ in $c$ such that $p_f(g)(y') = y$.
  \end{itemize}
  Let $S$ and $\pi$ be the obvious projection functors. We may now
  calculate directly using \cref{rmk.explicitdeltapisigma} that the
  functor $\Sigma_T \circ \Pi_\pi \circ \Delta_S$ induced by this
  diagram is isomorphic to $p$.
\end{proof}

\subsection{Retrofunctors and $\ccatsharp$}
There is an additional layer of structure that can be added to the
$2$-category of multi-variable polynomials in order to make it into a
\emph{framed bicategory}~\cite{shulman2008framed} As we will see in
the next section, the prafunctors $c\set \rightarrow d\set$ are
exactly \emph{bicomodules} between comonoids in the monoidal category
$\poly$, and the extra layer of structure to be introduced are the
corresponding comonoid \emph{homomorphisms}. One might think these
would be functors between small categories, but in fact, this is not
the case.

\begin{definition}[Retrofunctor]\label{def.retrofunctor}
  Let $c$ and $d$ be categories. A \emph{retrofunctor}
  $\varphi\colon c\coto d$ (also known as a \emph{cofunctor})
  consists of:
  \begin{enumerate}[itemsep=0em]
  \item a \emph{forwards function}  $\varphi_1\colon \ob( c)\to\ob(d)$ on objects and
  \item a \emph{backwards function} $\varphi^\sharp_a\colon d[\varphi_1(a)]\to c[a]$ on morphisms, for each $a\in\ob( c)$,
  \end{enumerate}
  satisfying the following conditions, which say that $\varphi^\sharp$ preserves identities, codomains,
  and composition:
  \begin{enumerate}[label=\roman*.]
  \item $\varphi^\sharp_a(\id_{\varphi_1(a)})=\id_a$ for any $a\in \ob( c)$;
  \item $\varphi_1(\cod \varphi^\sharp_a(f))=\cod f$ for any $a\in \ob( c)$ and $f\in d[\varphi_1(a)]$; and
  \item $\varphi^\sharp_{\cod \varphi^\sharp_a(f)}(g)\circ \varphi^\sharp_a(f)=\varphi^\sharp_a(g\circ f)$ for composable arrows $f,g$ out of $\varphi_1(a)$.
  \end{enumerate}
  With the obvious composition, we obtain a (large) category
  $\catsharp$ of small categories and retrofunctors.
\end{definition}


Here are some examples of retrofunctors in the wild. 

\begin{example}[\'Etale functors as retrofunctors]\label{ex.\'etale_retrofunctor}
  As noted above, an \'etale functor $\varphi\colon c\to d$ is one for
  which the induced map $ c[a]\to d[\varphi(a)]$, sending
  $a$-outfacing maps in $c$ to $\varphi(a)$-outfacing maps in $d$, is
  a bijection for each $a \in \ob(c)$. In this situation, we can
  define $\varphi^\sharp_a\colon d[\varphi(a)]\to c[a]$ to be the
  inverse of this bijection, and one now checks easily that
  $\bar \varphi = (\ob(\varphi),\varphi^\sharp)$ satisfies the
  conditions of being a retrofunctor $c\coto d$. Thus, discrete
  opfibrations $\varphi \colon c \rightarrow d$ can be identified with
  retrofunctors $\bar \varphi \colon c \coto d$ with the special
  property that $\varphi^\sharp_a$ is a bijection for each
  $a\in \ob(c)$.
\end{example}

\begin{example}[Very well-behaved lenses]\label{ex.indiscrete}
  Let $S,T\in\smset$ be sets, and consider the associated codiscrete
  categories $\codisc{S},\codisc{T}$, i.e., those which have a unique
  morphism between any two objects. 
  We read off that a retrofunctor $\codisc{S}\coto\codisc{T}$ consists
  of a function $\varphi_1 \colon S\to T$ and a function
  $\varphi^\sharp\colon S\times T\to S$ satisfying three laws:
  \begin{enumerate}[(1), itemsep=0pt]
  \item $\varphi^\sharp(s,\varphi_1(s))=s$
  \item $\varphi_1(\varphi^\sharp(s,t))=t$
  \item $\varphi^\sharp(\varphi^\sharp(s,t_1),t_2)=\varphi^\sharp(t_2)$.
  \end{enumerate}
  Functional programmers will recognize these as the three \emph{lens
    laws} for what are sometimes called \emph{very well-behaved
    lenses}.
\end{example}


\begin{example}[Retrofunctors of sets]
  Retrofunctors between \emph{discrete} categories $S\yon \coto T\yon$
  are less interesting: they are simply functions $S \to T$.
\end{example}

\begin{example}[Retrofunctors of monoids]
  The full subcategory of $\catsharp$ spanned by the one-object
  categories is isomorphic to the opposite of the category of monoids.
  Indeed, retrofunctors of one-object categories
  $\mathscr{M} \coto \mathscr{N}$ are functions $N \to M$ between the
  unique hom-sets respecting identity and multiplication.
\end{example}

\begin{example}[Bijective on objects functors as retrofunctors]\label{ex.boo_retrofunctor}
  Generalizing the previous example, if $\varphi\colon c\to d$ is a
  functor whose action $\ob(\varphi) \colon \ob(c) \rightarrow \ob(d)$
  is invertible, then defining $\tilde \varphi_1$ to be the inverse of
  $\ob(\varphi)$, and defining
  $\tilde \varphi_a^\sharp \colon c[\tilde \varphi_1(a)] \rightarrow
  d[a]$ to be the action on morphisms of $\varphi$, one can check that
  $\tilde \varphi = (\tilde \varphi_1,\tilde \varphi^\sharp)$ is a
  retrofunctor $d\coto c$. Thus, bijective-on-objects functors
  $\varphi \colon c \rightarrow d$ can be identified with
  bijective-on-objects retrofunctors
  $\tilde \varphi \colon d \coto c$.
\end{example}

We now explain how to link retrofunctors to multi-variable
polynomials. The key is the following decomposition result for retrofunctors:

\begin{lemma}[Spans from retrofunctors]
  \label{lem.retrofunctor_decomp}
  Every retrofunctor $\varphi \colon c \coto d$ has a canonical
  decomposition $\varphi = \widetilde S \then \overline T$ for a span of
  categories and functors
  \begin{equation}\label{eqn.retrofunctor_decomp}
    \begin{tikzcd}
    &\cat{r_\varphi}\ar[dl, "S"']\ar[dr, "T"]\\[-10pt]
    \cat{c}&&\cat{d}
  \end{tikzcd}
\end{equation}
wherein $S$ is bijective on objects and $T$ is \'etale (so that
$\tilde S$ is defined as in \cref{ex.boo_retrofunctor} and
$\overline T$ is defined as in \cref{ex.\'etale_retrofunctor}); we call
this the \emph{span representation} of $\varphi$. Moreover,
composition of retrofunctors corresponds to composition of span
representations by pullback.
\end{lemma}
\begin{proof}
  (cf.~\cite[\S 4.4]{aguiar1997internal}). Define $r_\varphi$ to be
  the category with the same objects as $c$, and as morphisms
  $a \rightarrow a'$, the set of morphisms
  $f \colon \varphi_1(a) \rightarrow \varphi_1(a')$ in $d$ for which
  $\cod\varphi_a^\sharp(f)=a'$, i.e.\ for which $\varphi_a^\sharp(f)$ is a map $a\to a'$.
  Such morphisms are
  easily seen to be composition-closed, so that $r_\varphi$ is a
  category. Let $F$ be the identity-on-objects functor sending
  $f \colon \varphi_1(a) \rightarrow \varphi_1(a')$ to
  $\varphi_a^\sharp(f) \colon a \rightarrow a'$, and let $T$ be the
  functor acting as $\varphi_1$ on objects, and as the identity on
  morphisms. It is now straightforward to verify that $T$ is \'etale and
  that $\widetilde S \then \overline T = \varphi$.
\end{proof}
\begin{definition}[Polynomials from retrofunctors]
  \label{def.poly_from_retro}
  Let $c$ and $d$ be categories, and let
  $\varphi\colon\cat{c}\coto\cat{d}$ be a retrofunctor with span
  representation~\eqref{eqn.retrofunctor_decomp}. We define
  $\varphi\lpush  \colon c\set \rightarrow d\set$ and
  $\varphi^\ast \colon d\set \rightarrow c\set$ by
  \begin{equation*}
    \varphi\lpush  \coloneqq c\set \xrightarrow{\Delta_S} r_\varphi\set \xrightarrow{\Sigma_T} d\set \qquad \text{and} \qquad 
    \varphi^\ast \coloneqq d\set \xrightarrow{\Delta_T} r_\varphi\set \xrightarrow{\Pi_S} c\set\rlap{ .}
  \end{equation*}
\end{definition}

\begin{proposition}
  \label{prop.retrofunctorembedding}
  For each retrofunctor
  $\varphi\colon\cat{c}\coto\cat{d}$, the functors $\varphi\lpush $ and
  $\varphi^\ast$ are prafunctors, and we have 
  \[
    \begin{tikzcd}[column sep=large]
      c\set\ar[r, shift left=5pt, "\varphi\lpush "]
      \ar[r, phantom, "\scriptstyle\Rightarrow"]&
      d\set\rlap{ .}\ar[l, shift left=5pt, "\varphi^\ast"]
    \end{tikzcd}
  \]
\end{proposition}

\begin{proof}
  $\varphi\lpush $ and $\varphi^\ast$ are prafunctors by
  \cref{prop.functor_adj_bico}, and are adjoint by the same result.
\end{proof}

\begin{remark}
  We can describe the polynomials $\varphi\lpush $ and
  $\varphi^\ast$ associated to a retrofunctor
  $\varphi \colon c \coto d$ explicitly.
  \begin{enumerate}[(1)]
  \item 
  The polynomial functor
  $\varphi\lpush  \colon c\set \rightarrow d\set$ takes the $c$-set $X$
  to the $d$-set $\varphi\lpush  X$ given by
  \begin{equation}
    \label{eqn.induced_prafunctor}
    \varphi\lpush  X(b) = \sum_{a \in \varphi_1^{-1}(b)} X(a) \qquad \text{and} \qquad 
    \varphi\lpush  X(f \colon b \rightarrow b') \mapsto
    \Biggl(\begin{aligned}
      \sum_{a \in \varphi_1^{-1}(b)} X(a) &\rightarrow \sum_{a' \in \varphi_1^{-1}(b')} X(a')\\
      (a, x) & \mapsto (a',x')
    \end{aligned}\,\Biggr)
  \end{equation}
  where to the right, $a' \in c$ is the codomain of
  $f' \coloneqq \varphi^\sharp_a(f)$, and $x'$ is $X(f')(x)$.

\item The polynomial functor
  $\varphi^\ast \colon d\set \rightarrow c\set$ takes the $d$-set $Y$
  to the $c$-set $\varphi^\ast Y$ with
  \begin{equation*}
    \varphi^\ast Y(a) = d\set(\varphi\lpush c[a], X) \qquad \text{and} \qquad 
    \varphi^\ast Y(h \colon a \rightarrow a') = (\thg) \circ \varphi\lpush c[h] \colon d\set(\varphi\lpush c[a], X) \rightarrow d\set(\varphi\lpush c[a'], X)
  \end{equation*}
  where explicitly, $\varphi\lpush c[a]$ is the $d$-set with
  \begin{equation*}
    \varphi\lpush c[a](b) = \{f \in c[a] \mid \mathrm{cod}(f) \in \varphi_1^{-1}(b)\} \qquad \text{and} \qquad 
    \varphi\lpush c[a](g \colon b \rightarrow b') \colon f \mapsto \varphi_a^\sharp(g) \circ f
  \end{equation*}
  and $\varphi\lpush c[h] \colon \varphi\lpush c[a'] \rightarrow \varphi\lpush c[a]$ acts by
  precomposing by $h\colon a\to a'$.
  \end{enumerate}
\end{remark}

We would now like to show that the assignments
$\varphi \mapsto \varphi\lpush $ and $\varphi \mapsto \varphi^\ast$ are
(pseudo-)functorial in $\varphi$. The key to doing so will be the fact
that retrofunctors, and their composition, can be completely described
in terms of the span representations.

\begin{definition}
  Let $\catsharpspan$ be the sub-bicategory of
  $\spancat(\smcat)$ obtained by restricting to (bijective on objects,
  \'etale) spans as in~\cref{eqn.retrofunctor_decomp} and all
  span $2$-cells between them.
\end{definition}

We will shortly prove:

\begin{proposition}[Retrofunctors versus spans]\label{prop.catsharpspan_representation}
  Each hom-category $\catsharpspan(c,d)$ is an equivalence
  relation, i.e., a groupoid and a poset,~and 
  \begin{equation}
    \label{eqn.catsharpspan_maps}
    (S,T) \xrightarrow{\ \exists\ } (U,V) \qquad \text{ if and only if } \qquad  \widetilde S\then \overline T = \widetilde U\then \overline V
    \colon c \coto d\rlap{ .}
  \end{equation}
  Thus the assignment $(S,T) \mapsto \widetilde S\then\overline T$ is
  an equivalence of categories
  $\catsharpspan(c,d) \rightarrow \catsharp(c,d)$, viewing
  the codomain as a discrete category. These equivalences are the
  actions on homs of an identity-on-objects biequivalence
  $\catsharpspan \rightarrow \catsharp$, with
  pseudo-inverse $\catsharp \rightarrow \catsharpspan$
  given by taking span representations.
\end{proposition}

However, before doing so, we use this result to establish:

\begin{proposition}
  The assignments
  $\varphi \mapsto \varphi\lpush $ and $\varphi \mapsto \varphi^\ast$
  yield adjoint identity-on-objects pseudofunctors $(\thg)\lpush  \colon \catsharp \rightarrow \polyfunb$
  and $(\thg)^\ast \colon (\catsharp)^\mathrm{op} \rightarrow \polyfunb$.
\end{proposition}
\begin{proof}
  Due to their adjointness (from \cref{prop.retrofunctorembedding}), $(\thg)^\ast$ will be pseudofunctorial so
  long as $(\thg)\lpush $ is, so we need only check the latter fact.
  Consider the assignment $\catsharpspan \rightarrow
  \polyfunb$ which is identity on objects, sends a morphism $(S,T)$ to
  $\Delta_S \then \Sigma_T$, and sends a span $2$-cell $H \colon (S,T)
  \rightarrow (U,V)$ to the canonical $2$-cell
  \begin{equation*}
   \Delta_S \then \Sigma_T \xrightarrow{\cong} \Delta_U \then \Delta_H \then \Sigma_H \then \Sigma_V \xrightarrow{\Delta_U \then \epsilon \then \Sigma_V} \Delta_U \then \Sigma_V
 \end{equation*}
  where the first isomorphism comes from the fact that $S = UH$ and $T
  = VH$.
  We claim this assignment is pseudofunctorial. Indeed, given spans
  $(S,T) \colon c \rightarrow d$ and $(H,K) \colon d \rightarrow e$,
  we can form their pullback:
  \begin{equation*}
\begin{tikzcd}[column sep=1.8em]
  &&\cat{t} \ar[dl, "L"']\ar[dr, "M"] & & \\[-10pt]
  &\cat{r}\ar[dl, "S"']\ar[dr, "T"] & &
  \cat{s}\ar[dl, "H"']\ar[dr, "K"] & &
  \\[-10pt]
    \cat{c}&&\cat{d}&&\cat{e}
  \end{tikzcd}
  \end{equation*}
  and must exhibit an isomorphism
  $\Sigma_K \circ \Delta_H \circ \Sigma_T \circ \Delta_S \cong \Sigma_K \circ \Sigma_M \circ \Delta_L \circ
  \Delta_S$. This will follow if we can
  show that $\Delta_L \circ \Sigma_M \cong \Delta_H \circ \Sigma_T$. This
  \emph{Beck--Chevalley isomorphism}, equating the two ways around a
  pullback square of categories and functors via $\Delta$ and
  $\Sigma$, is known not to hold for arbitrary pullback squares, but
  again we are saved by the fact that $T$ (and so $M$) are \'etale; in
  fact, we can simply read off the desired isomorphism from
  \cref{rmk.explicitdeltapisigma}.
  Thus we have a pseudofunctorial assignment $\Omega \colon \catsharpspan
  \rightarrow \polyfunb$; and to conclude the proof, we need only
  observe that
  \begin{equation*}
    (\thg)\lpush  = \catsharp \xrightarrow{\text{span rep}} \catsharpspan \xrightarrow{\Omega} \polyfunb
  \end{equation*}
  is the composite of two pseudofunctors, and hence pseudofunctorial.
\end{proof}

We now give the proof of~\cref{prop.catsharpspan_representation}.
\begin{proof}
  Suppose that
  \begin{equation}
    \label{eqn.catsharpspan_map_diagram}
    \begin{tikzcd}[row sep=1.2em]
      & r \arrow[ddl, bend right, "S"'] \arrow[bend left, ddr, "T"] \arrow[d, "H"] \\
      & r' \arrow[dl, "U"] \arrow[dr, "V"'] \\
      c & & d
    \end{tikzcd}
  \end{equation}
  is a map of (bijective on objects, \'etale) spans. By standard
  cancellativity properties, $H$ is bijective on objects since $S$ and
  $U$ are, while $H$ is \'etale since $T$ and $V$ are; and a bijective
  on objects, \'etale functor is necessarily an isomorphism.
  Furthermore, since $S$ and $U$ are bijective on objects, the action
  on objects of $H$ is uniquely determined. But any morphism between
  \'etale maps is uniquely determined by its object-component and so $H$
  in its totality is uniquely determined. Thus
  $\catsharpspan(c,d)$ is an equivalence relation.

  To establish the ``only if'' direction
  of~\cref{eqn.catsharpspan_maps}, note that, if $H$ is an invertible
  functor with inverse $K$, then $\widetilde H = \overline{K}$ and
  $\overline H = \widetilde K$ as retrofunctors. Thus,
  given~\cref{eqn.catsharpspan_map_diagram}, we have
  \begin{equation*}
    \widetilde S \then \overline T = \widetilde{U} \then \widetilde H \then \overline T = \widetilde{U} \then \overline K \then \overline T = \widetilde {U} \then \overline{V}
  \end{equation*}
  as desired. 
  For the ``if'' direction
  of~\cref{eqn.catsharpspan_maps}, it suffices to show that, if
  $(U,V) \in \catsharpspan(c,d)$, and
  $\varphi = \widetilde U \then \overline V$, then we have a $2$-cell
  \begin{equation}
    \label{eqn.catsharpspan_map_diagram_2}
    \begin{tikzcd}[row sep=1.2em]
      & r \arrow[ddl, bend right, "U"'] \arrow[bend left, ddr, "V"] \arrow[d, "H"] \\
      & r_\varphi \arrow[dl, "S"] \arrow[dr, "T"'] \\
      c & & d\rlap{ .}
    \end{tikzcd}
  \end{equation}
  Note first that, since $U$ is bijective on objects,
  $\varphi \colon c \coto d$ is completely characterized by the
  clauses:
  \begin{equation*}
    \varphi_1(Ua) = Va \qquad \text{and} \qquad \varphi_{Ua}^\sharp(Va \xrightarrow{f} b') = U(V^\sharp_{a}(f))
  \end{equation*}
  where, since $V$ is \'etale, $V_a^\sharp(f)$ is the unique lift of
  $f \colon Va \rightarrow b'$ to a map $a \rightarrow a'$ in $r$.
  Thus, objects of $r_\varphi$ are just objects of $c$, while maps
  $Ua \rightarrow Ua'$ in $r_\varphi$ are maps
  $f \colon Va \rightarrow Va'$ in $d$ such that $U(V^\sharp_{a}(f))$
  has codomain $Ua'$. Thus, we can define define $H$ to act on objects
  by $a \mapsto Ua$ and on morphisms by $f \mapsto Vf$.
  
  This establishes the existence of equivalences of categories
  $\catsharpspan(c,d) \rightarrow \catsharp(c,d)$.
  Moreover, it is an easy calculation
  (cf.~\cite[Lemma~2.3]{garner2024}) to see that, if we have a
  pullback square
  \begin{equation*}
    \begin{tikzcd}
      a \arrow[r,"F"] \arrow[d,"G"'] &
      b \arrow[d,"H"]\\
      c \arrow[r,"K"'] & d\ar[ul, phantom, very near end, "\lrcorner"]
    \end{tikzcd}
  \end{equation*}
   with $K$ (and hence $F$) bijective on objects
  and with $H$ (and hence $G$) \'etale, then
  $\widetilde F \then \overline G = \overline H \then \widetilde K$.
  It follows that the functors
  $\catsharpspan(c,d) \rightarrow \catsharp(c,d)$ are the
  action on homs of an identity-on-objects biequivalence of
  bicategories $\catsharpspan \rightarrow \catsharp$, with
  pseudoinverse given by taking
  span representations.
\end{proof}

\cref{prop.retrofunctorembedding} tells us that small categories,
retrofunctors, and prafunctors provide us with an instance of the
following notion:
\begin{definition}[Pro-arrow equipment]
  \label{def.proarrow}
  A \emph{pro-arrow equipment} comprises a category $\prov$, a
  bicategory $\proh$, and a bijective-on-objects pseudofunctor
  $(\thg)\lpush  \colon \prov \rightarrow \proh$ with the property that,
  for each $f \in \prov$, the morphism $f\lpush $ has a right adjoint
  $f^\ast$ in the bicategory
  $\proh$.
\end{definition}
An equivalent, and often more convenient formulation of
pro-arrow equipments is in terms of \emph{framed bicategories}. These
are particular kinds of (pseudo) double category: thus, they have
objects $c,d,e,\dots$, \emph{vertical} morphisms
$f \colon c \rightarrow c'$, \emph{horizontal} morphisms
$m \colon c \rightarrow d$, and \emph{squares} as displayed to the left in:
\begin{equation*}
  \begin{tikzcd}
    c\ar[d, "\alpha"']\ar[r, "m", ""' name=M]&d\ar[d, "\beta"] & & 
    c\ar[d, "\alpha\lpush "']\ar[r, "m", ""' name=M'']&d\ar[d, "\beta\lpush "]\\
    c'\ar[r,  "m'"', "" name=M']&d'
    \ar[from=M, to=M', Rightarrow, shorten=2mm, "\varphi"] & & 
    c'\ar[r, "m'"', "" name=M''']&d'\rlap{ ,}
    \ar[from=M'', to=M''', Rightarrow, shorten=2mm, "\varphi"]
  \end{tikzcd}
\end{equation*}
together with composition operations which are vertically strictly
associative, but horizontally associative only up to suitable
invertible cells. Now, the framed bicategory associated to a pro-arrow
equipment $(\thg)\lpush  \colon \prov \rightarrow \proh$ has objects and
vertical morphisms given by objects and morphisms of $\prov$;
horizontal morphisms given by morphisms of $\proh$; and cells as to
the left above given by $2$-cells in $\proh$ as to the right. What
makes this double category into a \emph{framed} bicategory is the
possibility of turning a vertical morphism $f$ into horizontal
morphism in two ways: by taking $f\lpush $ or by taking $f^\ast$. Said
another way, a framed bicategory is a double category with all
\emph{companions} and \emph{conjoints}~\cite[\S
1]{Grandis.Pare:2004a}.

The basic example of a pro-arrow equipment has $\prov$ given by the
category of small categories and $\proh$ the bicategory of
profunctors, with $(\thg)\lpush $ the usual embedding of functors into
profunctors; in this case, applying the construction above yields the
framed bicategory $\mathbb{C}\Cat{at}$ of categories, functors and
profunctors. On the other hand, applying the construction to the
pro-arrow equipment of \cref{prop.retrofunctorembedding} yields:
\begin{definition}[The polynomial ecosystem]\label{def.ccatsharp}
  $\ccatsharp$ is the framed bicategory obtained from the pro-arrow
  equipment $(\thg)\lpush  \colon \catsharp \rightarrow \polyfunb$. Thus,
  its objects are categories, its vertical morphisms are
  retrofunctors, and its horizontal morphisms are prafunctors.
\end{definition}

\section{Building $\ccatsharp$ from $\poly$, abstractly}\label{chap.abstract}
In 2016, Ahman and Uustalu proved an amazing result \cite{ahman2016directed}:%
\begin{center}
  \emph{Polynomial comonads are exactly small categories.}
\end{center}
By a \emph{polynomial comonad}, we mean a comonad on $\smset$ with
polynomial carrier, i.e., a comonoid in the monoidal category
$(\poly,\yon,\tri)$. As already in the previous section, the
corresponding comonoid morphisms are not functors between categories,
but the retrofunctors of~\cref{def.retrofunctor}; so that Ahman and
Uustalu's result provides an explanation of the category $\catsharp$
purely in terms of comonoids in $\poly$.

In this section, we show that we can re-find not just $\catsharp$ but
the whole framed bicategory $\ccatsharp$ by considering polynomial
comonads. The missing ingredient is a description of the prafunctors
between presheaf categories in terms of polynomial comonads; our
contribution is to show that:
\begin{center}
  \emph{Bicomodules between polynomial comonads are prafunctors
    between copresheaf categories.}
\end{center}
Thus, the main result of this section establishes an equivalence
between $\ccatsharp$ and the framed bicategory of polynomial comonads,
comonad morphisms and bicomodules.

One approach to proving this result is very concrete: we simply work
through the definitions on each side and match them up. However, we
also need to check that this matching is well-behaved with respect to
composition, and this leads to a proliferation of messy details.
Instead, we adopt an approach which exploits known results from the
literature characterizing (framed) bicategories of bicomodules; then,
once the result is proved, we can spell out the concrete
correspondence secure in the knowledge that the details are taken care
of.

\subsection{The framed bicategory of bicomodules}\label{sec.bicomod}
If $\mathscr{V}$ is a monoidal category, then we have the standard
notions of \emph{comonoid} and \emph{comonoid morphism} in
$\mathscr{V}$, yielding a category $\Cat{Comon}(\mathscr{V})$; for the
sake of reference, we record the details below. In what follows, we
simplify notation by treating monoidal categories as if they were
strict monoidal categories by suppressing associativity and unitality
constraints. This is justified by the coherence theorem for monoidal
categories.

\begin{definition}[Comonoids and homomorphisms]\label{def.poly_comonad}
  A \emph{comonoid} in a monoidal category $(\mathscr{V}, \alttensor, I)$
  is a triple $(c,\epsilon,\delta)$, where $c\in\mathscr{V}$ is a
  polynomial and $\epsilon\colon c\to I$ and
  $\delta\colon c\to c \alttensor c$ are maps rendering commutative the
  diagrams:
  \begin{equation}\label{eqn.counitality_coassoc}
    \begin{tikzcd}
      &c\ar[dr, equal]\ar[dl, equal]\ar[d, "\delta" description]&&[30pt]
      c\ar[r, "\delta"]\ar[d, "\delta"']&c\alttensor c\ar[d, "c\alttensor\delta"]\\
      c&c\alttensor c\ar[l, "c\alttensor\epsilon"]\ar[r, "\epsilon\alttensor c"']&c&
      c\alttensor c\ar[r, "\delta\alttensor c"']&c\alttensor c\alttensor c
    \end{tikzcd}
  \end{equation}
  or, as string diagrams:
  \addtocounter{equation}{-1}
  \begin{equation}
    \begin{tikzpicture}[stringd,xscale=-1]
      \draw [wire] (0, 1.2) node[obj,above] {$c$} -- (0, .8);
      \draw [wire] (-.45, -1.2) node[obj,below] {$c$} -- (-.45,-.7) to [in=-90,out=90] (-.15, .2);
      \draw [wire] (.3,-.2) to [in=-90,out=90] node[obj,left] {$c$} (.15, .2);
      \draw[hub] (-.3,.2) rectangle (.3, .8) node[pos=.5] {$\delta$};
      \draw[hub] (0,-.8) rectangle (.6, -.2) node[pos=.5] {$\epsilon$};
    \end{tikzpicture}
    \quad=\quad
    \begin{tikzpicture}[stringd,xscale=-1]
      \draw [wire] (0, 1.2) -- node[obj,left] {$c$} (0, -1.2);
    \end{tikzpicture}\qquad=\quad
    \begin{tikzpicture}[stringd]
      \draw [wire] (0, 1.2) node[obj,above] {$c$} -- (0, .8);
      \draw [wire] (-.45, -1.2) node[obj,below] {$c$} -- (-.45,-.7) to [in=-90,out=90] (-.15, .2);
      \draw [wire] (.3,-.2) to [in=-90,out=90] node[obj,right] {$c$} (.15, .2);
      \draw[hub] (-.3,.2) rectangle (.3, .8) node[pos=.5] {$\delta$};
      \draw[hub] (0,-.8) rectangle (.6, -.2) node[pos=.5] {$\epsilon$};
    \end{tikzpicture}
    \qquad\qquad\qquad\qquad
    \begin{tikzpicture}[stringd,xscale=-1]
      \draw [wire] (0, 1.2) node[obj,above] {$c$} -- (0, .8);
      \draw [wire] (-.6, -1.2) node[obj,below] {$c$} -- (-.6,-.7) to [in=-90,out=90] (-.15, .2);
      \draw [wire] (.3,-.2) to [in=-90,out=90] node[obj,left] {$c$} (.15, .2);
      \draw [wire] (0, -1.2) node[obj,below] {$c$} to [in=-90,out=90] (.15, -.8);
      \draw [wire] (.6, -1.2) node[obj,below] {$c$} to [in=-90,out=90] (.45, -.8);
      \draw[hub] (-.3,.2) rectangle (.3, .8) node[pos=.5] {$\delta$};
      \draw[hub] (0,-.8) rectangle (.6, -.2) node[pos=.5] {$\delta$};
    \end{tikzpicture}
    \quad=\quad
    \begin{tikzpicture}[stringd]
      \draw [wire] (0, 1.2) node[obj,above] {$c$} -- (0, .8);
      \draw [wire] (-.6, -1.2) node[obj,below] {$c$} -- (-.6,-.7) to [in=-90,out=90] (-.15, .2);
      \draw [wire] (.3,-.2) to [in=-90,out=90] node[obj,right] {$c$} (.15, .2);
      \draw [wire] (0, -1.2) node[obj,below] {$c$} to [in=-90,out=90] (.15, -.8);
      \draw [wire] (.6, -1.2) node[obj,below] {$c$} to [in=-90,out=90] (.45, -.8);
      \draw[hub] (-.3,.2) rectangle (.3, .8) node[pos=.5] {$\delta$};
      \draw[hub] (0,-.8) rectangle (.6, -.2) node[pos=.5] {$\delta$};
    \end{tikzpicture}
  \end{equation}
  A \emph{comonoid morphism} is a map $\varphi\colon c\to d$ in
  $\mathscr{V}$ rendering commutative the diagrams:
  \begin{equation}\label{eqn.comonoid_hom}
    \begin{tikzcd}
      c\ar[r, "\varphi"]\ar[d, "\epsilon"']&d\ar[d, "\epsilon"]&&
      c\ar[r, "\varphi"]\ar[d, "\delta"']&d\ar[d, "\delta"]\\
      \yon\ar[r, equal]&\yon&&c\alttensor c\ar[r, "\varphi\alttensor\varphi"']&d\alttensor d
    \end{tikzcd}
  \end{equation}
  or, as string diagrams:
  \addtocounter{equation}{-1}
  \begin{equation}
    \begin{tikzpicture}[stringd]
      \draw [wire] (0, 1.2) node[obj,above] {$c$} -- (0, .8);
      \draw[hub] (-.3,.2) rectangle (.3, .8) node[pos=.5] {$\epsilon$};
    \end{tikzpicture}
    \quad=\quad
    \begin{tikzpicture}[stringd]
      \draw [wire] (0, 1.2) node[obj,above] {$c$} -- (0, .8);
      \draw [wire] (0, -.2) -- node[obj,left] {$d$} (0, .2);
      \draw[hub] (-.3,.2) rectangle (.3, .8) node[pos=.5] {$\varphi$};
      \draw[hub] (-.3,-.8) rectangle (.3, -.2) node[pos=.5] {$\epsilon$};
    \end{tikzpicture}
    \qquad\qquad\qquad\qquad 
    \begin{tikzpicture}[stringd]
      \draw [wire] (0, 1.2) node[obj,above] {$c$} -- (0, .8);
      \draw [wire] (.4,-.2) to [in=-90,out=90] node[obj,right] {$c$} (.15, .2);
      \draw [wire] (-.4,-.2) to [in=-90,out=90] node[obj,left] {$c$} (-.15, .2);
      \draw [wire] (.4, -1.2) node[obj,below] {$d$} -- (.4, -.8);
      \draw [wire] (-.4, -1.2) node[obj,below] {$d$} -- (-.4, -.8);
      \draw[hub] (-.3,.2) rectangle (.3, .8) node[pos=.5] {$\delta$};
      \draw[hub] (.1,-.8) rectangle (.7, -.2) node[pos=.5] {$\varphi$};
      \draw[hub] (-.1,-.8) rectangle (-.7, -.2) node[pos=.5] {$\varphi$};
    \end{tikzpicture}\quad=\quad
    \begin{tikzpicture}[stringd]
      \draw [wire] (0, 1.2) node[obj,above] {$c$} -- (0, .8);
      \draw [wire] (0, -.2) -- node[obj,left] {$d$} (0, .2);
      \draw [wire] (.4, -1.2) node[obj,below] {$d$} to [in=-90,out=90] (.15, -.8);
      \draw [wire] (-.4, -1.2) node[obj,below] {$d$} to [in=-90,out=90] (-.15, -.8);
      \draw[hub] (-.3,.2) rectangle (.3, .8) node[pos=.5] {$\varphi$};
      \draw[hub] (-.3,-.8) rectangle (.3, -.2) node[pos=.5] {$\delta$};
    \end{tikzpicture}
  \end{equation}
\end{definition}

As well as homomorphisms between comonoids, we also have the notion of
\emph{bicomodule}:
\begin{definition}[Bicomodules]\label{def.bicomodule}
  Let $(c, \epsilon_c, \delta_c)$ and $(d,\epsilon_d, \delta_d)$ be
  comonoids in $\mathscr{V}$. A \emph{bicomodule from $c$ to $d$} or
  \emph{$(c,d)$-bicomodule} $m \colon c \altbito d$ consists of a tuple $(m,\lambda,\rho)$,
  where $m\in\mathscr{V}$ and $\lambda,\rho$ are maps
  \[
    d \alttensor m\Fromm{\lambda}m\Too{\rho}m \alttensor c
  \]
  making the following diagrams commute in $\mathscr{V}$:
  \begin{gather}\label{eqn.bimod_left}
    \begin{tikzcd}[ampersand replacement=\&]
      d\alttensor m\ar[d, "\epsilon\alttensor m"']\&
      m\ar[l, "\lambda"']\ar[dl, equal, bend left]\\
      m
    \end{tikzcd}
    \hspace{.6in}
    \begin{tikzcd}[ampersand replacement=\&]
      d\alttensor m\ar[d, "\delta\alttensor m"']\&
      m\ar[l, "\lambda"']\ar[d, "\lambda"]\\
      d\alttensor d\alttensor m\&
      d\alttensor m\ar[l, "d\alttensor\lambda"]
    \end{tikzcd}
    \\\label{eqn.bimod_right}
    \begin{tikzcd}[ampersand replacement=\&]
      m\ar[r, "\rho"]\ar[dr, equal, bend right]\&
      m\alttensor c\ar[d, "m\alttensor\epsilon"]\\\&
      m
    \end{tikzcd}
    \hspace{.6in}
    \begin{tikzcd}[ampersand replacement=\&]
      m\ar[r, "\rho"]\ar[d, "\rho"']\&
      m\alttensor c\ar[d, "m\alttensor\delta"]\\
      m\alttensor c\ar[r, "\rho\alttensor c"']\&
      m\alttensor c\alttensor c
    \end{tikzcd}
    \\\label{eqn.bimod_coherence}
    \begin{tikzcd}[ampersand replacement=\&]
      m\ar[r, "\rho"]\ar[d, "\lambda"']\&
      m\alttensor c\ar[d, "\lambda\alttensor c"]\\
      d\alttensor m\ar[r, "d\alttensor\rho"']\&
      d\alttensor m\alttensor c
    \end{tikzcd}
  \end{gather}
  or, as string diagrams:
  \addtocounter{equation}{-3}
  \begin{equation}
    \qquad\begin{tikzpicture}[stringd,xscale=-1]
      \draw [wire] (0, 1.2) node[obj,above] {$m$} -- (0, .8);
      \draw [wire] (-.45, -1.2) node[obj,below] {$m$} -- (-.45,-.7) to [in=-90,out=90] (-.15, .2);
      \draw [wire] (.3,-.2) to [in=-90,out=90] node[obj,left] {$d$} (.15, .2);
      \draw[hub] (-.3,.2) rectangle (.3, .8) node[pos=.5] {$\lambda$};
      \draw[hub] (0,-.8) rectangle (.6, -.2) node[pos=.5] {$\epsilon$};
    \end{tikzpicture}
    \quad=\quad
    \begin{tikzpicture}[stringd,xscale=-1]
      \draw [wire] (0, 1.2) -- node[obj,left] {$m$} (0, -1.2);
    \end{tikzpicture}
    \qquad\qquad\qquad\qquad
    \begin{tikzpicture}[stringd,xscale=-1]
      \draw [wire] (0, 1.2) node[obj,above] {$m$} -- (0, .8);
      \draw [wire] (-.6, -1.2) node[obj,below] {$\mathstrut m$} -- (-.6,-.7) to [in=-90,out=90] (-.15, .2);
      \draw [wire] (.3,-.2) to [in=-90,out=90] node[obj,left] {$d$} (.15, .2);
      \draw [wire] (0, -1.2) node[obj,below] {$\mathstrut d$} to [in=-90,out=90] (.15, -.8);
      \draw [wire] (.6, -1.2) node[obj,below] {$\mathstrut d$} to [in=-90,out=90] (.45, -.8);
      \draw[hub] (-.3,.2) rectangle (.3, .8) node[pos=.5] {$\lambda$};
      \draw[hub] (0,-.8) rectangle (.6, -.2) node[pos=.5] {$\delta$};
    \end{tikzpicture}
    \quad=\quad
    \begin{tikzpicture}[stringd]
      \draw [wire] (0, 1.2) node[obj,above] {$m$} -- (0, .8);
      \draw [wire] (-.6, -1.2) node[obj,below] {$\mathstrut d$} -- (-.6,-.7) to [in=-90,out=90] (-.15, .2);
      \draw [wire] (.3,-.2) to [in=-90,out=90] node[obj,right] {$m$} (.15, .2);
      \draw [wire] (0, -1.2) node[obj,below] {$\mathstrut d$} to [in=-90,out=90] (.15, -.8);
      \draw [wire] (.6, -1.2) node[obj,below] {$\mathstrut m$} to [in=-90,out=90] (.45, -.8);
      \draw[hub] (-.3,.2) rectangle (.3, .8) node[pos=.5] {$\lambda$};
      \draw[hub] (0,-.8) rectangle (.6, -.2) node[pos=.5] {$\lambda$};
    \end{tikzpicture}
  \end{equation}
  \begin{equation}
    \qquad\begin{tikzpicture}[stringd,xscale=-1]
      \draw [wire] (0, 1.2) -- node[obj,left] {$m$} (0, -1.2);
    \end{tikzpicture}\qquad=\quad
    \begin{tikzpicture}[stringd]
      \draw [wire] (0, 1.2) node[obj,above] {$m$} -- (0, .8);
      \draw [wire] (-.45, -1.2) node[obj,below] {$\mathstrut m$} -- (-.45,-.7) to [in=-90,out=90] (-.15, .2);
      \draw [wire] (.3,-.2) to [in=-90,out=90] node[obj,right] {$c$} (.15, .2);
      \draw[hub] (-.3,.2) rectangle (.3, .8) node[pos=.5] {$\rho$};
      \draw[hub] (0,-.8) rectangle (.6, -.2) node[pos=.5] {$\epsilon$};
    \end{tikzpicture}
    \qquad\qquad\qquad\qquad
    \begin{tikzpicture}[stringd,xscale=-1]
      \draw [wire] (0, 1.2) node[obj,above] {$m$} -- (0, .8);
      \draw [wire] (-.6, -1.2) node[obj,below] {$\mathstrut c$} -- (-.6,-.7) to [in=-90,out=90] (-.15, .2);
      \draw [wire] (.3,-.2) to [in=-90,out=90] node[obj,left] {$m$} (.15, .2);
      \draw [wire] (0, -1.2) node[obj,below] {$\mathstrut c$} to [in=-90,out=90] (.15, -.8);
      \draw [wire] (.6, -1.2) node[obj,below] {$\mathstrut m$} to [in=-90,out=90] (.45, -.8);
      \draw[hub] (-.3,.2) rectangle (.3, .8) node[pos=.5] {$\rho$};
      \draw[hub] (0,-.8) rectangle (.6, -.2) node[pos=.5] {$\rho$};
    \end{tikzpicture}
    \quad=\quad
    \begin{tikzpicture}[stringd]
      \draw [wire] (0, 1.2) node[obj,above] {$m$} -- (0, .8);
      \draw [wire] (-.6, -1.2) node[obj,below] {$\mathstrut m$} -- (-.6,-.7) to [in=-90,out=90] (-.15, .2);
      \draw [wire] (.3,-.2) to [in=-90,out=90] node[obj,right] {$c$} (.15, .2);
      \draw [wire] (0, -1.2) node[obj,below] {$\mathstrut c$} to [in=-90,out=90] (.15, -.8);
      \draw [wire] (.6, -1.2) node[obj,below] {$\mathstrut c$} to [in=-90,out=90] (.45, -.8);
      \draw[hub] (-.3,.2) rectangle (.3, .8) node[pos=.5] {$\rho$};
      \draw[hub] (0,-.8) rectangle (.6, -.2) node[pos=.5] {$\delta$};
    \end{tikzpicture}
  \end{equation}
  \begin{equation}
    \begin{tikzpicture}[stringd,xscale=-1]
      \draw [wire] (0, 1.2) node[obj,above] {$m$} -- (0, .8);
      \draw [wire] (-.6, -1.2) node[obj,below] {$\mathstrut c$} -- (-.6,-.7) to [in=-90,out=90] (-.15, .2);
      \draw [wire] (.3,-.2) to [in=-90,out=90] node[obj,left] {$m$} (.15, .2);
      \draw [wire] (0, -1.2) node[obj,below] {$\mathstrut m$} to [in=-90,out=90] (.15, -.8);
      \draw [wire] (.6, -1.2) node[obj,below] {$\mathstrut d$} to [in=-90,out=90] (.45, -.8);
      \draw[hub] (-.3,.2) rectangle (.3, .8) node[pos=.5] {$\rho$};
      \draw[hub] (0,-.8) rectangle (.6, -.2) node[pos=.5] {$\lambda$};
    \end{tikzpicture}
    \quad=\quad
    \begin{tikzpicture}[stringd]
      \draw [wire] (0, 1.2) node[obj,above] {$m$} -- (0, .8);
      \draw [wire] (-.6, -1.2) node[obj,below] {$\mathstrut d$} -- (-.6,-.7) to [in=-90,out=90] (-.15, .2);
      \draw [wire] (.3,-.2) to [in=-90,out=90] node[obj,right] {$m$} (.15, .2);
      \draw [wire] (0, -1.2) node[obj,below] {$\mathstrut m$} to [in=-90,out=90] (.15, -.8);
      \draw [wire] (.6, -1.2) node[obj,below] {$\mathstrut c$} to [in=-90,out=90] (.45, -.8);
      \draw[hub] (-.3,.2) rectangle (.3, .8) node[pos=.5] {$\lambda$};
      \draw[hub] (0,-.8) rectangle (.6, -.2) node[pos=.5] {$\rho$};
    \end{tikzpicture}
  \end{equation}
  
  Just $d\alttensor m\From{\lambda}m$ satisfying \eqref{eqn.bimod_left}
  is called a \emph{left $d$-comodule}; just $m\To{\rho}m\alttensor c$
  satisfying \eqref{eqn.bimod_right} is called a \emph{right
    $c$-comodule}; and \eqref{eqn.bimod_coherence} is a compatibility
  between these structures called the \emph{bicomodule axiom}. 
  A  \emph{morphism}
  $\varphi \colon (m,\lambda,\rho) \rightarrow (m',\lambda',\rho')$
  between $(c,d)$-bicomodules is a
  a map $\varphi \colon m \rightarrow m'$ in $\mathscr{V}$ 
  making
  the following diagram commute:
  \begin{equation}\label{eqn.bico_maps}
    \begin{tikzcd}
      d\alttensor m\ar[d, "d\alttensor\varphi"']&m\ar[d, "\varphi" description]\ar[l, "\lambda"']\ar[r, "\rho"]&m\alttensor c\ar[d, "\varphi\alttensor c"]\\
      d\alttensor m'&m'\ar[l, "\lambda'"]\ar[r, "\rho'"']&m'\alttensor c
    \end{tikzcd}
  \end{equation}
  or, as string diagrams:
  \addtocounter{equation}{-1}
  \begin{equation}
    \begin{tikzpicture}[stringd]
      \draw [wire] (0, 1.2) node[obj,above] {$m$} -- (0, .8);
      \draw [wire] (.4,-.2) to [in=-90,out=90] node[obj,right] {$m$} (.15, .2);
      \draw [wire] (-.4,-.2) to [in=-90,out=90] node[obj,left] {} (-.15, .2);
      \draw [wire] (.4, -1.2) node[obj,below] {$\mathstrut m'$} -- (.4, -.8);
      \draw [wire] (-.4, -1.2) node[obj,below] {$\mathstrut d$} -- (-.4, -.2);
      \draw[hub] (-.3,.2) rectangle (.3, .8) node[pos=.5] {$\lambda$};
      \draw[hub] (.1,-.8) rectangle (.7, -.2) node[pos=.5] {$\varphi$};
    \end{tikzpicture}\quad=\quad
    \begin{tikzpicture}[stringd]
      \draw [wire] (0, 1.2) node[obj,above] {$m$} -- (0, .8);
      \draw [wire] (0, -.2) -- node[obj,left] {$m'$} (0, .2);
      \draw [wire] (.4, -1.2) node[obj,below] {$\mathstrut m'$} to [in=-90,out=90] (.15, -.8);
      \draw [wire] (-.4, -1.2) node[obj,below] {$\mathstrut d$} to [in=-90,out=90] (-.15, -.8);
      \draw[hub] (-.3,.2) rectangle (.3, .8) node[pos=.5] {$\varphi$};
      \draw[hub] (-.3,-.8) rectangle (.3, -.2) node[pos=.5] {$\lambda'$};
    \end{tikzpicture}
    \qquad\qquad\qquad\qquad 
    \begin{tikzpicture}[stringd]
      \draw [wire] (0, 1.2) node[obj,above] {$m$} -- (0, .8);
      \draw [wire] (.4,-.2) to [in=-90,out=90] node[obj,right] {} (.15, .2);
      \draw [wire] (-.4,-.2) to [in=-90,out=90] node[obj,left] {$m$} (-.15, .2);
      \draw [wire] (.4, -1.2) node[obj,below] {$\mathstrut c$} -- (.4, -.2);
      \draw [wire] (-.4, -1.2) node[obj,below] {$\mathstrut m'$} -- (-.4, -.8);
      \draw[hub] (-.3,.2) rectangle (.3, .8) node[pos=.5] {$\rho$};
      \draw[hub] (-.1,-.8) rectangle (-.7, -.2) node[pos=.5] {$\varphi$};
    \end{tikzpicture}\quad=\quad
    \begin{tikzpicture}[stringd]
      \draw [wire] (0, 1.2) node[obj,above] {$m$} -- (0, .8);
      \draw [wire] (0, -.2) -- node[obj,left] {$m'$} (0, .2);
      \draw [wire] (.4, -1.2) node[obj,below] {$\mathstrut c$} to [in=-90,out=90] (.15, -.8);
      \draw [wire] (-.4, -1.2) node[obj,below] {$\mathstrut m'$} to [in=-90,out=90] (-.15, -.8);
      \draw[hub] (-.3,.2) rectangle (.3, .8) node[pos=.5] {$\varphi$};
      \draw[hub] (-.3,-.8) rectangle (.3, -.2) node[pos=.5] {$\rho'$};
    \end{tikzpicture}
  \end{equation}
  We write $\comod(\mathscr{V})(c,d)$ for the category of
  $(c,d)$-bicomodules in $\mathscr{V}$.
\end{definition}
\begin{remark}
  Note that a bicomodule $m \colon c \altbito d$ involves a \emph{right}
  $c$-action and a \emph{left} $d$-action. This is because we view the
  tensor product as a Leibniz-ordered composition, so that a map
  $\lambda \colon m \rightarrow d \alttensor m$ goes from $m$ to ``$d$
  after $m$'', and likewise $\rho \colon m \rightarrow m \alttensor c$
  involves ``$m$ after $c$''; so, naturally, $m$ goes \emph{from}
  $c$ \emph{to} $d$.
\end{remark}

Suppose now that the tensor product $\alttensor$ of $\mathscr{V}$
preserves equalizers of coreflexive pairs in each variable; this is
true, for example, when $\mathscr{V}$ is vector spaces over a field
but also, and more saliently, when $\mathscr{V} = \poly$
(see~\cref{lemma.preservation_of_equalizers}). In this situation, we
have a well-behaved \emph{composition} of bicomodules:
\begin{definition}[Bicomodule composition]\label{def.bicomodulecomposition}
  Let $\mathscr{V}$ be a monoidal category whose tensor product
  preserves coreflexive equalizers in each variable. The
  \emph{composite} of bicomodules $m \colon c \altbito d$ and
  $n \colon d \altbito e$ is the bicomodule
  $n \alttensor_{d} m \colon c \altbito e$ with carrier given by the
  following coreflexive equalizer in $\mathscr{V}$:
  \begin{equation}\label{def.bicomodule_equalizer}
    n\alttensor_d m\to n\alttensor m\tto n\alttensor d\alttensor m.
  \end{equation}
  Here, the two maps
  $n\alttensor m \rightrightarrows n\alttensor d\alttensor m$ are
  $n\alttensor\lambda_m$ and $\rho_n\alttensor m$ respectively, and the
  common retraction making them into a coreflexive pair is
  $n \alttensor \epsilon_d \alttensor m \colon n \alttensor d \alttensor m \rightarrow n
  \alttensor m$. To obtain the $(c,e)$-bicomodule structure on
  $n\alttensor_d m$, note that tensoring the above coreflexive equalizer
  with $e$ and $c$ yields another equalizer as along the bottom of:
  \begin{equation}\label{def.bicomodule_comp}
    \begin{tikzcd}[column sep=large]
      n\alttensor_d m\ar[r]\ar[d, dashed]&n\alttensor m\ar[r, shift left]\ar[r, shift right]\ar[d, "\lambda\alttensor\rho"]&n\alttensor d\alttensor m\ar[d, "\lambda\alttensor d\alttensor\rho"]\\
      e\alttensor n\alttensor_d m\alttensor c\ar[r]&e\alttensor n\alttensor m\alttensor c\ar[r, shift left]\ar[r, shift right]&e\alttensor n\alttensor d\alttensor m\alttensor c\rlap{ .}
    \end{tikzcd}
  \end{equation}
  The bicomodule structure maps are now obtained from the unique
  dashed map making the left square commute, as displayed above; we
  leave it to the reader to check the bicomodule axioms. With the
  obvious extension of this action to bicomodule morphisms, we obtain
  in this way
  a functor
  \begin{equation*}
    \label{equ.bicomodule_comp}
    \alttensor_d \colon \comod(\mathscr{V})(d,e) \times \comod(\mathscr{V})(c,d) \rightarrow \comod(\mathscr{V})(c,e)\rlap{ .}
  \end{equation*}
\end{definition}

\begin{definition}[Bicategory of bicomodules]\label{def.bicat_bicomod}
  Let $\mathscr{V}$ be a monoidal category whose tensor product
  preserves coreflexive equalizers in each variable. The
  \emph{bicategory of bicomodules} $\comod(\mathscr{V})$ has as
  objects, comonoids in $\mathscr{V}$; the hom-category from $c$ to
  $d$ is $\comod(\mathscr{V})(c,d)$ as
  in~\cref{def.bicomodule}, and composition is given as
  in~\cref{def.bicomodule_comp}. The identity morphism on
  $(c, \epsilon, \delta)$ is $c$ itself with left and right coactions
  both equal to $\delta \colon c \rightarrow c \alttensor c$. The
  associativity constraints
  $(o \alttensor_d n) \alttensor_e m \cong o \alttensor_d (n \alttensor_e m)$ come
  from the fact that both triple composites equalize the parallel pair
\begin{equation*}
  \begin{tikzcd}[column sep=10em]
      o\alttensor n \alttensor m\ar[r, shift left, "o \alttensor \lambda_n \alttensor \lambda_m"]\ar[r, shift right,"\rho_o \alttensor \rho_n \alttensor m"'] &o\alttensor d\alttensor n\alttensor e \alttensor m\rlap{ ,}
    \end{tikzcd}
\end{equation*}
and the unit constraints are found similarly.
\end{definition}    

\begin{remark}[Degenerate bicomodules]\label{rem.degenerate}
  Let $c$ be a comonoid in $\mathscr{V}$, and let $I$ be the monoidal unit of
  $\mathscr{V}$, seen as a comonoid in the unique possible way. Then
  for purely formal reasons:
  \begin{itemize}
  \item a $(c,I)$-bicomodule is just a right $c$-comodule (a map
    $\rho \colon m \rightarrow m \alttensor c$ satisfying \eqref{eqn.bimod_left});
  \item an $(I, d)$-bicomodule is just a left $d$-comodule (a  map
    $\lambda \colon d \alttensor m \leftarrow m$ satisfying \eqref{eqn.bimod_right});
  \item an $(I,I)$-bicomodule is just an object $m$.
  \end{itemize}
  This is because there are unique left and right $I$-comodule
  structures on $m$, each of which is automatically compatible with
  any comodule on the other side.
\end{remark}

In much the same way as we did for $\ccatsharp$, we can enhance
$\comod(\mathscr{V})$ to a framed bicategory; and, like before,
it will be convenient to do this by way of a pro-arrow equipment. In
the below, we write $\comon(\mathscr{V})$ for the category of
comonoids and comonoid homomorphisms in $\mathscr{V}$.

\begin{definition}[Bicomodules from homomorphisms]
  \label{def.bicomod_from_homom}
  Let $c$ and $d$ be comonoids in $\mathscr{V}$, and let
  $\varphi\colon c \to d$ be a comonoid homomorphism. We define the
  bicomodules $\varphi\lpush  \colon c \altbito d$ and
  $\varphi^\ast \colon d \altbito c$ to each have underlying object
  $c \in \mathscr{V}$, and respective coactions
  \begin{equation*}
    d \alttensor c\Fromm{\delta \,\then\, (\varphi \alttensor c)} c \Too{\delta} c \alttensor c \qquad \text{and} \qquad 
    c \alttensor c\Fromm{\delta} c \Too{\delta \,\then\, (c \alttensor \varphi)} c \alttensor d\rlap{ .}
  \end{equation*}
\end{definition}
\begin{proposition}
  \label{prop.comonoid_homomorphism_embedding}
  For each comonoid homomorphism 
  $\varphi\colon c \to d$ in $\mathscr{V}$, we have an adjunction
  \[
    \begin{tikzcd}[column sep=large]
      c\ar[r, tick, shift left=5pt, "\varphi\lpush "]
      \ar[r, phantom, "\scriptstyle\Rightarrow"]&
      d\ar[l, tick, shift left=5pt, "\varphi^\ast"]
    \end{tikzcd}
  \]
  in $\comod(\mathscr{V})$. Moreover, the assignments
  $\varphi \mapsto \varphi\lpush $ and $\varphi \mapsto \varphi^\ast$
  yield pseudofunctors $(\thg)\lpush  \colon \comon(\mathscr{V}) \rightarrow \comod(\mathscr{V})$
  and $(\thg)^\ast \colon \comon(\mathscr{V}) \rightarrow \comod(\mathscr{V})$.
\end{proposition}
\begin{proof}
  Standard and hence omitted.
\end{proof}
\begin{definition}[The framed bicategory of bicomodules]\label{def.bicomod_framed}
  $\ccomod(\mathscr{V})$ is the framed bicategory obtained from the pro-arrow
  equipment
  $(\thg)\lpush  \colon \comon(\mathscr{V}) \rightarrow
  \comod(\mathscr{V})$. Thus, its objects are comonoids in
  $\mathscr{V}$, its vertical morphisms are comonoid homomorphisms,
  and its horizontal morphisms are bicomodules.
\end{definition}

\subsection{Characterising $\ccomod(\mathscr{V})$}

In this section, we explain how to characterize framed bicategories of
the form $\ccomod(\mathscr{V})$; in the next section, we will apply
this characterisation to deduce that $\ccatsharp$ is nothing other
than $\ccomod(\poly)$. In fact, it will be easier for us to
characterize the pro-arrow equipment that generates
$\ccomod(\mathscr{V})$ rather than $\ccomod(\mathscr{V})$ itself, as
then we can exploit results from~\cite{Wood:1985a}.

To motivate the development, note first that by \cref{rem.degenerate},
we can re-find the category $\mathscr{V}$ inside the pro-arrow
equipment of comodules as the hom-category $\comod(\mathscr{V})(I,I)$;
moreover, the monoidal structure of $\mathscr{V}$ is re-found as
composition by looking at the composition of $(I,I)$-bicomodules. This
suggests paying special attention to the object $I$. In fact, $I$ has
a universal characterisation: it is terminal in $\comon(\mathscr{V})$.
Indeed, for any other comonoid $c$, the unique comonoid map to $I$ is
given by the counit map $c \rightarrow I$ of $c$.

This leads us to our first assumption; we will gradually add
assumptions like the below, each of which is satisfied by pro-arrow
equipments of bicomodules, until we have a complete characterisation
result.
\begin{assumption}\label{assn.terminal}
  $(\thg)\lpush  \colon \prov \rightarrow \proh$ is a pro-arrow
  equipment for which $\prov$ has a terminal object $I$.
\end{assumption}

Given \cref{assn.terminal}, we can write $\mathscr{V}$ for the
monoidal category $\proh(I,I)$ under composition. Our objective
is to obtain a map of pro-arrow equipments from
$(\prov \rightarrow \proh)$ into the proarrow equipment of
comonoids and bicomodules in $\mathscr{V}$, and then to show that this
is an equivalence. In order for $\comod(\mathscr{V})$ to even exist,
we must assume that $\mathscr{V} = \proh(I,I)$ has coreflexive
equalizers which are preserved by composition in each variable. In
fact, it is natural to immediately assume something stronger:
\begin{assumption}\label{assn.localeqs}
  $(\thg)\lpush  \colon \prov \rightarrow \proh$ has \emph{local
    coreflexive equalizers}: meaning that each hom-category
  $\proh(a,b)$ has coreflexive equalizers and these are
  preserved by pre- and post-composition by $1$-cells of
  $\proh$.
\end{assumption}
Under these assumptions, we now set about constructing a comparison of
pro-arrow equipments:
\begin{equation}\label{eqn.proarrow_comparison}
  \begin{tikzcd}
    \prov
    \ar[r, "(\thg)\lpush "]
    \ar[d, "\vcomp"'] &
    \proh 
    \ar[d, "\hcomp"] \\
    \comon(\mathscr{V}) 
    \ar[r, "(\thg)\lpush "']&
    \comod(\mathscr{V})\rlap{ ,}
  \end{tikzcd}
\end{equation}
that is, a functor $K_v$ and a strong functor of bicategories $K_h$,
fitting into a square which commutes to within an identity-on-objects
pseudonatural equivalence.

We start by constructing the functor down the left of this square. For
any $a \in \prov$, let us write
\begin{equation}
  \label{eqn.sa_ta_adjunction}
  \begin{tikzcd}[column sep=large]
    a\ar[r, shift left=5pt, "s_a"]
    \ar[r, phantom, "\scriptstyle\Rightarrow"]&
    I\ar[l, shift left=5pt, "t_a"]
  \end{tikzcd}
\end{equation}
for the adjunction in $\proh$ obtained by applying $(\thg)\lpush $
and $(\thg)^\ast$ to the unique $\prov$-map $a \rightarrow I$. This
adjunction yields a comonad $s_a\circ t_a$ on $I$, hence a
comonoid $s_a \circ t_a$ in $\mathscr{V} = \proh(I,I)$.

\begin{lemma}
  The assignment $a \mapsto s_a t_a$ is the
  action on objects of a functor
  $\vcomp \colon \prov \rightarrow \comon(\mathscr{V})$.
\end{lemma}

Again, we simplify notation by treating bicategories as if
they were strict $2$-categories by suppressing associativity and
unitality constraints (as we have been doing for monoidal
categories). This is justified by the coherence theorem for
bicategories.
\begin{proof}
  Let $f \colon a \rightarrow b$ be a morphism in $\prov$. Because
  $f$ commutes with the unique maps to the terminal object, and
  because $(\thg)\lpush $ and $(\thg)^\ast$ are pseudofunctorial, we
  have invertible $2$-cells
  \begin{equation}
    \label{eqn.coherence_for_equipment}
    \begin{tikzcd}
      a\ar[dr, "s_a"' name=A]\ar[rr, "f\lpush ", ""' name=M] & \ar[from=M, to=d, Rightarrow, shorten=2mm, "\cong"] &
      b \ar[dl, "s_b"  name=B] & &
      a\ar[dr, <-, "t_a"']\ar[rr, <-, "f^\ast", ""' name=M'] & \ar[from=M', to=d, Rightarrow, shorten=2mm, "\cong"] &
      b \ar[dl, <-,  "t_b"] \\
      & I & & &&  I
    \end{tikzcd}
  \end{equation}
  These $2$-cells give rise to the first map in the following
  composite in $\mathscr{V} = \proh(I,I)$:
  \begin{equation}
    \label{eqn.homomorphism_from_A}
    \vcomp(f) \coloneqq s_a t_a \xrightarrow{\cong} s_b f\lpush  f^\ast t_b \xrightarrow{s_b \epsilon t_b} s_b t_b
  \end{equation}
  where the second map is the whiskered counit of the adjunction
  $f\lpush  \dashv f^\ast$. It is now easy to check that $\vcomp(f)$ is a
  comonoid homomorphism $s_a t_a \rightarrow s_b t_b$, and that this
  assignment is functorial in $f$.
\end{proof}
We next construct the action on homs of the functor down the
right-hand side of \cref{eqn.proarrow_comparison}. Here, and in what
follows, we write ``$K$'' for the action of either $\vcomp$ or
$\hcomp$ where this does not lead to ambiguity.
\begin{definition}\label{def.proarrow_comparison_homs}
  For any $m \colon a \rightarrow b$ in $\proh$, the bicomodule
  $K(m) \colon K(a) \altbito K(b)$
  has underlying object
  \begin{equation*}
    I \xrightarrow{t_a} a \xrightarrow{m} b \xrightarrow{s_b} I
  \end{equation*}
  in $\mathscr{V} = \proh(I,I)$, and left and right actions
  by $K(b) = s_b t_b$ and $K(a) = s_a t_a$ given by
  \begin{equation}
    \label{eqn.bicomodule_actions_from_B}
    s_b t_b s_b m t_a \Fromm{s_b \eta_b m t_a} s_b m t_a \Too{s_b m \eta_a t_a} s_b m t_a s_a t_a
  \end{equation}
  where $\eta_a$ and $\eta_b$ are the units of the adjunctions
  $s_a \dashv t_a$ and $s_b \dashv t_b$. With the
  obvious extension to morphisms, this yields a functor $K_{a,b}
  \colon\proh(a,b) \rightarrow \comod(\mathscr V)\bigl(K(a),K(b)\bigr)$.
\end{definition}
We now show that the functors of the preceding definition provide the
action on homs of a morphism of bicategories
$\hcomp \colon \proh \rightarrow \comod(\mathscr{V})$. However, there
is a subtlety with regard to preservation of composition.

\begin{lemma}\label{lem.functoriality_proarrow_lemma}
  The functors
  $K_{a,b} \colon \proh(a,b) \rightarrow \comod(\mathscr
  V)\bigl(K(a),K(b)\bigr)$ are the action on homs of a map of
  bicategories $\hcomp \colon \proh \rightarrow \comod(\mathscr{V})$
  which preserves identities to within coherent isomorphism, but
  preserves composition of $1$-cells $m \colon a \rightarrow b$ and
  $n \colon b \rightarrow c$ only up to (not necessarily invertible)
  bicomodule maps
  \begin{equation}
    \label{eqn.bicomod_equipment_oplax_constraint}
    \begin{tikzcd}
      K(a) \ar[dr, "K(m)"']\ar[rr, "K(n\circ m)", ""' name=M] & \ar[from=M, to=d, Rightarrow, shorten=2mm, "\phi_{n,m}"] &
      K(c)\rlap{ .} \ar[dl, <-, "K(n)"] \\
      & K(b)
    \end{tikzcd}
  \end{equation}
  Said another way, $\hcomp \colon \proh \rightarrow \comod(\mathscr{V})$ is
  an \emph{oplax normal functor} between bicategories. The maps
  $\phi_{n,m}$ will be invertible---so that $\hcomp$ is a \emph{strong
    functor} of bicategories---so long as, for each $a \in \prov$,
  the following diagram is an equalizer in $\proh(a,a)$:
  \begin{equation}\label{eqn.bicomod_char_equalizer}
    \begin{tikzcd}
      {\id_a} & {t_as_a} && {t_a s_a t_a s_a\rlap{ .}}
      \arrow["\eta_a", from=1-1, to=1-2]
      \arrow["{\eta_a t_as_a}", shift left=1.5, from=1-2, to=1-4]
      \arrow["{t_as_a\eta_a}"', shift right=1.5, from=1-2, to=1-4]
    \end{tikzcd}
  \end{equation}
\end{lemma}
\begin{proof}
  $\hcomp$ sends the identity homomorphism $a \rightarrow a$ to the
  bicomodule $K(a) \altbito K(a)$ carried by the object $s_a t_a$ and
  coactions
  \begin{equation*}
    s_a t_a s_a t_a \Fromm{s_a \eta_a t_a} s_a t_a \Too{s_a \eta_a t_a} s_a t_a s_a t_a\rlap{ ;}
  \end{equation*}
  but these maps are both the comultiplication of the comonoid $K(a)$,
  so that this is the identity bicomodule on $K(a) = s_a t_a$. Thus
  $\hcomp$ preserves identities; as for binary composition, let
  $m \colon a \rightarrow b$ and $n \colon b \rightarrow c$ be maps of
  $\proh$. The composite of $K(n)$ and $K(m)$ in
  $\comod(\mathscr{V})$ is the tensor product
  $K(n) \alttensor_{K(b)} K(m)$, which is the $(K(a), K(c))$-bicomodule
  found as the equalizer along the top row of:
  \begin{equation}
    \label{eqn.bicomod_equipment_functor_diagram}
    \begin{tikzcd}
      {K(n) \alttensor_{K(b)} K(m)} & {s_c n t_bs_bmt_a} && {s_c n t_bs_bt_bs_bmt_a\rlap{ .}} \\
      {s_c nm t_a}
      \arrow[from=1-1, to=1-2]
      \arrow["{s_cn \cdot \eta_b \cdot t_bs_b mt_a}", shift left=2, from=1-2, to=1-4]
      \arrow["{s_cn t_bs_b \cdot \eta_b \cdot mt_a}"', shift right=2, from=1-2, to=1-4]
      \arrow["{\phi_{n,m}}", dashed, from=2-1, to=1-1]
      \arrow["{s_c n \cdot \eta_b \cdot m t_a}"', from=2-1, to=1-2]
    \end{tikzcd}
  \end{equation}
  On the other hand, $K(nm)$ is the bicomodule on the bottom row, and
  so from the commuting fork of bicomodules along the bottom, we
  induce the bicomodule map $\phi_{n,m}$ required for
  \cref{eqn.bicomod_equipment_oplax_constraint}. It is straightforward
  to check functorial coherence of these data.

  Finally, suppose that \cref{eqn.bicomod_char_equalizer} is an
  equalizer. Since it is an equalizer of a coreflexive pair (with
  common retraction $t_a \epsilon_a s_a$), it is preserved by pre-
  and post-composition in $\proh$. Thus the fork along the
  bottom of \cref{eqn.bicomod_equipment_functor_diagram} is, like the
  fork across the top, an equalizer; whence the comparison map
  $\phi_{n,m}$ is invertible.
\end{proof}
The equalizer condition in \cref{eqn.bicomod_char_equalizer} does in
fact hold in equipments of bicomodules, and so could be adopted as an
assumption now; however, it doubtless looks unmotivated and \emph{ad
  hoc}, and so we postpone doing so until we have given a cleaner
reformulation in \cref{prop.wood} below. For now, let us make good on the promise made surrounding \eqref{eqn.proarrow_comparison}.
\begin{lemma}
  The square \cref{eqn.proarrow_comparison} commutes to
  within an identity-on-objects pseudonatural equivalence.
\end{lemma}
\begin{proof}
  This says that we have coherent isomorphisms of bicomodules
  $(Kf)\lpush  \cong K(f\lpush ) \colon K(a) \altbito K(b)$ for each map
  $f \colon a \rightarrow b$ of $\prov$. On the one hand,
  $(Kf)\lpush $ is the bicomodule induced by the homomorphism
  $Kf \colon s_a t_a \rightarrow s_b t_b$ defined as in
  \cref{eqn.homomorphism_from_A}. Thus, it has underlying object $s_a
  t_a$, and actions
  \begin{equation*}
    s_b t_b s_a t_a \Fromm{s_b \epsilon t_b s_a t_a} s_b f\lpush  f^\ast t_b s_a t_a  \Fromm{\cong} s_a t_a s_a t_a \Fromm{s_a \eta t_a}s_a t_a \Too{s_a \eta t_a} s_a t_a s_a t_a\rlap{ .}
  \end{equation*}
  On the other hand, $K(f\lpush )$ has underlying object
  $s_b f\lpush  t_a$, and actions as in
  \cref{eqn.bicomodule_actions_from_B}, thus:
  \begin{equation*}
    s_bt_bs_bf\lpush  t_a  \Fromm{s_b \eta f\lpush  t_a} s_b f\lpush  t_a \Too{s_b f\lpush  \eta t_a} s_b f\lpush  t_a t_b t_a\rlap{ .}
  \end{equation*}
  Now, the isomorphism to the left of
  \cref{eqn.coherence_for_equipment} yields an isomorphism
  $s_a t_a \rightarrow s_b f\lpush  t_a$ in $\mathscr{V}$, which is
  clearly compatible with the $t_b t_a$-actions displayed to the right
  above, and is compatibile with the $s_b s_a$-actions by a short
  calculation. Finally, the isomorphisms
  $(Kf)\lpush  \cong K(f\lpush ) \colon K(a) \altbito K(b)$ are functorial
  in $f$ precisely because $(\thg)\lpush $ is a pseudofunctor.
\end{proof}

We have thus constructed a map of pro-arrow equipments from
$\prov \rightarrow \proh$ to
$\comon(\mathscr{V}) \rightarrow \comod(\mathscr{V})$. We now address
the conditions under which this map will be an equivalence.  The key
notion we require is that of co-Eilenberg--Moore and co-Kleisli objects.

\begin{definition}[Left/right/bi-comodule in a bicategory]\label{def.comod_2}
  Let $\mathscr{K}$ be a bicategory, let $a,b \in \mathscr{K}$ and let
  $c$ be a comonad on $b$. A \emph{left $c$-comodule with domain $a$}
  in $\mathscr{K}$ comprises a morphism $m \colon a \rightarrow b$ and
  a $2$-cell $\lambda \colon m \Rightarrow c m$ satisfying the same
  axioms as in~\eqref{eqn.bimod_left}. (Again, these axioms as stated
  treat $\mathscr{K}$ as if it were strict for convenience, but
  corresponding weakened forms can be used if desired.) These
  comodules are the objects of the category $\Cat{LComod}(a,c)$, whose
  morphisms $(m,\lambda) \rightarrow (m', \lambda')$ are $2$-cells
  $\alpha \colon m \Rightarrow m'$ such that
  $\lambda' \circ \alpha = c \alpha \circ \lambda$.

  Similarly, for $d$ a comonad on $b$, have the dual notion of a
  \emph{right $d$-comodule with codomain $b$} and the corresponding
  category $\Cat{RComod}(c,b)$.

  As before, \emph{$(c, d)$-bicomodule} comprises a right $c$-module
  and a left $d$-module satisfying the bicomodule axiom as
  in~\eqref{eqn.bimod_coherence}. Also note that, under appropriate
  assumptions (i.e., preservation of coreflexive equalizers under
  horizontal composition), these assemble into a (framed) bicategory
  of bicomodules $\ccomod(\mathscr{K})$, just as
  in~\cref{def.bicat_bicomod}.
\end{definition}

\begin{definition}[Co-Eilenberg--Moore and co-Kleisli objects]
  Each morphism $h \colon a \rightarrow a'$ of $\mathscr{K}$ induces a
  functor
  $h^\ast \colon \Cat{LComod}(a',c) \rightarrow \Cat{LComod}(a,c)$ by
  precomposition with $h$; similarly, each $2$-cell
  $\alpha \colon h \Rightarrow h'$ of $\mathscr{K}$ induces a natural
  transformation $\alpha^\ast \colon h^\ast \Rightarrow (h')^\ast$. An
  \emph{co-Eilenberg--Moore object} for the comonad $c$ is a universal
  left $c$-comodule
  $(u^c \colon b^c \rightarrow b, \, \lambda \colon u^c \Rightarrow c
  u^c)$, where universality says that, for any object
  $a \in \mathscr{K}$, the functor
  \begin{align*}
    \mathscr{K}(a,b^c) & \rightarrow \Cat{LComod}(a, c) &
    h & \mapsto h^\ast(u^c, \lambda^c)
  \end{align*}
  is an equivalence of categories.
  Dually, we have the notion of a \emph{right $c$-comodule with
    codomain $a$}, involving a morphism $m \colon b \rightarrow a$ and
  $2$-cell $\rho \colon m \rightarrow mc$; and a \emph{co-Kleisli
    object} for the comonad $c$ is a universal right $c$-comodule
  $(g_c \colon b \rightarrow b_c,\, \rho \colon b_c \Rightarrow b_c
  c)$.
\end{definition}

\begin{example}[Co-Eilenberg--Moore and co-Kleisli objects in $\smcat$]
  In the $2$-category $\smcat$, the co-Eilenberg--Moore object of a
  comonad $C \colon x \rightarrow x$ on a category $x$ is given by the
  category $x^C$ of Eilenberg--Moore coalgebras equipped with its
  forgetful functor $U^C \colon x^C \rightarrow x$. The universal left
  $C$-comodule $\lambda \colon U^C \Rightarrow CU^C$ is the natural
  transformation whose component at a coalgebra
  $(a \in x, \alpha \colon a \rightarrow Ca)$ is given by $\alpha$.

  On the other hand, the co-Kleisli object of $C$ is the co-Kleisli
  category $x_C$, whose objects are those of $x$, and whose maps $a
  \tickar b$ are maps $Ca \rightarrow b$ in $x$, and with composition
  defined using the comonad structure of $C$. The functor $G_C \colon
  x \rightarrow x_C$ is the identity on objects, and sends $f \colon a
  \rightarrow b$ to $f \circ \epsilon_a \colon a \tickar b$; and the
  universal right $C$-comodule $\rho \colon G_C \rightarrow G_C C$ has
  component at $a \in x$ given by $\id_{Ca} \colon a \tickar Ca$.
\end{example}

\begin{example}[Co-Eilenberg--Moore and co-Kleisli objects in $\comod(\mathscr{V})$]\label{ex.cokleisli}
  In the bicategory $\comod(\mathscr{V})$, an object is a comonoid $d$
  in $\mathscr{V}$. A comonad $c$ on that object is a $(d,d)$-bicomodule
  $c$ equipped with bicomodule maps $\iota \colon c \rightarrow d$ and
  $\upsilon \colon c \rightarrow c \alttensor_d c$ satisfying the usual
  axioms. By dualizing the usual argument for algebras, we see that
  these data are equivalent to giving a comonoid $c$ together with a
  comonoid homomorphism $\iota \colon c \rightarrow d$.
  From these data, we obtain the adjunction:
  \begin{equation*}
    \begin{tikzcd}[column sep=large]
      c\ar[r, shift left=5pt, "\iota\lpush "]
      \ar[r, phantom, "\scriptstyle\Rightarrow"]&
      d\ar[l, shift left=5pt, "\iota^\ast"]
    \end{tikzcd}
  \end{equation*}
  in $\comod(\mathscr{V})$ and it is not hard to check that the
  comonad $\iota\lpush  \iota^\ast$ on $d$ generated by this adjunction 
  is the comonad $c$ we started with. It is now the case that:
  \begin{itemize}
  \item The $1$-cell $\iota\lpush  \colon c \rightarrow d$ and $2$-cell $\smash{\iota\lpush  \xRightarrow{\eta \iota\lpush }
    \iota\lpush  \iota^\ast \iota\lpush  = c \iota\lpush }$ form a co-Eilenberg--Moore object for~$c$;
  \item The $1$-cell $\iota^\ast \colon d \rightarrow c$ and $2$-cell
    $\smash{\iota^\ast \xRightarrow{\iota^\ast \eta}
    \iota^\ast \iota\lpush  \iota^\ast = \iota^\ast c}$ form
    a co-Kleisli object for $c$.
  \end{itemize}
\end{example}

Thus, in $\comod(\mathscr{V})$, ``Eilenberg--Moore and Kleisli objects
coincide''. This is analogous to the fact that, in the category of
abelian groups, finite products and finite coproducts coincide. In
fact, this latter coincidence holds not just in abelian groups, but in
any category whose hom-sets have commutative monoid structures
preserved by composition in each variable. Similarly, the coincidence
of co-Eilenberg--Moore and co-Kleisli objects holds in any bicategory
with local reflexive coequalizers:

\begin{proposition}\label{prop.wood}
  Let $\mathscr{K}$ be a bicategory with local reflexive coequalizers.
  Consider an adjunction
  \begin{equation}
    \label{eqn.bicat_adjunction}
    \begin{tikzcd}[column sep=large]
      y\ar[r, shift left=5pt, "s"]
      \ar[r, phantom, "\scriptstyle\Rightarrow"]&
      x\ar[l, shift left=5pt, "t"]
    \end{tikzcd}
  \end{equation}
  in $\mathscr{K}$ with unit $\eta$ and counit $\epsilon$, and let $c$
  be the comonad on $x$ it generates. The following are equivalent:
  \begin{enumerate}[(1)]
  \item $s \eta \colon s \Rightarrow sts$ exhibits
    $s$ as an Eilenberg--Moore object for $c$ (we may say that the
    adjunction is \emph{comonadic});
  \item $\eta t \colon t \Rightarrow tst$ exhibits $t$ as
    a Kleisli object for $c$ (we may say that the adjunction is of
    \emph{co-Kleisli type});
  \item The following diagram is an equalizer in $\mathscr{K}(x,x)$:
    \begin{equation*}
      \begin{tikzcd}
        {\id_a} & {ts} && {tsts\rlap{ .}}
        \arrow["\eta", from=1-1, to=1-2]
        \arrow["{\eta ts}", shift left=1.5, from=1-2, to=1-4]
        \arrow["{ts\eta}"', shift right=1.5, from=1-2, to=1-4]
      \end{tikzcd}
    \end{equation*}
  \end{enumerate}
\end{proposition}
\begin{proof}
  This is \cite[Corollary 23]{Wood:1985a}; we do not reproduce the
  proof here as we will not need the details. 
\end{proof}
\begin{remark}
  The analogy we drew with the fact that, in a category enriched over
  commutative monoids, finite products and finite coproducts coincide,
  is in fact more than just analogy; see
  \cite[Section~15]{Garner.Shulman:2013a}.
\end{remark}

Note that the equalizer diagram in condition (3) of the preceding
proposition is the same diagram as in
\cref{eqn.bicomod_char_equalizer}. Thus, the assumption that these
diagrams are indeed equalizers can be equivalently stated as:
\begin{assumption}\label{assn.eilenberg_moore}
  In the pro-arrow equipment
  $(\thg)\lpush  \colon \prov \rightarrow \proh$, the adjunction
  in $\proh$ induced by each $a \in \prov$, as displayed
  below, is comonadic (and so also of co-Kleisli type):
  \begin{equation}
    \label{eqn.eilenberg_moore}
  \begin{tikzcd}[column sep=large]
    a\ar[r, shift left=5pt, "s_a"]
    \ar[r, phantom, "\scriptstyle\Rightarrow"]&
    I\rlap{ .}\ar[l, shift left=5pt, "t_a"]
  \end{tikzcd}
\end{equation}
\end{assumption}
This assumption not only ensures that $\hcomp \colon \proh
\rightarrow \comod(\mathscr{V})$ is strong, but also goes some of the
way towards verifying it is a biequivalence of bicategories:
\begin{proposition}
  If \crefrange{assn.terminal}{assn.eilenberg_moore} hold, then the
  comparison morphism
  $\hcomp \colon \proh \rightarrow \comod(\mathscr{V})$ of
  \cref{eqn.proarrow_comparison} is strong, and also bi-fully faithful,
  i.e., an equivalence on hom-categories.
\end{proposition}
\begin{proof}
  $\hcomp$ is strong by \cref{lem.functoriality_proarrow_lemma} and
  \cref{prop.wood}. For the second claim, consider objects
  $a,b \in \proh$ and the hom-category $\proh(a,b)$. Since
  the adjunction $s_b \dashv t_b$ is comonadic, we have an equivalence
  of categories
  \begin{equation}
    \label{eqn.bifully_faithful_proof_1}
    \begin{aligned}
      \proh(a,b) & \rightarrow \Cat{LComod}(a, K(b)) & \qquad m &
      \mapsto m^\ast(s_b, s_b \eta_b) = (s_b m, s_b \eta_b m)\rlap{ .}
    \end{aligned}
  \end{equation}
  Now, an object of $\Cat{LComod}(a, K(b))$ involves a $1$-cell
  $n \colon a \rightarrow I$ of $\proh$ together with a $2$-cell
  $\lambda \colon n \Rightarrow s_b t_b n$, plus left comodule axioms
  for $\lambda$. Note that all these data live in the hom-category
  $\proh(a, I)$; and because the adjunction $s_a \dashv t_a$ is
  of co-Kleisli type, there is an equivalence of categories
  \begin{equation}
    \label{eqn.bifully_faithful_proof_2}
    \begin{aligned}
      \proh(a,I) & \rightarrow \Cat{RComod}(K(a), I) & \qquad n &
      \mapsto n\lpush (t_a, \eta_a t_a) = (n t_a, n \eta_a t_a)\rlap{ .}
    \end{aligned}
  \end{equation}
  When we transport the data for an object of $\Cat{LComod}(a,
  K(b))$ under this equivalence, it becomes:
  \begin{itemize}
  \item An object of $\Cat{RComod}(K(a), I)$, so a map
    $p \colon I \rightarrow I$ and $2$-cell
    $\rho \colon p \Rightarrow p s_a t_a$ plus right comodule~axioms;
  \item A map $p \rightarrow s_b t_b p$ of $\Cat{RComod}(K(a), I)$,
    so a $2$-cell $\lambda \colon p \Rightarrow s_b t_b p$ which is
    a map of right comodules;
  \item Plus left comodule axioms for $\lambda$.
  \end{itemize}
  The condition that $\lambda$ is a map of right comodules is exactly
  the \emph{bimodule compatibility axiom} of
  \cref{eqn.bimod_coherence}; and so we have shown that the
  equivalence \cref{eqn.bifully_faithful_proof_2} lifts to an
  equivalence
  \begin{equation*}
    \begin{aligned}
      \Cat{LComod}(a, K(b)) & \rightarrow
      \comod(\mathscr{V})(K(a), K(b)) & (n,\lambda)  &
      \mapsto (nt_a, \lambda t_a, n\eta_a t_a)\rlap{ .}
    \end{aligned}
  \end{equation*}
  Composing this with the equivalence
  of~\cref{eqn.bifully_faithful_proof_1} yields an equivalence
  $\proh(a,b) \rightarrow \comod(\mathscr{V})(K(a), K(b))$ which, by
  inspection, is precisely the action on homs of
  $\hcomp \colon \proh \rightarrow \comod(\mathscr{V})$ as given in
  \cref{def.proarrow_comparison_homs}.
\end{proof}
Although \cref{assn.eilenberg_moore} ensures that
$\hcomp \colon \proh \rightarrow \comod(\mathscr{V})$ is bi-fully
faithful, it does not imply that
$\vcomp \colon \prov \rightarrow \comon(\mathscr{V})$ is fully
faithful. This is because the universal property of a co-Kleisli or
co-Eilenberg--Moore object in $\proh$ does not provide a way to talk
about maps in $\prov$. Thus, we must assume something more.

\begin{assumption}\label{assn.tightness}
  Whenever we have a pseudo-commuting triangle in $\proh$ of the form:
  \begin{equation}\label{eqn.tightness}
    \begin{tikzcd}
      a \arrow[rr, "m"]
      \arrow[dr, "s_a"', ""{name=0, anchor=center, inner sep=0}] &&
      b
      \arrow[dl, "s_b", ""{name=1, anchor=center, inner sep=0}] \\
      & I
      \arrow["\theta", "\raisebox{2pt}{$\cong$}"', shift right, shorten <=10pt, shorten >=10pt, Rightarrow, from=1, to=0]
    \end{tikzcd}
  \end{equation}
  there is a unique map $f \colon a \rightarrow b$ in $\prov$ and
  invertible $2$-cell $\varphi \colon f\lpush  \Rightarrow m$ in $\proh$ such
  that the functoriality constraint of 
  $(\thg)\lpush $ at the pair of composable maps $f \colon a \rightarrow b$ and $! \colon b
  \rightarrow I$ is equal to the composite $2$-cell:
  \setlength{\abovedisplayskip}{6pt} 
  \begin{equation*}
     s_b f\lpush  \xrightarrow{s_b \varphi} s_b m \xrightarrow{\theta} s_a\rlap{ .}
  \end{equation*}
\end{assumption}
\begin{example}
  \cref{assn.tightness} holds in the pro-arrow equipment of
  bicomodules. Indeed, a diagram of the form~\cref{eqn.tightness} in
  $\comod(\mathscr{V})$ amounts to an $(a,b)$-bicomodule
  $(m, \lambda, \rho)$ together with an isomorphism of right
  $a$-comodules
  \begin{equation*}
    \theta \colon (m \xrightarrow{\rho} ma) \rightarrow (a \xrightarrow{\delta} aa)\rlap{ .}
  \end{equation*}
  Now, if  $f \colon a \rightarrow b$ is a map of comonoids, then an
  invertible $2$-cell $\varphi \colon f\lpush  \Rightarrow m$ is a  map of bicomodules
  \begin{equation*}
    \varphi \colon (ba \xleftarrow{fa} aa \xleftarrow{\delta} a \xrightarrow{\delta} aa) \rightarrow
    (bm \xleftarrow{\lambda} m \xrightarrow{\rho} ma)
  \end{equation*}
  and the required compatibility condition is the requirement that
  $\theta$ is inverse to $\varphi$ qua right $a$-comodule map. Thus,
  the result will be established if we can show that every
  $(a,b)$-bicomodule of the form $(a, \lambda, \delta)$ must have
  $\lambda = fa \circ \delta \colon a \rightarrow aa \rightarrow ba$
  for a unique comonoid homomorphism $f \colon a \rightarrow b$. This
  is a form of the Yoneda lemma; indeed, given $\lambda$, we re-find
  the unique $f$ as the composite
  $b\epsilon \circ \lambda \colon a \rightarrow ba \rightarrow b$.
\end{example}

\begin{proposition}
  If \crefrange{assn.terminal}{assn.tightness} hold, then the
  functor
  $\vcomp \colon \prov \rightarrow \comod(\mathscr{V})$ is fully
  faithful.
\end{proposition}
\begin{proof}
  For any bicategory $\mathscr{K}$ and any object $X \in \mathscr{K}$,
  we can form the \emph{slice bicategory} $\mathscr{K}/X$, whose
  objects are arrows into $X$, whose morphisms are pseudo-commuting
  triangles, and whose $2$-cells are defined in the evident way. Any
  strong functor of bicategories
  $F \colon \mathscr{K} \rightarrow \mathscr{L}$ induces a strong
  functor $\tilde F \colon \mathscr{K}/X \rightarrow \mathscr{L}/FX$, and if
  $F$ is bi-fully faithful, then so is $\tilde F$. Applying this to the
  square~\cref{eqn.coherence_for_equipment} and the terminal object $I
  \in \prov$, we obtain a square
  \begin{equation*}
    \begin{tikzcd}
      \llap{$\prov$} = \prov / I
      \ar[r, "\widetilde{(\thg)}\lpush "]
      \ar[d, "\vcomp"'] &
      \proh / I
      \ar[d, "\widetilde{\hcomp}"] \\
      \llap{$\comon(\mathscr{V})$} = \comon(\mathscr{V}) / I
      \ar[r, "\widetilde{(\thg)}\lpush "']&
      \comod(\mathscr{V}) / I \rlap{ .}
    \end{tikzcd}
  \end{equation*}
  which commutes to within an invertible pseudonatural transformation.
  The right-hand vertical map is bi-fully faithful since $\hcomp$ is
  so. On the other hand, \cref{assn.tightness} asserts precisely that
  the horizontal arrows are bi-fully faithful; whence, by the
  cancellativity properties of bi-fully faithful functors, $\vcomp$
  down the left is also bi-fully faithful. Since its domain and
  codomain are $1$-categories, it is thus fully faithful.
\end{proof}

To show that~\cref{eqn.proarrow_comparison} is an
equivalence of pro-arrow equipments, all that remains is to show that
$\vcomp$ is surjective on objects up to isomorphism, and that $\hcomp$ is
surjective on objects up to equivalence. In fact, the former implies
the latter, and is in turn implied by:
\begin{assumption}\label{assn.cauchy_dense}
  In the pro-arrow equipment
  $(\thg)\lpush  \colon \prov \rightarrow \proh$, there is for
  each comonad $c$ on $I \in \proh$ an object
  $\hat c$ and $2$-cell
  $\lambda \colon s_{\hat c} \Rightarrow cs_{\hat c}$ in $\proh$ exhibiting
  $s_{\hat c} \colon \hat c \rightarrow I$ as a co-Eilenberg--Moore object
  for $c$.
\end{assumption}

\begin{proposition}
  If \crefrange{assn.terminal}{assn.cauchy_dense} hold,
  then~\cref{eqn.proarrow_comparison} is an equivalence of pro-arrow equipments.
\end{proposition}
\begin{proof}
  By the above discussion, it remains only to verify that
  $\vcomp \colon \prov \rightarrow \comon(\mathscr{V})$ is surjective
  on objects up to isomorphism. So let $c$ be an object of
  $\comon(\mathscr{V})$, i.e., a comonad on $I$ in $\mathscr{B}$. By
  \cref{assn.cauchy_dense} we have an object $\hat c$ and a universal
  left $c$-comodule structure
  $\lambda \colon s_{\hat c} \Rightarrow cs_{\hat{c}}$ on $\hat{c}$.
  We will exhibit an isomorphism of comonoids between $K(\hat{c})$ and
  $c$.

  To this end, consider the left $c$-comodule with domain $I$ given by
  $(c \colon I \rightarrow I, \delta \colon c \Rightarrow cc)$. By the
  universality of $(s_{\hat c}, \lambda)$, there is a $1$-cell
  $r \colon I \rightarrow \hat c$ fitting into a pseudo-commuting
  triangle as to the left in:
  \begin{equation*}
    \begin{tikzcd}
      I \arrow[rr, "r"]
      \arrow[dr, "c"', ""{name=0, anchor=center, inner sep=0}] &&
      \hat c
      \arrow[dl, "s_{\hat c}", ""{name=1, anchor=center, inner sep=0}] \\
      & I
      \arrow["\theta"', "\raisebox{2pt}{$\cong$}", shift left, shorten <=13pt, shorten >=13pt, Rightarrow, from=0, to=1]
    \end{tikzcd}
    \qquad \qquad 
    \begin{tikzcd}
      c \arrow[r, "\delta"] 
      \arrow[d, "\theta"'] &
      cc
      \arrow[d, "c\theta"] \\
      s_{\hat c} r
      \arrow[r, "\lambda r"'] &
      cs_{\hat c} r 
    \end{tikzcd}
  \end{equation*}
  which renders commutative the square of
  $2$-cells to the right. We now have a map of left
  $c$-comodules with domain $\hat c$ given by
  \begin{equation*}
    (s_{\hat c}, \lambda) \xrightarrow{\lambda} (c s_{\hat c}, \delta s_{\hat c}) \xrightarrow{\theta s_{\hat c}} (s_{\hat c} r s_{\hat c}, \lambda r s_{\hat c})
  \end{equation*}
  and hence again by universality, a unique $2$-cell
  $\alpha \colon \id_{\hat c} \Rightarrow r s_{\hat c}$ for which
  $s_{\hat c} \alpha$ is the above-displayed composite. We also have
  the $2$-cell
  $\beta = \epsilon \circ \theta^{-1} \colon s_{\hat c} r \rightarrow
  c \rightarrow \id_I$ and a short calculation shows that
  $\alpha$ and $\beta$ satisfy the triangle identities exhibiting
  $s_{\hat c} \dashv r$, and that $\theta$ then becomes an isomorphism
  between the comonoid $c$ and the comonoid $s_{\hat c} r$ induced by
  the adjunction $s_{\hat c} \dashv r$; see, for example,
  \cite[Theorem~2]{Street:1972a}. However, since we also know that
  $s_{\hat c} \dashv t_{\hat c}$, we also have an isomorphism between
  this latter comonoid and the comonoid
  $K(\hat c) = s_{\hat c}t_{\hat c}$, as desired.
\end{proof}
This concludes our characterisation of pro-arrow equipments of the
form $\comon(\mathscr{V}) \rightarrow \comod(\mathscr{V})$, and hence
our characterisation of framed bicategories of the form
$\ccomod(\mathscr{V})$.

\subsection{$\ccatsharp$ is $\ccomod(\poly)$}
We are now in a position to show that $\ccatsharp$ is equivalent to
the framed bicategory $\ccomod(\poly)$ of comonoids, comonoid
homomorphisms and bicomodules in $\poly$. We proceed to verify each of
the assumptions of the preceding section for the pro-arrow equipment
$(\thg)\lpush  \colon \catsharp \rightarrow \polyfunb$. To start with, it
is immediate that:

\begin{lemma}
  $\catsharp$ has a terminal object given by the one-object category
  $1$, and $\polyfunb(1,1) = \poly$.
\end{lemma}

Thus, verifying the remaining four assumptions for the pro-arrow
equipment $\catsharp \rightarrow \polyfunb$ will show it to be
equivalent to $\comon(\poly) \rightarrow \comod(\poly)$, as desired.
\cref{assn.localeqs} is precisely
\cref{lemma.multivar_preservation_of_equalizers}, so we move
immediately on to verifying
\cref{assn.eilenberg_moore}.

\begin{proposition}
  For any small category $a$, the following adjunction
  in $\polyfunb$ is comonadic:
  \begin{equation*}
    \begin{tikzcd}[column sep=large]
      a\ar[r, shift left=5pt, "s_a"]
      \ar[r, phantom, "\scriptstyle\Leftarrow"]&
      1\rlap{ .}\ar[l, shift left=5pt, "t_a"]
    \end{tikzcd}
  \end{equation*}
\end{proposition}
\begin{proof}
  $s_a$ is the polynomial induced by the unique retrofunctor
  $a \coto 1$, whose span representation
  \cref{eqn.retrofunctor_decomp} is:
  \begin{equation*}
    \begin{tikzcd}
    &\mathrm{ob}(\cat{a})\ar[dl, "J"']\ar[dr, "!"]\\[-10pt]
    \cat{a}&&\cat{1}\rlap{ .}
  \end{tikzcd}
\end{equation*}
Here $\mathrm{ob}(\cat{a})$ is the set of objects of $a$, seen as a
discrete category, $J$ is the inclusion functor, and $!$ is the unique
functor. Thus, $s_a$ is the composite functor
\begin{equation}\label{eqn.sa_in_polynfun}
  s_a \coloneqq a\set \xrightarrow{\Delta_J} \mathrm{ob}(\cat{a})\set \xrightarrow{\Sigma} \smset
\end{equation}
whose first component is restriction and whose second component is the
coproduct functor. Now $\Delta_J$ creates limits since limits in
$a\set$ are pointwise; while $\Sigma$ creates connected limits because
it is equivalent to the slice category projection
$\smset / \mathrm{ob}(a) \rightarrow \smset$. It follows that the
diagram \cref{eqn.bicomod_char_equalizer} is an equalizer in
$\polyfunb(a,a)$ since its composite with the connected-limit creating
$s_a$ is a split equalizer in $\polyfunb(a,1)$. Thus, by
\cref{prop.wood}, each adjunction as in \cref{eqn.eilenberg_moore} is
comonadic.
\end{proof}
This recaptures one half of Ahman and Uustalu's result, as it shows
that every small category $a$ gives rise to a polynomial comonad on
$\smset$. The other half is showing that every polynomial comonad on
$\smset$ arises in this way, which amounts to verifying
\cref{assn.cauchy_dense}:
\begin{proposition}
  Let $c \colon \smset \rightarrow \smset$ be a polynomial comonad,
  and $U \colon \mathscr{A} \rightarrow \smset$ its category of
  Eilenberg--Moore coalgebras. There exists a small category $\hat c$ for
  which $\mathscr{A} \simeq \hat c\set$ via an equivalence which identifies $U$ with
  the functor~\cref{eqn.sa_in_polynfun}. Consequently, the proarrow
  equipment $\catsharp \rightarrow \polyfunb$ satisfies
  \cref{assn.cauchy_dense}.
\end{proposition}
\begin{proof}
  The category $\mathscr{A}$ has as terminal object the cofree
  coalgebra $c(1) \xrightarrow{\delta_1} cc(1)$, and so by slicing the
  forgetful functor $U$ we get a functor
  \begin{equation*}
    V \coloneqq \mathscr{A} \xrightarrow{\cong} \mathscr{A} / (c1, \delta_1) \xrightarrow{U/(c1, \delta_1)} \smset / c(1) \xrightarrow{\cong}c(1)\set\rlap{ .}
  \end{equation*}
  Since $U$ preserves connected limits, so does $V$; it also preserves
  the terminal object, and so is limit-preserving. Moreover,
  by~\cite[Proposition~3.1]{johnstone2001structure}, $V$ has a right
  adjoint and is comonadic. So $\mathscr{A}$ is equivalent to the category of
  coalgebras for a limit-preserving comonad $Q$ on $c(1)\set$. Since
  the functor part of this comonad preserves limits, it has a left
  adjoint $T$ and now the comonad structure of $Q$ transports to a
  monad structure of $T$ in such a way that the categories of
  $T$-algebras and $Q$-coalgebras are isomorphic. But
  by~\cite[Satz~10.11]{Gabriel1971Lokal}, the category of algebras for
  a cocontinuous monad on $c(1)\set$ must be of the form
  $\Delta_J \colon \hat c\set \rightarrow c(1)\set$ for some small category $\hat c$
  with object-set $c(1)$, and with $J \colon c(1) \rightarrow \hat c$ the
  inclusion-of-objects functor.

  Putting this together, we see that $\mathscr{A} \simeq \hat c\set$
  via an equivalence which identifies
  $V \colon \mathscr{A} \rightarrow c(1)\set$ with
  $\Delta_J \colon \hat{c}\set \rightarrow c(1)\set$. Postcomposing with
  $\Sigma \colon c(1)\set \rightarrow \smset$, we conclude that
  $\mathscr{A} \simeq a\set$ via an equivalence which equates $U$
  with~\cref{eqn.sa_in_polynfun}, as desired.

  It follows that $\catsharp \rightarrow \polyfunb$ satisfies
  \cref{assn.cauchy_dense}: for indeed, since
  $s_{\hat c} \colon \hat c\set \rightarrow \set$ is comonadic in
  $\smcat$, the diagram~\cref{eqn.bicomod_char_equalizer} is an
  equalizer in $\smcat(\hat c\set, \hat c\set)$ and hence also in
  $\polyfunb(\hat c\set, \hat c\set)$; whence this adjunction is
  comonadic in $\polyfunb$ as well, as required.
\end{proof}

All that remains to complete our characterisation is to verify
\cref{assn.tightness}. This seems to be a new result,
for which we have found no better approach than verification by hand:

\begin{proposition}
  Suppose we are given a pseudo-commuting diagram of polynomial functors:
  \begin{equation*}
    \begin{tikzcd}
      c\set \arrow[rr, "p"]
      \arrow[dr, "s_c"', ""{name=0, anchor=center, inner sep=0}] &&
      d\set
      \arrow[dl, "s_d", ""{name=1, anchor=center, inner sep=0}] \\
      & \smset
      \arrow["\theta", "\raisebox{2pt}{$\cong$}", shift right, shorten <=13pt, shorten >=13pt, Rightarrow, from=1, to=0]
    \end{tikzcd}
  \end{equation*}
  where $s_c$ and $s_d$ are as in~\cref{eqn.sa_in_polynfun}. There is
  a unique retrofunctor $\varphi \colon c \coto d$ and isomorphism of
  polynomial functors $\tau \colon \varphi\lpush 
  \cong p$ such that the functoriality constraint $\mu$ of $(\thg)\lpush $ at the composable retrofunctors $\varphi \colon c \coto d$ and $! \colon d
  \coto 1$ is given by:
  \setlength{\abovedisplayskip}{6pt} 
  \begin{equation}
    \label{eqn.cofunctor_representation_diag_1}
    \mu = s_d \varphi\lpush  \xrightarrow{s_d \tau} s_d p \xrightarrow{\theta} s_c\rlap{ .}
  \end{equation}
\end{proposition}
\begin{proof}
  The functor $p \colon c\set \to d\set$ can be written in the form:
  \begin{equation*}
    p(X)(b) = \sum_{i \in p_b(1)} c\set(p_b[i], X)
  \end{equation*}
  for some $p(1) \in d\set$ and $p[\thg] \colon \el_d(p(1))\op
  \rightarrow c\set$.
  Given that functors of the form $s_c$ simply take the elements of a
  presheaf, the functors $s_c \colon c\set \rightarrow \smset$ and
  $s_d \circ p \colon c\set \rightarrow \smset$ are thus given by
  \begin{equation*}
    s_d \circ p \colon X \mapsto \sum_{b \in d, i \in p_b(1)} c\set(p_b[i], X) \qquad \qquad \text{and} \qquad \qquad 
    s_c \colon X \mapsto \sum_{a \in c} X(a)\rlap{ .}
  \end{equation*}
  So the data for an isomorphism $\theta \colon s_d \circ p \cong
  s_c$ involves:
  \begin{enumerate}[(1)]
  \item An isomorphism of sets $\theta_1 \colon \sum_{b \in d} p_b(1)
    \rightarrow \mathrm{ob}(c)$;
  \item For each object $b \in d$ and $i \in p_b(1)$, an isomorphism of
    $a$-sets $\yon^{\theta_1(b,i)} \rightarrow p_{b}[i]$.
  \end{enumerate}
  By transporting along the isomorphisms in (2), we may without loss
  of generality assume that $p_b[i] = \yon^{\theta_1(b,i)}$ for each $b,i$
  and that the isomorphisms in (2) are equalities. So now $p[\thg]$
  takes values in representable $c$-sets, and so, since the Yoneda
  embedding is full and faithful, we have a factorisation
  \begin{equation*}
    p[\thg] = \el_d(p(1))\op \xrightarrow{U^\mathrm{op}} c^\mathrm{op} \xrightarrow{\yon} c\set
  \end{equation*}
  where the action on objects of $U$ is given by the isomorphism
  $\theta_1 \colon \sum_{b \in d} p_b(1) \rightarrow \mathrm{ob}(c)$.
  Thus, writing $V$ for the projection functor from $\el_d(p(1))$, we
  have a span of categories as along the top of:
  \begin{equation*}
    \begin{tikzcd}[row sep=1.2em]
      & \el_d(p(1)) \arrow[ddl, bend right, "U"'] \arrow[bend left, ddr, "V"] \arrow[d, "H"] \\
      & r_\varphi \arrow[dl, "S"] \arrow[dr, "T"'] \\
      c & & d
    \end{tikzcd}
  \end{equation*}
  with $U$ bijective on objects and $V$ \'etale. Let $\varphi$ be the
  retrofunctor $\widetilde U \then \overline V \colon c \coto d$
  corresponding to this span; then $\varphi\lpush  = \Delta_S \then
  \Sigma_T$, where $(S,T)$ is the span representation of $\varphi$ as
  along the bottom of the above diagram. Thus, exactly as
  in~\cref{eqn.catsharpspan_map_diagram_2}, there is a span
  isomorphism $H$ as displayed and so an isomorphism of polynomials
  \begin{equation*}
   p = \Delta_S \then \Sigma_T \xrightarrow{\cong} \Delta_U \then \Delta_H \then \Sigma_H \then \Sigma_V \xrightarrow{\Delta_U \then \epsilon \then \Sigma_V} \Delta_U \then \Sigma_V = \varphi\lpush \rlap{ .}
  \end{equation*}
  A short calculation shows that the inverse
  $\tau \colon \varphi\lpush  \cong p$ of this isomorphism
  satisfies~\cref{eqn.cofunctor_representation_diag_1}. It remains to
  show that $\varphi$ and $\tau$ are \emph{unique}. Since $\theta$ is
  invertible, \cref{eqn.cofunctor_representation_diag_1} implies that
  $s_d \tau$ (and hence $\tau$) is uniquely determined by $\varphi$.
  As for the unicity of $\varphi$, it suffices to show that, if
  $\varphi, \psi \colon c \coto d$ and we have a natural isomorphism
  of polynomials $\varphi\lpush  \cong \psi\lpush $ that is compatible
  with the pseudofunctoriality constraints
  $s_d \circ \varphi\lpush  \cong s_c$ and
  $s_d \circ \psi\lpush  \cong s_c$, then $\varphi = \psi$. This follows
  by a short direct calculation.
\end{proof}

This concludes the verification of \cref{assn.cauchy_dense} and hence
the proof of:
\begin{theorem}\label{thm.garner}
  The pro-arrow equipments $\catsharp \rightarrow \polyfunb$ and
  $\comon(\poly) \rightarrow \comod(\poly)$ are equivalent; whence the
  framed bicategories $\ccatsharp$ and $\ccomod(\poly)$ are equivalent.
\end{theorem}


%
%
%

\section{Building $\ccatsharp$ from $\poly$, concretely}\label{chap.concrete}
Having shown $\ccatsharp \cong \ccomod(\poly)$ abstractly, we take
some time to elaborate on this correspondence more concretely. The
reader who is not interested in seeing these details spelled out may
skip most of this section.

\subsection{Polynomial comonads}\label{sec.comonoids}

A \emph{polynomial comonad} is a comonoid in the monoidal category
$(\poly, \tri, \yon)$; thus, a comonad on $\smset$ whose underlying
endofunctor is a polynomial endofunctor.
The following is a first indication of how a category resembles a polynomial.

\begin{definition}[Outfacing polynomial]\label{def.outfacing}
  Let $c$ be a category. Recall from~\cref{notation.outfacing} that
  for any $a\in\ob(c)$, we write $c[a]$ for the set of
  \emph{$a$-outfacing maps} i.e., the set of morphisms in $c$ whose
  domain is $a$.
  We define the \emph{outfacing polynomial} of $c$ to be
  \[
    \sum_{a\in\ob(c)}\yon^{c[a]}\rlap{ .}
  \]
  In other words, a position is an element of $\ob(c)=c(1)$, and a
  direction at a position is an arrow out of that object.
\end{definition}

A category can be recovered from a comonoid structure on its
outfacing polynomial.%
\footnote{Since domain and codomain play
  symmetrical roles in the definition of category, we have to choose
  one of two opposite conventions when we identify categories with
  polynomial comonoids. The convention opposite ours would instead
  identify a category with a comonoid structure on its \emph{infacing}
  polynomial (the outfacing polynomial of its opposite). Our
  convention induces the conventional definition of retrofunctor
  (\cref{def.retrofunctor}), and it suits the pictures we draw in that
  directions point \emph{out} of positions.}

\begin{proposition}[Polynomial comonads are
  categories]\label{thm.cats_comonads}
  A polynomial comonad with carrier $c$ amounts to a category,
  with outfacing polynomial $c$. This correspondence defines is an isomorphism of groupoids between that of polynomial comonads and their isomorphisms and that of categories and their isomorphisms.
\end{proposition}
\begin{proof}
  The counit $\epsilon\colon c\to\yon$ corresponds to the assignment
  of an identity morphism to each object, and the comultiplication
  $\delta\colon c\to c\tri c$ corresponds to the assignment of a
  codomain and composition formula for morphisms. The comonoid laws
  \eqref{eqn.counitality_coassoc} say that the codomain of identities
  and composites are as they should be, and that composition is unital
  and associative. For the remaining details, see
  \cite{ahman2016directed} or \cite{spivak2022poly},
  or \href{https://www.youtube.com/watch?v=2mWnrgPIrlA}{this video}.
\end{proof}

\begin{figure}[h]
  \centering
  \begin{subfigure}[c]{0.5\textwidth}
    \centering
    \begin{tikzpicture}[corollas]
      \begin{scope}[shift={(-1.5,0)}]
        \node [rcol,vertex,large] (s) at (0, -.8) {};
        \coordinate (t1) at (-.9,.8) {};
        \coordinate (t2) at (-.3,.8) {};
        \coordinate (t3) at (.3,.8) {};
        \coordinate (t4) at (.9,.8) {};
        \draw [rcol,edge] (s) -- (t1);
        \draw [rcol,edge] (s) -- (t2);
        \draw [rcol,edge] (s) -- (t3);
        \draw [rcol,edge] (s) -- (t4);
        \node [annot] at (.3, -.8) {\rlap{$\in c(1)$}};
      \end{scope}
      \node at (0, 0){$\underset{\epsilon}{\mapsto}$};
      \begin{scope}[shift={(1.35,0)}]
        \coordinate (s) at (0,-.95) {};
        \coordinate (t1) at (-.9,.8) {};
        \coordinate (t2) at (-.3,.8) {};
        \coordinate (t3) at (.3,.8) {};
        \coordinate (t4) at (.9,.8) {};
        \draw [dasht] (s) -- (t1);
        \draw [edge] (s) -- (t2);
        \draw [dasht] (s) -- (t3);
        \draw [dasht] (s) -- (t4);
        \node [annot] at (.35, -.8) {\rlap{$\in \yon$}};
      \end{scope}
    \end{tikzpicture}
    \caption{$\epsilon$ picks out an ``identity'' direction at each position.}
  \end{subfigure}%
  \begin{subfigure}[c]{0.5\textwidth}
    \centering
    \begin{tikzpicture}[corollas]
      \begin{scope}[shift={(-1.5,0)}]
        \node [rcol,vertex,large] (s) at (0, -.8) {};
        \coordinate (t1) at (-.9,.8) {};
        \coordinate (t2) at (-.3,.8) {};
        \coordinate (t3) at (.3,.8) {};
        \coordinate (t4) at (.9,.8) {};
        \draw [rcol,edge] (s) -- (t1);
        \draw [rcol,edge] (s) -- (t2);
        \draw [rcol,edge] (s) -- (t3);
        \draw [rcol,edge] (s) -- (t4);
        \node [annot] at (.3, -.8) {\rlap{$\in c(1)$}};
      \end{scope}
      \node at (0, 0){$\underset{\delta}{\mapsto}$};
      \begin{scope}[shift={(2,0)}]
        \node [rcol,vertex] (s) at (0, -.8) {};
        \coordinate (t1) at (-.9,.8) {};
        \coordinate (t2) at (-.3,.8) {};
        \coordinate (t3) at (.3,.8) {};
        \coordinate (t4) at (.9,.8) {};
        \node [rcol,vertex] (m1) at (-1.2,0) {};
        \node [rcol,vertex] (m2) at (-.4,0) {};
        \node [rcol,vertex] (m3) at (.4,0) {};
        \node [rcol,vertex] (m4) at (1.2,0) {};
        \draw [rcol,edge] (s) -- (m1);
        \draw [rcol,edge] (s) -- (m2);
        \draw [rcol,edge] (s) -- (m3);
        \draw [rcol,edge] (s) -- (m4);

        \draw [rcol,edge] (m1) -- (t1);
        \draw [rcol,edge,bend left=22] (m1) to (t1);
        \draw [rcol,edge,bend right=22] (m1) to (t1);

        \draw [rcol,edge] (m2) -- (t1);
        \draw [rcol,edge] (m2) -- (t2);
        \draw [rcol,edge] (m2) -- (t3);
        \draw [rcol,edge] (m2) -- (t4);

        \draw [rcol,edge] (m3) -- (t1);
        \draw [rcol,edge] (m3) -- (t3);
        \draw [rcol,edge,bend left=10] (m3) to (t4);
        \draw [rcol,edge,bend right=10] (m3) to (t4);

        \draw [rcol,edge] (m4) -- (t4);

        \node [annot] at (.2, -.8) {\rlap{$\in c(1)$}};

        \node [annot] at (1.35, 0) {\rlap{$\in c(1)$}};
      \end{scope}
    \end{tikzpicture}
    \caption{$\delta$ specifies codomains and compositions, sending length-2 paths of directions to single directions.}
  \end{subfigure}
  \caption{The morphisms of polynomials $\epsilon\colon c \to \yon$ and
    $\delta\colon c \to c \tri c$, illustrated with corolla replacement (as in \cref{fig.corollamap1}).}
\end{figure}

\begin{figure}[h]
  \centering
  \begin{subfigure}[c]{0.5\textwidth}
    \centering
    \begin{tikzpicture}[corollas]
      \begin{scope}[shift={(-2.7,0)}]
        \coordinate (s) at (0, -.8) {};
        \coordinate (t1) at (-.9,.8) {};
        \coordinate (t2) at (-.3,.8) {};
        \coordinate (t3) at (.3,.8) {};
        \coordinate (t4) at (.9,.8) {};
        \node [rcol,vertex,transparent] (m1) at (-1.2,0) {};
        \node [rcol,vertex] (m2) at (-.4,0) {};
        \node [rcol,vertex,transparent] (m3) at (.4,0) {};
        \node [rcol,vertex,transparent] (m4) at (1.2,0) {};
        \draw [dasht] (s) -- (m1);
        \draw [edge] (s) -- (m2);
        \draw [dasht] (s) -- (m3);
        \draw [dasht] (s) -- (m4);

        \draw [rcol,edge,transparent] (m1) -- (t1);
        \draw [rcol,edge,bend left=22,transparent] (m1) to (t1);
        \draw [rcol,edge,bend right=22,transparent] (m1) to (t1);

        \draw [rcol,edge] (m2) -- (t1);
        \draw [rcol,edge] (m2) -- (t2);
        \draw [rcol,edge] (m2) -- (t3);
        \draw [rcol,edge] (m2) -- (t4);

        \draw [rcol,edge,transparent] (m3) -- (t1);
        \draw [rcol,edge,transparent] (m3) -- (t3);
        \draw [rcol,edge,bend left=10,transparent] (m3) to (t4);
        \draw [rcol,edge,bend right=10,transparent] (m3) to (t4);

        \draw [rcol,edge,transparent] (m4) -- (t4);

        \node at (0, -1.3) {$\delta \then (\epsilon \tri \id_c)$};
      \end{scope}
      \node at (-1, 0){$=$};
      \begin{scope}
        \node [rcol,vertex,large] (s) at (0, -.8) {};
        \coordinate (t1) at (-.9,.8) {};
        \coordinate (t2) at (-.3,.8) {};
        \coordinate (t3) at (.3,.8) {};
        \coordinate (t4) at (.9,.8) {};
        \draw [rcol,edge] (s) -- (t1);
        \draw [rcol,edge] (s) -- (t2);
        \draw [rcol,edge] (s) -- (t3);
        \draw [rcol,edge] (s) -- (t4);
        
        \node at (0, -1.3) {$\id_c$};
      \end{scope}
      \node at (1, 0){$=$};
      \begin{scope}[shift={(2.7,0)}]
        \node [rcol,vertex] (s) at (0, -.8) {};
        \coordinate (t1) at (-.9,.8) {};
        \coordinate (t2) at (-.3,.8) {};
        \coordinate (t3) at (.3,.8) {};
        \coordinate (t4) at (.9,.8) {};
        \coordinate (m1) at (-1.2,0) {};
        \coordinate (m2) at (-.4,0) {};
        \coordinate (m3) at (.4,0) {};
        \coordinate (m4) at (1.2,0) {};
        \draw [rcol,edge] (s) -- (m1);
        \draw [rcol,edge] (s) -- (m2);
        \draw [rcol,edge] (s) -- (m3);
        \draw [rcol,edge] (s) -- (m4);

        \draw [edge] (m1) -- (t1);
        \draw [dasht,bend left=22] (m1) to (t1);
        \draw [dasht,bend right=22] (m1) to (t1);

        \draw [dasht] (m2) -- (t1);
        \draw [edge] (m2) -- (t2);
        \draw [dasht] (m2) -- (t3);
        \draw [dasht] (m2) -- (t4);

        \draw [dasht] (m3) -- (t1);
        \draw [edge] (m3) -- (t3);
        \draw [dasht,bend left=10] (m3) to (t4);
        \draw [dasht,bend right=10] (m3) to (t4);

        \draw [edge] (m4) -- (t4);

        \node at (0, -1.3) {$\delta \then (\id_c \tri \epsilon)$};
      \end{scope}
    \end{tikzpicture}
    \caption{The unit laws ensure $\delta$ identifies every single
      direction with either length two path obtained by appending
      identity (as picked out by $\epsilon$) on either side.}
  \end{subfigure}%
  \begin{subfigure}[c]{0.5\textwidth}
    \centering
    \begin{tikzpicture}[corollas]
      \begin{scope}[shift={(-1.83,0)}]
        
        \node [rcol,vertex,small] (s) at (0, -.8) {};
        \coordinate (t1) at (-.9,.8) {};
        \coordinate (t2) at (-.3,.8) {};
        \coordinate (t3) at (.3,.8) {};
        \coordinate (t4) at (.9,.8) {};

        \node [rcol,vertex,small] (m1_) at (-1.2,-.4) {};
        \node [rcol,vertex,small] (m2_) at (-.4,-.4) {};
        \node [rcol,vertex,small] (m3_) at (.4,-.4) {};
        \node [rcol,vertex,small] (m4_) at (1.2,-.4) {};
        \draw [rcol,edge] (s) -- (m1_);
        \draw [rcol,edge] (s) -- (m2_);
        \draw [rcol,edge] (s) -- (m3_);
        \draw [rcol,edge] (s) -- (m4_);

        \node [rcol,vertex] (m1) at (-1.2,0) {};
        \node [rcol,vertex] (m2) at (-.4,0) {};
        \node [rcol,vertex] (m3) at (.4,0) {};
        \node [rcol,vertex] (m4) at (1.2,0) {};

        \draw [rcol,edge] (m1_) -- (m1);
        \draw [rcol,edge,bend left=32] (m1_) to (m1);
        \draw [rcol,edge,bend right=32] (m1_) to (m1);

        \draw [rcol,edge] (m2_) -- (m1);
        \draw [rcol,edge] (m2_) -- (m2);
        \draw [rcol,edge] (m2_) -- (m3);
        \draw [rcol,edge] (m2_) -- (m4);

        \draw [rcol,edge] (m3_) -- (m1);
        \draw [rcol,edge] (m3_) -- (m3);
        \draw [rcol,edge,bend left=10] (m3_) to (m4);
        \draw [rcol,edge,bend right=10] (m3_) to (m4);

        \draw [rcol,edge] (m4_) -- (m4);

        \draw [rcol,edge] (m1) -- (t1);
        \draw [rcol,edge,bend left=22] (m1) to (t1);
        \draw [rcol,edge,bend right=22] (m1) to (t1);

        \draw [rcol,edge] (m2) -- (t1);
        \draw [rcol,edge] (m2) -- (t2);
        \draw [rcol,edge] (m2) -- (t3);
        \draw [rcol,edge] (m2) -- (t4);

        \draw [rcol,edge] (m3) -- (t1);
        \draw [rcol,edge] (m3) -- (t3);
        \draw [rcol,edge,bend left=10] (m3) to (t4);
        \draw [rcol,edge,bend right=10] (m3) to (t4);

        \draw [rcol,edge] (m4) -- (t4);

        \node at (0, -1.3) {$\delta \then (\delta \tri \id_c)$};
      \end{scope}
      \node at (0, 0){$=$};
      \begin{scope}[shift={(1.87,0)}]
        
        \node [rcol,vertex] (s) at (0, -.8) {};
        \coordinate (t1) at (-.9,.8) {};
        \coordinate (t2) at (-.3,.8) {};
        \coordinate (t3) at (.3,.8) {};
        \coordinate (t4) at (.9,.8) {};

        \node [rcol,vertex,small] (m1) at (-1.2,0) {};
        \node [rcol,vertex,small] (m2) at (-.4,0) {};
        \node [rcol,vertex,small] (m3) at (.4,0) {};
        \node [rcol,vertex,small] (m4) at (1.2,0) {};
        \draw [rcol,edge] (s) -- (m1);
        \draw [rcol,edge] (s) -- (m2);
        \draw [rcol,edge] (s) -- (m3);
        \draw [rcol,edge] (s) -- (m4);

        \node [rcol,vertex,small] (m11) at (-1.45,.4) {};
        \node [rcol,vertex,small] (m12) at (-1.2,.4) {};
        \node [rcol,vertex,small] (m13) at (-.95,.4) {};
        \node [rcol,vertex,small] (m21) at (-.7,.4) {};
        \node [rcol,vertex,small] (m22) at (-.5,.4) {};
        \node [rcol,vertex,small] (m23) at (-.3,.4) {};
        \node [rcol,vertex,small] (m24) at (-.1,.4) {};
        \node [rcol,vertex,small] (m31) at (.1,.4) {};
        \node [rcol,vertex,small] (m32) at (.3,.4) {};
        \node [rcol,vertex,small] (m33) at (.5,.4) {};
        \node [rcol,vertex,small] (m34) at (.7,.4) {};
        \node [rcol,vertex,small] (m41) at (1.2,.4) {};

        \draw [rcol,edge] (m1) -- (m11);
        \draw [rcol,edge] (m1) -- (m12);
        \draw [rcol,edge] (m1) -- (m13);
        \draw [rcol,edge] (m2) -- (m21);
        \draw [rcol,edge] (m2) -- (m22);
        \draw [rcol,edge] (m2) -- (m23);
        \draw [rcol,edge] (m2) -- (m24);
        \draw [rcol,edge] (m3) -- (m31);
        \draw [rcol,edge] (m3) -- (m32);
        \draw [rcol,edge] (m3) -- (m33);
        \draw [rcol,edge] (m3) -- (m34);
        \draw [rcol,edge] (m4) -- (m41);

        \draw [rcol,edge] (m11) -- (t1);
        \draw [rcol,edge,bend left=11] (m11) to (t1);
        \draw [rcol,edge,bend right=11] (m11) to (t1);

        \draw [rcol,edge] (m12) -- (t1);
        \draw [rcol,edge,bend left=11] (m12) to (t1);
        \draw [rcol,edge,bend right=11] (m12) to (t1);

        \draw [rcol,edge] (m13) -- (t1);
        \draw [rcol,edge,bend left=11] (m13) to (t1);
        \draw [rcol,edge,bend right=11] (m13) to (t1);

        \draw [rcol,edge] (m21) -- (t1);
        \draw [rcol,edge,bend left=11] (m21) to (t1);
        \draw [rcol,edge,bend right=11] (m21) to (t1);

        \draw [rcol,edge] (m22) -- (t1);
        \draw [rcol,edge] (m22) -- (t2);
        \draw [rcol,edge] (m22) -- (t3);
        \draw [rcol,edge] (m22) -- (t4);

        \draw [rcol,edge] (m23) -- (t1);
        \draw [rcol,edge] (m23) -- (t3);
        \draw [rcol,edge,bend left=5] (m23) to (t4);
        \draw [rcol,edge,bend right=5] (m23) to (t4);

        \draw [rcol,edge] (m24) -- (t4);

        \draw [rcol,edge] (m31) -- (t1);
        \draw [rcol,edge,bend left=11] (m31) to (t1);
        \draw [rcol,edge,bend right=11] (m31) to (t1);

        \draw [rcol,edge] (m32) -- (t3);

        \draw [rcol,edge] (m33) -- (t4);

        \draw [rcol,edge] (m34) -- (t4);

        \draw [rcol,edge] (m41) -- (t4);

        \node at (0, -1.3) {$\delta \then (\id_c \tri \delta)$};
      \end{scope}
    \end{tikzpicture}
    \caption{The associativity law ensures the two ways that single
      directions may be identified with length three paths via
      $\delta$ agree.}
  \end{subfigure}
  \caption{Comonad laws, as in \cref{eqn.counitality_coassoc}.}
\end{figure}

\begin{example}[Linear polynomial comonads are sets]\label{ex.discrete}
  There is only one possible comonoid structure on $S\yon$,
  corresponding to the discrete category (the unique category with one
  morphism out of each object). The counit $\epsilon$ is the unique
  polynomial map $S\yon\to\yon$, and the comultiplication $\delta$ is
  the map $S\yon\to (S\yon\tri S\yon)\cong S^2\yon$ corresponding to
  the diagonal $S\to S^2$.

  In particular the polynomials
  $0,\yon\in\poly$ each have a unique comonoid structure,
  corresponding to the empty and the one-morphism category,
  respectively. Note that $0$ is the only constant polynomial with a
  comonoid structure.
\end{example}

\begin{example}[Representable polynomial comonads are monoids]\label{ex.monoid_rep}
  If $(M,e,*)$ is a monoid---i.e., a category with one object---its
  outfacing polynomial is $\yon^M$. The counit $\yon^M\to\yon$
  corresponds to the element $e\in M$, the comultiplication map
  $\yon^M\to\yon^M\tri\yon^M$ corresponds to a function $*\colon
  M\times M\to M$, and the diagrams in \eqref{eqn.counitality_coassoc}
  correspond to the unit and associativity laws.
\end{example}

The following we learned from David Jaz Myers.
\begin{example}[Full internal subcategory]\label{prop.DJM}\label{ex.full_internal_finset}
  For any $p\in\poly$, there is a natural comonoid structure on
  $\lens{p}{p}$ (from \cref{prop.JoshMeyers}). This is purely formal;
  when a monoidal operation, e.g., $\tri$, has a (co)closure
  operation, then applying the (co)closure to an object $p$ with
  itself will be a (co)monoid.

  Spelling this out explicitly, the counit map $\epsilon \colon
  \lens{p}{p} \rightarrow \yon$ is the
  transpose of the unit constraint $p \rightarrow \yon \tri p$ under~\cref{eqn.adjunction_coclosure}, while
  the comultiplication $\smash{\lens p p \rightarrow \lens p p \tri
    \lens p p}$ is the transpose under~\cref{eqn.adjunction_coclosure}
  of the composite
  \begin{equation*}
    p \xrightarrow{\mathsf{coev}} \lens p p \tri p \xrightarrow{\lens p p \tri \mathsf{coev}} \lens p p \tri \lens p p \tri p
  \end{equation*}
  where ``$\mathsf{coev}$'' is the unit of the adjunction
  $\lens {\thg} p \dashv (\thg) \tri p$. Hence
  $\lens{p}{p}\cong\sum_{i\in p(1)}\yon^{p(p[i])}$ has a category
  structure. What is it?
  
  Its object set is $p(1)$, while an outfacing morphism from an
  object $i$ consists of another $i' \in p(1)$ and map of direction
  sets $f \colon p[i'] \to p[i]$. So one might guess $\lens{p}{p}$ is
  just the opposite of the category of functions between these
  direction sets $p[i]$. Indeed, one checks from the definitions of
  $\epsilon,\delta$ for this comonoid that the identity on $i$ is
  $(i,\id_{p[i]})$, the codomain of $(i',f)$ is $i'$, and the
  composite of $(i',f)$ and $(i'',f)$ is $(i'',f\then f')$.

  Hence $\lens{p}{p}$ comes with a fully faithful functor to
  $\smset\op$ given by $i\mapsto p[i]$. It may be slightly confusing
  to say $\smash{\lens{p}{p}}$ is a ``subcategory'' of $\smset\op$
  because there may be $i\neq i'$ with $p[i]\cong p[i']$. However, the
  opposite of $\smash{\lens{p}{p}}$ is often called the \emph{full
    internal subcategory} spanned by $p$ (cf.~\cite{jacobs1999categorical}).

  We will be able to construct the full internal subcategory
  itself---rather than its opposite---later on, with the technology of
  \cref{sec.prof}.
\end{example}

\begin{example}\label{ex.costate}
  For any set $S\in\smset$, \cref{prop.DJM} says that
  $S\yon^S=\lens{S}{S}$ has a comonoid structure. It is the so-called
  \emph{costate comonad} from functional programming, i.e., it is the
  comonad one gets from the product--hom adjunction
  $S \times (\thg) \dashv (\thg)^S \colon \smset \rightarrow \smset$.
  The costate comonoid $\lens{S}{S}$ corresponds to
  the indiscrete category with object-set $S$ as in
  \cref{ex.indiscrete}. Indeed, the fully faithful functor from it to
  $\smset\op$ sends every object to $\varnothing$, since
  $S = S\yon^0$.
\end{example}

When we mechanically unpack the definition of comonad morphism
\eqref{eqn.comonoid_hom}, translating through the identification of
polynomial comonads with categories, we obtain the notion of
retrofunctor as in \cref{def.retrofunctor} 

\subsection{Polynomial bicomodules}\label{sec.bicomodules}

Bicomodules (\cref{def.bicomodule}) seem to include a lot of data; what does it all mean? Let
us spend some time closely examining left and right comodules to see
what structures fall out, just like we did for comonoids.

We use the notation $c \bito d$ to denote a bicomodule from the
polynomial comonoid $c$ to the polynomial comonoid $d$.

\begin{proposition}[Left comodules are diagrams in $\poly$]\label{prop_left_comodules}
  Specifying a left comodule $\yon \bito[m] d$ amounts to
  providing a functor $d\to\poly$; here $m$ is
  the sum of assigned polynomials over all objects in $d$.
\end{proposition}
\begin{proof}
  The left comodule map $\lambda\colon m \to d \tri m$ first of all assigns
  an object of $d$ to each $m$-position, thus determining a polynomial
  $m_a$ consisting of the $m$-positions lying over any particular
  object $a\in d(1)$. Second of all, it specifies for each $m$-position $i$
  and arrow $f\colon a \to a'$ out of its assigned object $a$ another
  $m$-position $j$, thus determining a map $f_1\colon m_a(1) \to m(1)$ (yet
  to cohere as a map $f_1\colon m_a(1) \to m_{a'}(1)$). Third of all, it
  maps directions at each such position $f_1(i) \coloneqq j$ backward
  to directions at $i$, thus determining a map
  $f^\sharp_i\colon m[f_1(i)] \to m_a[i]$.
  
  The left comodule laws \eqref{eqn.bimod_left} say that each such $f_1\colon m_a(1) \to m(1)$
  lands in $m_{a'}(1)$ as it should, and that the assignment
  $f \mapsto (f_1, f^\sharp)$ is functorial (preserves identities and
  composition).
\end{proof}
\begin{figure}[h]
  \centering
  \begin{tikzpicture}[corollas]
    \begin{scope}[shift={(-1.85,0)}]
      \node [ycol,vertex,large] (s) at (0, -.8) {};
      \coordinate (t1) at (-1.2,.8) {};
      \coordinate (t2) at (-.6,.8) {};
      \coordinate (t3) at (.0,.8) {};
      \coordinate (t4) at (.6,.8) {};
      \coordinate (t5) at (1.2,.8) {};
      \draw [ycol,edge] (s) -- (t1);
      \draw [ycol,edge] (s) -- (t2);
      \draw [ycol,edge] (s) -- (t3);
      \draw [ycol,edge] (s) -- (t4);
      \draw [ycol,edge] (s) -- (t5);
      \node [annot] at (.3, -.8) {\rlap{$\in m(1)$}};
    \end{scope}
    \node at (0, 0){$\underset{\lambda}{\mapsto}$};
    \begin{scope}[shift={(2.5,0)}]
      \node [bcol,vertex] (s) at (0, -.8) {};
      \coordinate (t1) at (-1.2,.8) {};
      \coordinate (t2) at (-.6,.8) {};
      \coordinate (t3) at (0,.8) {};
      \coordinate (t4) at (.6,.8) {};
      \coordinate (t5) at (1.2,.8) {};
      \node [ycol,vertex] (m1) at (-1.5,0) {};
      \node [ycol,vertex] (m2) at (-.5,0) {};
      \node [ycol,vertex] (m3) at (.5,0) {};
      \node [ycol,vertex] (m4) at (1.5,0) {};
      \draw [bcol,edge] (s) -- (m1);
      \draw [bcol,edge] (s) -- (m2);
      \draw [bcol,edge] (s) -- (m3);
      \draw [bcol,edge] (s) -- (m4);

      \draw [ycol,edge] (m2) -- (t1);
      \draw [ycol,edge] (m2) -- (t2);
      \draw [ycol,edge] (m2) -- (t3);
      \draw [ycol,edge] (m2) -- (t4);
      \draw [ycol,edge] (m2) -- (t5);

      \draw [ycol,edge,bend left=11] (m3) to (t3);
      \draw [ycol,edge,bend right=11] (m3) to (t3);
      \draw [ycol,edge,bend left=11] (m3) to (t4);
      \draw [ycol,edge,bend right=11] (m3) to (t4);
      \draw [ycol,edge,bend left=11] (m3) to (t5);
      \draw [ycol,edge,bend right=11] (m3) to (t5);

      \draw [ycol,edge] (m4) -- (t4);
      \draw [ycol,edge] (m4) -- (t5);

      \node [annot] at (.2, -.8) {\rlap{$\in d(1)$}};
      \node [annot] at (1.7, 0) {\rlap{$\in m(1)$}};
    \end{scope}
  \end{tikzpicture}
  \caption{Polynomial morphism $\lambda\colon m \to d \tri m$ as in a left
    comodule.
  }
\end{figure}

\begin{figure}[h]
  \centering
  \begin{subfigure}[c]{0.5\textwidth}
    \centering
    \begin{tikzpicture}[corollas]
      \begin{scope}[shift={(-2.3,0)}]
        \coordinate (s) at (0, -.8) {};
        \coordinate (t1) at (-1.2,.8) {};
        \coordinate (t2) at (-.6,.8) {};
        \coordinate (t3) at (0,.8) {};
        \coordinate (t4) at (.6,.8) {};
        \coordinate (t5) at (1.2,.8) {};

        \node [ycol,vertex,transparent] (m1) at (-1.5,.0) {};
        \node [ycol,vertex] (m2) at (-.5,0) {};
        \node [ycol,vertex,transparent] (m3) at (.5,0) {};
        \node [ycol,vertex,transparent] (m4) at (1.5,0) {};
        \draw [dasht] (s) -- (m1);
        \draw [edge] (s) -- (m2);
        \draw [dasht] (s) -- (m3);
        \draw [dasht] (s) -- (m4);

        \draw [ycol,edge] (m2) -- (t1);
        \draw [ycol,edge] (m2) -- (t2);
        \draw [ycol,edge] (m2) -- (t3);
        \draw [ycol,edge] (m2) -- (t4);
        \draw [ycol,edge] (m2) -- (t5);

        \draw [ycol,edge,transparent,bend left=11] (m3) to (t3);
        \draw [ycol,edge,transparent,bend right=11] (m3) to (t3);
        \draw [ycol,edge,transparent,bend left=11] (m3) to (t4);
        \draw [ycol,edge,transparent,bend right=11] (m3) to (t4);
        \draw [ycol,edge,transparent,bend left=11] (m3) to (t5);
        \draw [ycol,edge,transparent,bend right=11] (m3) to (t5);

        \draw [ycol,transparent,edge] (m4) -- (t4);
        \draw [ycol,transparent,edge] (m4) -- (t5);

        \node at (0, -1.3) {$\lambda \then (\epsilon \tri \id_m)$};
      \end{scope}
      \node at (0, 0){$=$};
      \begin{scope}[shift={(1.5,0)}]
        \node [ycol,vertex,large] (s) at (0, -.8) {};
        \coordinate (t1) at (-1.2,.8) {};
        \coordinate (t2) at (-.6,.8) {};
        \coordinate (t3) at (0,.8) {};
        \coordinate (t4) at (.6,.8) {};
        \coordinate (t5) at (1.2,.8) {};
        \draw [ycol,edge] (s) -- (t1);
        \draw [ycol,edge] (s) -- (t2);
        \draw [ycol,edge] (s) -- (t3);
        \draw [ycol,edge] (s) -- (t4);
        \draw [ycol,edge] (s) -- (t5);

        \node at (0, -1.3) {$\id_m$};
      \end{scope}
    \end{tikzpicture}
    \caption{The unit law ensures identity arrows of $d$ send any
      position or direction to itself.}
  \end{subfigure}%
  \begin{subfigure}[c]{0.5\textwidth}
    \centering
    \begin{tikzpicture}[corollas]
      \begin{scope}[shift={(-2.35,0)}]
        \node [bcol,vertex,small] (s) at (0, -.8) {};
        \coordinate (t1) at (-1.2,.8) {};
        \coordinate (t2) at (-.6,.8) {};
        \coordinate (t3) at (0,.8) {};
        \coordinate (t4) at (.6,.8) {};
        \coordinate (t5) at (1.2,.8) {};

        \node [bcol,vertex,small] (m1_) at (-1.5,-.4) {};
        \node [bcol,vertex,small] (m2_) at (-.5,-.4) {};
        \node [bcol,vertex,small] (m3_) at (.5,-.4) {};
        \node [bcol,vertex,small] (m4_) at (1.5,-.4) {};

        \node [ycol,vertex] (m1) at (-1.5,0) {};
        \node [ycol,vertex] (m2) at (-.5,0) {};
        \node [ycol,vertex] (m3) at (.5,0) {};
        \node [ycol,vertex] (m4) at (1.5,0) {};

        \draw [bcol,edge] (s) -- (m1_);
        \draw [bcol,edge] (s) -- (m2_);
        \draw [bcol,edge] (s) -- (m3_);
        \draw [bcol,edge] (s) -- (m4_);

        \draw [bcol,edge] (m1_) -- (m1);
        \draw [bcol,edge,bend left=32] (m1_) to (m1);
        \draw [bcol,edge,bend right=32] (m1_) to (m1);

        \draw [bcol,edge] (m2_) -- (m1);
        \draw [bcol,edge] (m2_) -- (m2);
        \draw [bcol,edge] (m2_) -- (m3);
        \draw [bcol,edge] (m2_) -- (m4);

        \draw [bcol,edge] (m3_) -- (m1);
        \draw [bcol,edge] (m3_) -- (m3);
        \draw [bcol,edge,bend left=10] (m3_) to (m4);
        \draw [bcol,edge,bend right=10] (m3_) to (m4);

        \draw [bcol,edge] (m4_) -- (m4);

        \draw [ycol,edge] (m2) -- (t1);
        \draw [ycol,edge] (m2) -- (t2);
        \draw [ycol,edge] (m2) -- (t3);
        \draw [ycol,edge] (m2) -- (t4);
        \draw [ycol,edge] (m2) -- (t5);

        \draw [ycol,edge,bend left=11] (m3) to (t3);
        \draw [ycol,edge,bend right=11] (m3) to (t3);
        \draw [ycol,edge,bend left=11] (m3) to (t4);
        \draw [ycol,edge,bend right=11] (m3) to (t4);
        \draw [ycol,edge,bend left=11] (m3) to (t5);
        \draw [ycol,edge,bend right=11] (m3) to (t5);

        \draw [ycol,edge] (m4) -- (t4);
        \draw [ycol,edge] (m4) -- (t5);

        \node at (0, -1.3) {$\lambda \then (\delta \tri \id_m)$};
      \end{scope}
      \node at (0, 0){$=$};
      \begin{scope}[shift={(2.5,0)}]
        \node [bcol,vertex] (s) at (0, -.8) {};
        \coordinate (t1) at (-1.2,.8) {};
        \coordinate (t2) at (-.6,.8) {};
        \coordinate (t3) at (0,.8) {};
        \coordinate (t4) at (.6,.8) {};
        \coordinate (t5) at (1.2,.8) {};

        \node [ycol,vertex,small] (m11) at (-1.8,.4) {};
        \node [ycol,vertex,small] (m12) at (-1.5,.4) {};
        \node [ycol,vertex,small] (m13) at (-1.2,.4) {};
        \node [ycol,vertex,small] (m21) at (-.89,.4) {};
        \node [ycol,vertex,small] (m22) at (-.63,.4) {};
        \node [ycol,vertex,small] (m23) at (-.37,.4) {};
        \node [ycol,vertex,small] (m24) at (-.12,.4) {};
        \node [ycol,vertex,small] (m31) at (.12,.4) {};
        \node [ycol,vertex,small] (m32) at (.37,.4) {};
        \node [ycol,vertex,small] (m33) at (.63,.4) {};
        \node [ycol,vertex,small] (m34) at (.89,.4) {};
        \node [ycol,vertex,small] (m41) at (1.5,.4) {};

        \node [bcol,vertex,small] (m1) at (-1.5,0) {};
        \node [bcol,vertex,small] (m2) at (-.5,0) {};
        \node [bcol,vertex,small] (m3) at (.5,0) {};
        \node [bcol,vertex,small] (m4) at (1.5,0) {};

        \draw [bcol,edge] (m1) -- (m11);
        \draw [bcol,edge] (m1) -- (m12);
        \draw [bcol,edge] (m1) -- (m13);
        \draw [bcol,edge] (m2) -- (m21);
        \draw [bcol,edge] (m2) -- (m22);
        \draw [bcol,edge] (m2) -- (m23);
        \draw [bcol,edge] (m2) -- (m24);
        \draw [bcol,edge] (m3) -- (m31);
        \draw [bcol,edge] (m3) -- (m32);
        \draw [bcol,edge] (m3) -- (m33);
        \draw [bcol,edge] (m3) -- (m34);
        \draw [bcol,edge] (m4) -- (m41);

        \draw [bcol,edge] (s) -- (m1);
        \draw [bcol,edge] (s) -- (m2);
        \draw [bcol,edge] (s) -- (m3);
        \draw [bcol,edge] (s) -- (m4);

        \draw [ycol,edge] (m22) -- (t1);
        \draw [ycol,edge] (m22) -- (t2);
        \draw [ycol,edge] (m22) -- (t3);
        \draw [ycol,edge] (m22) -- (t4);
        \draw [ycol,edge] (m22) -- (t5);

        \draw [ycol,edge,bend left=5] (m23) to (t3);
        \draw [ycol,edge,bend right=5] (m23) to (t3);
        \draw [ycol,edge,bend left=3] (m23) to (t4);
        \draw [ycol,edge,bend right=3] (m23) to (t4);
        \draw [ycol,edge,bend left=3] (m23) to (t5);
        \draw [ycol,edge,bend right=3] (m23) to (t5);

        \draw [ycol,edge] (m24) -- (t4);
        \draw [ycol,edge] (m24) -- (t5);

        \draw [ycol,edge,bend left=5] (m32) to (t3);
        \draw [ycol,edge,bend right=5] (m32) to (t3);
        \draw [ycol,edge,bend left=3] (m32) to (t4);
        \draw [ycol,edge,bend right=3] (m32) to (t4);
        \draw [ycol,edge,bend left=3] (m32) to (t5);
        \draw [ycol,edge,bend right=3] (m32) to (t5);
        \draw [ycol,edge] (m33) -- (t4);
        \draw [ycol,edge] (m33) -- (t5);
        \draw [ycol,edge] (m34) -- (t4);
        \draw [ycol,edge] (m34) -- (t5);

        \draw [ycol,edge] (m41) -- (t4);
        \draw [ycol,edge] (m41) -- (t5);

        \node at (0, -1.3) {$\lambda \then (\id_d \tri \lambda)$};
      \end{scope}
    \end{tikzpicture}
    \caption{The associativity law ensures any length two
      path of arrows in $d$ acts on on positions and directions of $m$
      in the same way as the composite arrow in $d$.}
  \end{subfigure}
  \caption{Left comodule laws, as in \cref{eqn.bimod_left}.}
\end{figure}

\begin{example}[Degenerate left comodule]
  As per \cref{rem.degenerate}, we expect the degenerate case of a left
  $\yon$-module to be just a polynomial. Indeed, a
  diagram in $\poly$ indexed by $y$, the one-morphism category, is just
  an object.
\end{example}

\begin{notation}[Indexed component $m_a$ of a left
  comodule]\label{not.indexed_component_left}
  Let $m$ be a left $d$-comodule and $a$ an object of $d$. We continue
  to write $m_a \in \poly$ for the polynomial that $m$ (construed as a
  functor $d \to \poly$) assigns to $a$. Explicitly, $m_a$ consists of
  the $m$-positions lying via $\lambda_1$ over $(d \tri m)$-positions
  whose $d$-position component is $a$. More precisely, consider the composite
  \[
    m\To{\lambda}d\tri m\To{d\tri\; !}d\tri 1.
  \]
  Then $m_a$ is the pullback of this map along $a\colon 1\to d(1)$:
  \[
    \begin{tikzcd}
      m_a\ar[r]\ar[d]\ar[dr,phantom, very near start, "\lrcorner"]&m\ar[d]\\
      1\ar[r, "a"']&d(1)
    \end{tikzcd}
  \]
  We have an isomorphism of polynomials
  \begin{equation}\label{eqn.family_map}
    m\cong\sum_{a\in d(1)}m_a.
  \end{equation}
  
  We will use this notation $m_a$ often and refer to it as the
  \emph{$a$-indexed component of $m$}.
\end{notation}

\begin{proposition}[Right comodules are indexed sets of
  $c$-sets]\label{prop_right_comodules} Specifying a right comodule $c
  \bito[m] \yon$ amounts to providing for each $i \in m(1)$ a
  $c$-set with elements $m[i]$.
\end{proposition}
\begin{proof}
  Restricted to a particular $m$-position $i$, the right comodule map
  $\rho\colon m \to m \tri d$ gives an $m$-position $i'$, assigns objects
  of $c$ to $m[i']$---thus determining a set $P_i(a)$ consisting of
  elements lying over $a$ for any particular object $a$---and assigns
  to each arrow $f\colon a \to a'$ a map $P_i(f)\colon P_i(a) \to m[i]$ (yet to
  cohere as a map from $P_i(f)\colon P_i(a) \to P_i(a')$).

  The right comodule laws \eqref{eqn.bimod_right} say first of all
  that $i' = i$---so the sets $P_i(a)$ partition $m[i]$
  itself---second of all that each such $P_i(f)$ lands in $P_i(a')$ as
  it should, and third of all that the assignment $f \mapsto P_i(f)$
  is functorial (preserves identities and composition).
\end{proof}

\begin{figure}[h]
  \centering
  \begin{tikzpicture}[corollas]
    \begin{scope}[shift={(-1.85,0)}]
      \node [ycol,vertex,large] (s) at (0, -.8) {};
      \coordinate (t1) at (-1.2,.8) {};
      \coordinate (t2) at (-.6,.8) {};
      \coordinate (t3) at (.0,.8) {};
      \coordinate (t4) at (.6,.8) {};
      \coordinate (t5) at (1.2,.8) {};
      \draw [ycol,edge] (s) -- (t1);
      \draw [ycol,edge] (s) -- (t2);
      \draw [ycol,edge] (s) -- (t3);
      \draw [ycol,edge] (s) -- (t4);
      \draw [ycol,edge] (s) -- (t5);
      \node [annot] at (.3, -.8) {\rlap{$\in m(1)$}};
    \end{scope}
    \node at (0, 0){$\underset{\rho}{\mapsto}$};
    \begin{scope}[shift={(2.5,0)}]
      \node [ycol,vertex] (s) at (0, -.8) {};
      \coordinate (t1) at (-1.2,.8) {};
      \coordinate (t2) at (-.6,.8) {};
      \coordinate (t3) at (-0,.8) {};
      \coordinate (t4) at (.6,.8) {};
      \coordinate (t5) at (1.2,.8) {};
      \node [rcol,vertex] (m1) at (-1.6,0) {};
      \node [rcol,vertex] (m2) at (-.8,0) {};
      \node [rcol,vertex] (m3) at (0,0) {};
      \node [rcol,vertex] (m4) at (.8,0) {};
      \node [rcol,vertex] (m5) at (1.6,0) {};
      \draw [ycol,edge] (s) -- (m1);
      \draw [ycol,edge] (s) -- (m2);
      \draw [ycol,edge] (s) -- (m3);
      \draw [ycol,edge] (s) -- (m4);
      \draw [ycol,edge] (s) -- (m5);

      \draw [rcol,edge] (m1) -- (t1);
      \draw [rcol,edge] (m1) -- (t2);

      \draw [rcol,edge] (m2) -- (t1);
      \draw [rcol,edge] (m2) -- (t2);

      \draw [rcol,edge] (m3) -- (t3);
      \draw [rcol,edge] (m3) -- (t4);
      \draw [rcol,edge] (m3) -- (t5);

      \draw [rcol,edge] (m4) -- (t4);
      \draw [rcol,edge] (m4) -- (t5);

      \draw [rcol,edge] (m5) -- (t4);
      \draw [rcol,edge] (m5) -- (t5);

      \node [annot] at (.2, -.8) {\rlap{$\in m(1)$}};
      \node [annot] at (1.75, 0) {\rlap{$\in c(1)$}};
    \end{scope}
  \end{tikzpicture}
  \caption{Polynomial morphism $\rho\colon m \to m \tri c$ as in a right comodule.}
\end{figure}

\begin{figure}[h]
  \centering
  \begin{subfigure}[c]{0.5\textwidth}
    \centering
    \begin{tikzpicture}[corollas]
      \begin{scope}[shift={(-1.5,0)}]
        \node [ycol,vertex,large] (s) at (0, -.8) {};
        \coordinate (t1) at (-1.2,.8) {};
        \coordinate (t2) at (-.6,.8) {};
        \coordinate (t3) at (0,.8) {};
        \coordinate (t4) at (.6,.8) {};
        \coordinate (t5) at (1.2,.8) {};
        \draw [ycol,edge] (s) -- (t1);
        \draw [ycol,edge] (s) -- (t2);
        \draw [ycol,edge] (s) -- (t3);
        \draw [ycol,edge] (s) -- (t4);
        \draw [ycol,edge] (s) -- (t5);

        \node at (0, -1.3) {$\id_m$};
      \end{scope}
      \node at (0, 0){$=$};
      \begin{scope}[shift={(2.3,0)}]
        \node [ycol,vertex] (s) at (0, -.8) {};
        \coordinate (t1) at (-1.2,.8) {};
        \coordinate (t2) at (-.6,.8) {};
        \coordinate (t3) at (0,.8) {};
        \coordinate (t4) at (.6,.8) {};
        \coordinate (t5) at (1.2,.8) {};

        \coordinate (m1) at (-1.6,0) {};
        \coordinate (m2) at (-.8,0) {};
        \coordinate (m3) at (0,0) {};
        \coordinate (m4) at (.8,0) {};
        \coordinate (m5) at (1.6,0) {};

        \draw [ycol,edge] (s) -- (m1);
        \draw [ycol,edge] (s) -- (m2);
        \draw [ycol,edge] (s) -- (m3);
        \draw [ycol,edge] (s) -- (m4);
        \draw [ycol,edge] (s) -- (m5);

        \draw [edge] (m1) -- (t1);
        \draw [dasht] (m1) -- (t2);

        \draw [dasht] (m2) -- (t1);
        \draw [edge] (m2) -- (t2);

        \draw [edge] (m3) -- (t3);
        \draw [dasht] (m3) -- (t4);
        \draw [dasht] (m3) -- (t5);

        \draw [edge] (m4) -- (t4);
        \draw [dasht] (m4) -- (t5);

        \draw [dasht] (m5) -- (t4);
        \draw [edge] (m5) -- (t5);

        \node at (0, -1.3) {$\rho \then (\id_m \tri \epsilon)$};
      \end{scope}
    \end{tikzpicture}
    \caption{The unit law ensures each direction is sent to itself by
      the relevant identity arrow in $d$.}
  \end{subfigure}%
  \begin{subfigure}[c]{0.5\textwidth}
    \centering
    \begin{tikzpicture}[corollas]
      \begin{scope}[shift={(-2.35,0)}] 
        \node [ycol,vertex,small] (s) at (0, -.8) {};
        \coordinate (t1) at (-1.2,.8) {};
        \coordinate (t2) at (-.6,.8) {};
        \coordinate (t3) at (-0,.8) {};
        \coordinate (t4) at (.6,.8) {};
        \coordinate (t5) at (1.2,.8) {};

        \node [rcol,vertex,small] (m1) at (-1.6,-.4) {};
        \node [rcol,vertex,small] (m2) at (-.8,-.4) {};
        \node [rcol,vertex,small] (m3) at (0,-.4) {};
        \node [rcol,vertex,small] (m4) at (.8,-.4) {};
        \node [rcol,vertex,small] (m5) at (1.6,-.4) {};

        \node [rcol,vertex] (m1_) at (-1.6,0) {};
        \node [rcol,vertex] (m2_) at (-.8,0) {};
        \node [rcol,vertex] (m3_) at (0,0) {};
        \node [rcol,vertex] (m4_) at (.8,0) {};
        \node [rcol,vertex] (m5_) at (1.6,0) {};

        \draw [ycol,edge] (s) -- (m1);
        \draw [ycol,edge] (s) -- (m2);
        \draw [ycol,edge] (s) -- (m3);
        \draw [ycol,edge] (s) -- (m4);
        \draw [ycol,edge] (s) -- (m5);

        \draw [rcol,edge] (m1) -- (m1_);
        \draw [rcol,edge] (m1) -- (m2_);

        \draw [rcol,edge] (m2) -- (m1_);
        \draw [rcol,edge] (m2) -- (m2_);

        \draw [rcol,edge] (m3) -- (m3_);
        \draw [rcol,edge] (m3) -- (m4_);
        \draw [rcol,edge] (m3) -- (m5_);

        \draw [rcol,edge] (m4) -- (m4_);
        \draw [rcol,edge] (m4) -- (m5_);

        \draw [rcol,edge] (m5) -- (m4_);
        \draw [rcol,edge] (m5) -- (m5_);

        \draw [rcol,edge] (m1_) -- (t1);
        \draw [rcol,edge] (m1_) -- (t2);

        \draw [rcol,edge] (m2_) -- (t1);
        \draw [rcol,edge] (m2_) -- (t2);

        \draw [rcol,edge] (m3_) -- (t3);
        \draw [rcol,edge] (m3_) -- (t4);
        \draw [rcol,edge] (m3_) -- (t5);

        \draw [rcol,edge] (m4_) -- (t4);
        \draw [rcol,edge] (m4_) -- (t5);

        \draw [rcol,edge] (m5_) -- (t4);
        \draw [rcol,edge] (m5_) -- (t5);

        \node at (0, -1.3) {$\rho \then (\rho \tri \id_c)$};
      \end{scope}
      \node at (0, 0){$=$};
      \begin{scope}[shift={(2.5,0)}]
        \node [ycol,vertex] (s) at (0, -.8) {};
        \coordinate (t1) at (-1.2,.8) {};
        \coordinate (t2) at (-.6,.8) {};
        \coordinate (t3) at (-0,.8) {};
        \coordinate (t4) at (.6,.8) {};
        \coordinate (t5) at (1.2,.8) {};

        \node [rcol,vertex,small] (m1) at (-1.6,0) {};
        \node [rcol,vertex,small] (m2) at (-.8,0) {};
        \node [rcol,vertex,small] (m3) at (0,0) {};
        \node [rcol,vertex,small] (m4) at (.8,0) {};
        \node [rcol,vertex,small] (m5) at (1.6,0) {};

        \node [rcol,vertex,small] (m11) at (-1.8,.4) {};
        \node [rcol,vertex,small] (m12) at (-1.4,.4) {};
        \node [rcol,vertex,small] (m21) at (-1.0,.4) {};
        \node [rcol,vertex,small] (m22) at (-.6,.4) {};
        \node [rcol,vertex,small] (m31) at (-.3,.4) {};
        \node [rcol,vertex,small] (m32) at (.0,.4) {};
        \node [rcol,vertex,small] (m33) at (.3,.4) {};
        \node [rcol,vertex,small] (m41) at (.6,.4) {};
        \node [rcol,vertex,small] (m42) at (1.0,.4) {};
        \node [rcol,vertex,small] (m51) at (1.4,.4) {};
        \node [rcol,vertex,small] (m52) at (1.8,.4) {};

        \draw [ycol,edge] (s) -- (m1);
        \draw [ycol,edge] (s) -- (m2);
        \draw [ycol,edge] (s) -- (m3);
        \draw [ycol,edge] (s) -- (m4);
        \draw [ycol,edge] (s) -- (m5);

        \draw [rcol,edge] (m1) -- (m11);
        \draw [rcol,edge] (m1) -- (m12);

        \draw [rcol,edge] (m2) -- (m21);
        \draw [rcol,edge] (m2) -- (m22);

        \draw [rcol,edge] (m3) -- (m31);
        \draw [rcol,edge] (m3) -- (m32);
        \draw [rcol,edge] (m3) -- (m33);

        \draw [rcol,edge] (m4) -- (m41);
        \draw [rcol,edge] (m4) -- (m42);

        \draw [rcol,edge] (m5) -- (m51);
        \draw [rcol,edge] (m5) -- (m52);

        \draw [rcol,edge] (m11) -- (t1);
        \draw [rcol,edge] (m11) -- (t2);

        \draw [rcol,edge] (m12) -- (t1);
        \draw [rcol,edge] (m12) -- (t2);

        \draw [rcol,edge] (m21) -- (t1);
        \draw [rcol,edge] (m21) -- (t2);

        \draw [rcol,edge] (m22) -- (t1);
        \draw [rcol,edge] (m22) -- (t2);

        \draw [rcol,edge] (m31) -- (t3);
        \draw [rcol,edge] (m31) -- (t4);
        \draw [rcol,edge] (m31) -- (t5);

        \draw [rcol,edge] (m32) -- (t4);
        \draw [rcol,edge] (m32) -- (t5);

        \draw [rcol,edge] (m33) -- (t4);
        \draw [rcol,edge] (m33) -- (t5);

        \draw [rcol,edge] (m41) -- (t4);
        \draw [rcol,edge] (m41) -- (t5);

        \draw [rcol,edge] (m42) -- (t4);
        \draw [rcol,edge] (m42) -- (t5);

        \draw [rcol,edge] (m51) -- (t4);
        \draw [rcol,edge] (m51) -- (t5);

        \draw [rcol,edge] (m52) -- (t4);
        \draw [rcol,edge] (m52) -- (t5);

        \node at (0, -1.3) {$\rho \then (\id_m \tri \delta)$};
      \end{scope}
    \end{tikzpicture}
    \caption{The associativity law ensures the action by a length two
      path of arrows in $d$ on directions of $m$, induced by applying
      $\rho$ twice, is the action by the composite arrow in $d$}
  \end{subfigure}
  \caption{Right comodule laws, as in \cref{eqn.bimod_right}.}
\end{figure}

\begin{example}[Degenerate right comodule]
Again, we expect the degenerate case of a right $\yon$-module to be
just a polynomial. Indeed, a polynomial is precisely an indexed set of
sets.
\end{example}

\begin{proposition}[($c$,$d$)-bicomodules are $d$-shaped diagrams of
  $c$-variable polynomials]\label{prop_bicomodules}
  Specifying a bicomodule $c\bito[m]d$ amounts to providing a
  functor $d \to \smset[c]$; here $m$ is the sum of assigned
  underlying polynomials over all objects in $d$.
\end{proposition}
\begin{proof}
  In \cref{prop_left_comodules} we saw that left module structure on
  $m$ gives a functor $d \to \poly$, $a \mapsto m_a$, and in
  \cref{prop_right_comodules} we saw that right module structure on
  $m$ gives a $c$-set structure on the direction set at each
  position of $m$, or in other words, exhibits $m$ as the underlying
  polynomial of a $c$-variable polynomial. A fortiori this induces a
  $c$-set structure on the directions at each position of any
  $m_a$, exhibiting it as the underlying polynomial of a
  $c$-variable polynomial.

  Note that a morphism in $\smset[c]$ amounts to a morphism of
  underlying polynomials in $\poly$ such that, at each position, the
  map backwards on directions preserves the $c$-set
  structure.

  The bicomodule compatibility law
  \eqref{eqn.bimod_coherence} says that, for every arrow $f\colon a \to a'$
  in $d$ and position $i$ of $m_a$, the assigned map
  $f^\sharp_i\colon m_{a'}[j] \to m_a[i]$ does indeed preserve the
  $c$-set structure on $m_{a'}[j]$, and so the functor
  $d \to \poly$ factors through $\smset[c]$.
\end{proof}
\begin{figure}[h]
  \centering
  \begin{tikzpicture}[corollas]
    \begin{scope}[shift={(-2.3,0)}]
      \node [bcol,vertex,small] (s) at (0, -.8) {};
      \coordinate (t1) at (-1.2,.8) {};
      \coordinate (t2) at (-.6,.8) {};
      \coordinate (t3) at (-0,.8) {};
      \coordinate (t4) at (.6,.8) {};
      \coordinate (t5) at (1.2,.8) {};

      \node [rcol,vertex] (m1_) at (-1.6,0) {};
      \node [rcol,vertex] (m2_) at (-.8,0) {};
      \node [rcol,vertex] (m3_) at (0,0) {};
      \node [rcol,vertex] (m4_) at (.8,0) {};
      \node [rcol,vertex] (m5_) at (1.6,0) {};

      \node [ycol,vertex,small] (m1) at (-1.5,-.4) {};
      \node [ycol,vertex,small] (m2) at (-.5,-.4) {};
      \node [ycol,vertex,small] (m3) at (.5,-.4) {};
      \node [ycol,vertex,small] (m4) at (1.5,-.4) {};

      \draw [ycol,edge] (m2) -- (m1_);
      \draw [ycol,edge] (m2) -- (m2_);
      \draw [ycol,edge] (m2) -- (m3_);
      \draw [ycol,edge] (m2) -- (m4_);
      \draw [ycol,edge] (m2) -- (m5_);

      \draw [ycol,edge,bend left=13] (m3) to (m3_);
      \draw [ycol,edge,bend right=13] (m3) to (m3_);
      \draw [ycol,edge,bend left=13] (m3) to (m4_);
      \draw [ycol,edge,bend right=13] (m3) to (m4_);
      \draw [ycol,edge,bend left=8] (m3) to (m5_);
      \draw [ycol,edge,bend right=8] (m3) to (m5_);

      \draw [ycol,edge] (m4) -- (m4_);
      \draw [ycol,edge] (m4) -- (m5_);

      \draw [bcol,edge] (s) -- (m1);
      \draw [bcol,edge] (s) -- (m2);
      \draw [bcol,edge] (s) -- (m3);
      \draw [bcol,edge] (s) -- (m4);

      \draw [rcol,edge] (m1_) -- (t1);
      \draw [rcol,edge] (m1_) -- (t2);

      \draw [rcol,edge] (m2_) -- (t1);
      \draw [rcol,edge] (m2_) -- (t2);

      \draw [rcol,edge] (m3_) -- (t3);
      \draw [rcol,edge] (m3_) -- (t4);
      \draw [rcol,edge] (m3_) -- (t5);

      \draw [rcol,edge] (m4_) -- (t4);
      \draw [rcol,edge] (m4_) -- (t5);

      \draw [rcol,edge] (m5_) -- (t4);
      \draw [rcol,edge] (m5_) -- (t5);

      \node at (0, -1.3) {$\rho \then (\lambda \tri \id_c)$};
    \end{scope}
    \node at (0, 0){$=$};
    \begin{scope}[shift={(2.3,0)}]
      \node [bcol,vertex] (s) at (0, -.8) {};
      \coordinate (t1) at (-1.2,.8) {};
      \coordinate (t2) at (-.6,.8) {};
      \coordinate (t3) at (0,.8) {};
      \coordinate (t4) at (.6,.8) {};
      \coordinate (t5) at (1.2,.8) {};

      \node [ycol,vertex,small] (m1) at (-1.5,0) {};
      \node [ycol,vertex,small] (m2) at (-.5,0) {};
      \node [ycol,vertex,small] (m3) at (.5,0) {};
      \node [ycol,vertex,small] (m4) at (1.5,0) {};

      \node [rcol,vertex,small] (m21) at (-.875,.4) {};
      \node [rcol,vertex,small] (m22) at (-.675,.4) {};
      \node [rcol,vertex,small] (m23) at (-.5,.4) {};
      \node [rcol,vertex,small] (m24) at (-.325,.4) {};
      \node [rcol,vertex,small] (m25) at (-.125,.4) {};
      \node [rcol,vertex,small] (m31) at (.125,.4) {};
      \node [rcol,vertex,small] (m32) at (.275,.4) {};
      \node [rcol,vertex,small] (m33) at (.425,.4) {};
      \node [rcol,vertex,small] (m34) at (.575,.4) {};
      \node [rcol,vertex,small] (m35) at (.725,.4) {};
      \node [rcol,vertex,small] (m36) at (.875,.4) {};
      \node [rcol,vertex,small] (m41) at (1.3,.4) {};
      \node [rcol,vertex,small] (m42) at (1.7,.4) {};

      \draw [bcol,edge] (s) -- (m1);
      \draw [bcol,edge] (s) -- (m2);
      \draw [bcol,edge] (s) -- (m3);
      \draw [bcol,edge] (s) -- (m4);

      \draw [ycol,edge] (m2) -- (m21);
      \draw [ycol,edge] (m2) -- (m22);
      \draw [ycol,edge] (m2) -- (m23);
      \draw [ycol,edge] (m2) -- (m24);
      \draw [ycol,edge] (m2) -- (m25);
      \draw [ycol,edge] (m3) -- (m31);
      \draw [ycol,edge] (m3) -- (m32);
      \draw [ycol,edge] (m3) -- (m33);
      \draw [ycol,edge] (m3) -- (m34);
      \draw [ycol,edge] (m3) -- (m35);
      \draw [ycol,edge] (m3) -- (m36);
      \draw [ycol,edge] (m4) -- (m41);
      \draw [ycol,edge] (m4) -- (m42);

      \draw [rcol,edge] (m21) -- (t1);
      \draw [rcol,edge] (m21) -- (t2);

      \draw [rcol,edge] (m22) -- (t1);
      \draw [rcol,edge] (m22) -- (t2);

      \draw [rcol,edge] (m23) -- (t3);
      \draw [rcol,edge] (m23) -- (t4);
      \draw [rcol,edge] (m23) -- (t5);

      \draw [rcol,edge] (m24) -- (t4);
      \draw [rcol,edge] (m24) -- (t5);

      \draw [rcol,edge] (m25) -- (t4);
      \draw [rcol,edge] (m25) -- (t5);

      \draw [rcol,edge] (m31) -- (t3);
      \draw [rcol,edge] (m31) -- (t4);
      \draw [rcol,edge] (m31) -- (t5);

      \draw [rcol,edge] (m32) -- (t3);
      \draw [rcol,edge] (m32) -- (t4);
      \draw [rcol,edge] (m32) -- (t5);

      \draw [rcol,edge] (m33) -- (t4);
      \draw [rcol,edge] (m33) -- (t5);

      \draw [rcol,edge] (m34) -- (t4);
      \draw [rcol,edge] (m34) -- (t5);

      \draw [rcol,edge] (m35) -- (t4);
      \draw [rcol,edge] (m35) -- (t5);

      \draw [rcol,edge] (m36) -- (t4);
      \draw [rcol,edge] (m36) -- (t5);

      \draw [rcol,edge] (m41) -- (t4);
      \draw [rcol,edge] (m41) -- (t5);
      \draw [rcol,edge] (m42) -- (t4);
      \draw [rcol,edge] (m42) -- (t5);

      \node at (0, -1.3) {$\lambda \then (\id_d \tri \rho)$};
    \end{scope}
  \end{tikzpicture}
  \caption{Bicomodule law, as in \cref{eqn.bimod_coherence}.}
\end{figure}

\begin{corollary}[$d$-sets as $(0,d)$-bicomodules]\label{prop_0_comodules}
  \cref{prop_bicomodules} tells us in particular that bicomodules
  $0\bito d$ are $d$-sets, since $d\set[0] \cong
  d\set$.

  Moreover, the carrier of such a bicomodule $0\bito[m]d$ is
  constant: $m=M$ where $M\in\smset$ is the set of elements of the
  corresponding $d$-set.
\end{corollary}

\begin{remark}[Indexed component $M_a$ of a $d$-set]
  Suppose given a $d$-set $0 \bito[M] d$. Then for any object $a\in d(1)$, its $a$-indexed
  component $M_a$, as defined in \cref{not.indexed_component_left}, is
  simply the set $M(a)$.
\end{remark}

\begin{example}[Hom-sets $c(a,a')$ as components {$c[a]_{a'}$}]\label{ex.hom_sets}
  Suppose given a category $c$. For any object $a\in c(1)$, we have a
  representable $c$-set $0\bito[{c[a]}]c$. For any $a'\in c(1)$,
  its $a'$-indexed component is the hom-set
  \[
    c[a]_{a'}\cong c(a,a').
  \]
\end{example}

\begin{example}[$d$-sets as left $d$-comodules with linear
  carrier]\label{prop_yon_comodules}
  In general, for arbitrary $c$ we may identify $d$-sets with
  the bicomodules $c\bito[M]d$ with constant carrier; these correspond
  to maps $d \to \smset[c] \cong \coco((c\set)\op)$ that factor
  through the full subcategory of coproducts of the initial $c$-set's
  representable functor, which is a category isomorphic to $\smset$
  (as in \cref{rem.set_copies}).

  Similarly, we may identify $d$-sets with the bicomodules $c\bito d$
  that correspond to maps $d \to \coco((c\set)\op)$ that factor
  through the full subcategory of coproducts of the \emph{terminal}
  $c$-set's representable functor, which is also a category isomorphic
  to $\smset$ (as in \cref{rem.set_copies}). In the case
  $c \coloneqq \yon$, these are the bicomodules with linear carrier,
  i.e., of the form $\yon\bito[A\yon]d$ for $A$ a set.
\end{example}

\begin{remark}[Unreasonably effective notation]\label{rem.fascinating_representable}
  Let $c\bito[m]d$ be a bicomodule. The notation for the underlying polynomial
  \begin{equation}\label{eqn.gorgeous_notation}
    m=\sum_{i\in m(1)}\yon^{m[i]}
  \end{equation}
  is surprisingly well-suited for regarding $m$ as an object of $d\set[c]$.

  First, the notation $m(1)$ in \eqref{eqn.gorgeous_notation} could refer to either
  \begin{itemize}
  \item the set one obtains by applying the polynomial functor
    $m\colon\smset\to\smset$ to the terminal object $1\in\smset$, or
  \item the $d\set$ one obtains by applying the prafunctor $m\colon
    c\set\to d\set$ to the terminal object $1\in c\set$.
  \end{itemize}
  It turns out that the first is always equal to the set of elements of the
  second, fitting our notation all along; see
  \cref{notation.main,prop_0_comodules}. Thus both readings are correct: the ambiguity
  disappears.

  Second, we saw in \cref{prop_right_comodules} that for each
  $i\in m(1)$, the set $m[i]$ is endowed by $\rho$ with the additional
  structure of (the elements of) a $c\set$.  Hence for each object $a$
  of $d$, the $a$-indexed component of $m$
  \[m_a = \sum_{i\in m_a(1)}\yon^{m[i]}\] could refer to either
  \begin{itemize}
  \item the polynomial $m_a\colon \smset \to \smset$ where $\yon^{m[i]}$
    denotes the representable functor $\smset(m[i], \thg)$, or
  \item the $c$-variable polynomial $m_a\colon c\set\to \smset$, where
    $\yon^{m[i]}$ denotes the representable functor $c\set(m[i], \thg)$,
    outputting the component at $a$ of the functor
    $m \colon c\set\to d\set$.
  \end{itemize}

  In other words, the notation for the polynomial $m$ as in
  \eqref{eqn.gorgeous_notation} simultaneously serves as a
  single-variable polynomial and a $c$-variable polynomial, which
  applied to a $c$-set yields the elements of its assigned
  $d$-set. Thus again, both readings are correct: the ambiguity
  disappears. (But be aware the entire ``sum'', ranging over the elements
  of a $d$-set, does not denote a coproduct in $d\set[c]$.)
\end{remark}

To finish concretely verifying the equivalence
$\ccomod(\poly) \cong \ccatsharp$, it would remain to show that
bicomodule maps correspond to natural transformations of prafunctors,
bicomodule composition corresponds to prafunctor composition, and that
horizontal composition of bicomodule maps corresponds to horizontal
composition of natural transformations. However, we do not find
spelling out these details to be particularly enlightening, and since
the equivalence was already established abstractly in the previous
section, we leave such calculations to the interested reader.

\begin{notation}
  The equivalence
  \[\comod(\poly)(c,d)\simeq d\set[c],\] 
  adds a sixth equivalent category to the list in
  \cref{cor.TFAE_indexed_duc}.  We will generally default to the
  notation $d\set[c]$ for all six. For example given $m\in d\set[c]$
  we can think of it as a functor $m\colon d\to\smset[c]$, as a
  functor $m\colon d\to\coco((c\set)\op)$, as a prafunctor
  $m\colon c\set\to d\set$, as a connected limit preserving functor
  $m\colon c\set\to d\set$, or as a bicomodule $c\bito[m]d$. In
  particular, we have
  \[
    d\set\coloneqq d\set[0]\cong\catfun{d}{\smset}
    \qqand
    \smset[c]\coloneqq \yon\set[c]\cong\coco((c\set)\op).
  \]
\end{notation}

\begin{remark}[Carrier of bicomodule
  composite]\label{rem.bimodule_comp_combinatorial}
  In general, the composite $m\tri_dn\in e\set[c]$ of bicomodules
  $c\bito[n]d\bito[m]e$
  has carrier polynomial
  \begin{equation}\label{eqn.composite_bico}
    m\tri_dn\coloneqq\sum_{i\in m(1)}\sum_{j\in
      d\set(m[i],n(1))}\yon^{\left(\colim\limits_{x\in
          m[i]}n[j(x)]\right)}
  \end{equation}
  where $\colim_{x\in m[i]}n[j(x)]$ denotes the colimit of the
  $c$-sets $n[j(x)]$, taken over elements $x\in(\el_dm[i])\op$.
\end{remark}

\begin{remark}[Indexed component $m_a$ of a
  bicomodule]\label{not.indexed_component}
  Recall \cref{not.indexed_component_left}: for $m$ a left $d$-comodule
  and $a$ an object in $d$, we obtain the $a$-indexed component
  $m_a$. If $m$ moreover has the structure of a right $c$-comodule, $m_a$
  inherits the structure of a right $c$-comodule from $m$. (Indeed, we saw
  in \cref{prop_right_comodules} that a right $c$-comodule amounts to
  $c$-set structure on the direction set of each position
  independently.) When $m$ is a $(c,d)$-bicomodule, the
  right $c$-comodule structure on $m_a$ can be written as the composite
  \[
    c\bito[m]d\bito[\yon^{d[a]}]\yon
  \]
  where the right comodule $\yon^{d[a]}$ corresponds to the
  representable $d$-set with representing object $a$.   
\end{remark}

\section{Structures in $\ccatsharp$}\label{chap.structures}
We have now have a good enough working knowledge of $\ccatsharp$ that
we can begin both to locate familiar friends from category
theory---functors, profunctors, etc.---and to explore
universal constructions--adjoints, Kan extensions, etc.--within it.


\begin{proposition}[Adjoints in $\ccatsharp$]\label{prop.adjoint_prafunctors}
  Let $F\colon c\set\to d\set$ be a parametric right adjoint regarded
  as a $(c,d)$-bicomodule.
  \begin{itemize}
  \item $F$ has a right adjoint (i.e., is a left adjoint) within
    $\ccatsharp$ if and only if $F$ has a right adjoint
    $d\set\to c\set$ within the category of large categories.
  \item $F$ has a left adjoint (i.e., is a right adjoint) within
    $\ccatsharp$ if and only if $F$ has a left adjoint
    $d\set\to c\set$ within the category of large categories
    and that it too is a parametric right adjoint.
  \end{itemize}
\end{proposition}
\begin{proof}
  Right adjoints are in particular parametric right adjoints. The
  result follows from
  $d\set[c] \cong \Praf(c\set,d\set) \hookrightarrow
  \catfun{c\set}{d\set}$, specifically that this sub bicategory is
  locally full.
\end{proof}

\begin{remark}\label{prop.right_adjoint_prafunctors}
  If $F$ has a left adjoint (i.e., is a right adjoint) within the
  category of large categories, then in general one \emph{cannot}
  conclude that $F$ has a left adjoint in $\ccatsharp$.
\end{remark}

We now recall a basic fact that will help us orient
ourselves.

\begin{proposition}\label{prop.profunctor_left}
  The following bicategories are equivalent (objects are categories
  and arrows between objects $c$ and $d$ are as follows):
  \begin{enumerate}
  \item profunctors $c\op\times d\to\smset$, and
  \item left adjoint functors $c\set\to d\set$.
  \end{enumerate}
\end{proposition}
\begin{proof}
By currying, a profunctor $c\op\times d\to\smset$ can be viewed as a
functor $c\op \to d\set$. Since $c\set$ is the free cocompletion of
$c\op$, functors $c\op \to d\set$ are in correspondence with
colimit-preserving functors $c\set \to d\set$, which are the same as
left adjoints. One can check that the horizontal composites agree as
well.
\end{proof}

Recall the notation $m_a$ from \cref{not.indexed_component}.

\begin{definition}[Conjunctive bicomodule]\label{def.conjuctive}
  We say that a bicomodule $c\bito[m] d$ is \emph{conjunctive} if
  for each $a\in d(1)$ the $c$-variable polynomial $m_a$ is
  representable (conjunctive in the sense of \cref{ex.variable_is_duc}),
  i.e., if $m_a(1)=1$ for each object $a\in d(1)$.
\end{definition}

\begin{proposition}\label{prop.profunctor}
  The following bicategories are equivalent (objects are categories
  and arrows between objects $c$ and $d$ are as follows):
  \begin{enumerate}
  \item conjunctive $(c,d)$-bicomodules,
    \item profunctors $d\op\times c\to\smset$, \emph{with 2-cells
        between profunctors reversed}, and
    \item right adjoint functors $c\set\to d\set$.
  \end{enumerate}
\end{proposition}
\begin{proof}
  For $1 \iff 3$, a functor $c\set\to d\set$ is right adjoint if and
  only if it preserves limits, if and only if its component
  $c\set \to \smset$ at each object of $d$ preserves limits (since
  limits in $d\set$ are taken pointwise), if and only if each such
  component is representable.

  Now $2\iff 3$ follows from \cref{prop.profunctor_left}, but we may
  also see $1 \iff 2$ directly. By currying, a profunctor
  $d\op\times c\to\smset$ can be viewed as a functor $d\op \to
  c\set$. Taking the opposite functor (this step responsible for
  reversal of 2-cells) and composing with the Yoneda embedding, we
  obtain a functor
  $d\to(c\set)\op\to\coco((c\set)\op)\cong \smset[c]$, i.e., a
  bicomodule. We see profunctors correspond to those bicomodules
  $d\to \coco((c\set)\op)$ which factor through the Yoneda
  embedding, i.e., having the form
  \[
    p\cong\sum\limits_{a\in d(1)}\yon^{P_a}
  \]
  for $P_a\colon c\to\smset$ arbitrary. These are exactly the
  conjunctive bicomodules. Note this bicomodule as a functor
  $c\set \to d\set$ is indeed the right adjoint of the left adjoint
  functor $d\set \to c\set$ that extends the profunctor
  $d\op \to c\set$ as in \cref{prop.profunctor_left}.
\end{proof}

In later sections we will denote by $\ccatsharpcon$ the (not full)
locally full sub framed bicategory of $\ccatsharp$ consisting of
conjunctive bicomodules.

\begin{example}[Not all conjunctive bicomodules are right adjoints in $\ccatsharp$]
  Let $d\coloneqq\fbox{$\bullet^1\tto\bullet^2$}$ be the
  parallel-arrows category. The diagonal functor $\smset\to d\set$
  corresponds to a profunctor $d\profunctor 1$ and hence to a
  bicomodule of the form $\yon\bito[2\yon]d$.%
  \footnote{The carrier is $2\yon^1$ because each of the two objects
    in $d$ is assigned $\smset(1,\thg)$. As a profunctor
    $d\op\times 1\to\smset$, it is constantly $1$.}  It has a left
  adjoint functor $d\set\to\smset$, but this left adjoint (given by
  taking the coequalizer of sets $X_1\tto X_2$) is itself not a
  prafunctor. One can see that by the fact that coequalizers do not
  preserve connected limits.
\end{example}

The following generalizes \cref{prop.JoshMeyers}.

\begin{proposition}[Left Kan extensions in $\ccatsharp$]\label{prop.generalcoclosure}
  $\ccatsharp$ has left Kan extensions (in the sense of Kan
  extensions in a general 2-category). That is, for any bicomodules $c\bito[p]
  e$ and $c \bito[q] d$ we
  can define a bicomodule $\lenskan{p}{q}{d}{e} \colon d\bito e$ via adjunction
  \begin{equation}\label{eqn.adjunction_generalcocolosure}
    e\set[d]\left(\lenskan{p}{q}{d}{e},p'\right) \cong e\set[c](p,p'\tri_d q).
  \end{equation}
  
  In other words, for any diagram as shown left
  \[
    \begin{tikzcd}[row sep=0]
      c\ar[rr, bend left, biml-bimr, "p", ""' name=P]\ar[dr, bend right=15pt, biml-bimr, "q"']&&
      e\\ &
      d
    \end{tikzcd}
    \hspace{.6in}
    \begin{tikzcd}[row sep=0]
      c\ar[rr, bend left, biml-bimr, "p", ""' name=P]\ar[dr, bend right=15pt, biml-bimr, "q"']&&
      e\\ &
      |[alias=D]|d\ar[ur, bend right=15pt, dashed, biml-bimr, "p'"']
      \ar[from=P-|D, to=D, Rightarrow, "\varphi"]
    \end{tikzcd}
  \]
  the category consisting of bicomodules $d\bito[p']e$ and 2-cells
  $\varphi\colon p\to p'\tri_dq$, as shown right, has an initial
  object, which we denote $\lenskan{p}{q}{d}{e}$, carried by the polynomial:
  \begin{equation}\label{eqn.generalcoclosure}
    \lenskan{p}{q}{d}{e}\coloneqq\sum_{i \in p(1)}\yon^{q\tri_c p[i]}.
  \end{equation}
\end{proposition}
\begin{proof}
  We begin by explaining how to read the formula, and then show that
  it indeed has the structure of a $(d,e)$-bicomodule.
  First note that the right $c$-comodule structure on $p$ endows each
  $p[i]$ with the structure of a $c\set$. Thus we can consider it as
  a bicomodule $p[i] \colon 0\bito c$ at which point the composite
  $q\tri_c p[i]$ in \eqref{eqn.generalcoclosure} makes
  sense.
  Hence we find a canonical $d$-set structure on
  the composite
  \[q\tri_cp[i]\cong\sum_{j\in q(1)}c\set(q[j],p[i])\] 
  for each $i\in p(1)$. This composite varies contravariantly over
  $i\in\el_ep(1)$. Indeed, given a map $f\colon a\to a'$ in $e$, we
  get a map we'll temporarily call $f\lpush \colon p_a(1)\to
  p_{a'}(1)$. This gives a map of $c$-sets $p[f\lpush (i)]\to p[i]$, and
  hence a map $q\tri_cp[f\lpush (i)]\to q\tri_cp[i]$. Thus we have
  established a $(d,e)$-bicomodule structure on $\lenskan{p}{q}{d}{e}$.

  We next give the map $p\to \lenskan{p}{q}{d}{e}\tri_dq$ in $e\set[c]$. Its type is
  \[
    \int\limits_{a{\in}e(1)}\prod_{i\in p_a(1)}\sum_{i'\in p_a(1)}\sum_{\varphi\in d\set(q\tri_cp[i'],q(1))}\lim_{(b,j,w)\in\el_d(q\tri_cp[i'])}c\set(q[j],p[i]).
  \]
  As complicated as it looks, giving an element of this type is straightforward. Indeed, we simply use
  \[
    a\mapsto i\mapsto \Big(i, \big(((j,w)\mapsto j), (b,j,w)\mapsto x\mapsto w(x)\big)\Big)
  \]
  where $a\in e(1)$, $i'\coloneqq i\in p_a(1)$, $j\in q(1)$, $w\in c\set(q[j],p[i'])$, $\varphi(j,w)\coloneqq j$, $b\in d(1)$, $x\in q[j]$, and $w(x)\in p[i']=p[i]$.

  It remains to show that the above is universal, so suppose given $d\bito[p']e$ and a map $\psi\colon p\to p'\tri_d q$ in $e\set[c]$. It has type
  \[
    \psi\in\int\limits_{a\in e(1)}\prod_{i\in p_a(1)}\sum_{i'\in p'_a(1)}\sum_{\varphi\in d\set(p'[i'],q(1))}\lim_{(b,w')\in\el_dp'[i']}c\set(q[\varphi w'],p[i]).
  \]
  In order to give a map $\lenskan{p}{q}{d}{e}\to p'$ in $e\set[d]$ we need something of type
  \[
    \int\limits_{a{\in}e(1)}\prod_{i\in p_a(1)}\sum_{i'\in p'_a(1)}d\set(p'[i'],q\tri_cp[i]),
  \]
  and one can check that this type is isomorphic to that defining $\psi$. We leave the remaining details to the reader.
\end{proof}

\subsection{The full sub framed bicategory $\ccatsharpdisc$}\label{sec.catsharpdisc}

Of special interest to us is the full sub framed bicategory
$\ccatsharpdisc\ss\ccatsharp$ spanned by the discrete objects. This is
the framed bicategory of \emph{set}-variable polynomials as mentioned in the
introduction.

\begin{remark}[Multi-variable multi-output polynomials]
  We have seen that bicomodules $m \in d\set[c]$ are $c$-variable
  $d$-valued polynomials. In particular if $c \coloneqq C\yon$ and
  $d \coloneqq D\yon$ are discrete categories (\cref{ex.discrete_multivariable}),
  then
\[m \coloneqq \sum_{i \in m(1)}\yon^{P_i}\] is seen quite literally as
a polynomial in multiple input and output variables, where for each
term $i$ the component $P_i(a)$ of the $c$-set $P_i$ at $a \in C$ is
the exponent of the $a$-indexed input variable, and the component
$m_b$ of the functor $c\set \to d\set$ at $b\in D$ produces the value
of the $b$-indexed output variable:
\[m_b \cong \sum_{i \in m_b}\prod_{a \in C}\yon^{P_i(a)}.\]

\end{remark}

\begin{remark}[Colored corollas]\label{thm.GK_myform}
  Let $C,D\in\smset$ be sets, regarded as discrete categories
  $C\yon, D\yon$ in $\ccatsharp$. By abuse of notation, we will denote
  the category of bicomodules between them by
  \[
    D\set[C]\coloneqq (D\yon)\set[C\yon].
  \]
  
  A bicomodule $m \in D\set[C]$ is simply a map
  from $m$-positions to elements of $D$ and a map from $m$-directions
  to elements of $C$.
  
  \begin{figure}[h]
    \centering
    \begin{tikzpicture}[corollas]
      \begin{scope}[shift={(-2,0)}]
        \node [ycol,vertex,large] (s) at (0, -.8) {};
        \coordinate (t1) at (-1.2,.8) {};
        \coordinate (t2) at (-.6,.8) {};
        \coordinate (t3) at (.0,.8) {};
        \coordinate (t4) at (.6,.8) {};
        \coordinate (t5) at (1.2,.8) {};
        \draw [ycol,edge] (s) -- (t1);
        \draw [ycol,edge] (s) -- (t2);
        \draw [ycol,edge] (s) -- (t3);
        \draw [ycol,edge] (s) -- (t4);
        \draw [ycol,edge] (s) -- (t5);
        \node [annot] at (.3, -.8) {\rlap{$\in m(1)$}};
      \end{scope}
      \node at (0, 0){$\mapsto$};
      \begin{scope}[shift={(2,0)}]
        \node [bcol,vertex] (s) at (0, -.8) {};
        \coordinate (t1) at (-1.2,.8) {};
        \coordinate (t2) at (-.6,.8) {};
        \coordinate (t3) at (-0,.8) {};
        \coordinate (t4) at (.6,.8) {};
        \coordinate (t5) at (1.2,.8) {};
        
        \node [ycol,vertex] (s1) at (0,-.3) {};
        
        \node [rcol,vertex] (m1) at (-1.2,.4) {};
        \node [rcol,vertex] (m2) at (-.6,.4) {};
        \node [rcol,vertex] (m3) at (0,.4) {};
        \node [rcol,vertex] (m4) at (.6,.4) {};
        \node [rcol,vertex] (m5) at (1.2,.4) {};
        
        \draw [bcol,edge] (s) -- (s1);
        
        \draw [ycol,edge] (s1) -- (m1);
        \draw [ycol,edge] (s1) -- (m2);
        \draw [ycol,edge] (s1) -- (m3);
        \draw [ycol,edge] (s1) -- (m4);
        \draw [ycol,edge] (s1) -- (m5);

        \draw [rcol,edge] (m1) -- (t1);
        \draw [rcol,edge] (m2) -- (t2);
        \draw [rcol,edge] (m3) -- (t3);
        \draw [rcol,edge] (m4) -- (t4);
        \draw [rcol,edge] (m5) -- (t5);

        \node [annot] at (.2, -.8) {\rlap{$\in D$}};
        \node [annot] at (.2, -.3) {\rlap{$\in m(1)$}};
        \node [annot] at (1.35, .4) {\rlap{$\in C$}};
      \end{scope}
    \end{tikzpicture}
    \caption{The morphism $m \to D\yon \tri m \tri C\yon$
      (built from $\lambda$ and $\rho$) for a bicomodule between
      discrete categories. The corolla shown represents a term in a
      polynomial of $C$ input variables and $D$ output variables; such
      a term is a product of variables of various types $c \in C$ and
      outputs a type $d \in D$.}
  \end{figure}

  Indeed, we have seen that an object in $D\set[C]$ consists of
  $D$-many polynomials in $C$-many variables; this is just the
  assignment of output types to terms and input types to factors.
\end{remark}

Thus $C$-variable $D$-valued polynomials are in correspondence up to
isomorphism with diagrams as in \cref{prop.bridge_diagram} such that
all the categories involved are discrete, i.e., sets $C$, $E$, $B$,
and $D$ with maps:
\begin{equation}\label{eqn.bridge}
  \begin{tikzcd}
    &E\ar[dl, "s"']\ar[r, "\pi"]&B\ar[dr, "t"]\\[-10pt]
    C&&&D
  \end{tikzcd}
\end{equation}

Here $E \To{\pi} B$ is a polynomial as a bundle
(\cref{rem.poly_bundle}), so $t$ assigns to its positions elements
of $D$ and $s$ assigns to its directions elements of $C$.

As explained earlier, such a \emph{bridge diagram} neatly decomposes
the associated functor $C\set \to D\set$ as
$\Sigma_t\circ\Pi_\pi\circ\Delta_s$ (where as usual $\Delta_f$ for
$f\colon X \to Y$ denotes the reindexing functor $Y\set \to X\set$ via
$f$, and $\Sigma_f$ and $\Pi_f$ respectively denote its left and right
adjoint).

\begin{proposition}\label{thm.GK_migration}
  Let $m \in D\set[C]$. Its bridge diagram is
  \[C\From{s} \pnt{m}(1) \To{\pi} m(1) \To{t} D \] where $s$ is the
  map induced by the right $C$-comodule structure, $\pi$ is the
  polynomial carrier of $m$ viewed as a bundle, and $t$ is the map
  induced by the left $D$-comodule structure. The functor
  $C\set\to D\set$ described by this bridge diagram,
  $\Sigma_t\circ\Pi_\pi\circ\Delta_s$, is the same as that described
  by $m$ as in \eqref{thm.garner}.
\end{proposition}
\begin{proof}
  The parametric right adjoint described by $m$ sends $X\in D\set$ and
  $b\in D$ to
  \[
    \sum_{i\in m_b}C\set(m[i],X).
  \]
  Indeed, $\Delta_s$ reindexes according to $s$; then, at each
  position $i \in m(1)$, $\Pi_\pi$ gives the product of sets indexed
  by elements of $\pnt{m}$ over $i$ (directions from $i$); then, at
  each object $b \in D$, $\Sigma_t$ gives the sum of sets indexed by
  elements of $m(1)$ over $b$ (positions of $m_b$). So

  \[\Sigma_t\circ\Pi_\pi\circ\Delta_s(X)(b) \cong \sum_{i\in
      m_b}\prod_{j\in m[i]}X(s(j)) \cong \sum_{i\in m_b}C\set(m[i],X).\]
\end{proof}

\begin{remark}\label{rem.bridge_lin_con}
  The linear and conjunctive $(C,D)$-bicomodules are those with bridge
  diagrams of the form
  \[
    C\From{s}E=\!=E\To{t}D \qqand C\From{s}E\To{\pi}D=\!=D
  \]
  respectively, and as functors may respectively be written
  $\Sigma_t\circ\Delta_s$ and $\Pi_\pi\circ\Delta_s$.
\end{remark}

\begin{remark}[Combinatorial descriptions in
  $\ccatsharpdisc$]\label{rem.comb_maps_catsharpdisc}
  The combinatorial description of bicomodule maps
  from \cref{rem.comb_bicomodule_maps} simplifies greatly when the
  categories involved are
  discrete. We have the following combinatorial
  description of morphisms between $m,n\in D\set[C]$:
  \begin{equation}\label{eqn.bicomod_map_combinatorics_disc}
    D\set[C](m,n)\cong\sum_{\varphi_1\in
      D\set(m(1),n(1))}\;\prod_{i\in
      m(1)}C\set\big(n[\varphi_1(i)],m[i]\big).
  \end{equation}
  In other words, a $(C,D)$-bicomodule map is a map $\varphi$ of underlying
  polynomials such that $\varphi_1$ preserves assigned elements of $D$
  and each $\varphi^\sharp_i$ preserves assigned elements of $C$.

  The combinatorial description \eqref{eqn.composite_bico} of
  composition $C\yon\bito[n]D\yon\bito[m]E\yon$ also simplifies
  from a colimit to a coproduct in the discrete categories case,
  becoming:
  \begin{equation}\label{eqn.composite_bico_linear}
    m\tri_{D\yon}n\coloneqq\sum_{i\in m(1)}\;\sum_{j\in
      d\set(m[i],n(1))}\yon^{\;\sum\limits_{x\in m[i]}n[j(x)]}.
  \end{equation}

  In other words, the positions of $m\tri_{D\yon}n$ are positions of
  $m \tri n$ (two-layer trees) such that the $n$-position at each
  $m$-direction matches its assigned element of $D$.
\end{remark}

\begin{remark}\label{rem.polyfun}
  In \cite{kock2012polynomial}, Gambino and Kock define a framed
  bicategory $\ppolyfun_\call{E}$ given an arbitrary locally cartesian
  closed category $\call{E}$. The horizontal morphisms are polynomial
  functors (i.e., functors that can be expressed in the form
  $\Sigma_t\circ\Pi_\pi\circ\Delta_s$) between slice categories of
  $\call{E}$, which they identify with bridge diagrams
  as in \eqref{eqn.bridge} (up to isomorphism). The bicategory's 2-cells are
  natural transformations satisfying a certain ``strength'' condition
  (disappearing in the case $\call{E} = \smset$), which they
  identify with diagrams of the form
  \begin{equation}\label{eqn.bridge_map}
    \begin{tikzcd}
      C\ar[dd,equal]&E\ar[l, "s"']\ar[r,"\pi"]&B\ar[r,"t"]\ar[d, equal]&D\ar[dd,equal]\\
      &\bullet\ar[dr, phantom, very near start,
      "\lrcorner"]\ar[r]\ar[d]\ar[u, "\varphi^\sharp"]&B\ar[d,
      "\varphi_1"]\\
      C'&E'\ar[l,"s'"]\ar[r,"\pi'"']&B'\ar[r,"t'"']&D'
    \end{tikzcd}
  \end{equation}
  (up to isomorphism); note this is just \cref{eqn.map_bundle_view}
  with the additional constraint that the assigned elements of $C$ and
  $D$ are preserved, as expected. The square 2-cells are similar to
  \eqref{eqn.bridge_map}, but with arbitrary maps instead of
  identities along the sides.

  In particular, $\ccatsharpdisc$ is identified with
  $\ppolyfun_{\smset}$.
\end{remark}

\begin{lemma}\label{lemma.right_adj_GK}
  For sets $C,D\in\smset$, every right (resp.\ left) adjoint
  $C\set\to D\set$ corresponds to a right (resp.\ left) adjoint in
  $\ccatsharp$.
\end{lemma}
\begin{proof}
  \cref{prop.profunctor} tells us the bicomodules describing right
  adjoint functors are precisely the conjunctive ones, so by
  \cref{rem.bridge_lin_con}, a right adjoint $C\set\to D\set$ is
  represented as $\Pi_g\circ\Delta_f$ for some
  $C\From{f}E\To{g}D$. Its left adjoint is
  $\Sigma_f\circ\Delta_g$. Since functors between discrete categories,
  such as $f\colon E\to C$, are \'etale, we have by
  \cref{prop.functor_adj_bico} that $\Sigma_f\circ\Delta_g$ is a
  parametric right adjoint and thus a left adjoint in $\ccatsharp$.
\end{proof}

\begin{corollary}[Adjoint 1-cells in $\ccatsharpdisc$]\label{prop.adjoint_bicoms}
  Let $C,D\in\smset$, and let $C\yon\bito[m] D\yon$ be a bicomodule. Then
  \begin{enumerate}
  \item $m$ is a left adjoint in $\ccatsharp$ iff $m$ is linear.
  \item $m$ is a right adjoint in $\ccatsharp$ iff $m$ is conjunctive.
  \end{enumerate}
  Specifically, given a linear bicomodule with carrier
  $\sum_{a\in C}\sum_{b\in D}E_{a, b}\yon$ and bridge diagram
  \[
    C\From{f}E=\!=E\To{g}D,
  \]
  its right adjoint is the conjunctive bicomodule with carrier
  $\sum_{a\in C}\yon^{\sum_{b\in D}E_{a,b}}$ and bridge diagram
  \[
    D\From{g}E\To{f}C=\!=C.
  \]
\end{corollary}

Following \cref{notation.main}, we will denote the left adjoint of $F$
by $F\ldag$ and the right adjoint of $F$ by $F\rdag$.

\begin{example}\label{ex.polys_and_bundles}
  Given a polynomial $p\in\poly$, we note two ways of capturing its
  bundle form $\pnt{p}(1)\to p(1)$ as in \eqref{eqn.poly_bundleform}
  inside of $\ccatsharpdisc$:
  \[
    \yon\bito[p]p(1)\yon
    \qqand
    p(1)\yon\bito[p\ldag]\yon
  \]
  with respective bridge diagrams
  \[
    1\from \pnt{p}(1)\To{\pi} p(1)=\!=p(1)
    \qqand
    p(1)\From{\pi} \pnt{p}(1)=\!=\pnt{p}(1)\to 1.
  \]
  If $p=\sum_{i\in p(1)}\yon^{p[i]}$ then $p\ldag=\sum_{i\in
    p(1)}p[i]\yon$. As a functor $\smset\to p(1)\set$, the former
  sends $X$ to the $p[i]$-fold product $X^{p[i]}$ for $i\in p(1)$. As
  a functor $p(1)\set\to\smset$, the latter sends $X\to p(1)$ to its
  pullback along $\pnt{p}(1)\to p(1)$.

  These two representations correspond to two sorts of maps one can
  imagine between bundles. Indeed, consider 2-cells of the form
  \[
    \begin{tikzcd}
      \yon\ar[d, equal]\ar[r, biml-bimr, "p"]\ar[dr, phantom,
      "\phantom{f^\sharp}\Downarrow f^\sharp"]&p(1)\yon\ar[d, "f"]&&
      p(1)\yon\ar[d, "f"']\ar[r, biml-bimr, "p\ldag"]&\yon\ar[d,
      equal]\ar[dl, phantom, "\phantom{f^\sharp}\Downarrow f^\flat"]\\
      \yon\ar[r, biml-bimr, "q"']& q(1)\yon&& q(1)\yon\ar[r,
      biml-bimr, "q\ldag"']&\yon
    \end{tikzcd}
  \]
  The data $f\colon p(1)\to q(1)$ is the same in both cases. One
  checks that $f^\sharp$ is the data of a map $q[f(i)]\to p[i]$ for
  each $i\in p(1)$; in other words the left-hand side is just another
  representation of a map of polynomials $p\to q$. On the other hand,
  $f^\flat$ is the data of a map $p[i]\to q[f(i)]$ for each $i\in p(1)$;
  in other words it is just a commutative square. Here are
  set-theoretic representations of the 2-cells above:
  \[
    \begin{tikzcd}
      \pnt{p}(1)\ar[r]&p(1)\ar[d, equal]\\
      \bullet\ar[u, "f^\sharp"]\ar[d]\ar[dr,
      phantom, very near start, "\lrcorner"]\ar[r]&p(1)\ar[d,
      "f"]\\
      \pnt{q}(1)\ar[r]&q(1)
    \end{tikzcd}
    \hspace{.5in}
    \begin{tikzcd}
      p(1)\ar[d, "f"']&\pnt{p}(1)\ar[d,"f^\flat"]\ar[l]\\
      q(1)&\pnt{q}(1)\ar[l]
    \end{tikzcd}
  \]

  The above recovers the operation referred to in \cite[Section
  4]{spivak2020dirichlet} as the \emph{Dirichlet transform} between
  polynomials and Dirichlet series. Whereas in that reference the
  operation was regarded as purely syntactic, we are now able to see
  that it has a universal property: it is the adjoint
  $p\mapsto p\ldag$.
\end{example}

\subsubsection{The sub framed bicategory $\sspan\cong\ccatsharplin$}\label{sec.sspan}

Recall that we refer to a polynomial $p$ as linear if it is of the
form $p\cong P\yon$ for some set $P$. Let
$\ccatsharplin\ss\ccatsharpdisc\ss\ccatsharp$ denote the (not full)
locally full sub framed bicategory consisting of polynomial comonads
with linear carrier (i.e., discrete categories) and bicomodules with
linear carrier as horizontal maps.

\begin{figure}[h]
  \centering
  \begin{tikzpicture}[corollas]
    \begin{scope}[shift={(-1.4,0)}]
      \node [ycol,vertex,large] (s) at (0, -.8) {};
      \coordinate (t1) at (.0,.8) {};
      \draw [ycol,edge] (s) -- (t1);
      \node [annot] at (.3, -.8) {\rlap{$\in M$}};
    \end{scope}
    \node at (0, 0){$\mapsto$};
    \begin{scope}[shift={(1.4,0)}]
      \node [bcol,vertex] (s) at (0, -.8) {};
      \coordinate (t1) at (-0,.8) {};
      
      \node [ycol,vertex] (s1) at (0,-.3) {};
      
      \node [rcol,vertex] (m1) at (0,.4) {};
      
      \draw [bcol,edge] (s) -- (s1);
      
      \draw [ycol,edge] (s1) -- (m1);

      \draw [rcol,edge] (m1) -- (t1);

      \node [annot] at (.2, -.8) {\rlap{$\in D$}};
      \node [annot] at (.2, -.3) {\rlap{$\in M$}};
      \node [annot] at (.2, .4) {\rlap{$\in C$}};
    \end{scope}
  \end{tikzpicture}
  \caption{The morphism
    $M\yon \to D\yon \tri M\yon \tri C\yon$ (built from $\lambda$ and
    $\rho$) for a linear bicomodule $C \yon \bito D \yon$ between
    discrete categories.}
\end{figure}

Recall that the framed bicategory $\sspan$ of spans has sets as
objects, functions as vertical morphisms, spans as horizontal
morphisms, and diagrams (left) as 2-cells (right):
\[
  \begin{tikzcd}
    C\ar[d]&S\ar[l]\ar[r]\ar[d]&D\ar[d]\\
    C'&S'\ar[l]\ar[r]&D'
  \end{tikzcd}
  \qquad\text{as}\qquad
  \begin{tikzcd}
    C\ar[r, tick, "S"]\ar[d]\ar[dr, phantom, "\Downarrow"]&D\ar[d]\\
    C'\ar[r, tick, "S'"']&D'
  \end{tikzcd}
\]

\begin{proposition}[$\sspan$ as linears]\label{prop.span_iso_lin}
  There is an isomorphism of framed bicategories
  \[\sspan\cong\ccatsharplin.\]
\end{proposition}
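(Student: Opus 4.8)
The plan is to build a strict isomorphism of double categories $\sspan\to\ccatsharplin$ and then observe that it is automatically an isomorphism of framed bicategories, since an invertible double functor transports the bifibration $\mathbb{D}_1\to\mathbb{D}_0\times\mathbb{D}_0$ that constitutes a framing. (Alternatively, one can deduce the statement by restricting the isomorphism $\ppolyfun\cong\ccatsharpdisc$ of \cref{thm.GK_myform} to the spans inside $\ppolyfun$ and the linear bicomodules inside $\ccatsharpdisc$; I indicate this route in passing below.) On objects and on vertical arrows there is nothing to prove beyond \cref{ex.discrete}: a set $S$ is sent to the linear comonad $S\yon$, which carries a unique comonoid structure, the discrete category on $S$, and a function $f\colon S\to S'$ is sent to the cofunctor $S\yon\coto S'\yon$ it determines; \cref{ex.discrete} says both assignments are bijective.

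Next I would identify the horizontal arrows. By definition a linear bicomodule $C\yon\bifrom[m]D\yon$ has carrier $m\cong M\yon$ for a set $M$, so $C\yon\tri m\cong(C\times M)\yon$ and $m\tri D\yon\cong(M\times D)\yon$. The counitality halves of \eqref{eqn.bimod_left} and \eqref{eqn.bimod_right} then force $\lambda$ and $\rho$ to be the graphs of functions $s_1\colon M\to C$ and $s_2\colon M\to D$, and one checks directly that—because $C\yon$ and $D\yon$ are discrete—the coassociativity halves of \eqref{eqn.bimod_left} and \eqref{eqn.bimod_right} and the coherence condition \eqref{eqn.bimod_coherence} are then automatically satisfied and impose no further constraint. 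Hence a linear bicomodule $C\yon\bifrom[m]D\yon$ is exactly a span $C\From{s_1}M\To{s_2}D$. (The same conclusion drops out of \cref{thm.GK_myform} by restricting to bridge diagrams \eqref{eqn.bridge} whose middle map $g$ is a bijection, since $m\cong\sum_{b}\yon^{g\inv(b)}$ is linear precisely in that case.)

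It then remains to match 2-cells, horizontal units, and horizontal composition. Using \eqref{eqn.bico_maps} (equivalently \eqref{eqn.bicomod_map_combinatorics_disc}), a 2-cell between linear bicomodules over vertical maps $\alpha,\beta$ is just a function $\varphi\colon M\to M'$ of carriers, and the two commuting squares say exactly that $\varphi$ carries the legs of the first span to those of the second over $\alpha$ and $\beta$—i.e.\ $\varphi$ is a morphism of spans. The horizontal unit on $C\yon$ in $\ccatsharp$ has carrier $C\yon$ with both structure maps the diagonal, which under the dictionary above is the identity span $C\From{\id}C\To{\id}C$. Finally, for composable linear bicomodules $C\yon\bifrom[M\yon]D\yon\bifrom[N\yon]E\yon$ the composition formula \eqref{eqn.composite_bico_linear} collapses: writing $m=M\yon$, each direction set $m[k]=1$ is the $D\yon$-set concentrated at $s_2(k)\in D$, so $D\set(m[k],N)$ is the fibre of $N\to D$ over $s_2(k)$ and all exponents are $1$; hence $M\yon\tri_{D\yon}N\yon\cong(M\times_D N)\yon$ with the evident projections, which is span composition, and the induced left and right comodule structures on it are the two outer legs, as required. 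These assignments are visibly natural, so they assemble into a double functor that is bijective on objects, vertical arrows, horizontal arrows, and 2-cells and preserves both compositions and both kinds of identities; this is an isomorphism of double categories, hence of framed bicategories.

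The only real work is bookkeeping. One must fix the variance conventions once and for all—deciding which leg of a span is the left- and which the right-comodule structure, and whether a horizontal $\op$ is needed, as flagged in the footnote to \cref{thm.GK_myform}—and one must verify carefully that for discrete comonoids the coassociativity and coherence bicomodule axioms are vacuous, so that a linear bicomodule carries no data beyond the two legs. I expect this, together with confirming that \eqref{eqn.composite_bico_linear} really does produce the pullback span with its canonical projections, to be the most error-prone part; the rest is an unwinding of definitions already in hand.
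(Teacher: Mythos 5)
Your proof is correct, and its primary route differs in emphasis from the paper's. The paper's proof is a one-line restriction of the isomorphism $\ppolyfun\cong\ccatsharpdisc$ from \cref{thm.GK_myform}: a span is a bridge diagram \eqref{eqn.bridge} with $g=\id$ and $E=B$, whose associated carrier $\sum_{b\in B}\yon^{E[b]}$ is linear since each $E[b]=1$, and the 2-cell comparison likewise reduces to the middle column of \eqref{eqn.bridge_map}; everything else (units, composition, the framing) is inherited from that theorem. You instead unwind the bicomodule axioms directly---showing that for discrete comonoids the counit halves of \eqref{eqn.bimod_left}--\eqref{eqn.bimod_right} force $\lambda,\rho$ to be graphs of the two legs while coassociativity and the coherence \eqref{eqn.bimod_coherence} are vacuous, and that \eqref{eqn.composite_bico_linear} collapses to the pullback span---and you flag the paper's restriction argument only as an alternative. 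Your route is more self-contained and makes explicit several facts the paper buries inside \cref{thm.GK_myform} (in particular why a linear bicomodule carries no data beyond its two legs, and that horizontal composition really is span composition); the paper's route is shorter because that work was already done once for all of $\ccatsharpdisc$. Your characterization of the linear bridge diagrams as those with $g$ a bijection matches the paper's $g=\id$, $E=B$ up to isomorphism, and your caution about variance conventions is exactly the issue the paper relegates to the footnote of \cref{thm.GK_myform}. No gaps.
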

\begin{proof}
  The isomorphism identifies squares in $\sspan$ as to the left with
  squares in $\ccatsharplin$ as to the right:
  \[
    \begin{tikzcd}
      C\ar[r, tick, "S"]\ar[d]\ar[dr, phantom, "\Downarrow"]&D\ar[d]&&
      C\yon\ar[r, biml-bimr, "S\yon"]\ar[d]\ar[dr, phantom, "\Downarrow"]&D\yon\ar[d]\\
      C'\ar[r, tick, "S'"']&D'&&
      C'\yon\ar[r, biml-bimr, "S'\yon"']&D'\yon
    \end{tikzcd}
  \]
  Indeed, we can view spans as bridge diagrams \eqref{eqn.bridge} for
  which $S\coloneqq E=B$ and $\pi=\id$, and by
  \cref{rem.bridge_lin_con} these correspond to linear
  polynomials. As for morphisms between them, the middle column of
  \eqref{eqn.bridge_map} reduces in this case to a map $E=B\To{\varphi_1} B'=E'$
  making the outer rectangles commute, which is in agreement with maps
  of spans.
\end{proof}

\begin{corollary}\label{cor.span_left_adj}
  The framed bicategory $\sspan$ can be regarded as that of the left adjoints in $\ccatsharpdisc$. 
\end{corollary}
\begin{proof}
  This follows from \cref{prop.span_iso_lin,prop.adjoint_bicoms}.
\end{proof}


Recall from \cref{prop.adjoint_bicoms} that the corresponding right
adjoints are the conjunctive bicomodules $D\yon\bito[m]C\yon$, i.e.,
those for which $m_a(1)=1$ for all $a\in C$. We denote the bicategory of
these right adjoint bicomodules between linear comonoids by
$\ccatsharpdisccon$. Taking adjoints gives a contravariant, in both
1-cells and 2-cells, equivalence of bicategories between
$\ccatsharpdisccon$ and $\ccatsharplin$:
\begin{equation}\label{eqn.disc_con}
  \ccatsharplin(C\yon,D\yon)\cong\ccatsharpdisccon(D\yon,C\yon)\op.
\end{equation}
We will see $\ccatsharpdisccon$ again in
\cref{thm.linear_conjunctive_dual} where we will obtain an equation
much like \eqref{eqn.disc_con} but with $\ccatsharpdisccon(D\yon,C\yon)\op$
replaced by $\ccatsharpdisccon(C\yon,D\yon)\op$.

\begin{remark}\label{rem.linearconj}
  Note that the $(C\yon,D\yon)$-bicomodules underlying monoid
  homomorphisms $D\yon \to C\yon$ (i.e., functions $D \to C$) are
  precisely the linear, conjunctive ones.

  We do not call $\ccatsharpdisccon$ a sub \emph{framed} bicategory of
  $\ccatsharp$, since the left adjoints of such maps (the
  $(D\yon,C\yon)$-bicomodules underlying monoid homomorphisms
  $D\yon \to C\yon$) are often not within $\ccatsharpdisccon$.
\end{remark}

\section{The Dirichlet product}\label{chap.dirichlet}

The remainder of the paper requires an additional ingredient, namely a
monoidal structure on polynomials which interacts well with
composition. We begin by describing this first in the single-variable
case---so giving a second monoidal structure on the category
$\poly$---and then upgrade this to monoidal structures on
multi-variable polynomials. These monoidal structures will come in two
forms: on the one hand, a ``global'' monoidal structure on a category
of \emph{all} multivariable polynomials, regardless of their input-
and output-types; and on the other hand, ``local'' monoidal structures
on the categories $\polyfunb(c,d)$ for fixed $c$ and $d$. We will construct
the global structure directly from the single-variable structure on
$\poly$, and then deduce the local structure---which is of primary
interest for our applications---via the theory of monoidal fibrations
developed in~\cite{shulman2008framed}.

\subsection{The single-variable case}
The category $\poly$ is cartesian closed, but we have found
surprisingly little use for this fact. Much more useful to us is the
following monoidal structure.

\begin{proposition}[The Dirichlet monoidal structure on $\poly$]\label{prop.mon_closed}
  The category $\poly$ has a symmetric monoidal closed structure with unit $\yon$, with multiplication denoted $\otimes$, called \emph{Dirichlet product}, and given by the formula
  \begin{equation}\label{eqn.dirichlet_product}
    p\otimes q\coloneqq\sum_{(i,j)\in p(1) \times q(1)}\yon^{p[i]\times q[j]},
  \end{equation}
  and with closure given by the formula
  \[
    [p,q]\coloneqq\sum_{\varphi\colon p\to q}\yon^{\;\sum\limits_{i\in p(1)}q[\varphi_1(i)]}.
  \]
\end{proposition}
\begin{proof}
  It is easy to see directly from the formula that $\otimes$ is
  associative, unital, and symmetric. The proof that $(\thg) \otimes p$ is
  left adjoint to $[p,\thg]$ is an elementary calculation using
  \cref{prop.comb_description}. We refer the interested reader to
  \cite{spivak2022poly} (or our generalization in
  \cref{prop.local_closure}) for the details, but we give the
  evaluation map $p\otimes[p,q]\to q$ here for the reader's
  convenience. Given positions $i\in p(1)$ and $\varphi\colon p\to q$,
  evaluation returns $j\coloneqq\varphi_1(i)\in q(1)$ together with
  the function
  \[q[j]\to p[i]\times\sum_{i'\in p(1)}q[\varphi_1(i')]\] 
  given by sending $e\in q[j]$ to $(\varphi^\sharp(i,e), i, e)$.
\end{proof}

\begin{example}[Duality in $\poly$]\label{ex.duality_in_poly}
  In any symmetric monoidal closed category $\mathscr{V}$, we write
  $\dual{X} = [X,I]$ for the result of homming $X \in \mathscr{V}$
  into the monoidal unit $I$; in the terminology
  of~\cite{Dold1983Duality}, $\dual{X}$ is the \emph{weak dual} of
  $X$. Weak duality induces a contravariant adjunction
  \begin{equation}
    \label{eqn.weak_duality}
    \begin{tikzcd}[column sep=large, ampersand replacement=\&]
      \mathscr{V}\ar[r, shift left=5pt, "{\dual{(\thg)}}"] \ar[r,
      phantom, "\scriptstyle\Rightarrow"]\& \mathscr{V}\op\ar[l, shift
      left=5pt, "{\dual{(\thg)}}"]
    \end{tikzcd}
  \end{equation}
  which, like any adjunction, restricts to an equivalence on the
  \emph{fixpoints}---in this case, the objects
  $X$ for which the unit map $X \rightarrow
  \ddual{X}$ is invertible. In~\cite{Dold1983Duality}, such objects
  were called \emph{reflexive}, and so we can say that: in any
  monoidal category,
  $\dual{(\thg)}$ gives a contravariant autoequivalence of the
  subcategory of reflexive objects.

  In the case of $\poly$, if $p = \sum_{i \in p(1)} \yon^{p[i]}$, then
  \begin{equation*}
    \dual{p} = [p, \yon] = \sum_{f \in \Pi_{i \in p(1)} p[i]} \yon^{p(1)} \qquad \text{and} \qquad 
    \ddual{p} = [[p, \yon], \yon] = \sum_{f \colon \Pi_{i \in p(1)} p[i] \rightarrow p(1)} \yon^{\Pi_{i \in p(1)} p[i]}\rlap{ ,}
  \end{equation*}
  and the unit map $\eta \colon p \rightarrow \ddual{p}$ acts on
  positions by sending $i \in p(1)$ to the function
  $\Pi_{i \in p(1)} p[i] \rightarrow p(1)$ which is constant at $i$,
  and on directions by the functions
  $\eta^\sharp(i, \thg) \colon \Pi_{j \in p(1)} p[j] \rightarrow p[i]$
  which evaluate at $i$. Thus, $\eta$ will be invertible precisely
  when \emph{either} $p(1)$ is a singleton \emph{or} each $p[i]$ is a
  singleton. In other words, the reflexive objects are those of the
  form $A\yon$ and $\yon^A$. In fact, we have the isomorphisms
  \begin{equation}\label{eqn.duality_in_poly}
    \dual{(A\yon)}\cong\yon^A
    \qqand
    \dual{(\yon^A)}\cong A\yon\rlap{ ,}
  \end{equation}
  and so the adjunction~\cref{eqn.weak_duality} restricts to a
  contravariant equivalence between the subcategory of objects $A\yon$
  (which is equivalent to $\smset$) and the subcategory of objects
  $\yon^A$ (which is equivalent to $\smset\op$).
\end{example}

We will generalize the above duality story from single-variable to
multi-variable polynomials in~\cref{thm.linear_conjunctive_dual}
below. In order to do this, we need to generalize the monoidal closed
Dirichlet structure from single-variable to multi-variable
polynomials. One might hope for this to follow straightforwardly from
the equivalence $\ccatsharp \simeq \ccomod(\poly)$, but it seems it
does not. As such, we will describe a different abstract machinery
which captures the single-variable Dirichlet product, and which
generalizes directly to the multi-variable setting.

The starting point for this treatment is the observation that the
functor $P \colon \poly \rightarrow \smset$ sending $p$ to $p(1)$
is a Grothendieck fibration. Indeed:
\begin{itemize}
\item The reindexing of
$\sum_{i \in p(1)}\yon^{p[i]}$ along $f \colon Q \rightarrow p(1)$ is the
polynomial $\sum_{j \in Q} \yon^{p[f(j)]}$; 
\item The $P$-cartesian map
  $\sum_{j \in Q} \yon^{p[f(j)]} \rightarrow \sum_{i \in
    p(1)}\yon^{p[i]}$ witnessing the reindexing acts as $f$ does on
  positions, and is the identity on directions;
\item  The general $P$-cartesian maps are those
$\varphi \colon p \rightarrow q$ which act \emph{invertibly} on directions.
\end{itemize}
As observed in~\cite{spivak2020dirichlet}, the fibration $P$ is
closely related to the fibration of \emph{Dirichlet polynomials}:

\begin{definition}[Fibration of Dirichlet polynomials]\label{def.dir}
  A \emph{Dirichlet polynomial} is a functor $d \colon \smset\op
  \rightarrow \smset$ that is isomorphic to a sum of representables:
  \begin{equation*}
    d \cong \sum_{i \in d(0)} d[i]^{\yon}
  \end{equation*}
  where here we write $C^{\yon}$ for the contravariant representable
  $\smset(\thg, C) \colon \smset\op \rightarrow \smset$. We write
  $\Cat{Dir}$ for the category of Dirichlet polynomials and natural
  transformations.
\end{definition}
The morphisms $d \rightarrow e$ of $\Cat{Dir}$ can be described as
pairs comprising a map $f_0 \colon d(0) \rightarrow e(0)$ on
``positions'', and maps $f^\sharp_i \colon d[i] \rightarrow e[f(i)]$
on ``directions''. It follows that the functor
$Q \colon \Cat{Dir} \rightarrow \smset$ sending $d$ to $d(0)$ is again
a Grothendieck fibration under an identical reindexing structure as
for $\poly$. (Viewing Dirichlet polynomials as bundles of
sets, this amounts to the codomain fibration of $\smset$.)

Now the two fibrations $P \colon \poly \rightarrow \smset$ and
$Q \colon \Cat{Dir} \rightarrow \smset$ are related by
what~\cite{spivak2020dirichlet} calls the \emph{Dirichlet transform}
(cf.~\cref{ex.polys_and_bundles} above). In abstract categorical
terms, this amounts to the fact that $P$ and $Q$ are \emph{opposite
  fibrations}. In general, if $P \colon \Cat{E} \rightarrow \Cat{B}$
is a Grothendieck fibration, we can form the associated
(pseudo)functor $E \colon \Cat{B}\op \rightarrow \Cat{CAT}$; take
``fiberwise opposites'' to get a pseudofunctor
$E(\thg)\op \colon \Cat{B}\op \rightarrow \Cat{CAT}$; and apply the
Grothendieck construction. This yields a new fibration
$\tilde P \colon \tilde{\Cat{E}} \rightarrow \Cat{B}$ termed the
\emph{opposite fibration} of $P$. More concretely:
\begin{definition}[Opposite fibration]
  \label{defn.opposite_fibration}
  Let $P \colon \Cat{E} \rightarrow \Cat{B}$ be a Grothendieck
  fibration. The opposite fibration $\tilde P \colon \tilde{\Cat{E}}
  \rightarrow \Cat{B}$ has:
  \begin{itemize}
  \item Objects of $\tilde{\Cat{E}}$ the same as objects of $\Cat{E}$;
  \item Morphisms $e \rightarrow e'$ in $\tilde{\Cat{E}}$ being
    equivalence classes of $\Cat{E}$-spans:
    \begin{equation*}
      \begin{tikzcd}[column sep=1em, row sep=0.8em]
        & f \arrow[dl,"s"'] \arrow[dr,"t"] \\
        e & & e'
      \end{tikzcd}
    \end{equation*}
    where $s$ is $P$-vertical (i.e., $P(s)$ is an identity) and $t$ is
    $P$-cartesian, and where two spans $(s,t)$ and $(s',t')$ are
    identified if they are isomorphic via a vertical isomorphism
    $f \rightarrow f'$.
  \item Composition in $\tilde{\Cat{E}}$ is by pullback;
  \item $\tilde P \colon \tilde{\Cat{E}} \rightarrow \Cat{B}$
    acts as $P$ does on objects, and on morphisms sends $(s,t)$ to $Pt$.
  \end{itemize}
\end{definition}
In particular, applying this construction to
$Q \colon \Cat{Dir} \rightarrow \smset$ yields
$P \colon \poly \rightarrow \smset$. The relevance of this for us is
that $Q$ has a \emph{cartesian} monoidal structure, which transports
to a monoidal structure on the corresponding opposite fibration $P$,
which is the Dirichlet monoidal structure. Let us first recall what it
means for a fibration to be monoidal:

\begin{definition}[Monoidal fibration]\cite[Section~12]{shulman2008framed}
  A functor $P \colon \Cat{E} \rightarrow \Cat{B}$ between monoidal
  categories is called a \emph{monoidal fibration} if:
  \begin{itemize}
  \item $P$ is both a strict monoidal functor and a Grothendieck
    fibration; and
  \item The binary tensor product $\otimes$ of $\Cat{E}$ preserves
    $P$-cartesian maps.
  \end{itemize}
\end{definition}
Now, $Q$ is a cartesian monoidal fibration, i.e., a monoidal fibration
when \emph{both} $\Cat{Dir}$ and $\smset$ are equipped with the
cartesian product. Indeed, the cartesian product in $\Cat{Dir}$ can be
taken to be
\begin{equation*}
  d \times e = \sum_{(i,j) \in d(0) \times e(0)} (d[i] \times e[j])^{\yon}\rlap{ ,}
\end{equation*}
i.e., the dual formula to~\cref{eqn.dirichlet_product}. Now clearly
$Q$ preserves the (cartesian) monoidal structure strictly, and the
fact that products of $Q$-cartesian maps are $Q$-cartesian is simply
the fact that the cartesian product of isomorphisms (on directions) is
an isomorphism.

The existence of the Dirichlet monoidal structure on $\poly$ now
follows from:

\begin{lemma}[Opposite monoidal fibrations]\label{lem.opposite_monoidal_fib}
  If $P \colon \Cat{E} \rightarrow \Cat{B}$ is a monoidal 
  fibration, then the opposite fibration $\tilde P \colon \tilde{\Cat{E}}
  \rightarrow \Cat{B}$ is also monoidal, where:
  \begin{itemize}
  \item The monoidal structure of $\Cat{B}$ is the same one;
  \item The tensor product of $\tilde{\Cat{E}}$ is that of $\Cat{E}$
    on objects, and on morphisms is given by applying the tensor product componentwise:
    \begin{equation*}
      \vcenter{\hbox{
          \begin{tikzcd}[column sep=1em, row sep=0.8em]
            & f \arrow[dl,"s"'] \arrow[dr,"t"] \\
            d & & d'
          \end{tikzcd}}} \quad \otimes \quad 
      \vcenter{\hbox{
          \begin{tikzcd}[column sep=1em, row sep=0.8em]
            & g \arrow[dl,"u"'] \arrow[dr,"v"] \\
            e & & e'
          \end{tikzcd}}} \quad = \quad 
      \vcenter{\hbox{
          \begin{tikzcd}[column sep=1em, row sep=0.8em]
            & f \otimes g \arrow[dl,"s \otimes u"'] \arrow[dr,"t \otimes v"] \\
            d \otimes e & & d' \otimes e'
          \end{tikzcd}}}
    \end{equation*}
    noting here that the tensor of vertical maps is (trivially)
    vertical, and the tensor of cartesian maps is cartesian by
    assumption.
  \item The associativity coherences $\tilde \alpha$ in $\tilde{\Cat{E}}$ are defined
    in terms of the corresponding coherences $\alpha$ for $\Cat{E}$ as:
    \begin{equation*}
      \begin{tikzcd}[column sep=1em, row sep=0.8em]
        & (c \otimes d) \otimes e \arrow[dl,"1"'] \arrow[dr,"\alpha_{cde}"] \\
        (c \otimes d) \otimes e & & c \otimes (d \otimes e)
      \end{tikzcd}
    \end{equation*}
    noting that the maps $\alpha$ in $\Cat{E}$ are invertible, hence
    cartesian.
  \end{itemize}
  In particular, if $P$ is cartesian monoidal, then $\tilde{\Cat{E}}$
  bears a monoidal structure which we term the \emph{Dirichlet
    monoidal structure}.
\end{lemma}

In fact, the preceding framework also allows us to explain the
closedness of the Dirichlet monoidal structure,
by~\cref{prop.closed_dirichlet} below. This is a special case of the
results of~\cite{Nunes2024Monoidal}---but one which is special enough
that it seems easiest to give our own self-contained account. First we
recall some preliminary notions from the theory of fibrations; we cite
the textbook~\cite{jacobs1999categorical} as a convenient reference.

\begin{definition}[Simple coproducts] (\cite[Definition~1.9.1]{jacobs1999categorical})
  Let $\Cat{B}$ be a category with finite products. We say that a
  fibration $P \colon \Cat{E} \rightarrow \Cat{B}$ has \emph{simple
    coproducts} if, for all $a,b \in \Cat{B}$, the reindexing functor
  between fiber categories induced by the product projection
  $\pi_1 \colon a \times b \rightarrow a$ has a left adjoint $\Sigma_b$:
  \begin{equation*}
  \begin{tikzcd}[column sep=large]
    \Cat{E}_{a \times b} \ar[r, shift left=5pt, "\Sigma_b"]
    \ar[r, phantom, "\scriptstyle\Rightarrow"]&
    \Cat{E}_{a}\ar[l, shift left=5pt, "\pi_1^\ast"]
  \end{tikzcd}
  \end{equation*}
  and these adjoints satisfy the \emph{Beck--Chevalley condition},
  meaning that for any map $f \colon a' \rightarrow a$ in $\Cat{B}$, the
  canonical $2$-cell in the following square is invertible:
  \begin{equation*}
    \begin{tikzcd}
      \Cat{E}_{a \times b} \arrow[r,"\Sigma_b"] \arrow[d,"(f \times 1)^\ast"']  &
      \Cat{E}_{a} \arrow[d,"f^\ast"] \\
      \Cat{E}_{a' \times b} \arrow[r,"\Sigma_b"'] &
      \Cat{E}_{a'}\rlap{ .}
    \end{tikzcd}
  \end{equation*}  
\end{definition}

\begin{definition}[Locally small fibration]
  (\cite[Definition~1.9.1]{jacobs1999categorical}) A fibration
  $P \colon \Cat{E} \rightarrow \Cat{B}$ is said to be \emph{locally small}
  if, for all $b \in \Cat{B}$ and all $x,y$ in the fiber category
  $\Cat{E}_b$, the functor
  \begin{equation}
    \label{eqn:locally_small_repn}
    (\Cat{B}/b)\op \rightarrow \smset \qquad \qquad
    (f \colon a \rightarrow b) \quad \mapsto \quad
    \Cat{E}_a(f^\ast x, f^\ast y)\rlap{ ;}
  \end{equation}
  with action on morphisms given by reindexing, is representable.
  Concretely, this means there is an object $E(x,y)$ (the
  ``$\Cat{B}$-valued hom'' from $x$ to $y$) and map
  $\upsilon \colon E(x,y) \rightarrow b$ in $\Cat{B}$, together with a
  ``universal map''
  $\xi \colon \upsilon^\ast x \rightarrow \upsilon^\ast y$ in
  $\Cat{E}_{E(x,y)}$; its universality expresses that, whenever we are
  given $g \colon a \rightarrow b$ in $\Cat{B}$ and a map
  $k \colon g^\ast x \rightarrow g^\ast y$ in $\Cat{E}_a$, there is a
  unique $\present{g,k} \colon a \rightarrow E(x,y)$ with
  $\upsilon \circ \present{g,k} = g$ and which renders the following
  diagram commutative:
  \begin{equation*}
    \begin{tikzcd}[column sep=large]
      {g^\ast x} \arrow[d, "\cong"'] \arrow[r,"k"] &
      {g^\ast y} \arrow[d, "\cong"]\\
      {\present{g,k}^\ast \upsilon^\ast x} \arrow[r, "\present{g,k}^\ast \xi"'] & 
      {\present{g,k}^\ast \upsilon^\ast y}
    \end{tikzcd}
  \end{equation*}
\end{definition}

Our result can now be stated as:
\begin{proposition}\label{prop.closed_dirichlet}
  Let $P \colon \Cat{E} \rightarrow \Cat{B}$ be a cartesian monoidal
  fibration. The monoidal structure $\otimes$ this induces on the
  total category of the opposite fibration
  $\tilde P \colon \tilde{\Cat{E}} \rightarrow \Cat{B}$ is closed so
  long as:
  \begin{itemize}
  \item $\Cat{B}$ is cartesian closed with finite limits;
  \item $P \colon \Cat{E} \rightarrow \Cat{B}$ is locally small with
    simple coproducts.
  \end{itemize}
\end{proposition}
Note that the fibration $Q \colon \Cat{Dir} \rightarrow \smset$
satisfies these two assumptions. The first one is obvious. As for the
second, the simple coproduct functors $\Sigma_B \colon \Cat{Dir}_{A
  \times B} \rightarrow \Cat{Dir}_A$ are given by
\begin{equation*}
    \sum_{(i,j) \in A \times B}d[i,j]^{\yon} \quad \mapsto \quad \sum_{i \in A} \Bigl( \sum_{j \in B} d[i,j]\Bigr)^\yon \text{,} 
\end{equation*}
and the locally small hom between objects $\sum_{i \in B} d[i]^\yon$
and $\sum_{i \in B} e[i]^\yon$ is given by
\begin{equation*}
  \sum_{i \in B} e[i]^{d[i]} \xrightarrow{\pi_1} B\rlap{ .}
\end{equation*}
Thus, this result recovers the closedness of the Dirichlet monoidal
structure on $\poly$.
\begin{proof}
  We first note two consequences of our assumptions. First,
  each pullback functor
  $\pi_1^\ast \colon \Cat{B}/a \rightarrow \Cat{B}/(a \times b)$ has a
  right adjoint
  $\Pi_b \colon \Cat{B}/(a \times b) \rightarrow \Cat{B}/a$. Indeed,
  given $f \colon c \rightarrow a \times b$, we construct $\Pi_b(f)$
  as the arrow to the left of the following pullback:
  \begin{equation*}
    \begin{tikzcd}
      \bullet \arrow[r] \arrow[d, "\Pi_b(f)"'] &
      c^b \arrow[d, "f^c"]  \\
      a \arrow[r, "\eta"']& (a \times b)^b\ar[ul, phantom, very near end, "\lrcorner"]
    \end{tikzcd}
  \end{equation*}
  where $c^b$ and $(a \times b)^b$ are internal homs for the
  cartesian closed structure of $\Cat{B}$, and $\eta$ is the transpose
  of the identity $a \times b \rightarrow a \times b$;
  see~\cite[Proposition~1.9.8]{jacobs1999categorical} for the
  remaining details.

  Second, if we are given objects $x \in \Cat{E}_a$ and
  $y \in \Cat{E}_b$, then their cartesian product $x \times_{\Cat{E}}
  y$ in $\Cat{E}$ also
  has the universal property of a product in $\Cat{E}_{a \times b}$;
  indeed, we have bijections, natural in $c \in \Cat{E}_{a \times b}$,
  of the form
  \begin{equation}
    \label{eqn.dirichlet_closed}
    \Cat{E}_{a \times b}(c, x \times_{\Cat{E}} y) \cong \Cat{E}_{a \times b}(c, \pi_1^\ast x) \times
    \Cat{E}_{a \times b}(c, \pi_2^\ast y)
  \end{equation}
  where
  $\pi_1^\ast \colon \Cat{E}_{a} \rightarrow \Cat{E}_{a \times b}$ and
  $\pi_2^\ast \colon \Cat{E}_{b} \rightarrow \Cat{E}_{a \times b}$
  reindex along the product projections in $\Cat{B}$.

  Now, suppose we are given $y \in \Cat{E}_b$ and $z \in \Cat{E}_c$.
  We can reindex $y$ along $\pi_2 \colon c^b \times b \rightarrow b$
  and $z$ along the evaluation map $\mathsf{ev} \colon c^b \times b \rightarrow c$
  to obtain objects $\pi_2^\ast(y)$ and
  $\mathsf{ev}^\ast(z) \in \Cat{E}_{c^b \times b}$. By local
  smallness, we can thus form the $\Cat{B}$-valued hom, as to the left in:
  \begin{equation*}
    \upsilon \colon E(\mathsf{ev}^\ast z, \pi_2^\ast y) \rightarrow c^b \times b \qquad \qquad \nu = \Pi_b(\upsilon) \colon [y,z]_1 \rightarrow c^b\rlap{ ,}
  \end{equation*}
  and can now define $[y,z]_1 \in \Cat{B}$ and
  $\nu \colon [y,z]_1 \rightarrow c^b$ by applying
  $\Pi_b \colon \Cat{B} / {c^b \times b} \rightarrow \Cat{B} / c^b$ to
  $\upsilon$, as to the right. Now for any $a \in \Cat{B}$, we have a
  chain of bijections, natural in $a$, of the form:
  \begin{align*}
    \Cat{B}(a, [y,z]_1) &\cong \sum_{\bar f \in \Cat{B}(a, c^b)} (\Cat{B} / c^b) (\bar f, \nu) & \text{(property of slice categories)}\\
    &\cong \sum_{\bar f \in \Cat{B}(a, c^b)} (\Cat{B} / c^b \times b) (\pi_1^\ast \bar f, \upsilon) & \text{(since $\nu = \Pi_b \upsilon$)}\\
    &\cong \sum_{\bar f \in \Cat{B}(a, c^b)} \Cat{E}_{a \times b}(({\bar f} \times a)^\ast \mathsf{ev}^\ast z, ({\bar f} \times a)^\ast \pi_2^\ast y) & \text{(universal property of $\upsilon$)}\\
    &\cong \sum_{f \in \Cat{B}(a \times b, c)} \Cat{E}_{a \times b}(f^\ast z, \pi_2^\ast y)\rlap{ .} & \text{(definition of exponential transpose)}
  \end{align*}
  Now, consider the exponential transpose
  $\bar \nu \colon [y,z]_1 \times b \rightarrow c$ of $\nu$, and form the
  object
  \begin{equation*}
    [y,z] \coloneqq \Sigma_b ({\bar \nu}^\ast z) \in \Cat{E}_{[y,z]_1}\rlap{ .}
  \end{equation*}
  We now have the following chain of natural bijections (where $x \in
  \Cat{E}_a$):
  \begin{align*}
    \tilde{\Cat{E}}(x, [y,z]) &\cong \sum_{h \in \Cat{B}(a, [y,z]_1)}\Cat{E}_{a}(h^\ast[y,z],x) &\text{(definition of opposite fibration)} \\
    &\cong \sum_{f \in \Cat{B}(a, [y,z]_1)}\Cat{E}_{a}(h^\ast \Sigma_b({\bar \nu}^\ast z),x) & \text{(definition of $[y,z]$)}\\
    &\cong \sum_{h \in \Cat{B}(a, [y,z]_1)}\Cat{E}_{a}(\Sigma_b (h \times 1)^\ast{\bar \nu}^\ast z, x) & \text{(Beck--Chevalley)}\\
    &\cong \sum_{h \in \Cat{B}(a, [y,z]_1)}\Cat{E}_{a\times b}((h \times 1)^\ast{\bar \nu}^\ast z, \pi_1^\ast x) & \text{($\Sigma_b \dashv \pi_1^\ast$)}
\intertext{
  Now, from above we have bijections $\Cat{B}(a, [y,z]_1) \cong \sum_{f \in
    \Cat{B}(a \times b, c)} \Cat{E}_{a \times b}(f^\ast z, \pi_2^\ast y)$ and
  under this bijection, the map $f \colon a \times b \rightarrow c$
  corresponding to $h \colon a \rightarrow [y,z]_1$ is given by $\bar
  \nu \circ (h \times 1)$. Thus, we continue the chain of isomorphisms:
}
    &\cong \sum_{f \in \Cat{B}(a \times b, c)} \Cat{E}_{a \times b}(f^\ast z, \pi_2^\ast y) \times \Cat{E}_{a\times b}(f^\ast z, \pi_1^\ast x)\\ &\cong
    \sum_{f \in \Cat{B}(a \times b, c)} \Cat{E}_{a \times b}(f^\ast z, x \times_\Cat{E} y)  & \text{(by~\cref{eqn.dirichlet_closed})} \\ &\cong \tilde{\Cat{E}}(x \otimes y, z) & \text{(definition of opposite monoidal fibration)} & \qedhere
  \end{align*}
\end{proof}

Finally in this section, we consider the interaction of the Dirichlet
product on $\poly$ with the composition product of polynomials. This
does not admit an explanation in terms of the preceding abstractions,
since the composition product of polynomials does not transport any
monoidal structure on Dirichlet polynomials; nonetheless, we have:

\begin{proposition}[Duoidality $(\tri\otimes\tri)\to(\otimes\tri\otimes)$]\label{prop.duoidality}
  Dirichlet product is naturally colax monoidal with respect to composition product. That is, for any polynomials $p,q,p',q'\in\poly$ there is a natural map
  \begin{equation}\label{eqn.duoidality}
    (p\tri q)\otimes(p'\tri q')\too(p\otimes p')\tri(q\otimes q')
  \end{equation}
  which, along with the unique map $\yon\to\yon$ between their units, forms a duoidal structure on $\poly$.
\end{proposition}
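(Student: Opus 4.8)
The plan is to define the colax constraint $\zeta\colon(p\tri q)\otimes(p'\tri q')\too(p\otimes p')\tri(q\otimes q')$ explicitly from the combinatorial description of polynomial maps in \cref{rem.comb_description}, using the formulas of \cref{prop.comp_monoidal,prop.mon_closed}, and then to verify the duoidal coherence by a direct computation on positions and directions. Unwinding the formulas: a position of the source is a tuple $((i,j),(i',j'))$ with $i\in p(1)$, $j\colon p[i]\to q(1)$, $i'\in p'(1)$, $j'\colon p'[i']\to q'(1)$, over which the direction-set is the product $\big(\sum_{d\in p[i]}q[j(d)]\big)\times\big(\sum_{d'\in p'[i']}q'[j'(d')]\big)$; a position of the target is a pair $((i,i'),k)$ with $k\colon p[i]\times p'[i']\to q(1)\times q'(1)$, over which the direction-set is $\sum_{(d,d')}q[k_1(d,d')]\times q'[k_2(d,d')]$. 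I would let $\zeta$ send $((i,j),(i',j'))$ to $((i,i'),k)$ with $k(d,d')\coloneqq(j(d),j'(d'))$ on positions, and be the identity on directions: once $k$ has this ``product form'', the two direction-sets coincide on the nose by the distributive law. Naturality in all four arguments is then inherited directly from the formulas.

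Next I would invoke the standard repackaging: a duoidal structure on $\poly$ with first tensor $(\otimes,\yon)$ and second tensor $(\tri,\yon)$ is precisely the data making $\otimes\colon(\poly,\tri)\times(\poly,\tri)\to(\poly,\tri)$ a colax monoidal functor (equivalently, making $\tri$ a lax monoidal functor for $\otimes$), subject to coherence---equivalently, a pseudomonoid $(\poly,\tri,\yon)$ in the $2$-category of monoidal categories and lax monoidal functors. Here $\zeta$ is the colax constraint and the colax unit constraint is the canonical isomorphism $\yon\otimes\yon\cong\yon$. Since the $\otimes$-unit and the $\tri$-unit are both $\yon$, with $\yon\otimes\yon\cong\yon\cong\yon\tri\yon$ canonically, all of the unit-level data collapses to $\id_\yon$, so the unitality axioms---compatibility of $\zeta$ with the left and right unitors of $\otimes$ and of $\tri$---hold on the nose. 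This is exactly what the parenthetical ``along with the unique map $\yon\to\yon$'' refers to.

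What remains are the two associativity diagrams for $\zeta$. For the $\otimes$-direction, one checks that the two ways of assembling $\zeta$ into a map $(p\tri q)\otimes(p'\tri q')\otimes(p''\tri q'')\to(p\otimes p'\otimes p'')\tri(q\otimes q'\otimes q'')$ coincide: each sends the position $((i,j),(i',j'),(i'',j''))$ to the position $(i,i',i'')$ equipped with the function $(d,d',d'')\mapsto(j(d),j'(d'),j''(d''))$, and each is the iterated distributive-law reindexing on directions, so the two agree modulo the associator of $\otimes$, which acts purely by reassociating products. For the $\tri$-direction, one checks that the two ways of assembling $\zeta$ into $(p\tri q\tri r)\otimes(p'\tri q'\tri r')\to(p\otimes p')\tri(q\otimes q')\tri(r\otimes r')$ coincide; here a position of $p\tri q\tri r$ is a tower $\big(i,\ j\colon p[i]\to q(1),\ k\colon\sum_{d\in p[i]}q[j(d)]\to r(1)\big)$ with direction-set the iterated dependent sum $\sum_{d\in p[i]}\sum_{e\in q[j(d)]}r[k(d,e)]$, and one verifies that threading $\zeta$ through both nesting levels produces, either way, the tower obtained by applying $j,j',j''$ and $k,k',k''$ pointwise, with the identity on directions. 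The main obstacle---in fact the only point requiring care---is this last $\tri$-associativity check: because $\tri$ is non-symmetric and its direction-sets are iterated dependent sums, one must be scrupulous about the indexing and about which instance of the distributive-law isomorphism acts at each level. But since every position component is just ``apply the substitution functions pointwise'' and every direction component is an identity-on-the-nose bijection, no genuine difficulty arises; I would run the diagram chase using \cref{rem.comb_description} and, as is done for the other structural facts about $\otimes$ and $\tri$ in this section, leave the remaining bookkeeping to the reader.
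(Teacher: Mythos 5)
Your proposal is correct and matches the paper's proof essentially exactly: the same map on positions, sending $((i,j),(i',j'))$ to $(i,i')$ with the pointwise substitution $(d,d')\mapsto(j(d),j'(d'))$, and the same distributive-law bijection on directions, with the coherence axioms left as (outlined) bookkeeping just as the paper leaves them to the reader. The only difference is cosmetic: you spell out slightly more of what the unit and associativity checks amount to.
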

\begin{proof}[Proof sketch]
  We supply the displayed map \eqref{eqn.duoidality} and leave the rest to the reader. We need to give an element of the following type:
  \begin{gather*}
    \sum_{i\in p(1)}\;
    \sum_{j\colon p[i]\to q(1)}\;
    \sum_{i'\in p'(1)}\;
    \sum_{j'\colon p'[i']\to q'(1)}\;
    \yon^{
      \sum\limits_{d\in p[i]}\;
      \sum\limits_{d'\in p'[i']}
      q[j(d)]\times q'[j'(d')]}\\
    \begin{tikzcd}
      \ar[d,"\varphi"]\\{}
    \end{tikzcd}\\
    \sum_{(i,i')\in (p\otimes p')(1)}\;
    \sum_{(j,j')\colon (p\otimes p')[(i,i')]\to (q\otimes q')(1)}\;
    \yon^{
      \sum\limits_{(d,d')\in(p\otimes p')[(i,i')]}(q\otimes q')[(j (d,d'), j'(d,d'))]}
  \end{gather*}
  Define $\varphi$ on positions by
  \[
    (i,j,i',j')\mapsto \big(i,i',(d,d')\mapsto (j(d),j'(d'))\big)
  \]
  and on directions by the obvious bijection
  \[
    \sum_{(d,d')\in(p\otimes p')[(i,i')]}(q\otimes q')[(j(d), j'(d'))]
    \cong
    \sum_{d\in p[i]}\;
    \sum_{d'\in p'[i']}
    q[j(d)]\times q'[j'(d')].
    \qedhere
  \]
\end{proof}

\subsection{The multi-variable case: global tensor product}\label{sec.multivar_tensor}

Our goal in this section is extend the Dirichlet monoidal structure
from single-variable polynomials to multi-variable polynomials. Given
$p, q \in d\set[c]$, their \emph{Dirichlet product} will be given by
\begin{equation}
  \label{eqn.dirichlet_monoidal}
    (p\ot{c}{d}q)(1) = p(1) \times q(1) 
    \qqand
    (p\ot{c}{d}q)_{a}[(i,j)] = p_a[i] \times q_a[j]\rlap{ .}
  \end{equation}
  It is not hard to see directly that this is a monoidal structure,
  with as unit object the polynomial $I = \yoncool{c}{d}$ satisfying
  $I_a(1) = \{\ast\}$ and $I_{a}[\ast] = 1$. However, we can also
  proceed via~\cref{lem.opposite_monoidal_fib} applied to a suitable
  fibration:
\begin{definition}[Dirichlet maps of polynomials]\label{def.dir_map}
  Let $p, q \in d\set[c]$ be $c$-variable $d$-valued polynomials. A
  \emph{Dirichlet map} $\varphi \colon p \rightarrow q$ comprises a
  map of $d$-sets $\varphi_1 \colon p(1) \rightarrow q(1)$ together
  with maps of $c$-sets
  \begin{equation*}
    \varphi^\flat(i, \thg) \colon p_b[i] \rightarrow q_b[\varphi_1(i)]
  \end{equation*}
  natural with respect to maps of $\el_d p(1)$. We write
  $d\text-\Cat{Dir}[c]$ for the category of $c$-variable $d$-valued
  polynomials and Dirichlet maps.
\end{definition}
\begin{remark}
  The reader might expect us to consider the category of
  ``$c$-variable $d$-valued Dirichlet polynomials'', i.e., functors
  $(c\set)\op \to d\set$ which are pointwise coproducts of
  representables. However, the objects of this category do \emph{not}
  correspond to $c$-variable $d$-valued polynomials. Indeed, if
  $p \colon (c\set)\op \to d\set$ satisfies $p_b = \sum_{i \in
    p_b(0)} p_b[i]^{\yon}$ for each $b \in d$,  then the reindexing
  maps $p_b \rightarrow p_{b'}$ for $f \colon b \rightarrow b'$ in $d$
  involve maps $p_b[i] \rightarrow p_{b'}[p_f(0)(i)]$, rather than
  maps $p_{b'}[p_f(0)(i)] \rightarrow p_b[i]$ as in an object of
  $d\set[c]$. Thus, we are led to define directly
\end{remark}
\begin{lemma}
  The functor $Q \colon d\text-\Cat{Dir}[c] \rightarrow d\set$ sending
  $p$ to $p(1)$ is a cartesian monoidal fibration. Its opposite is the
  evaluation-at-1 functor $P \colon d\set[c] \rightarrow d\set$, and
  the monoidal structure on $d\set[c]$ is the Dirichlet monoidal
  structure.
\end{lemma}
\begin{proof}
  The reindexing of $p$ along $f \colon q \rightarrow p(1)$ is the
  polynomial with $(f^\ast p)_a = \sum_{i \in q_a} \yon^{p_a[f(i)]}$.
  The cartesian lift $f^\ast p \rightarrow p$ acts on positions via
  $f$, and on directions as the identity. A general cartesian map is
  one whose action on directions is invertible. Like before, the
  product in $d\text-\Cat{Dir}[c]$ can be taken as
  \begin{equation*}
    (p \times q)_a = \sum_{(i,j) \in p_a(1) \times q_a(1)} (p_a[i] \times q_a[j])^{\yon}
  \end{equation*}
  whence $Q \colon d\text-\Cat{Dir}[c] \rightarrow d\set$ is a
  cartesian monoidal fibration, whose opposite is clearly
  $P \colon d\set[c] \rightarrow d\set$, endowed with the Dirichlet
  monoidal structure of~\cref{eqn.dirichlet_monoidal}.
\end{proof}

However, the real reason for adopting the above approach is that it
allows us to deduce the existence of closures $\ih{c}{-}{-}{d}$ for
the monoidal product $\ot{c}{d}$. For this, we must verify that $Q
\colon d\text-\Cat{Dir}[c] \rightarrow d\set$ satisfies the additional
hypotheses of~\cref{prop.closed_dirichlet}. Clearly, the base category
$d\set$, like any presheaf category, is cartesian closed with finite
limits. What remains to show is that:
\begin{proposition}
  The fibration $Q \colon d\text-\Cat{Dir}[c] \rightarrow d\set$ is
  locally small with simple coproducts.
\end{proposition}
\begin{proof}
  The fiber category of $Q$ over $X \in d\set$ is given
  by the presheaf category $[(\el_d X)\op, c\set]$, and reindexing
  along $f \colon X \rightarrow Y$ is given by precomposition with
  $(\el_d f)\op \colon (\el_d X)\op \rightarrow (\el_d Y)\op$.
  We first show local smallness.  Given $p \in d\set[c]$
  with $p(1) = X$, we first define, for every $b \in d$ and
  $i \in X_b$, a functor
  $\res p i \colon (b / d)^\mathrm{op} \rightarrow c\set$ as the
  composite
  \begin{equation*}
    (b/d)^\mathrm{op} \xrightarrow{=} \mathrm{El}_d(\yon^b)^\mathrm{op} \xrightarrow{\mathrm{El}_d(\iota)^\mathrm{op}} \mathrm{El}_d(X)^\mathrm{op} \xrightarrow{p[\thg]} c\set
  \end{equation*}
  where $\iota \colon \yon^b \rightarrow X$ classifies $i \in X_b$.
  Explicitly, $\res p i$ acts on objects and morphisms by
  \begin{equation*}
    (f \colon b \rightarrow b') \qquad \mapsto \qquad p_{b'}[X_f(i)]
  \end{equation*}
  and
  \begin{equation*}
    \begin{tikzcd}
      & {b} \arrow[dl,"f"'] \arrow[dr,"g"] \\
      {b'} \arrow[rr,"h"'] & &
      {b''}
    \end{tikzcd}
    \qquad \mapsto \qquad
    p_{b''}[X_g(i)] \xrightarrow{p^\sharp_{h}(X_f(i), \thg)} 
    p_{b'}[X_f(i)]\rlap{ .}
  \end{equation*}
  Now, given $p,q \in d\set[c]$ with $p(1) = q(1) = X$,
  we define $E(p,q) \in d\set$ on objects by
  \begin{equation*}
    E(p,q)_b = \{(i, \alpha) \mid i \in X_b, \, \alpha \colon \res p i \Rightarrow \res q i\}\rlap{ .}
  \end{equation*}
  On morphisms, given $f \colon b \rightarrow b'$ in $d$, we define 
  \begin{equation*}
    E(p, q)_b \rightarrow E(p, q)_{b'} \qquad \qquad (i,
    \alpha) \mapsto (X_f i, f\lpush  \alpha)
  \end{equation*}
  where
  $f\lpush  \alpha \colon \res p {X_f i} \Rightarrow \res q {X_f i}$ is
  the whiskering of
  $\alpha \colon \res p i \Rightarrow \res q i \colon (b/d)\op
  \rightarrow c\set$ by
  $(\thg) \circ f \colon (b'/d)\op \rightarrow (b/d)\op$. Now
  $E(p,q)$, with the obvious projection to $X$, provides the desired
  representation for~\cref{eqn:locally_small_repn}.

  Indeed, given a map $g \colon Y \rightarrow X$ in $d\set$, to give a
  lifting of $\alpha$ through the projection $E(p,q) \rightarrow X$ is
  to give, for each $b \in d$ and each $j \in Y_b$, a natural
  transformation
  $\alpha_j \colon \res p {g j} \Rightarrow \res q {g j}$, in such a
  way that, for each $f \colon b \rightarrow b'$ in $d$, we have
  $f\lpush (\alpha_j) = \alpha_{Y_f j}$. This last condition means that
  the $\alpha_j$'s are completely determined by giving the component
  of each $\alpha_j$ at $\id_b \in b/d$, thus, by a family of natural transformations
  \begin{equation}
    \label{eqn.d_dir_c_local_small}
    (\alpha_j)_{\id_b} \colon p_b[g(j)] \Rightarrow q_b[g(j)]\rlap{ ;}
  \end{equation}
  and the condition that $f\lpush (\alpha_j) = \alpha_{Y_f j}$ now
  reduces to the condition that the maps
  in~\cref{eqn.d_dir_c_local_small} are natural in $j \in \el_d(Y)$.
  Thus, liftings of $g$ through $E(p,q) \rightarrow X$ are in
  bijection with maps $g^\ast p \rightarrow g^\ast q$ in the fiber
  $[\el_d(Y)^\mathrm{op}, c\set]$, as desired.

  We now show that $Q \colon d\text-\Cat{Dir}[c] \rightarrow d\set$
  has simple coproducts. Given $X,Y \in d\set$, the reindexing functor
  from the $X$-fiber to the $X \times Y$-fiber is given by
  precomposition with the projection $\pi_1 \colon \el_d(X \times Y) \rightarrow
  \el_d X$, and this certainly has a left adjoint, given by left Kan extension:
  \begin{equation*}
    \begin{tikzcd}[column sep=large]
      {[(\el_d (X \times Y))\op, c\set]} \ar[r, shift left=5pt, "\mathrm{Lan}_{\pi_1}"]
      \ar[r, phantom, "\scriptstyle\Rightarrow"]&
      {[(\el_d X)\op, c\set]}\ar[l, shift left=5pt, "(\thg) \circ \pi_1"]\rlap{ .}
    \end{tikzcd}
  \end{equation*}
  However, we must also verify the Beck--Chevalley condition. To this
  end, let us first note that, given
  $p \colon (\el_d (X \times Y))\op \rightarrow c\set$, the value of
  $\mathrm{Lan}_{\pi_1}(p)$ at some $(b, i) \in \el_{d} X$ is given by
  the colimit
  \begin{equation*}
    \colim\Bigl(((b,i) \downarrow \pi_1)\op \xrightarrow{\text{proj}} (\el_d (X \times Y))\op \xrightarrow{p} c\set\Bigr)
  \end{equation*}
  where the comma category $(b,i) \downarrow \pi_1$ has:
  \begin{itemize}
  \item Objects being pairs of $(b', i', j') \in \el_d(X \times Y)$
    and $f \colon (b,i) \rightarrow (b',i')$ in $\el_d X$;
  \item Maps $(b',i',j',f) \rightarrow (b'',i'',j'',g)$ being maps
    $h \colon b' \rightarrow b''$ so that $g = hf$, $X_h(i') =
    i''$ and $Y_h(j') = j''$.
  \end{itemize}
  Now, if $f \colon (b,i) \rightarrow (b',i')$ in $\el_d X$, then
  necessarily $i' = X_f(i)$. Thus, objects of $(b,i) \downarrow \pi_1$
  are equally pairs $(f \colon b \rightarrow b', j' \in Y_{b'})$, and,
  in fact, we have an isomorphism of categories
  $(b,i) \downarrow \pi_1 \cong \el_{d}(\yon^b \times Y)$, so that
  $\mathrm{Lan}_{\pi_1}(p)$ is equally given at $(b,i)$ by the
  following colimit, wherein
  $\tilde \imath \colon \yon^b \rightarrow X$ classifies $i \in X_b$:
  \begin{equation*}
    \colim\Bigl(\el_d(\yon^b \times Y)\op \xrightarrow{\el_d(\tilde \imath \times Y)\op} (\el_d (X \times Y))\op \xrightarrow{p} c\set\Bigr)\rlap{ .}
  \end{equation*}

  Let us now use this to verify the Beck--Chevalley condition for the
  simple coproducts of $Q \colon d\text-\Cat{Dir}[c] \rightarrow
  d\set$. Given $f \colon X' \rightarrow X$ and $Y$ in $d\set$, we
  must verify that the following square commutes to within
  isomorphism:
  \begin{equation}
    \label{eqn.d_dir_c_simple_coproducts}
    \begin{tikzcd}
      {[\el_d(X \times Y)\op, c\set]} \arrow[r,"\mathrm{Lan}_{\pi_1}"] \arrow[d,"(\thg) \circ \el_d(f \times 1)"']  &
      {[\el_d(X)\op, c\set]} \arrow[d,"(\thg) \circ \el_d(f)"] \\
      {[\el_d(X' \times Y)\op, c\set]} \arrow[r,"\mathrm{Lan}_{\pi_1}"'] &
      {[\el_d(X')\op, c\set]}\rlap{ .}
    \end{tikzcd}
  \end{equation}  
  Now, given $p \in [\el_d(X \times Y)\op, c\set]$, its image under
  the upper composite is the functor $\el_d(X')\op \rightarrow c\set$
  whose value at $(b,i)$ is the colimit of
  \begin{equation*}
    \el_d(\yon^b \times Y)\op \xrightarrow{\el_d(\widetilde {f(i)} \times Y)\op} (\el_d (X \times Y))\op \xrightarrow{p} c\set\rlap{ .}
  \end{equation*}
  On the other hand, the lower composite is the functor with value at
  $(b,i)$ given by the colimit of
  \begin{equation*}
    \el_d(\yon^b \times Y)\op \xrightarrow{\el_d(\tilde {\imath} \times Y)\op} (\el_d (X' \times Y))\op \xrightarrow{\el_d(f \times Y)\op} (\el_d (X \times Y)) \xrightarrow{p} c\set\rlap{ .}
  \end{equation*}
  But since $\widetilde{f(i)} = f \circ \tilde\imath$, these functors
  are the same, and so~\cref{eqn.d_dir_c_simple_coproducts} commutes
  to within isomorphism.
\end{proof}
By combining the preceding proof with that
of~\cref{prop.closed_dirichlet}, we obtain closure of the Dirichlet
monoidal structures, along with explicit formulae for the closure:

\begin{corollary}[Local closure]\label{prop.local_closure_general}
  For any categories $c, d\in\ccatsharp$, the Dirichlet product on
  $d\set[c]$ is monoidal closed, with the internal hom
  $[q,r] = \ih{c}{q}{r}{d}$ of polynomials $q,r \in d\set[c]$ given as
  follows, wherein $Y^b \in d\set[c]$ denotes the polynomial functor
  which is constant at the representable $\yon^b \in d\set$:
  \begin{equation}\label{eqn.local_closure_general}
    \begin{aligned}
      [q,r]_b(1) &\coloneqq d\set[c](Y^b \times q, r)
      \\ [q,r]_b[\alpha] &\coloneqq \colim\Bigl(\el_d(\yon^b \times q(1))\op \xrightarrow{\el_d(\alpha_1)\op} \el_d(r(1))\op \xrightarrow{r[\thg]}c\set\Bigr)\rlap{ .}
    \end{aligned}
  \end{equation}
  For any $f \colon b \rightarrow b'$ in $d$, the map
  $[q,r]_b(1) \rightarrow [q,r]_{b'}(1)$ is precomposition with
  $Y^f \times q \colon Y^{b'} \times q \rightarrow Y^{b} \times q$;
  and if $\alpha \in [q,r]_b(1)$ has image $\beta \in [q,r]_{b'}(1)$
  under this map, then the induced map of $c$-sets
  $[q,r]_{b'}[\beta] \rightarrow [q,r]_{b}[\alpha]$ is the unique map
  on colimits induced by the commuting triangle:
    \begin{equation*}
      \begin{tikzcd}[column sep={3.7em,between origins},row sep={1em}]
        {(\el_d(\yon^{b'} \times q(1))\op} \ar{rrrr}{\el_d (\yon^f \times q(1))} \ar[dr, "\el_d \beta"'] & & & & 
        {(\el_d (\yon^b \times q(1)))\op} \ar[dl, "\el_d \alpha"]  \\
        & {(\el_d r(1))\op} \ar[dr, "{r[\thg]}"'] & &
        {(\el_d r(1))\op} \ar[dl, "{r[\thg]}"] \\ & &
        c\set\rlap{ .}
      \end{tikzcd}
    \end{equation*}
\end{corollary}

For our applications in~\cref{chap.aggregation}, we will be
particularly interested in the special case of the above where the
category $d$ is the discrete category $D\yon$ on a set $D$. The
formulae above then simplify:

\begin{corollary}[Discrete local closure]\label{prop.local_closure}
Given a category $c\in\ccatsharp$, a set $D\in\smset$, and
  bicomodules $q, r$ from $c$ to $D\yon$, the internal hom $\ih{c}{q}{r}{D\yon}$ for the
  Dirichlet monoidal structure on $D\yon\set[c]$ is given by
  \begin{equation}\label{eqn.simple_closure}
    (\ih{c}{q}{r}{D\yon})_b = \sum_{\varphi\in\smset[c](q_b,r_b)}\yon^{\;\sum\limits_{j\in q_b(1)}r_b[\varphi(j)]}\;.
  \end{equation}
\end{corollary}

Finally in this section, we consider the multi-variable analogue of
the duoidal interaction between the Dirichlet and composition monoidal
structures on $\poly$. Like before, it seems easiest to proceed
directly.
\begin{proposition}[Duoidality]\label{prop.catsharp_duoidality}
  The local $\otimes$-monoidal structures interact ``duoidally'' under
  composition. That is, given $p, p' \colon c\bito d$ and $q,q' \colon
  d\bito e$ there is
  a natural map 
  \begin{equation}\label{eqn.big_duoidal}
    (q\tri_dp)\ot{c}{e}(q'\tri_dp')\to
    (q\ot{d}{e}q')\tri_d(p\ot{c}{d}p'),
  \end{equation}
  as well as natural maps of the form:
  \begin{equation}\label{eqn.big_duoidal_units}
    \yoncool{c}{e}\to \yoncool{d}{e} \tri_d \yoncool{c}{d}\qqand
    J_e\ot{e}{e}J_e\to J_e\qqand
    \yoncool{e}{e}\to J_e
  \end{equation}
  where $\yoncool{c}{d}\coloneqq d(1)\yon^{c(1)}$ is the $\ot{c}{d}$
  unit and $J_e\coloneqq\sum_{a\in e(1)}\yon^{e[a]}$ is the $\tri_e$
  unit. These satisfy the usual axioms.%
  \footnote{By the usual axioms, we mean those expressing that
    $\ccatsharp$ is a pseudo-category object in the 2-category
    $\Cat{MonCat}_l$ of monoidal categories and lax monoidal functors,
    just as a duoidal category is a pseudo-monoid object in
    $\Cat{MonCat}_l$; see \cite{nlab:duoidal_category}.}
\end{proposition}
\begin{proof}[Proof sketch]
  Again, we supply only the map \eqref{eqn.big_duoidal} and leave the
  rest to the reader. For each $b \in d$, we need to give a map in
  $\smset[c]$ of the following type:
  \begin{gather*}
    \sum_{i\in p_b(1)}\;
    \sum_{j\colon p_b[i]\to q(1)}\;
    \sum_{i'\in p'_b(1)}\;
    \sum_{j'\colon p'_b[i']\to q'(1)}\;
    \yon^{
      \int^{x\in \el_c(p_b[i])}\;
      \int^{x'\in \el_c(p'_{b}[i'])}
      q[j(x)]\times q'[j'(x')]}\\
    \begin{tikzcd}
      \ar[d,"\varphi"]\\{}
    \end{tikzcd}\\
    \sum_{(i,i')\in (p\otimes p')_b(1)}\;
    \sum_{(j,j')\colon (p\otimes p')_b[(i,i')]\to (q\otimes q')(1)}\;
    \yon^{
      \int^{(x,x')\in \el_c((p\otimes p')_b[(i,i')])}(q\otimes q')[(j (x,x'), j'(x,x'))]}
  \end{gather*}
  Define $\varphi$ on positions by
  \[
    (i,j,i',j')\mapsto \big(i,i',(d,d')\mapsto (j(d),j'(d'))\big)
  \]
  and on directions by the obvious map
  \[
    \int^{(x,x')\in\el_c((p\otimes p')_b[(i,i')])}(q\otimes q')_b[(j(x), j'(x'))]
    \rightleftarrows
    \int^{x\in \el_c(p_b[i])}\;
    \int^{x'\in \el_c(p'_b[i'])}
    q_b[j(x)]\times q'_b[j'(x')].
    \qedhere
  \]
\end{proof}

\subsection{Duality}\label{sec.duals}

In this section, we that in the monoidal category $D\set[C]$, weak
duality in the sense of \cref{ex.duality_in_poly} allows us to
delineate the relationships between the four ways the bicategory
$\sspan$ appears in $\ccatsharp$ (as observed in \cref{sec.sspan}):
$\ccatsharplin$ and its opposite (indeed, $\sspan \cong \sspan\op$)
and $(\ccatsharpdisccon)\co$ and its opposite.

\begin{notation}[Dualizing object]\label{def.dualizing_ob}
  Let $C,D\in\smset$ be sets. The unit of the Dirichlet monoidal
  structure on $D\set[C]$ is the terminal span $C\from
  (C\times D)\to D$ seen as a linear bicomodule as in \cref{sec.sspan}:
  \[
    C\yon\bito[CD\yon]D\yon.
  \]
  We denote it $_C\bot_D\in D\set[C]$, or simply by $\bot$ if $C,D$
  are clear from context, and call it the \emph{dualizing
    object}. In accordance with \cref{ex.duality_in_poly}, we write
  $\dual{p}$ for the internal hom $\ih{C\yon}{p}{\bot}{D\yon}$ and
  call  it the \emph{(weak) dual} of $p$ in $D\set[C]$.
\end{notation}

In $\poly=\yon\set[\yon]$, the dualizing object is $\yon$. Note that
for any $C,D,E$, we have $({_D\bot_E})\tri_D ({_C\bot_D})\cong
({_C\bot_E})$. The following theorem now generalizes
\cref{ex.duality_in_poly}.


\begin{theorem}[Linear-conjunctive
  duality]\label{thm.linear_conjunctive_dual}\label{prop.comp_dual_spans}
  For all sets $C,D$, weak duality $p \mapsto \dual{p}$ restricts to a contravariant
  equivalence of categories
  \begin{equation}
    \label{eqn.weak_duality_c_d}
    \begin{tikzcd}[column sep=60pt]
      \ccatsharplin(C\yon,D\yon)\op\ar[r, shift left, "\dual{(-)}"]&
      \ccatsharpdisccon(C\yon,D\yon)\ar[l, shift left, "\dual{(-)}"]
    \end{tikzcd}
  \end{equation}
  which are the action on homs of an identity-on-objects
  biequivalence $\dual{(\thg)}$ between $\ccatsharpdisccon$ and
  $(\ccatsharplin)\co$.
\end{theorem}
\begin{proof}
  To establish the given contravariant equivalences, we show that for
  linear and conjunctive bicomodules between discrete categories,
  weak dualizing just moves the coefficients to exponents and vice versa:
  \[
    \dual{\bigg(\sum_{b\in D}M_b\yon\bigg)}\cong\sum_{b\in D}\yon^{M_b}
    \qqand
    \dual{\bigg(\sum_{b\in D}\yon^{M_b}\bigg)}\cong\sum_{b\in D}M_b\yon.
  \]
  Indeed, let $m,n\in D\set[C]$, and suppose that $m$ is linear and that $n$
  is conjunctive. Then by definition we have isomorphisms
  \[
    m\cong\sum_{b\in D}\sum_{i\in m_b(1)}\yon^{\{a(i)\}}
    \qqand
    n\cong\sum_{b\in D}\yon^{n[b]}
  \]
  where $a\colon m(1)\to C$ is a function and $n[b]\in C\set$ (i.e.,
  there is a function $n[b]\to C$) for each $b\in D$. Starting with
  $m$, one checks using \cref{eqn.simple_closure} that
  \begin{equation}\label{eqn.dual_on_linear}
    \dual{m}\cong\sum_{b\in D}\sum_{\varphi\in\smset[C](m,C\yon)}\yon^{\sum_{j\in m_b(1)}\{\varphi(j)\}}.
  \end{equation}
  To see that this is conjunctive when $m$ is linear, it suffices to
  show that $\smset[C](m,C\yon)=1$. This is straightforward, either
  using \cref{eqn.bicomod_map_combinatorics_disc}, or, if one prefers,
  using bridge diagrams \cref{eqn.bridge_map} with $E'=B'=C=C'$,
  $D=D'=1$ and $E=B$.

  Moving on to $n$, one checks again using \cref{eqn.simple_closure} that
  \begin{equation}\label{eqn.dual_on_conjunctive}
    \dual{n}\cong\sum_{b\in D}\sum_{\varphi\in\smset[C](\yon^{n[b]},C\yon)}\yon^{\{\varphi_1(!)\}}
  \end{equation}
  which is clearly linear, as all the exponents have one element
  (namely $\varphi_1$ applied to the unique position $!$ in the
  polynomial $\yon^{n[b]}$).

  Finally, to see that the two constructions are mutually inverse,
  notice that $\{\varphi(j)\}$ is a one-element set for any $j\in
  m_b(1)$, so the exponent of \eqref{eqn.dual_on_linear} is
  $m_b(1)$. The second sum-index in \eqref{eqn.dual_on_conjunctive} is
  $\smset[C](\yon^{n[b]},C\yon)$ and it remains only to show that it
  is isomorphic to $n[b]$. Again by
  \eqref{eqn.bicomod_map_combinatorics_disc} we calculate the
  following for any $N\in C\set$:
  \[
    \smset[C](\yon^{N},C\yon)\cong\sum_{a\in C}C\set(\{a\},N)\cong\sum_{a\in C}N_a\cong N.
  \]

  This establishes the adjoint
  equivalences~\cref{eqn.weak_duality_c_d}, and it remains to show
  that these assemble into an identity-on-objects biequivalence
  $\ccatsharpdisccon \simeq (\ccatsharplin)\co$. Preservation of
  identities is straightforward; as for composition, we show that,
  given a pair of linear bimodules
  $D\yon\bito[N\yon]A\yon\bito[M\yon]B\yon$, there is an isomorphism
  of bicomodules
  \[
    \dual{(M\yon)}\tri_{A\yon}\dual{(N\yon)}\cong\dual{(M\yon\tri_{A\yon}N\yon)}.
  \]
  Indeed, by explicit calculation from
  \cref{prop.local_closure,eqn.composite_bico_linear}, we have
  \begin{align*}
    \dual{(M\yon)}\tri_{A\yon}\dual{(N\yon)}&
                                              \cong\ih{A\yon}{M\yon}{\bot}{B\yon}\tri_{A\yon}\ih{D\yon}{N\yon}{\bot}{A\yon}\\&
    \cong\ih{D\yon}{M\yon\tri_{A\yon}N\yon}{\bot}{B\yon}\\&
    \cong\dual{(M\yon\tri_{A\yon}N\yon)}.\qedhere
  \end{align*}
\end{proof}

\begin{remark}[Weak duals in terms of $\Delta,\Pi,\Sigma$]\label{rem.duals_delta_sigma_pi}
  One can understand the weak duality from
  \cref{thm.linear_conjunctive_dual} in terms of the perhaps more
  familiar $\Delta,\Pi,\Sigma$ operations. Namely, weak dualizing switches
  $\Pi\circ\Delta$ to $\Sigma\circ\Delta$ and vice versa. That is,
  given a span $C\From{f}M\To{g}D$, the prafunctor corresponding to
  the bicomodule $M\yon \colon C\yon\bito D\yon$ is
  $\Sigma_f\circ\Delta_g\colon d\set\to c\set$, while its weak dual
  $\dual{(M\yon)} \colon C\yon\bito D\yon$ corresponds to
  $\Pi_f\circ\Delta_g$.
\end{remark}

\begin{lemma}\label{lemma_Ay_selfdual}
  For any span of the form $A\from B =\!= B$, the corresponding bicomodule
  $A\yon\bito[B\yon]B\yon$ is self weak dual: $\dual{(B\yon)}\cong
  B\yon$.
\end{lemma}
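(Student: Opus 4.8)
The plan is to recognize $A\yon\bifrom[A\yon]B\yon$ as a bicomodule that is simultaneously linear and conjunctive, and then read off its dual from \cref{thm.linear_conjunctive_dual}. First I would note that the carrier $A\yon$ is visibly linear, so this bicomodule lies in $\ccatsharplin(A\yon,B\yon)$ and its dual $\dual{(A\yon)}=\ih{A\yon}{A\yon}{\bot}{B\yon}$ is defined; by the theorem it is conjunctive. The content is then to identify exactly \emph{which} linear bicomodule we have, via its span.

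Next I would unwind the span. By \cref{prop.span_iso_lin} and \cref{rem.duals_delta_sigma_pi}, the bicomodule $A\yon\bifrom[A\yon]B\yon$ corresponds to a span $A\From{f}A\To{g}B$ with apex $A$; the notation ``$A=\!=A\To{}B$'' says precisely that the left leg is $f=\id_A$ while the right leg $g\colon A\to B$ is arbitrary. Hence each fiber $M_a\coloneqq f^{-1}(a)=\{a\}$ of the left leg is a singleton, and the carrier decomposes as $A\yon\cong\sum_{a\in A}\yon^{\{g(a)\}}$. Now I would invoke the boxed formula for duals of linear bicomodules stated just after the proof of \cref{thm.linear_conjunctive_dual}, namely $\dual{\bigl(\sum_{a\in A}M_a\yon\bigr)}\cong\sum_{a\in A}\yon^{M_a}$; since every $M_a\cong 1$, the right-hand side is $\sum_{a\in A}\yon^1=A\yon$, which is the claim. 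Equivalently one can run the explicit computation \eqref{eqn.dual_on_linear} with $C=A$, $D=B$, $m=A\yon$, and $b\colon A\to B$ equal to $g$, using that $\smset[B](A\yon,B\yon)\cong 1$ (as $B\yon\in B\set$ is terminal and each direction set of $A\yon$ is a singleton), which again collapses the exponent $\sum_{j\in m_a(1)}\{\varphi(j)\}$ to a one-element set.

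The only genuine point of care — and the closest thing to an obstacle — is bookkeeping the direction conventions in \cref{prop.span_iso_lin,rem.duals_delta_sigma_pi}: one must check that ``$A=\!=A\To{}B$'' really names the span whose \emph{left} leg (the one pushed forward in the $\Sigma_f\circ\Delta_g$ description, hence the one whose fibers become the coefficients $M_a$) is the identity. Once that identification is pinned down, the lemma is an immediate specialization of the duality formula for linear and conjunctive bicomodules, with no further computation needed.
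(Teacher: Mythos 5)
Your proof is correct and follows the same route as the paper: the paper's (one-sentence) proof simply observes that $A\yon\bifrom[A\yon]B\yon$ is both linear and conjunctive and invokes either formula \eqref{eqn.dual_on_linear} or \eqref{eqn.dual_on_conjunctive} from \cref{thm.linear_conjunctive_dual}, which is exactly the computation you carry out explicitly (with the correct identification that the left leg of $A=\!=A\to B$ is the identity, so all coefficients $M_a$ are singletons).
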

\begin{proof}
  The bicomodule $A\yon\bito[B\yon]B\yon$ is both conjunctive and
  linear, and so we can use either \eqref{eqn.dual_on_linear} or
  \eqref{eqn.dual_on_conjunctive} to prove the result.
\end{proof}


Transposing a span---switching $C\From{f} M\To{g} D$ to $D\From{g}
M\To{f}C$, which is usually considered as purely syntactic---in fact
splits up into the composite of two more primitive operations, each
with a universal property.

\begin{corollary}[Transpose spans]\label{cor.transpose_spans}
  Given a span of sets $C\from M\to D$ considered as a bicomodule
  $C\yon\bito[M\yon]D\yon$, the following are equivalent
  \begin{itemize}
  \item its transpose $D\yon\bito[(M\yon)^\top]C\yon$,
  \item its right adjoint's weak dual $\dual{\big((M\yon)\rdag\big)}$, and
  \item its weak dual's left adjoint $\big(\dual{(M\yon)}\big)\ldag$.
  \end{itemize}

  These operations moreover yield equivalent equivalences
  $\sspan \cong \sspan\op$.
\end{corollary}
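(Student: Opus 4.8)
The plan is to push all three bicomodules through the equivalence $\bic{c}{d}\simeq\Cat{Praf}(d\set,c\set)$ of \cref{thm.garner} and express each as a composite of the operations $\Sigma$, $\Pi$, $\Delta$, for which adjoints are transparent and duals are described by \cref{rem.duals_delta_sigma_pi}. Write the given span as $C\From{f}M\To{g}D$, so that the bicomodule $C\yon\bifrom[M\yon]D\yon$ is the prafunctor $\Sigma_f\circ\Delta_g\colon D\set\to C\set$, while its transpose $D\yon\bifrom[(M\yon)^\top]C\yon$, being the bicomodule of the flipped span $D\From{g}M\To{f}C$, is the prafunctor $\Sigma_g\circ\Delta_f\colon C\set\to D\set$. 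It therefore suffices to identify the other two items of the list with $\Sigma_g\circ\Delta_f$ as well.

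First I would treat the right adjoint's dual. As a functor, $\Sigma_f\circ\Delta_g$ has right adjoint $\Pi_g\circ\Delta_f$; this is itself a right adjoint, hence a prafunctor, so by \cref{prop.adjoint_prafunctors} it is genuinely the right adjoint $(M\yon)\rdag$ inside $\ccatsharp$, carried---using \cref{prop.adjoint_bicoms}, or directly from the bridge-diagram picture of \cref{thm.GK_myform}---by the conjunctive bicomodule $\sum_{b\in D}\yon^{g\inv(b)}$. Dualizing a conjunctive bicomodule simply moves exponents to coefficients (\cref{thm.linear_conjunctive_dual}), producing the linear bicomodule $\sum_{b\in D}g\inv(b)\yon\cong M\yon$; at the level of prafunctors \cref{rem.duals_delta_sigma_pi} tells us this dual carries $\Sigma_g\circ\Delta_f$. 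That is exactly the transpose.

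Next I would treat the dual's left adjoint, symmetrically. Since $M\yon$ is linear, \cref{thm.linear_conjunctive_dual} gives $\dual{(M\yon)}$ as the conjunctive bicomodule $\sum_{a\in C}\yon^{f\inv(a)}$, which by \cref{rem.duals_delta_sigma_pi} is the prafunctor $\Pi_f\circ\Delta_g\colon D\set\to C\set$. Its left adjoint (within large categories) is $\Sigma_g\circ\Delta_f\colon C\set\to D\set$; because $g$ is a function between discrete categories it is etale, so $\Sigma_g$---and hence $\Sigma_g\circ\Delta_f$---is a prafunctor, whence by \cref{prop.adjoint_prafunctors} (or \cref{lemma.right_adj_GK}) this is also the left adjoint of $\dual{(M\yon)}$ inside $\ccatsharp$. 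Once more it equals $\Sigma_g\circ\Delta_f$, the transpose, so all three bicomodules are isomorphic.

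The one thing demanding real attention---and the main potential obstacle---is bookkeeping: keeping straight that a bicomodule $c\bifrom d$ is a prafunctor $d\set\to c\set$, that transposing the span interchanges the roles of $f$ and $g$ and reverses the horizontal arrow, and, most importantly, that the right adjoint of $M\yon$ and the left adjoint of $\dual{(M\yon)}$ actually live in $\ccatsharp$ rather than merely in large categories. The latter is precisely what the invocations of \cref{prop.adjoint_prafunctors} and of the etaleness of maps between discrete categories are for; with those in hand, all three descriptions collapse to the single prafunctor $\Sigma_g\circ\Delta_f$, equivalently the transposed span $D\From{g}M\To{f}C$.
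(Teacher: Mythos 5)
Your proof is correct and follows essentially the same route as the paper: the official proof is a one-line appeal to \cref{prop.adjoint_bicoms,thm.linear_conjunctive_dual}, and your explicit $\Sigma,\Pi,\Delta$ bookkeeping is precisely the computation the paper records in the remark immediately following the corollary. The care you take that the adjoints exist in $\ccatsharp$ (via \cref{prop.adjoint_prafunctors} and etaleness) is exactly the right point to check and is handled correctly.
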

\begin{proof}
  This follows from \cref{prop.adjoint_bicoms,thm.linear_conjunctive_dual}.
\end{proof}

\begin{figure}[h]
  \centering
  \begin{tikzpicture}[corollas]
    \begin{scope}[shift={(-3.15,2.15)}]
      \draw[rounded corners] (-2.325,-1.375) rectangle (2.325,1.375);
      \begin{scope}[shift={(0,.125)}]
        \node [bcol,vertex,large] (s1) at (-.6, -1) {\tiny$b_1$};
        \coordinate (t1) at (-1.8,1) {};
        \coordinate (t2) at (-1.2,1) {};
        \coordinate (t3) at (-.6,1) {};
        \coordinate (t4) at (0,1) {};
        \coordinate (t5) at (.6,1) {};
        
        \node [ycol,vertex,large] (ss1) at (-.6,-.3) {};
        
        \node [rcol,vertex,large] (m1) at (-1.8,.4) {\tiny$a_1$};
        \node [rcol,vertex,large] (m2) at (-1.2,.4) {\tiny$a_1$};
        \node [rcol,vertex,large] (m3) at (-.6,.4) {\tiny$a_1$};
        \node [rcol,vertex,large] (m4) at (0,.4) {\tiny$a_2$};
        \node [rcol,vertex,large] (m5) at (.6,.4) {\tiny$a_2$};
        
        \draw [bcol,edge] (s1) -- (ss1);
        
        \draw [ycol,edge] (ss1) -- (m1);
        \draw [ycol,edge] (ss1) -- (m2);
        \draw [ycol,edge] (ss1) -- (m3);
        \draw [ycol,edge] (ss1) -- (m4);
        \draw [ycol,edge] (ss1) -- (m5);

        \draw [rcol,edge] (m1) -- (t1);
        \draw [rcol,edge] (m2) -- (t2);
        \draw [rcol,edge] (m3) -- (t3);
        \draw [rcol,edge] (m4) -- (t4);
        \draw [rcol,edge] (m5) -- (t5);
        
        \node [bcol,vertex,large] (s2) at (1.5, -1) {\tiny$b_2$};
        \coordinate (t6) at (1.2,1) {};
        \coordinate (t7) at (1.8,1) {};
        
        \node [ycol,vertex,large] (ss2) at (1.5,-.3) {};
        
        \node [rcol,vertex,large] (m6) at (1.2,.4) {\tiny$a_2$};
        \node [rcol,vertex,large] (m7) at (1.8,.4) {\tiny$a_3$};
        
        \draw [bcol,edge] (s2) -- (ss2);
        
        \draw [ycol,edge] (ss2) -- (m6);
        \draw [ycol,edge] (ss2) -- (m7);

        \draw [rcol,edge] (m6) -- (t6);
        \draw [rcol,edge] (m7) -- (t7);

      \end{scope}

      \node at (0, 1.75) {$\yon^5 + \yon^2$};
    \end{scope}
    \begin{scope}[shift={(3.15,2.15)}]
      \draw[rounded corners] (-2.325,-1.375) rectangle (2.325,1.375);
      \begin{scope}[shift={(0,.125)}]
        \node [bcol,vertex,large] (s1) at (-1.8,-1) {\tiny$b_1$};
        \node [bcol,vertex,large] (s2) at (-1.2,-1) {\tiny$b_1$};
        \node [bcol,vertex,large] (s3) at (-.6,-1) {\tiny$b_1$};
        \node [bcol,vertex,large] (s4) at (0,-1) {\tiny$b_1$};
        \node [bcol,vertex,large] (s5) at (.6,-1) {\tiny$b_1$};
        
        \node [bcol,vertex,large] (s6) at (1.2,-1) {\tiny$b_2$};
        \node [bcol,vertex,large] (s7) at (1.8,-1) {\tiny$b_2$};
        
        \coordinate (t1) at (-1.8,1) {};
        \coordinate (t2) at (-1.2,1) {};
        \coordinate (t3) at (-.6,1) {};
        \coordinate (t4) at (0,1) {};
        \coordinate (t5) at (.6,1) {};
        
        \coordinate (t6) at (1.2,1) {};
        \coordinate (t7) at (1.8,1) {};
        
        \node [ycol,vertex,large] (ss1) at (-1.8,-.3) {};
        \node [ycol,vertex,large] (ss2) at (-1.2,-.3) {};
        \node [ycol,vertex,large] (ss3) at (-.6,-.3) {};
        \node [ycol,vertex,large] (ss4) at (0,-.3) {};
        \node [ycol,vertex,large] (ss5) at (.6,-.3) {};
        
        \node [ycol,vertex,large] (ss6) at (1.2,-.3) {};
        \node [ycol,vertex,large] (ss7) at (1.8,-.3) {};
        
        \node [rcol,vertex,large] (m1) at (-1.8,.4) {\tiny$a_1$};
        \node [rcol,vertex,large] (m2) at (-1.2,.4) {\tiny$a_1$};
        \node [rcol,vertex,large] (m3) at (-.6,.4) {\tiny$a_1$};
        \node [rcol,vertex,large] (m4) at (0,.4) {\tiny$a_2$};
        \node [rcol,vertex,large] (m5) at (.6,.4) {\tiny$a_2$};
        
        \node [rcol,vertex,large] (m6) at (1.2,.4) {\tiny$a_2$};
        \node [rcol,vertex,large] (m7) at (1.8,.4) {\tiny$a_3$};
        
        \draw [bcol,edge] (s1) -- (ss1);
        \draw [bcol,edge] (s2) -- (ss2);
        \draw [bcol,edge] (s3) -- (ss3);
        \draw [bcol,edge] (s4) -- (ss4);
        \draw [bcol,edge] (s5) -- (ss5);
        
        \draw [bcol,edge] (s6) -- (ss6);
        \draw [bcol,edge] (s7) -- (ss7);
        
        \draw [ycol,edge] (ss1) -- (m1);
        \draw [ycol,edge] (ss2) -- (m2);
        \draw [ycol,edge] (ss3) -- (m3);
        \draw [ycol,edge] (ss4) -- (m4);
        \draw [ycol,edge] (ss5) -- (m5);
        
        \draw [ycol,edge] (ss6) -- (m6);
        \draw [ycol,edge] (ss7) -- (m7);

        \draw [rcol,edge] (m1) -- (t1);
        \draw [rcol,edge] (m2) -- (t2);
        \draw [rcol,edge] (m3) -- (t3);
        \draw [rcol,edge] (m4) -- (t4);
        \draw [rcol,edge] (m5) -- (t5);

        \draw [rcol,edge] (m6) -- (t6);
        \draw [rcol,edge] (m7) -- (t7);

      \end{scope}

      \node at (0, 1.75) {$5\yon + 2\yon = 7\yon$};
    \end{scope}
    \begin{scope}[shift={(-3.15,-2.15)}]
      \draw[rounded corners] (-2.325,-1.375) rectangle (2.325,1.375);
      \begin{scope}[shift={(0,.125)}]
        \node [rcol,vertex,large] (s1) at (-1.8,-1) {\tiny$a_1$};
        \node [rcol,vertex,large] (s2) at (-1.2,-1) {\tiny$a_1$};
        \node [rcol,vertex,large] (s3) at (-.6,-1) {\tiny$a_1$};
        \node [rcol,vertex,large] (s4) at (0,-1) {\tiny$a_2$};
        \node [rcol,vertex,large] (s5) at (.6,-1) {\tiny$a_2$};
        \node [rcol,vertex,large] (s6) at (1.2,-1) {\tiny$a_2$};
        \node [rcol,vertex,large] (s7) at (1.8,-1) {\tiny$a_3$};
        
        \coordinate (t1) at (-1.8,1) {};
        \coordinate (t2) at (-1.2,1) {};
        \coordinate (t3) at (-.6,1) {};
        \coordinate (t4) at (0,1) {};
        \coordinate (t5) at (.6,1) {};
        \coordinate (t6) at (1.2,1) {};
        \coordinate (t7) at (1.8,1) {};
        
        \node [ycol,vertex,large] (ss1) at (-1.8,-.3) {};
        \node [ycol,vertex,large] (ss2) at (-1.2,-.3) {};
        \node [ycol,vertex,large] (ss3) at (-.6,-.3) {};
        \node [ycol,vertex,large] (ss4) at (0,-.3) {};
        \node [ycol,vertex,large] (ss5) at (.6,-.3) {};
        \node [ycol,vertex,large] (ss6) at (1.2,-.3) {};
        \node [ycol,vertex,large] (ss7) at (1.8,-.3) {};
        
        \node [bcol,vertex,large] (m1) at (-1.8,.4) {\tiny$b_1$};
        \node [bcol,vertex,large] (m2) at (-1.2,.4) {\tiny$b_1$};
        \node [bcol,vertex,large] (m3) at (-.6,.4) {\tiny$b_1$};
        \node [bcol,vertex,large] (m4) at (0,.4) {\tiny$b_1$};
        \node [bcol,vertex,large] (m5) at (.6,.4) {\tiny$b_1$};
        \node [bcol,vertex,large] (m6) at (1.2,.4) {\tiny$b_2$};
        \node [bcol,vertex,large] (m7) at (1.8,.4) {\tiny$b_2$};
        
        \draw [rcol,edge] (s1) -- (ss1);
        \draw [rcol,edge] (s2) -- (ss2);
        \draw [rcol,edge] (s3) -- (ss3);
        \draw [rcol,edge] (s4) -- (ss4);
        \draw [rcol,edge] (s5) -- (ss5);
        \draw [rcol,edge] (s6) -- (ss6);
        \draw [rcol,edge] (s7) -- (ss7);
        
        \draw [ycol,edge] (ss1) -- (m1);
        \draw [ycol,edge] (ss2) -- (m2);
        \draw [ycol,edge] (ss3) -- (m3);
        \draw [ycol,edge] (ss4) -- (m4);
        \draw [ycol,edge] (ss5) -- (m5);
        \draw [ycol,edge] (ss6) -- (m6);
        \draw [ycol,edge] (ss7) -- (m7);

        \draw [bcol,edge] (m1) -- (t1);
        \draw [bcol,edge] (m2) -- (t2);
        \draw [bcol,edge] (m3) -- (t3);
        \draw [bcol,edge] (m4) -- (t4);
        \draw [bcol,edge] (m5) -- (t5);
        \draw [bcol,edge] (m6) -- (t6);
        \draw [bcol,edge] (m7) -- (t7);

      \end{scope}
      
      \node at (0, -1.75) {$3\yon+3\yon + \yon = 7\yon$};
    \end{scope}
    \begin{scope}[shift={(3.15,-2.15)}]
      \draw[rounded corners] (-2.325,-1.375) rectangle (2.325,1.375);
      \begin{scope}[shift={(0,.125)}]
        \node [rcol,vertex,large] (s1) at (-1.2,-1) {\tiny$a_1$};
        \node [rcol,vertex,large] (s2) at (.6,-1) {\tiny$a_2$};
        \node [rcol,vertex,large] (s3) at (1.8,-1) {\tiny$a_3$};
        
        \coordinate (t1) at (-1.8,1) {};
        \coordinate (t2) at (-1.2,1) {};
        \coordinate (t3) at (-.6,1) {};
        \coordinate (t4) at (0,1) {};
        \coordinate (t5) at (.6,1) {};
        \coordinate (t6) at (1.2,1) {};
        \coordinate (t7) at (1.8,1) {};
        
        \node [ycol,vertex,large] (ss1) at (-1.2,-.3) {};
        \node [ycol,vertex,large] (ss2) at (.6,-.3) {};
        \node [ycol,vertex,large] (ss3) at (1.8,-.3) {};
        
        \node [bcol,vertex,large] (m1) at (-1.8,.4) {\tiny$b_1$};
        \node [bcol,vertex,large] (m2) at (-1.2,.4) {\tiny$b_1$};
        \node [bcol,vertex,large] (m3) at (-.6,.4) {\tiny$b_1$};
        \node [bcol,vertex,large] (m4) at (0,.4) {\tiny$b_1$};
        \node [bcol,vertex,large] (m5) at (.6,.4) {\tiny$b_1$};
        \node [bcol,vertex,large] (m6) at (1.2,.4) {\tiny$b_2$};
        \node [bcol,vertex,large] (m7) at (1.8,.4) {\tiny$b_2$};
        
        \draw [rcol,edge] (s1) -- (ss1);
        \draw [rcol,edge] (s2) -- (ss2);
        \draw [rcol,edge] (s3) -- (ss3);
        
        \draw [ycol,edge] (ss1) -- (m1);
        \draw [ycol,edge] (ss1) -- (m2);
        \draw [ycol,edge] (ss1) -- (m3);
        \draw [ycol,edge] (ss2) -- (m4);
        \draw [ycol,edge] (ss2) -- (m5);
        \draw [ycol,edge] (ss2) -- (m6);
        \draw [ycol,edge] (ss3) -- (m7);

        \draw [bcol,edge] (m1) -- (t1);
        \draw [bcol,edge] (m2) -- (t2);
        \draw [bcol,edge] (m3) -- (t3);
        \draw [bcol,edge] (m4) -- (t4);
        \draw [bcol,edge] (m5) -- (t5);
        \draw [bcol,edge] (m6) -- (t6);
        \draw [bcol,edge] (m7) -- (t7);

      \end{scope}
      
      \node at (0, -1.75) {$2\yon^3 + \yon$};
    \end{scope}
    \draw[<->] (-.4,2.15) -- node[above=-.1]{$\dual{}$} (.4, 2.15);
    \draw[<->] (-.4,-2.15) -- node[above=-.1]{$\dual{}$} (.4, -2.15);
    \draw[<->] (-3.15,.4) -- node[left]{$\ldag$} node[right]{$\rdag$} (-3.15, -.4);
    \draw[<->] (3.15,.4) -- node[left]{$\rdag$} node[right]{$\ldag$} (3.15, -.4);
  \end{tikzpicture}
  \caption{Weak duals and adjoints of conjunctive and linear bicomodules
    between $C\yon$ and $D\yon$.}
\end{figure}

\begin{remark}[Transpose as adjoint weak dual, in terms of
  $\Delta,\Sigma,\Pi$]
  As in \cref{rem.duals_delta_sigma_pi}, consider the span
  $C\From{f}M\To{g}D$ and corresponding bicomodule
  $M\yon \colon C\yon\bito D\yon$, which corresponds to
  $\Sigma_g\circ\Delta_f\colon C\set\to D\set.$ Its right adjoint
  $(M\yon)\rdag \colon D\yon\bito C\yon$ corresponds to
  $\Pi_f\circ\Delta_g$, and its right adjoint's weak dual corresponds to
  $\Sigma_f\circ\Delta_g$. Its weak dual
  $\dual{(M\yon)} \colon C\yon\bito D\yon$ corresponds to $\Pi_f\circ\Delta_g$,
  and hence its weak dual's left adjoint is again
  $\Sigma_f\circ\Delta_g$. In both cases the result,
  $\Sigma_f\circ\Delta_g$ corresponds to the transpose of the original
  span.
\end{remark}

\subsubsection{Categories and profunctors in $\ccatsharp$}\label{sec.prof}

In this section we use the machinery built up in previous sections to
examine two ways the bicategory $\ccat$ of categories and profunctors
arises in $\ccatsharp$. We will look at (co)monads and bi(co)modules
\emph{within} $\ccatsharp \coloneqq \ccomod(\poly)$, i.e.,
$\ccomod(\ccomod(\poly))$ and $\mmod(\ccomod(\poly))$.

First, we will see a way in which $\ccatsharp$ arises within itself.



\begin{proposition}\label{prop.iteratedcomonoids}
  Let $\mathscr{V}$ be a monoidal category whose tensor product
  preserves coreflexive equalizers in each variable (so that the
  framed bicategory $\ccomod(\mathscr{V})$ exists). Then we may
  describe comonads, comonad homomorphisms, and bicomodules in
  $\ccomod(\mathscr{V})$ (i.e., $\ccomod(\ccomod(\mathscr{V}))$) as
  follows.
  \begin{itemize}
  \item A comonad in $\ccomod(\mathscr{V})$ on the object (i.e.,
    comonoid) $d$ amounts to a comonoid $c$ in $\mathscr{V}$ over $d$
    (that is, equipped with a homomorphism of comonoids $c \to d$).
  \item A homomorphism of comonads in $\ccomod(\mathscr{V})$ amounts
    to a homomorphism of comonoids over $d$ (that is, a commutative
    triangle of homomorphisms) in $\mathscr{V}$.
  \item A bicomodule $c \altbito c'$ in $\ccomod(\mathscr{V})$
    just amounts to a bicomodule in $\mathscr{V}$ between the
    comonoids $c$ and $c'$.
\end{itemize}
\end{proposition}

In fact, this is true in greater generality: dropping the assumption
that the tensor product preserves coreflexive equalizers,
$\ccomod(\mathscr{V})$ is merely a covirtual double category (rather
than a double category). However, (co)virtual double categories are
outside the scope of this paper.
\begin{proof}
  As in \cref{ex.cokleisli}, this is obtained by dualizing the usual
  arguments for algebras. See also \cite[Examples 11.3 and
  11.6]{shulman2008framed} and \cite[Theorem~2.3.18]{Spivak.Schultz.Rupel:2016a}.
\end{proof}


Applied to $\ccatsharp \coloneqq \ccomod(\poly)$, we obtain that a
comonoid in $\ccatsharp$ on the category $d$ amounts to a category
$c$ with a retrofunctor into $d$. In particular, we note that every
category comes with a canonical retrofunctor into the discrete
category consisting of its objects.

\begin{proposition}\label{prop.racs_lams}\label{cor.omnibus}
  The framed bicategory $\ccatsharp$ is identified with the full sub
  framed bicategory of $\ccomod(\ccatsharpdisc)$ spanned by the
  comonads with conjunctive carrier.
\end{proposition}
\begin{proof}
  Follows from \cref{prop.iteratedcomonoids}, noting that the comonads in
  $\ccatsharpdisc$ with conjunctive carrier are precisely those
  corresponding to retrofunctors into discrete categories such that
  the map on objects is bijective, i.e., canonical retrofunctors from
  categories into their underlying sets of objects.
\end{proof}

\begin{corollary}\label{cor.profdisc}
  The bicategory of conjunctive polynomial bicomodules $\ccatsharpcon$
  is identified with $\ccomod(\ccatsharpdisccon)$.
\end{corollary}

Recall from \cref{prop.span_iso_lin} that
$\ccatsharplin \cong \sspan$, and so
$\mmod(\ccatsharplin) \cong \mmod(\sspan) \cong \ccat\op$, the
(opposite of the) framed bicategory of categories and
profunctors.\footnote{Now in the context of this paper, it seems a
  sensible convention to identify $\mmod(\ccatsharplin)$ with
  $\ccat\op$ rather than $\ccat$, so that the arrows in a category as
  well as the heteromorphisms of a profunctor point as directions of
  polynomials. Indeed, based on our previous conventions, a span from
  $A$ to $B$ as a bicomodule $A \bito B$ has directions pointing in
  the reverse: positions labeled by $B$ and directions labeled by
  $A$. So to us a category is seen more naturally as a monoid in
  $(\ccatsharplin)\op$ than a monoid in $\ccatsharplin$.} Recall also
from \cref{prop.profunctor} that $\ccatsharpcon$ is canonically
identified with $\ccat\op$.

\begin{proposition}\label{prop.new_omnibus}
  Weak dualization (\cref{thm.linear_conjunctive_dual}) provides an
  equivalence of bicategories
  \[
    \begin{tikzcd}[column sep=60pt]
      \phantom{ \cong(\ccatsharpcon)\co} \mmod(\ccatsharplin) \ar[r,
      shift left, "\dual{-}"]&
      \ccomod(\ccatsharpdisccon)\co\cong(\ccatsharpcon)\co \ar[l,
      shift left, "\dual{-}"]
    \end{tikzcd}
  \]
  that is also obtained by identifying of both sides with the
  (opposite of the) bicategory $\ccat$ of categories and profunctors.
\end{proposition}
\begin{proof}
  The equivalence of bicategories induced by weak dualization described in
  \cref{thm.linear_conjunctive_dual} identifies respective (co)monads
  and bi(co)modules. Moreover, it is easy to check that a profunctor
  construed as a bimodule in $\ccatsharplin$ weak dualizes to the same
  profunctor construed as a conjunctive bicomodule in $\poly$.
\end{proof}

\section{Aggregation}\label{chap.aggregation}

The previous section showed how $\ccat$, the framed bicategory of
categories, fits within $\ccatsharp$, the framed bicategory of
parametric right adjoint functors between copresheaf categories. The
latter are data migration functors, as described in the introduction.

Thus we can import concepts from category theory, placing
them in the same framework that supports data migration. In the
followng sections, we present aggregation as an example of what
can be expressed in this language (whereas we had previously found it
difficult to discuss within a unified formal framework that also
includes querying).

First, recall from the introduction what aggregation is: given a
commutative monoid $M$ and a function $s$ from a finite set $E$ to a
finite set $D$, there is a canonical map from functions
$E \to M$ to functions $D \to M$ given by \emph{aggregating} along $s$:

\begin{equation}\label{eqn.aggregate_general}
  \begin{tikzcd}
    E\ar[r, "s"]\ar[d, "\pi"']&M\\
    D\ar[ur, dashed, "(\mathtt{sum}\ s)_\pi"']
  \end{tikzcd}
\end{equation}
where \eqref{eqn.aggregate_general} is not intended to commute.

We briefly give an abstract account of what is happening here. To
rephrase the above, every function between finite sets $s: E \to D$
determines a function $\mathtt{sum}\ s: M^E \to M^D$. We will see, as
this suggests, that aggregation amounts to a functor
$\mathtt{sum}\colon \finset \to \smset$ mapping $E$ to $M^E$.

The key is that $\finset$ under $+$ is (up to equivalence) the free
symmetric monoidal category on a commutative monoid object: there is a
distinguished commutative monoid object in $(\finset, 0, +)$ carried by
$1$, with its $N$-ary multiplication given by the unique arrow
$N \cong 1 + \ldots + 1 \to 1$. Moreover, the arrows in $\finset$ are
freely generated under $+$ by these arrows into $1$. (Every function
decomposes uniquely as a coproduct of functions into $1$: the
codomain decomposes as elements, and the domain decomposes as
corresponding fibers.) Indeed, any commutative monoid object in a
symmetric monoidal category extends uniquely (up to isomorphism) to a
symmetric monoidal functor from $(\finset, 0, +)$.

In particular, a commutative monoid $M$ (in $\smset$) induces an
aggregation functor $\finset \to \smset$ where $N \mapsto M^N$; the
arrow $N \to 1$ is sent to the monoid multiplication $M^N \to M$, and,
since we send $+$ to $\times$, an arbitrary arrow in $\finset$ is sent
to the product of multiplication maps its fibers are sent to. Thus, as
in \eqref{eqn.aggregate_general}, a map $E \to D$ between finite sets
induces a map $M^E$ to $M^D$.

That is aggregation in the abstract. Now on to our goal in this
section: to place aggregation within $\ccatsharp$ (i.e., in the
language of data migration). As ordinary in categorical database
theory (\cite{spivak2012functorial}), we will model database
instances as $c$-sets; however, in order to handle aggregation, we
furthermore need to assign values in commutative monoids to the
elements in the $c$-sets. We will be interested in modeling situations
in which each object potentially takes values in a different
commutative monoid, as occurs in practice.


Here is the full story in brief: if $X$ is a finite-valued
$c$-set, and we additionally have a monoid $M_a$ and an assignment
$\alpha_a: X(a) \to M_a$ for each object $a$, then we can aggregate along
$s: a \to b$ in $c$ by sending it through the composite
$c \To{X} \finset \To{\agg_a} \smset$, where $\agg_a$ denotes the
aggregation functor $\finset \to \smset$ corresponding to the
commutative monoid $M_a$.
We will express this as a bicomodule map \eqref{eqn.key_diagram},
which at once sends every arrow $f: a \to b$ in $c$ to the appropriate
aggregated $X(b)$-tuple of elements of $M_a$.

\subsection{Schemas and instances}

Our first step is to define database schemas and instances on
them. Our definition (\cref{def.database_schema}) is not the most
general nor the most commonly considered one (as we will explain
in \cref{rem.simplistic_attributes}). It was chosen for our purpose in
this paper, which is simply to show that the most basic aspect of
aggregation can be defined in terms of the universal structures
(monoidal structures, adjoints, etc.) in $\ccatsharp$, and to leave
the development of the theory for later work; the paper is already
long enough.

\begin{definition}[Schema, instance]\label{def.database_schema}
  A \emph{schema} $(\cat{c}, \agg)$ consists of
  \begin{itemize}
  \item a small category $\cat{c}$ (the \emph{specification of the
      database}, as in \cite{spivak2012functorial}) and
  \item an \emph{aggregation} functor $\agg_a: \finset \to \smset$ for
    each object $a$ of $\cat{c}$ (specifying for each finite set its
    possible assignments of aggregable values, and specifying how
    these value-assignments aggregate along functions, as in the
    introduction to this section).
  \end{itemize}

  An \emph{instance} $(X, \alpha)$ on the schema $(\cat{c},\agg)$
  consists of
  \begin{itemize}
  \item a finite-valued $\cat{c}$-set $X\colon\cat{c}\to\finset$ (the
    \emph{data}, as in \cite{spivak2012functorial}) and
  \item an element $\alpha_a$ of $\agg_a(X(a))$ for each object
    $a\in\ob(\cat{c})$ (the \emph{assignment of aggregable values} for
    the set of elements at $a$).
  \end{itemize}
\end{definition}

\begin{example}[Concrete database schema]
  If $\cat{c}$ is a category and
  $M\colon \ob(\cat{c})\to\Cat{CommMon}$ is a function assigning to
  each object $a$ of $\cat{c}$ a commutative monoid $M_a$, we obtain a
  schema where $\agg_a$ is the aggregation functor $N\mapsto M_a^N$, as described in the introduction to this section. The
  functors $\finset \to \smset$ corresponding to commutative monoids
  in this way are precisely those that send finite coproducts to
  products.

  As for an instance, the $c$-set $X$ describes data as usual (see
  \cref{ex.variable_is_duc} or \cite{spivak2012functorial}) and
  $\alpha_a$ simply amounts to a map $X(a) \to M_a$, assigning each
  element of data a value in the corresponding commutative monoid.

  We call this a \emph{concrete database schema}, since this is the
  situation arising in practice we wish to model. The commutative
  monoid $M_a$ models \emph{attributes} for $a$, e.g., possible
  salaries for an employee.
\end{example}

\begin{example}[Other aggregation functor] 
  Nothing prevents us from using more general ``aggregation'' functors
  $\finset \to \smset$. For example, we may use the coproduct of
  aggregation functors corresponding to several commutative monoids,
  which would let us assign to all elements values in just one of
  these monoids at a time. (However, this would seem to be of limited
  practical use.)
\end{example}

\begin{example}[Product aggregation functor]
  More practically useful, the product of aggregation functors
  corresponding to several commutative monoids allows us assign to
  every element a value in each of these monoids. But note that
  this is simply the aggregation functor corresponding to the product
  of the monoids.

  Hence concrete database schemas are able to model multiple
  attributes per object.
\end{example}

\begin{remark}\label{rem.simplistic_attributes}
  Our notion of attributes in a concrete database schema is a bit
  different than the notion of attributes one finds in other
  category-theoretic work on databases. In \cite{johnson2000entity},%
  \footnote{This article was prescient in the sense that EA
    (entity-attribute) sketches are coproduct-limit sketches, which
    seem quite related to prafunctors (indexed disjoint unions of
    conjunctive queries). One could investigate relationships between
    EA sketches and algebras for monads $c\bito[m]c$.}  the authors
  take an attribute on $a\in\cat{c}$ to just be a map from $a$ to a
  coproduct of copies of 1. This puts the attributes directly into the
  schema, whereas for implementations it often seems preferable to
  have each attribute point to an external data type. That is what's
  done in \cite{patterson2021categorical}, in which the types are
  taken to be a discrete category $T$ (which is then equipped with a
  map $V\colon T\to\smset$, assigning types their elements), and the
  attributes on each object $a\in\cat{c}$ are taken to be a $T$-set
  --- i.e., a set $P(a, t)$ for every $t \in T$ --- varying
  functorially in $a$. Functoriality in $a$ means that attributes on
  $a$ play a role similar to arrows out of $a$, in that each
  $a' \to a$ and attribute on $a$ induces an attribute on $a'$. This
  amounts to a profunctor $P\colon \cat{c}\profunctor T$. Also
  relevant is \cite{schultz2017algebraic} (which has been implemented
  by a coauthor of that paper, Ryan Wisnesky) where the authors use
  not a discrete category $T$ but a multi-sorted theory $\Cat{Type}$
  (representing a programming language), allowing attributes to form
  algebraic structures: the attributes on each object $a\in\cat{c}$
  are taken to be an algebra $P_a$ (again varying functorially in
  $a$).

  Let us attempt to connect our notion of concrete database schema
  with \cite{patterson2021categorical} and
  \cite{schultz2017algebraic}. To us, each object has one attribute
  type with just one attribute (so it's as though we multiplied all
  the attributes together).
  An analogous schema in \cite{patterson2021categorical} would simply
  identify $T$ with $\ob(\cat{c})$ and take each $P(a, a)$ to be free
  on a single element. Our $\agg_a(1)$ is the analogue of $V(a)$
  (whereas the latter has no ability to aggregate\footnote{To go from
    an attribute with values in a set $V$ to a commutative monoid
    attribute, the free thing to do is replace $V$ by $M(V)$, the
    monoid of multisets in $V$. Thus the approaches in
    \cite{johnson2000entity} or \cite{patterson2021categorical} can be
    replaced by one in our formulation that enables the ``group-by''
    operation in database theory.}). On the other hand, an analogous
  schema in \cite{schultz2017algebraic} would take $\Cat{Type}$ to be
  the free finite product theory on $\ob(\cat{c})$ many monoid objects
  and take $P_a$ to be freely generated by an element in the monoid
  for $a$. Our entire $\agg$ corresponds to fixing a particular
  algebra $V$ of $\Cat{Type}$.\footnote{Our description of concrete
    database schema casts monoids as algebras of the \emph{symmetric
      monoidal} theory of a monoid $(\finset, 0, +)$, rather than
    algebras of the \emph{finite product} theory of a monoid.}

  Hence we see our notion of concrete database schema differs from
  \cite{schultz2017algebraic} as follows: we require $\Cat{Type}$ to
  be a particular form, supporting just one sort of summable attribute
  for each object, and we use just one attribute per object $a$ where
  these do not functorially in $a$ (or to put it another way, we
  require they vary freely).
\end{remark}

\subsection{Aggregation in $\ccatsharp$}

Now we express the notions of schema and instance from the previous
section within the language of $\ccatsharp$. In \cref{sec.prof} we saw
several ways to view the bicategory $\ccat$ of categories and
profunctors in $\ccatsharp$: as $(\ccatsharpcon)\co$ or its opposite,%
\footnote{Recall that the assignment $c\mapsto c\op$ identifies the bicategory of categories and profunctors with its opposite.}
as $\ccomod(\ccatsharpdisccon)\co$ or its opposite, or as
$\mmod(\ccatsharplin)$ or its opposite (moreover, any can be
translated into any other by way of universal constructions in
$\ccatsharp$). As noted in \cref{rem.linearconj}, out of these only
$\mmod(\ccatsharplin)$ and its opposite furthermore feature the
\emph{squares} of $\ccat$ as a \emph{framed} bicategory.

In \cref{ex.full_internal_finset} we saw how subcategories of
$\smset\op$ arise naturally in $\ccatsharp$, and in this way we
construct $\finset\op$ (up to equivalence).

\begin{definition}
  The \emph{list polynomial}\footnote{We use the notation $u$ because the HoTT version of $u$ can be regarded as the universe of finite sets; see \cite{aberle2024polynomial}.} $u \in \poly$ is the polynomial given by the formula
\[
  u\coloneqq\sum_{N\in\nn}\yon^{N}.
\]
\end{definition}
\begin{lemma}\label{cor.skeleton_finset}
  The induced comonad structure on the left Kan extension of $u$ along
  itself $\lens{u}{u}$ (see \cref{prop.JoshMeyers}) as a category is a
  skeleton of $\finset\op$.
\end{lemma}
\begin{proof}
  As per \cref{ex.full_internal_finset}, $\lens{u}{u}$ is the opposite
  of the full subcategory of $\smset$ spanned by the sets $N$, and
  every finite set is in bijection with some such $N$.
\end{proof}
To rephrase the definition of schema (\cref{def.database_schema}) in a
slightly more unified way, it is a category $\cat{c}$ and a profunctor
$\agg: \ob(\cat{c}) \profunctor \finset$. In particular we have that
$\agg$ (really $\agg\op: \finset\op \profunctor \ob(\cat{c})$) amounts
to a conjunctive bicomodule as per \cref{prop.profunctor}
\[c(1)\yon \bito[\agg] \lens{u}{u}\]
and we may subsequently choose to translate this into various
equivalent forms. Indeed, applying
$(\ccatsharpcon)\co \cong \ccomod(\ccatsharpdisccon)\co$ as in
\cref{cor.profdisc} and
$\ccomod(\ccatsharpdisccon)\co \cong \mmod(\ccatsharplin)$ as in
\cref{prop.new_omnibus} this becomes a left module (or a bimodule, but
the right module structure is superfluous) in $\ccatsharplin$:
\begin{equation}\label{eqn.aggleftmod}
  \begin{tikzcd}[row sep=0]
    &|[alias=u1]|u(1)\yon\ar[dr, bend left=15pt, biml-bimr, "\dual{\lens{u}{u}}"]\\
    c(1)\yon\ar[ur, bend left=15pt, biml-bimr,
    "\dual{\agg}"]\ar[rr, bend right, biml-bimr, "\dual{\agg}"'
    name=olu]&&u(1)\yon
    \ar[from=u1, to=olu-|u1, shorten >=5mm,shorten <=1mm, Rightarrow, "\lambda_\agg"]
  \end{tikzcd}
\end{equation}

As for expressing an \emph{instance}, we also have flexibility. The
finite-valued presheaf $X$ may either be viewed directly as a functor
$\cat{c} \to \finset$ or just as a $c$-set $\cat{c} \to \smset$
satisfying a finiteness property. The former will be more
straightforward to describe in $\ccatsharp$.
Thus we express
$X\colon\cat{c} \to \finset$ (really
$X\op\colon \cat{c}\op \to \finset\op$) as a homomorphism of monads
in $\ccatsharplin$:
\begin{equation}\label{eqn.monoidhom}
  \begin{tikzcd}[column sep=50pt]
    c(1)\ar[r,biml-bimr,"c\ldag" name=M]\ar[d,"\ob(X)"']&c(1)\ar[d,"\ob(X)"]\\
    u(1)\ar[r,biml-bimr,"\dual{\lens{u}{u}}"' name=M']&u(1)
    \ar[from=M, to=M', Rightarrow, shorten=2mm, "X"]
  \end{tikzcd}
\end{equation}

Finally, the assignment $\alpha$ is a square as shown left in
$\ccat$, corresponding to a square as shown right in $\ccatsharplin$:
\begin{equation}\label{eqn.alphasquare}
  \begin{tikzcd}
    \ob(\cat{c})\ar[r,equal,""name=M]\ar[d,equal]&\ob(\cat{c})\ar[d,"\ob(X)"]\\
    \ob(\cat{c})\ar[r,tick,"\agg"']\ar[r,phantom, name=M']&\finset
    \ar[from=M, to=M', Rightarrow, shorten=2mm, "\alpha"]
  \end{tikzcd}
  \hspace{.6in}
  \begin{tikzcd}
    c(1)\yon\ar[r,equal, "" name=M]\ar[d,equal,"c(1)\yon"']&c(1)\yon\ar[d,"\ob(X)"]\\
    c(1)\yon\ar[r,biml-bimr,"\dual{\agg}"' name=M']&u(1)\yon
    \ar[from=M, to=M', Rightarrow, shorten=2mm, "\alpha"]
  \end{tikzcd}
\end{equation}

\begin{theorem}[Aggregation]\label{thm.main}
  Let $(c,\agg)$ be a database schema and $(X,\alpha)$ a finite-valued
  instance on it. Then there is an induced homomorphism of left modules (shown
  left) relative to the monad homomorphism
  $X\colon c\ldag\to\dual{\lens{u}{u}}$ (shown right):
  \[
    \begin{tikzcd}[column sep=50pt]
      c(1)\yon\ar[r, biml-bimr, "c\ldag", ""' name=c1]\ar[d, equal]&c(1)\yon\ar[d, "\ob(X)"]\\
      c(1)\yon\ar[r, biml-bimr, "\dual{\agg}"', "" name=u1]&u(1)\yon
      \ar[from=c1, to=u1, shorten=2mm, Rightarrow, "\agg\alpha"]
    \end{tikzcd}
    \hspace{1in}
    \begin{tikzcd}[column sep=50pt]
      c(1)\yon\ar[r, biml-bimr, "c\ldag", ""' name=c1]\ar[d, "\ob(X)"']&c(1)\yon\ar[d, "\ob(X)"]\\
      u(1)\yon\ar[r, biml-bimr, "\dual{\lens{u}{u}}"', "" name=u1]&u(1)\yon
      \ar[from=c1, to=u1, shorten=2mm, Rightarrow, "X"]
    \end{tikzcd}
  \]
\end{theorem}
\begin{proof}
  The desired map $\agg\alpha$ is defined to be the composite
  \begin{equation}\label{eqn.key_diagram}
    \begin{tikzcd}[column sep=large]
      c(1)\yon\ar[d, equal]\ar[r, equal, "c(1)\yon", ""' name=c2]\ar[rr, bend left=40, biml-bimr, "c\ldag", ""' name=c1]&
      c(1)\yon\ar[d, "\ob(X)"]\ar[r, biml-bimr, "c\ldag", ""' name=c3]&
      c(1)\yon\ar[d, "\ob(X)"]\\
      c(1)\yon\ar[r, biml-bimr, "\dual{\agg}"' name=u2]\ar[rr, bend right=40, biml-bimr, "\dual{\agg}"' name=u4]&
      u(1)\yon\ar[r, biml-bimr, "\dual{\lens{u}{u}}"' name=u3]&
      u(1)\yon
      \ar[from=c1, to=c2-|c1, shorten >=4mm, equal]
      \ar[from=c2, to=u2, shorten=3mm, Rightarrow, "\alpha"]
      \ar[from=c3, to=u3, shorten=3mm, Rightarrow, "X"]
      \ar[from=u2-|u4, to=u4, shorten=1mm, Rightarrow, "\lambda_\agg"]
    \end{tikzcd}
  \end{equation}
  Here, the 2-cell $\agg$ defines aggregation as in
  \eqref{eqn.aggleftmod}, the 2-cell labeled $X$ defines $X$ as a
  finite-valued $c$-set as in \eqref{eqn.monoidhom}, and the 2-cell
  labeled $\alpha$ completes the instance $(X,\alpha)$ as in
  \eqref{eqn.alphasquare}.
  These diagrams live entirely within $\ccatsharplin \cong \sspan$;
  the module homomorphism corresponds to a map between the profunctors
  $c\op(\thg,\thg)\colon\cat{c}\op \profunctor \ob(\cat{c})$ and
  $\agg\op\colon\finset\op \profunctor \ob(\cat{c})$.
\end{proof}

\subsection{Conclusion}
We hope this paper will encourage further investigation into the
framed bicategory $\ccatsharp$. The recently established
correspondence between categories and polynomial comonads is
delightfully striking, where retrofunctors arise as a natural object
of study. In this paper we filled in more of this story by providing a
proof that polynomial bicomodules from $c$ to $d$ are parametric right
adjoints from $c\set$ to $d\set$. We also constructed a closed
monoidal structure, the Dirichlet product, on each such category.

We saw that within $\ccatsharp$, the operation of transposing a span
splits into two more primitive operations: taking adjoints and taking
weak duals. There is room for further investigation of weak duals in
$\ccatsharp$; we only considered the case where the categories
involved are discrete, focusing in particular on linear bicomodules
(spans) and conjunctive bicomodules.

It would be interesting to see what is the most general context in
which various parts of the theory go through. For simplicity we have
always stayed internal to $\smset$, but presumably much of what we
have done generalizes.

We have kept our treatment of database aggregation brief. Still, it
should serve to illustrate the expressive power of the language of
$\ccatsharp$. Since all of $\ccat$ arises within $\ccatsharp$, we can
translate a whole cast of structures into suitable arrangements of
polynomial bicomodules. In this way it should be possible to realize
arbitrary ways of manipulating data, for example more sophisticated
variants of aggregation than the basic example we showcased. Indeed,
we hope placing $\ccat$ within the context of $\ccatsharp$ provides a
link between the data migration functors of existing categorical
database theory and seemingly more general forms of data manipulation.

\bibliographystyle{elsarticle-num}
\bibliography{Library20211112}

\begin{thebibliography}{10}
\expandafter\ifx\csname url\endcsname\relax
  \def\url#1{\texttt{#1}}\fi
\expandafter\ifx\csname urlprefix\endcsname\relax\def\urlprefix{URL }\fi
\expandafter\ifx\csname href\endcsname\relax
  \def\href#1#2{#2} \def\path#1{#1}\fi

\bibitem{abbott2003categories}
M.~Abbott, T.~Altenkirch, N.~Ghani, Categories of containers, in: Foundations
  of software science and computation structures, Vol. 2620 of Lecture Notes in
  Computer Science, Springer, 2003, pp. 23--38.

\bibitem{gambino2003wellfounded}
N.~Gambino, M.~Hyland, Wellfounded trees and dependent polynomial functors, in:
  International Workshop on Types for Proofs and Programs, Springer, 2003, pp.
  210--225.

\bibitem{kock2012polynomial}
N.~Gambino, J.~Kock, Polynomial functors and polynomial monads, Mathematical
  Proceedings of the Cambridge Philosophical Society 154~(1) (2012) 153–192.

\bibitem{spivak2012functorial}
D.~I. Spivak, Functorial data migration, Information and Computation 217 (2012)
  31--51.

\bibitem{ahman2016directed}
D.~Ahman, T.~Uustalu, Directed containers as categories, EPTCS 207, 2016, pp.
  89-98 (2016).
\newblock \href {http://arxiv.org/abs/arXiv:1604.01187}
  {\path{arXiv:arXiv:1604.01187}}.

\bibitem{shulman2008framed}
M.~Shulman, Framed bicategories and monoidal fibrations, Theory and
  Applications of Categories 20 (2008) 650--738.

\bibitem{joyal1991geometry}
A.~Joyal, R.~Street, The geometry of tensor calculus. {I}, Adv. Math. 88~(1)
  (1991) 55--112.

\bibitem{hartshorne1977algebraic}
R.~Hartshorne, Algebraic geometry, Springer-Verlag, New York, 1977, graduate
  Texts in Mathematics, No. 52.

\bibitem{shapiro2024polynomial}
B.~T. Shapiro, D.~I. Spivak, \href{https://arxiv.org/abs/2405.13157}{A
  polynomial construction of nerves for higher categories} (2024).
\newblock \href {http://arxiv.org/abs/2405.13157} {\path{arXiv:2405.13157}}.
\newline\urlprefix\url{https://arxiv.org/abs/2405.13157}

\bibitem{weber2007theory}
M.~Weber, Familial 2-functors and parametric right adjoints, Theory and
  Applications of Categories 18 (2007) 665--732.

\bibitem{aguiar1997internal}
M.~Aguiar, Internal categories and quantum groups, Cornell University, 1997.

\bibitem{garner2024}
R.~Garner, Inner automorphisms of groupoids, Theory and Applications of
  Categories 42~(5) (2024) 84--101.

\bibitem{Grandis.Pare:2004a}
M.~Grandis, R.~Par{\'e}, Adjoints for double categories, Cahiers de Topologie
  et G{\'e}om{\'e}trie Diff{\'e}rentielle Cat{\'e}goriques 45 (2004) 193--240.

\bibitem{Wood:1985a}
R.~J. Wood, Proarrows {II}, Cahiers de Topologie et G{\'e}om{\'e}trie
  Diff{\'e}rentielle Cat{\'e}goriques 26 (1985) 135--168.

\bibitem{Garner.Shulman:2013a}
R.~Garner, M.~Shulman, Enriched categories as a free cocompletion, Advances in
  Mathematics 289 (2016) 1--94.
\newblock \href {https://doi.org/10.1016/j.aim.2015.11.012}
  {\path{doi:10.1016/j.aim.2015.11.012}}.

\bibitem{Street:1972a}
R.~Street, The formal theory of monads, Journal of Pure and Applied Algebra 2
  (1972) 149--168.

\bibitem{johnstone2001structure}
P.~Johnstone, J.~Power, T.~Tsujishita, H.~Watanabe, J.~Worrell, On the
  structure of categories of coalgebras, Theoretical Computer Science 260~(1-2)
  (2001) 87--117.

\bibitem{Gabriel1971Lokal}
P.~Gabriel, F.~Ulmer, Lokal pr{\"a}sentierbare {K}ategorien, Vol. 221 of
  Lecture Notes in Mathematics, Springer, 1971.

\bibitem{spivak2022poly}
D.~I. Spivak, N.~Niu, Polynomial functors: a general theory of interaction,
  2022, \emph{In preparation}.

\bibitem{jacobs1999categorical}
B.~Jacobs, Categorical logic and type theory, Vol. 141 of Studies in Logic and
  the Foundations of Mathematics, North-Holland Publishing Co., Amsterdam,
  1999.

\bibitem{spivak2020dirichlet}
D.~I. Spivak, D.~J. Myers, Dirichlet polynomials form a topos (2020).
\newblock \href {http://arxiv.org/abs/arXiv:2003.04827}
  {\path{arXiv:arXiv:2003.04827}}.

\bibitem{Dold1983Duality}
A.~Dold, D.~Puppe, Duality, trace and transfer, in: Topology (Moscow, 1979),
  Vol. 154, Akademiya Nauk SSSR, 1983, pp. 81--97.

\bibitem{Nunes2024Monoidal}
F.~L. Nunes, M.~{V\'ak\'ar}, Monoidal closure of {Grothendieck} constructions
  via {$\Sigma$}-tractable monoidal structures and {Dialectica} formulas,
  preprint, available as
  \href{https://arxiv.org/abs/2405.07724}{arXiv:2405.07724} (2024).

\bibitem{nlab:duoidal_category}
{nLab authors}, duoidal
  category\href{http://ncatlab.org/nlab/revision/duoidal\%20category/18}{Revision
  18} (Nov. 2021).

\bibitem{Spivak.Schultz.Rupel:2016a}
D.~I. Spivak, P.~Schultz, D.~Rupel, String diagrams for traced and compact
  categories are oriented 1-cobordisms, Journal of Pure and Applied Algebra
  221~(8) (2017) 2064--2110.

\bibitem{johnson2000entity}
M.~Johnson, R.~Rosebrugh, R.~Wood, Entity-relationship-attribute designs and
  sketches, Theory and Applications of Categories 10~(3) (2002) 94 -- 112.

\bibitem{patterson2021categorical}
E.~Patterson, O.~Lynch, J.~Fairbanks, Categorical data structures for technical
  computing (2021).
\newblock \href {http://arxiv.org/abs/arXiv:2106.04703}
  {\path{arXiv:arXiv:2106.04703}}.

\bibitem{schultz2017algebraic}
P.~Schultz, D.~I. Spivak, C.~Vasilakopoulou, R.~Wisnesky, Algebraic databases,
  Theory and Applications of Categories 32 (2017) Paper No. 16, 547--619.

\bibitem{aberle2024polynomial}
C.~B. Aberlé, D.~I. Spivak, \href{https://arxiv.org/abs/2409.19176}{Polynomial
  universes and dependent types} (2024).
\newblock \href {http://arxiv.org/abs/2409.19176} {\path{arXiv:2409.19176}}.
\newline\urlprefix\url{https://arxiv.org/abs/2409.19176}

\end{thebibliography}

\end{document}